\numberwithin{equation}{section}
\newcommand{\bC}{{\mathbb C}}
\newcommand{\bE}{{\mathbb E}}
\newcommand{\bF}{{\mathbb F}}
\newcommand{\bK}{{\mathbb K}}
\newcommand{\bL}{{\mathbb L}}
\newcommand{\bP}{{\mathbb P}}
\newcommand{\bQ}{{\mathbb Q}}
\newcommand{\bR}{{\mathbb R}}
\newcommand{\bT}{{\mathbb T}}
\newcommand{\bZ}{{\mathbb Z}}
\newcommand{\Z}{\mathbb{Z}}
\newcommand{\C}{\mathbb{C}}
\newcommand{\scrC}{\EuScript{C}}
\newcommand{\scrY}{\EuScript{Y}}
\newcommand{\scrV}{\EuScript{V}}
\newcommand{\scrG}{\EuScript{G}}
\newcommand{\scrT}{\EuScript{T}}
\newcommand{\scrZ}{\EuScript{Z}}
\newcommand{\scrP}{\EuScript{P}}
\newcommand{\scrF}{\EuScript{F}}
\newcommand{\scrW}{\EuScript{W}}
\newcommand{\scrA}{\EuScript{A}}
\newcommand{\scrB}{\EuScript{B}}
\newcommand{\scrM}{\EuScript{M}}
\newcommand{\cF}{{\mathcal F}}
\newcommand{\cI}{{\mathcal I}}
\newcommand{\sA}{{\mathscr A}}
\newcommand{\sB}{{\mathscr B}}
\newcommand{\sF}{{\mathscr F}}
\newcommand{\Sym}{\operatorname{Sym}}
\newcommand{\End}{\operatorname{End}}
\newcommand{\Hom}{\operatorname{Hom}}
\newcommand{\id}{\operatorname{id}}
\newcommand{\rk}{\operatorname{rk}}
\newcommand{\Symp}{\operatorname{Symp}}
\newcommand{\Diff}{\operatorname{Diff}}
\newcommand{\Ham}{\operatorname{Ham}}
\newcommand{\Ob}{{\mathcal Ob}}
\newcommand{\Conf}{\operatorname{Conf}}
\newcommand{\im}{\mathrm{im}}
\newtheorem{thm}{Theorem}[section]
\newtheorem{Theorem}[thm]{Theorem}
\newtheorem{Corollary}[thm]{Corollary}
\newtheorem{Lemma}[thm]{Lemma}
\newtheorem{Proposition}[thm]{Proposition}
\newtheorem{Definition}[thm]{Definition}
\newtheorem{Conjecture}[thm]{Conjecture}
\newtheorem{Addendum}[thm]{Addendum}
\theoremstyle{remark}
\newtheorem{Remark}[thm]{Remark}
\newtheorem{Example}[thm]{Example}
\newtheorem*{Notation}{Notation}
\newcommand{\comment}[1]{}
\newcommand{\Acknowledgements}{{\em Acknowledgements.} }
\title[{Floer cohomology and pencils of quadrics}]{Floer cohomology and pencils of quadrics}
\author[Ivan Smith]{Ivan Smith} \date{October 2011.}
\begin{document}

\begin{abstract}
There is a classical relationship in algebraic geometry between a hyperelliptic curve and an associated pencil of quadric hypersurfaces.  We investigate symplectic aspects  of this relationship, with a view to applications in low-dimensional topology.
\end{abstract}

\maketitle
\begin{footnotesize}
\setcounter{tocdepth}{2}
\tableofcontents
\end{footnotesize}

\section{Introduction}

\subsection{Two contexts}  The main result of this paper, Theorem \ref{Thm:Embed},  belongs to categorical symplectic topology in the sense of Donaldson, Fukaya and Kontsevich.  It relates the derived Fukaya categories of a genus $g$ surface $\Sigma_g$ and of the complete intersection $Q_0 \cap Q_1$ of two smooth quadric hypersurfaces in $\bP^{2g+1}$.  The result is of interest in at least two different contexts.

\begin{itemize}
\item The intersection of two quadric 4-folds in $\bP^5$ is also a moduli space of solutions to the anti-self-dual Yang-Mills equations on the product $\Sigma_2 \times S^1$ \cite{Newstead, NS}.  Theorem \ref{Thm:Embed}, in the special case $g=2$, can be seen as an instance of the ``Seiberg-Witten equals Donaldson" philosophy for gauge-theory invariants of 3-manifolds.

\item Theorem \ref{Thm:Embed} can also be viewed as a symplectic analogue of a classical theorem of Bondal and Orlov \cite{BondalOrlov} in algebraic geometry, concerned with derived categories of sheaves on the same spaces.  The passage here from algebraic to symplectic geometry fits into the broader program of Kontsevich's Homological Mirror Symmetry conjecture.
 \end{itemize}
 After formulating a precise version of the main theorem, the rest of the Introduction will flesh out these two contexts, and then indicate the basic strategy of the proof,  which is itself motivated -- via mirror symmetry -- by several classical theorems on derived categories of sheaves.  

\subsection{The theorem}

Let $\Sigma_g$ be a closed surface of genus $g\geq 2$, equipped with a symplectic form.   This has a well-defined balanced Fukaya category $\scrF(\Sigma_g)$, linear over $\bC$.  Let $\{Q^{2g}_t\}_{t\in \bP^1} \subset \bP^{2g+1}$ denote a pencil of smooth $2g$-dimensional quadric hypersurfaces in $\bP^{2g+1}$, with smooth base locus $Q^{2g}_0 \cap Q^{2g}_1$.  By Moser's theorem, the symplectic manifold underlying the complete intersection $Q_0 \cap Q_1$ is  independent of the choice of generic pencil of quadrics.  It is a simply-connected Fano variety,  in particular a monotone symplectic manifold with a well-defined monotone Fukaya category $\scrF(Q_0\cap Q_1)$, again defined over $\bC$.   The latter category actually splits into a collection of mutually orthogonal $A_{\infty}$-subcategories, one for each eigenvalue of quantum multiplication by the first Chern class on the quantum cohomology of $Q_0\cap Q_1$.   We denote by $D^{\pi}\scrC$ the cohomological category $H(Tw^{\pi}\scrC)$ underlying the split-closure of the category of twisted complexes of an $A_{\infty}$-category $\scrC$.  Our main result is:

\begin{Theorem} \label{Thm:Embed}
There is a $\bC$-linear equivalence of $\bZ_2$-graded split-closed triangulated categories 
\[
D^{\pi}\scrF(\Sigma_g) \simeq D^{\pi}\scrF(Q_0^{2g} \cap Q_1^{2g};0)
\]
where $\scrF(\bullet;0)$ denotes the summand corresponding to the 0-eigenvalue of quantum cup-product by the first Chern class.
\end{Theorem}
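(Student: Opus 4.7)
The plan is to exhibit a common $A_{2g+1}$-chain of Lagrangian generators on both sides, match their $A_\infty$-endomorphism algebras, and invoke a split-generation criterion to promote the match to a derived equivalence.

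\textbf{Step 1 (generators on $\Sigma_g$).} Realise $\Sigma_g$ as the hyperelliptic double cover of $\bP^1$ branched at $2g+2$ points $p_0,\dots,p_{2g+1}$; the lifts of arcs joining consecutive $p_i$ give a chain $c_1,\dots,c_{2g+1}$ of simple closed curves with $|c_i\cap c_{i+1}|=1$ and $c_i\cap c_j=\emptyset$ for $|i-j|\ge 2$. This is a classical Humphries-type generating collection for the mapping class group. A standard Picard--Lefschetz argument, or direct appeal to the results of Seidel and of Abouzaid--Smith on Fukaya categories of surfaces, shows that the $c_i$ split-generate $\scrF(\Sigma_g)$.

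\textbf{Step 2 (generators on $Q_0^{2g}\cap Q_1^{2g}$).} The discriminant of the pencil $\{Q_t\}$ is a degree-$(2g+2)$ polynomial in $t$, whose roots correspond to the singular quadrics. Fix a smooth member $Q_\infty$ transverse to the base locus and realise $Q_0\cap Q_1$ as a regular fibre of a Lefschetz pencil on $Q_\infty$ obtained, after suitable blow-up or perturbation, from the family $\{Q_\infty\cap Q_t\}_{t\in\bP^1}$. Matching cycles over arcs joining consecutive critical values produce Lagrangian spheres $L_1,\dots,L_{2g+1}\subset Q_0\cap Q_1$ forming an $A_{2g+1}$-chain. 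A Maslov-index computation places these in the $0$-eigenvalue summand $\scrF(Q_0\cap Q_1;0)$, and Seidel's Lefschetz-fibration generation theorem (or an Abouzaid-style open--closed split-generation argument) shows that they split-generate it.

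\textbf{Step 3 (matching $A_\infty$-structures).} On both sides, the formal endomorphism algebra of the chosen chain is a path algebra of the $A_{2g+1}$-quiver modulo a short list of natural relations; Hochschild-cohomology considerations show that its $A_\infty$-deformations form essentially a one-parameter family, parametrised by a single higher product $\mu^{k}$ (morally one family of holomorphic polygons). It therefore suffices to compute one representative polygon count on each side and verify agreement after rescaling. The resulting quasi-isomorphism of endomorphism algebras, together with the split-generation statements of Steps 1 and 2 and Keller's derived-equivalence criterion, yields the claimed equivalence of $\bZ_2$-graded split-closed triangulated categories.

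The principal obstacle is Step 3: the relevant enumerative calculation on $Q_0\cap Q_1$ takes place in a Fano manifold of real dimension $4g-2$, and one must identify the precise contributing disc configurations while tracking signs, $\bZ_2$-gradings, and compatibility with the projection to the $0$-eigenvalue summand. A secondary subtlety lies in Step 2, where the restriction $Q_t|_{Q_\infty}$ is not quite a Lefschetz pencil on the nose and must be perturbed, and where one must identify the resulting vanishing cycles with the curves $c_i$ via the hyperelliptic branched-cover correspondence.
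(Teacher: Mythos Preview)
Your overall strategy---find split-generators on both sides, match their $A_\infty$-endomorphism algebras---is the right shape, and Steps~1 and~2 are close to what the paper does (the paper's split-generation statement for $\scrF(Q_0\cap Q_1;0)$ is essentially your Step~2). The genuine gap is Step~3, and it is structural rather than merely technical.

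The claim that the relevant $A_\infty$-deformations form ``essentially a one-parameter family, parametrised by a single higher product'' is not justified, and the paper's argument shows this is far from obvious. The paper does \emph{not} compute holomorphic polygons directly in $Q_0\cap Q_1$. Instead it introduces an intermediate space, the relative quadric $Z = Bl_{Q_0\cap Q_1}(\bP^{2g+1})$, and constructs two fully faithful embeddings
\[
D^\pi\scrF(\Sigma_g)\ \hookrightarrow\ D^\pi\scrF(Z)\ \hookleftarrow\ D^\pi\scrF(Q_0\cap Q_1;0)
\]
with quasi-isomorphic images. The right-hand embedding comes from a monotone Lagrangian correspondence (a $T^2$-bundle over the diagonal in $B^-\times Z$) together with an idempotent splitting of a Clifford-algebra factor; the left-hand one uses matching spheres in the Lefschetz fibration $Z\to\bP^1$. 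The point is that polygons in $Z$ with boundary on matching spheres can be controlled by comparison with the Lefschetz-fibration category of the \emph{curve} (both fibrations have categorically equivalent fibres: $S^0$ versus an even-dimensional quadric), so one never enumerates discs in $Q_0\cap Q_1$ itself.

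Even with this reduction, the determinacy argument is delicate. The paper uses cyclically symmetric ``pentagram'' generators rather than an $A_{2g+1}$-chain, so that the endomorphism algebra is $\Lambda(\bC^3)\rtimes\bZ_{2g+1}$. Kontsevich's formality theorem then identifies $\bZ_2$-graded $A_\infty$-structures on this exterior algebra with pairs $(W,\eta)$ of a formal function and a formal $2$-form on $\bC^3$. Pinning down the structure requires two separate inputs: (i)~a computation of $HH^*(\scrF(Q_0\cap Q_1;0))\cong H^*(\Sigma_g)$, obtained via the cycle class of an auxiliary Lefschetz fibration on a quadric $2g$-fold together with Beauville's presentation of $QH^*(Q_0\cap Q_1)$; and (ii)~non-triviality of the cubic term of $W$, extracted using Lagrange surgery and absolute $\bZ$-grading constraints inside $Z$. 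Only then does a finite-determinacy theorem from singularity theory force $W$ to be gauge-equivalent to $-v_1v_2v_3 + \sum v_i^{2g+1}$. Your ``one representative polygon count'' does not substitute for either (i) or (ii), and without the passage through $Z$ there is no evident mechanism for carrying out the analogue of~(ii) directly on $Q_0\cap Q_1$.
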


The proof  of Theorem \ref{Thm:Embed} given here relies essentially on the  (typically non-geometric) passage to idempotent completion.  

If $\Sigma_g \rightarrow \bP^1$ is a hyperelliptic curve, branched over $\{\lambda_1, \ldots, \lambda_{2g+2}\} \subset \bC$, it determines a $(2,2)$-complete intersection
\[
Q_0 \cap Q_1 \ = \ \left ( \sum z_j^2 = 0 \right ) \, \cap \, \left ( \sum \lambda_j z_j^2 = 0 \right ) \ \subset \, \bP^{2g+1}.
\]
Varying the $\lambda_j$ in $\bC$, one obtains a natural action, by parallel transport, of the hyperelliptic mapping class group $\Gamma_{g,1}^{hyp}$ of a once-pointed curve on each of $\Sigma_g$ and $Q_0 \cap Q_1$.  (There is no universal hyperelliptic curve over configuration space $\Conf_{2g+2}(\bP^1)$  \cite{Mess}; we have constrained the $\lambda_j$ to lie in $\bC\subset \bP^1$, which accounts for the appearance of once-pointed curves.)

\begin{Addendum} \label{Thm:Action}
 The equivalence of Theorem \ref{Thm:Embed} is compatible with the weak action of $\Gamma_{g,1}^{hyp}$. 
 \end{Addendum}
 
 Considering the situation when branch points go to infinity more carefully, there is a non-split finite extension
\[
1 \rightarrow \bZ_2^{2g} \longrightarrow \tilde{\Gamma}^{hyp}_g \longrightarrow \Gamma^{hyp}_g \rightarrow 1
\]
of the classical hyperelliptic mapping class group which acts, via parallel transport, by symplectomorphisms of $Q_0 \cap Q_1$, cf. Remark \ref{Rem:BranchingInfinity}.  A fairly direct consequence of Theorem \ref{Thm:Embed} is:

\begin{Corollary} \label{Cor:Faithful2}
The natural representation $\tilde{\Gamma}^{hyp}_g \rightarrow \pi_0\Symp(Q_0 \cap Q_1)$ is faithful.
\end{Corollary}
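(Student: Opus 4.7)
The plan is to use Theorem~\ref{Thm:Embed} together with the Addendum to reduce faithfulness of the action on $Q_0 \cap Q_1$ to two statements on the surface side: (a) the $\Gamma^{hyp}_g$-action on $D^\pi \scrF(\Sigma_g)$ is faithful, and (b) every non-identity element of the kernel $\bZ_2^{2g}$ acts non-trivially on $D^\pi \scrF(Q_0 \cap Q_1;0)$.

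For step (a), I would argue that any hyperelliptic mapping class is determined by its action on isotopy classes of essential simple closed curves in $\Sigma_g$. Each such curve defines an object of $\scrF(\Sigma_g)$, and since Floer cohomology computes minimal geometric intersection number, non-isotopic essential simple closed curves cannot be quasi-isomorphic in $D^\pi \scrF(\Sigma_g)$. Hence a mapping class acting trivially on objects must preserve every isotopy class, and so be trivial in $\Gamma^{hyp}_g$. Combined with the Addendum: if $\phi \in \tilde\Gamma^{hyp}_g$ is symplectically isotopic to the identity on $Q_0 \cap Q_1$, then $\phi$ acts trivially on $D^\pi \scrF(Q_0 \cap Q_1;0)$, so its image $\bar\phi \in \Gamma^{hyp}_g$ acts trivially on $D^\pi \scrF(\Sigma_g)$, forcing $\bar\phi = \id$ and thus $\phi \in \bZ_2^{2g}$.

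For step (b), the generators of the kernel come from parallel transport as branch points travel to infinity in $\bC \subset \bP^1$, and act concretely on $Q_0 \cap Q_1$ by coordinate sign-change symmetries of $\bP^{2g+1}$ preserving both defining quadrics. Granted a suitable extension of the equivariance of Theorem~\ref{Thm:Embed} to the full group $\tilde\Gamma^{hyp}_g$, I expect these autoequivalences to correspond under the equivalence to the action of $H^1(\Sigma_g; \bZ_2) \cong \bZ_2^{2g}$ on $D^\pi \scrF(\Sigma_g)$ by tensoring with flat $\bZ_2$-local systems. Given this, non-triviality is immediate: for any non-trivial $\xi \in H^1(\Sigma_g; \bZ_2)$ there is an essential simple closed curve $\gamma$ on which $\xi$ has non-trivial holonomy, whence $HF^*(\gamma, \gamma \otimes \xi) = 0$ while $HF^*(\gamma, \gamma) \cong H^*(S^1; \bC)$, so $\gamma \otimes \xi \not\simeq \gamma$ in $\scrF(\Sigma_g)$.

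The principal obstacle is the identification in step (b): one must upgrade the equivariance of Theorem~\ref{Thm:Embed} from $\Gamma^{hyp}_{g,1}$ to $\tilde\Gamma^{hyp}_g$ and match the sign-change generators with local-system twists. I would approach this geometrically, by tracking how moving a branch point of the hyperelliptic cover once around the point at infinity in $\bP^1$ affects both the matching cycles in $Q_0 \cap Q_1$ and the corresponding objects on $\Sigma_g$; via the Picard--Lefschetz description of parallel transport in the pencil of quadrics, a single such half-turn should implement simultaneously a sign-change on one homogeneous coordinate of $\bP^{2g+1}$ and a non-trivial $\bZ_2$-local system twist on the surface, at which point the corollary follows from (a) and (b).
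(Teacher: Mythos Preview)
Your step~(a) is essentially one of the two routes the paper itself suggests (see Remark~\ref{Rem:BranchingInfinity}, which says one can argue ``by direct consideration of the action of the mapping class group on $\scrF(\Sigma_g)$''; the alternative is the entropy criterion of Section~\ref{Sec:Faithful}).

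The genuine gap is in step~(b). Your proposed identification of the coordinate sign-change involutions on $Q_0\cap Q_1$ with tensoring by flat $\bZ_2$-line bundles on $\Sigma_g$, under the equivalence of Theorem~\ref{Thm:Embed}, is precisely what the paper \emph{does not} prove: Remark~\ref{Remark:FiniteGroupEntwine} says explicitly ``We shall not prove this, but the relevant cohomological evidence is provided by Remark~\ref{Remark:2objects-1} and Remark~\ref{Rem:TwoHolChoices}.'' Your sketch of tracking a branch point around infinity would need to compare parallel transport in the family of curves with parallel transport in the family of quadric intersections at the level of Fukaya categories (not just on objects), and the paper gives no mechanism for this beyond the Dehn-twist generators handled in Addendum~\ref{Thm:Action}.

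The paper bypasses this entirely. In Remark~\ref{Rem:BranchingInfinity} it argues directly on $Q_0\cap Q_1$: a non-trivial element $\iota\in\bZ_2^{2g}$ acts by changing signs of an even number of homogeneous coordinates, with fixed-point set a lower-dimensional $(2,2)$-intersection (or disjoint union of two such), so $\rk H^*(\textrm{Fix}(\iota)) < \rk H^*(Q_0\cap Q_1)$. Pozniak's local-to-global spectral sequence then bounds $\rk HF(\iota)$ strictly below $\rk HF(\id)$, so $\iota$ is non-trivial in $\pi_0\Symp(Q_0\cap Q_1)$. This needs no extended equivariance and no reference to the curve side at all for the finite-group part; Theorem~\ref{Thm:Embed} is only invoked afterwards to handle elements with non-trivial image in $\Gamma_g^{hyp}$. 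You should replace your step~(b) with this direct argument.
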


Classical surgery theory \cite[Theorem 13.5]{Sullivan} shows  $\tilde\Gamma_g^{hyp} \rightarrow \pi_0\Diff(Q_0 \cap Q_1)$ has infinite kernel.   More surprisingly, the $\Gamma_{g,1}^{hyp}$-action extends to a faithful weak action of the full mapping class group $\Gamma_g$ by autoequivalences of $D^{\pi}\scrF(Q_0\cap Q_1)$, which has no obvious direct geometric construction.  The construction of mapping class group actions on triangulated categories is a well-known problem in that part of representation theory concerned with categorification.

\subsection{Entropy}

Corollary \ref{Cor:Faithful2} is essentially equivalent to a related dynamical statement. For a symplectic manifold $X$  the (conjugation-invariant) \emph{Floer-theoretic entropy} of  a mapping class $\phi \in \Symp(X)/\Ham(X)$ is
\[ 
h_{Floer}(\phi) \ = \ \limsup \frac{1}{n} \log \rk\, HF(\phi^n).
\]
This is a kind of robust version of the periodic entropy, robust in the sense that it depends on a symplectic diffeomorphism only through its mapping class; by contrast topological and periodic entropy are typically very sensitive to perturbation.  
For area-preserving diffeomorphisms of a surface $\Sigma$, the Floer-theoretic entropy (of the action on $\Sigma$ itself) co-incides with the minimum of topological or periodic entropy amongst representatives of the mapping class  \cite{Cotton-Clay,FLP, Felshtyn}; moreover, $h_{Floer}(\phi)>0 \Leftrightarrow \phi$ has a pseudo-Anosov component.   In this low dimension, these phenomena are basically detected by the fundamental group. 

\begin{Theorem} \label{Thm:Entropy}
A diffeomorphism $\phi \in \tilde{\Gamma}_g^{hyp}$  has a  pseudo-Anosov component if and only if the induced map on $Q_0 \cap Q_1$ has positive Floer-theoretic entropy.
\end{Theorem}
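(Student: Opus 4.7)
The plan is to transport the Nielsen--Thurston picture on $\Sigma_g$ to the quadric intersection via Theorem \ref{Thm:Embed} and Addendum \ref{Thm:Action}. By the results of Cotton-Clay, FLP and Felshtyn cited just above, for an area-preserving diffeomorphism of $\Sigma_g$ the sequence $\dim HF(\phi^n)$ on the surface grows exponentially precisely when $\phi$ has a pseudo-Anosov component, and polynomially otherwise. It therefore suffices to show that this exponential growth rate agrees with that of $\dim HF(\phi^n)$ on $Q_0^{2g} \cap Q_1^{2g}$.

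The key tool is a categorical reinterpretation of $HF(\phi^n)$. Under a split-generation hypothesis on the Fukaya category, the open--closed map identifies $HF(\phi^n)$ with the Hochschild homology of the $\phi^n$-twisted diagonal bimodule over $D^{\pi}\scrF$; in particular $\dim HF(\phi^n)$ depends only on the autoequivalence class induced by $\phi^n$, and is invariant under $\bC$-linear equivalences of split-closed triangulated categories. Combining this with the monotone eigenvalue splitting $\scrF(Q_0 \cap Q_1) = \bigoplus_\lambda \scrF(Q_0 \cap Q_1;\lambda)$ (preserved by any symplectomorphism) and with Addendum \ref{Thm:Action}, one obtains
\[
\dim HF(\phi^n;0)_{Q_0 \cap Q_1} \ = \ \dim HF(\phi^n)_{\Sigma_g} \quad\text{for all } n.
\]

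The forward direction of Theorem \ref{Thm:Entropy} is then immediate: if $\phi$ has a pseudo-Anosov component, the right-hand side grows exponentially, and the $0$-summand alone forces $h_{Floer}(\phi)>0$ on $Q_0 \cap Q_1$. The reverse direction -- which is where the main obstacle lies -- requires me to exclude exponential growth in the summands $HF(\phi^n;\lambda)$ with $\lambda\neq 0$ when $\phi$ has no pseudo-Anosov piece. The natural strategy, suggested by Kuznetsov's Lefschetz decomposition of $D^b(Q_0 \cap Q_1)$ into $D^b(\Sigma_g)$ plus an exceptional collection (and by the mirror Landau--Ginzburg picture, in which the non-zero critical values are isolated and Morse), is to show that each non-zero $\lambda$-summand of $D^{\pi}\scrF(Q_0 \cap Q_1)$ is split-generated by a short exceptional collection of Lagrangians. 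Rigidity of such collections would then force the autoequivalence action of $\tilde\Gamma_g^{hyp}$ on each such summand to factor through a finite group of permutations and shifts, giving a uniform bound on $\dim HF(\phi^n;\lambda)$ in $n$.

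With the auxiliary summands controlled in this way, $h_{Floer}(\phi)$ on $Q_0 \cap Q_1$ coincides with $h_{Floer}(\phi)$ on $\Sigma_g$, and Theorem \ref{Thm:Entropy} reduces to the two-dimensional statement already recorded on the surface side.
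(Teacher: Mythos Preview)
Your forward implication is correct and is exactly the paper's argument (Lemma~\ref{Lem:Genus2Case}): via Theorem~\ref{Thm:Embed}, Addendum~\ref{Thm:Action} and Corollary~\ref{Cor:HFisHH}, the $0$-eigenspace summand of $HF(\phi^n)$ on $Q_0\cap Q_1$ coincides with $HF(\phi^n)$ on $\Sigma_g$, so exponential growth on the curve forces positive entropy upstairs.

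The reverse direction in your proposal is a genuine gap. You correctly isolate the obstacle---bounding $\dim HF(\phi^n;\lambda)$ for $\lambda\neq 0$---but the mechanism you propose (split-generate each such summand by an exceptional collection, then invoke ``rigidity'' to force the $\tilde\Gamma_g^{hyp}$-action through a finite group) is only a sketch, and the rigidity step is not justified: autoequivalence groups of categories generated by exceptional collections are typically infinite. The paper's route is more direct and bypasses this entirely. By Lemma~\ref{Ex:genus2} the group $\tilde\Gamma_g^{hyp}$ acts on $Q_0\cap Q_1$ through Dehn twists in Lagrangian spheres $V_\gamma$, and each $V_\gamma$ has minimal Maslov number $>2$, hence lies in $\scrF(Q_0\cap Q_1;0)$. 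Then $c_1$ acts nilpotently on every $HF(V_\gamma,\psi V_\gamma)$, so in Seidel's exact triangle for $\tau_{V_\gamma}$ the third term contributes nothing to a nonzero generalised eigenspace, giving $HF(\psi;\lambda)\cong HF(\tau_{V_\gamma}\psi;\lambda)$ for all $\lambda\neq 0$; this is precisely the induction in the proof of Corollary~\ref{Cor:FixPoint}. Since by Beauville's presentation each nonzero eigenspace of $QH^*(Q_0\cap Q_1)=HF(\id)$ is one-dimensional, the same holds for $HF(\phi^n;\lambda)$ for every $n$. The entropy is thus carried entirely by the $0$-summand, and your displayed identity finishes the argument without any appeal to exceptional collections.
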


This seems to be one of the few computations of non-zero Floer-theoretic entropy in a closed symplectic manifold which is  not detected by classical topology.   Analogous results for symplectomorphisms of certain $K3$ surfaces can be derived from combining \cite{Seidel:graded} and \cite{KhovanovSeidel}.

\subsection{Representation varieties}\label{Sec:RepVar}

Fix a point $p\in \Sigma_g$ and  denote by $\scrM(\Sigma_g)$ the moduli space of rank two, fixed odd determinant stable bundles on a complex curve of genus $g$, equivalently the space of conjugacy classes of $SU(2)$-representations of $\pi_1(\Sigma\backslash\{p\})$ which have holonomy $-I$ at the puncture.  $\scrM(\Sigma_g)$ admits a natural symplectic structure; the symplectic volume of $\scrM(\Sigma_g)$ was computed by Witten  and Jeffrey-Weitsman \cite{Witten, JW}, and techniques from number theory, gauge theory and birational geometry have all been brought to bear on understanding its cohomology  \cite{NS, AtiyahBott, Thaddeus}.  The connection to pencils of quadrics goes back to Newstead \cite{Newstead}, who constructed an isomorphism  $\scrM(\Sigma_2) \cong Q^{4}_0\cap Q^{4}_1 \subset \bP^5$.  Theorem \ref{Thm:Embed} therefore computes part of the Fukaya category of $\scrM(\Sigma_2)$.  

In the topological setting, there is an extension $\widehat\Gamma(\Sigma_g)$  of the (full, not hyperelliptic) classical mapping class group $\Gamma_g=\pi_0\Diff^+(\Sigma_g)$ by $\bZ_2^{2g} = H^1(\Sigma_g;\bZ_2)$ which acts  on the moduli space via  a homomorphism $\widehat\Gamma(\Sigma_g) \rightarrow \Symp \scrM(\Sigma_g)$.       Goldman \cite{Goldman}  proved $\widehat{\Gamma}(\Sigma_g)$ acts ergodically.  Let $\hat\rho: \widehat\Gamma(\Sigma_g) \rightarrow \pi_0\Symp\scrM(\Sigma_g)$ denote the associated representation on groups of components.

\begin{Theorem}  \label{Thm:NotSymplectic}
For $g\geq 2$,  $\hat{\rho}: \widehat\Gamma(\Sigma_g) \longrightarrow \pi_0\Symp \scrM(\Sigma_g)$ does not factor through the symplectic group (i.e. $\hat\rho$ is non-trivial on the Torelli group).  When $g=2$, $\hat\rho$ is faithful.
\end{Theorem}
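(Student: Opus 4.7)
The plan is to handle the two claims in turn: faithfulness when $g=2$, and non-triviality on the Torelli subgroup for all $g\geq 2$.

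For $g=2$ every mapping class is hyperelliptic, so $\Gamma_2=\Gamma_2^{hyp}$. Newstead's classical isomorphism $\scrM(\Sigma_2)\cong Q_0^4\cap Q_1^4$ is equivariant for the braid-monodromy action of $\Gamma_2$ coming from variation of the branch points $\{\lambda_j\}$ along paths in $\bC$. It then remains to identify the two $\bZ_2^{2g}$-extensions of $\Gamma_2$ in play: the group $\widehat{\Gamma}(\Sigma_2)$, whose kernel $H^1(\Sigma_g;\bZ_2)$ acts on $\scrM(\Sigma_2)$ by twisting by 2-torsion line bundles, and the group $\tilde{\Gamma}_2^{hyp}$ from Corollary \ref{Cor:Faithful2}, whose kernel arises via branching at infinity in $\Conf_{2g+2}(\bP^1)$. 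Both descend from the same construction on configuration space and should be classified by the same cocycle under the identification of $H^1(\Sigma_g;\bZ_2)$ with 2-torsion of the Jacobian, equivariantly under braid monodromy. Granted this matching of extensions, Corollary \ref{Cor:Faithful2} yields faithfulness of $\hat\rho$ directly.

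Non-triviality on Torelli is then automatic when $g=2$, since $\cI_2$ is non-trivial (containing all separating twists, by Mess), so faithfulness implies non-triviality on $\cI_2$. For $g\geq 3$ I would choose a separating simple closed curve $\gamma\subset\Sigma_g$ invariant under the hyperelliptic involution $\iota$, cutting off a symmetric genus-two subsurface $\Sigma_{2,1}\subset\Sigma_g$. Inside this subsurface pick any Torelli element (for instance a separating Dehn twist $\tau_\delta$ with $\delta$ interior to $\Sigma_{2,1}$ and $\iota$-invariant) and extend by the identity to $\tilde\phi\in\Gamma_g^{hyp}\cap\cI_g$. To detect $\tilde\phi$ on $\scrM(\Sigma_g)$, I would use the Atiyah--Bott cut-and-paste description: fixing generic boundary holonomy along $\gamma$ exhibits a symplectic slice of $\scrM(\Sigma_g)$ containing the parabolic moduli space of rank-two bundles on $\Sigma_{2,1}$, on which $\tilde\phi$ restricts to the $g=2$ action, known non-trivial by the previous step.

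The principal obstacle is this transfer step for $g\geq 3$: while Atiyah--Bott slices are symplectic, the relation between $\scrM(\Sigma_g)$ and the lower-genus parabolic moduli is best understood as a symplectic correspondence rather than a clean symplectic embedding, so one must argue that fibrewise non-triviality of $\tilde\phi$ cannot be undone by a global Hamiltonian isotopy of $\scrM(\Sigma_g)$. A cleaner alternative, paralleling the proof of Corollary \ref{Cor:Faithful2}, would be to exhibit a Lagrangian $L\subset\scrM(\Sigma_g)$ associated to $\delta$ with non-trivial self-Floer cohomology on which $\tilde\phi$ acts as a generalized Dehn twist; or to use the Desale--Ramanan description of $\scrM(\Sigma_g)$ as a variety of $(g-1)$-planes on $Q_0\cap Q_1$ to transfer Corollary \ref{Cor:Faithful2} directly, having first checked that the induced action on the $(g-1)$-plane bundle remains non-trivial on the chosen separating twist.
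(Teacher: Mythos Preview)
Your treatment of $g=2$ matches the paper's: both identify $\widehat\Gamma(\Sigma_2)$ with $\tilde\Gamma_2^{hyp}$ and invoke Corollary~\ref{Cor:Faithful2}. The induction for $g\geq 3$, however, is where your proposal diverges and runs into the difficulty you correctly flag.

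The paper's argument (Corollary~\ref{Cor:TorelliNontrivial}, due to Wehrheim--Woodward) avoids both Atiyah--Bott slices and any direct statement about $\pi_0\Symp$. Instead it works one genus at a time and at the level of the Donaldson category $H(\scrF(\scrM(\Sigma_g)))$. For a \emph{non-separating} curve $\gamma\subset\Sigma_g$, the coisotropic $V_\gamma\subset\scrM(\Sigma_g)$ is an $SU(2)$-bundle over $\scrM(\Sigma_{g-1})$ and is viewed as a Lagrangian correspondence $V_\gamma\subset\scrM(\Sigma_{g-1})\times\scrM(\Sigma_g)$. Quilt composition (Theorem~\ref{Thm:QuiltsCompose}) then gives a functor isomorphism $V_\gamma\circ\tau_\sigma^{g}\circ V_\gamma^{op}\cong\tau_\sigma^{g-1}$ for a separating twist $\tau_\sigma$. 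If $\tau_\sigma^g$ acted as the identity functor on $H(\scrF(\scrM(\Sigma_g)))$, the left side would be the identity, contradicting the inductive hypothesis at genus $g-1$.

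This sidesteps exactly the obstacle you identify: there is no need to exhibit a symplectic embedding or to argue that a fibrewise non-triviality survives global Hamiltonian isotopy, because the correspondence $V_\gamma$ \emph{is} the ``slice'' and quilt functoriality does the transfer. Your alternative suggestions (a Lagrangian on which $\tilde\phi$ acts by generalized Dehn twist, or transfer via Desale--Ramanan) are in the right spirit but less direct; the non-separating $V_\gamma$ is already the right Lagrangian correspondence, and the categorical formulation makes the induction a one-line contradiction.
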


The second statement is just Corollary \ref{Cor:Faithful2}, and it implies the first statement by an argument of Wehrheim and Woodward which we recall in Corollary \ref{Cor:TorelliNontrivial}. By contast, the action of $\widehat{\Gamma}(\Sigma_g)$ on $H^*(\scrM(\Sigma_g))$ factors through the symplectic group. Theorem \ref{Thm:NotSymplectic} answers a question of Dostoglou and Salamon \cite[Remark 5.6]{DS2}. Donaldson's former student Michael Callahan proved (unfinished Oxford D. Phil thesis, circa 1993) that $\hat\rho$ distinguished the Dehn twist on a separating curve $\sigma \subset \Sigma_2$ from the identity. Callahan apparently used the gauge theoretic methods recalled in Section \ref{Subsec:3manifolds}, which seem less well suited to treat the general case.   A related faithfulness result, for the action of a cousin of the 5-strand spherical braid group on a four-dimensional moduli space $\bP^2\# 5\overline{\bP}^2$ of parabolic bundles on $S^2$,  was established by Seidel \cite[Example 2.13]{Seidel:4dim} using Gromov-Witten invariants and positivity of intersections of closed holomorphic curves in 4-manifolds.  

 There is also a fixed-point theorem valid at any genus: this is something of a digression from the main theme of the paper, but has some relevance in light of the conjectural description of $\scrF(\scrM(\Sigma_g))$ discussed in Section \ref{Sec:Speculative} below, and is needed for Corollary \ref{Cor:AllSummands}.

\begin{Theorem} \label{Thm:FixPoint}
If $\phi \in \Symp \scrM(\Sigma_g)$ represents a class in $im(\hat{\rho})$ then $\phi$ fixes a point of $\scrM(\Sigma_g)$. 
\end{Theorem}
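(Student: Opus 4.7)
The plan is to combine the Lefschetz fixed-point theorem with a direct construction of fixed representations in those cases where the Lefschetz number vanishes. As noted earlier in the paper, the induced action of $\widehat{\Gamma}(\Sigma_g)$ on $H^*(\scrM(\Sigma_g); \bQ)$ factors through $\operatorname{Sp}(2g, \bZ)$, so the Lefschetz number $L(\phi)$ depends only on the image $A \in \operatorname{Sp}(2g, \bZ)$ of the underlying mapping class. Using the Atiyah--Bott--Newstead presentation of $H^*(\scrM(\Sigma_g); \bQ)$---with $\operatorname{Sp}$-invariant generators $\alpha$ in degree $2$ and $\beta$ in degree $4$, together with generators $\psi_1, \ldots, \psi_{2g}$ in degree $3$ transforming by the standard symplectic representation, subject to the Mumford relations---I would compute $L(A)$ as an explicit polynomial in the traces $\tr(\Lambda^k A)$. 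Whenever $L(A) \neq 0$ the Lefschetz theorem immediately supplies a fixed point; for $g = 2$, for example, the Poincar\'e polynomial $1 + t^2 + 4t^3 + t^4 + t^6$ yields $L(A) = 4 - \tr(A|V)$, which is nonzero outside an explicit sublocus of $\operatorname{Sp}(4, \bZ)$.

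For the remaining cases $L(A) = 0$, and in particular for the Torelli component where $A = I$, I would construct fixed points directly. Such mapping classes $\bar{\phi}$ preserve an essential simple closed curve system $\Gamma \subset \Sigma_g$: in the Torelli case this is automatic, since Torelli is generated by bounding pair maps and separating twists each supported on a specific curve system, while in the nontrivial $L(A) = 0$ cases the trace constraint forces invariant structure via the Nielsen--Thurston decomposition. Stable representations $\rho\co \pi_1(\Sigma_g \backslash \{p\}) \to SU(2)$ with $\rho(\gamma) \in \{\pm I\}$ for each component $\gamma \in \Gamma$ then factor through the pieces of $\Sigma_g \backslash \Gamma$, and are $\bar{\phi}$-invariant up to conjugation. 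One builds them inductively from stable representations of the pieces, on which the restriction of $\bar{\phi}$ has strictly lower complexity. The residual $\bZ_2^{2g}$ twist in $\widehat{\Gamma}$ acts by tensoring with a $2$-torsion line bundle, and this permutes the $\pm I$ choices on components of $\Gamma$ in a controlled way that can be absorbed by a suitable choice of signs.

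The main obstacle will be handling the $L(A) = 0$ cases uniformly: one must identify the vanishing locus of the Lefschetz polynomial precisely for general $g$ (where the Mumford relations make the $\operatorname{Sp}(2g)$-equivariant Poincar\'e polynomial more intricate), and in each such case verify that the invariant curve system $\Gamma$ exists and that the glued representations are genuinely stable, not merely semistable, compatibly with the $\bZ_2^{2g}$-extension. A cleaner alternative worth exploring is to reformulate a fixed point of $\phi$ on $\scrM(\Sigma_g)$ as a conjugacy class of irreducible $SU(2)$-representations of $\pi_1(T_\phi \backslash (\{p\} \times S^1))$ on the mapping torus with $-I$ holonomy on the meridian of $\{p\} \times S^1$, and then appeal to existence results for such representations in $3$-manifold topology; this would bypass the cohomological case analysis entirely.
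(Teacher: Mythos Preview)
Your proposal has a genuine gap in the $L(A)=0$ case, which is precisely where the content of the theorem lies. The assertion that Torelli elements preserve an essential simple closed curve system is false: pseudo-Anosov mapping classes in the Torelli group exist (Thurston's construction from two filling separating curves, as in Lemma~\ref{Lem:BuildPseudoAnosov}, already produces them), and by definition a pseudo-Anosov map preserves no isotopy class of essential simple closed curve. The fact that the Torelli group is \emph{generated} by elements each fixing some curve system does not propagate to arbitrary words in those generators. So your inductive construction of invariant representations along an invariant curve system never gets off the ground for a generic Torelli element. The paper in fact emphasises (just after the statement of Theorem~\ref{Thm:FixPoint}) that for Torelli $\psi$ the Lefschetz number vanishes and $\hat\rho(\psi)$ is smoothly isotopic to a fixed-point-free diffeomorphism, so the result is ``essentially symplectic'' and cannot be reached by classical topology alone.

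The paper's argument (Corollary~\ref{Cor:FixPoint}) is entirely different and uses Floer theory in an essential way. One writes the mapping class as a word in non-separating Dehn twists, and applies the Wehrheim--Woodward exact triangle (Theorem~\ref{Thm:WWtriangle}) for each fibred twist. The key is Munoz's computation (Theorem~\ref{Thm:Munoz}) that the $4(g-1)$-generalised eigenspace of $\ast h$ on $QH^*(\scrM(\Sigma_g))$ is one-dimensional, together with Lemma~\ref{Lem:Evals}, which shows that the ``correction term'' $HF(V,(\psi\times\id)V)$ in each exact triangle has no $4(g-1)$-eigenspace at all. Taking generalised eigenspaces is exact, so induction on word length shows the $4(g-1)$-eigenspace of $HF(\phi)$ has rank exactly one for every $\phi$; in particular $HF(\phi)\neq 0$, hence $\phi$ has a fixed point. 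Your closing alternative via representations of $\pi_1$ of the mapping torus is correct in spirit and is exactly Corollary~\ref{Cor:RepnGrowth}(i), but the known proofs of that existence statement (Kronheimer--Mrowka) rest on deep gauge theory; it does not give an independent elementary route.
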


If $\psi \in \widehat\Gamma(\Sigma_g)$ lies in the Torelli group, then the Lefschetz number of $\hat\rho(\psi)$ is zero, and $\hat\rho(\psi)$ is isotopic to a diffeomorphism of $\scrM(\Sigma_g)$ without fixed points.  Thus Theorem \ref{Cor:Faithful2} and \ref{Thm:FixPoint}  both detect essentially symplectic phenomena.   There is an obvious correspondence between fixed points of the action of $\phi$ on $\scrM(\Sigma_g)$, and representations of the fundamental group of the mapping torus of $\phi$.  This gives a purely topological application of the main theorems.

\begin{Corollary} \label{Cor:RepnGrowth}
Let $Y \rightarrow S^1$ be a closed 3-manifold fibring over the circle with fibre $\Sigma_g$.
\begin{itemize} 
\item There is a non-abelian $SO(3)$-representation of $\pi_1(Y)$.
\item Suppose $g=2$ and $Y$ is hyperbolic.  There are cyclic degree $d$ covers $Y_d\rightarrow Y$ for which the number of conjugacy classes of non-abelian $SO(3)$-representations of $\pi_1(Y_d)$  grows exponentially in $d$.
\end{itemize}
\end{Corollary}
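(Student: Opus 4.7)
The plan is to translate conjugacy classes of non-abelian $SO(3)$-representations of $\pi_1(Y_d)$ into fixed points of a symplectomorphism of $\scrM(\Sigma_g)$, and then invoke Theorems \ref{Thm:FixPoint} and \ref{Thm:Entropy}. Choose a lift $\tilde\phi \in \widehat\Gamma(\Sigma_g)$ of the monodromy $\phi \in \Gamma_g$ (possible since $\widehat\Gamma(\Sigma_g) \to \Gamma_g$ is surjective) and set $\Phi = \hat\rho(\tilde\phi) \in \Symp \scrM(\Sigma_g)$. Presenting $\pi_1(Y_d) = \langle \pi_1(\Sigma_g),\, t \mid t x t^{-1} = \phi^d_\ast(x) \rangle$, an $SO(3)$-representation of $\pi_1(Y_d)$ whose restriction to the fibre has non-trivial $w_2$ amounts to a class $[\rho] \in \scrM(\Sigma_g)$ together with an intertwiner $h \in SO(3)$ satisfying $h \rho h^{-1} = \rho \circ \phi^d_\ast$. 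Since each $[\rho] \in \scrM(\Sigma_g)$ corresponds to an irreducible $SU(2)$-representation of the punctured surface group, its $SO(3)$-centraliser is trivial, so $h$ is uniquely determined; modulo global conjugation such pairs are in bijection with $\mathrm{Fix}(\Phi^d) \subset \scrM(\Sigma_g)$, and the representations so obtained are automatically non-abelian.

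The first bullet is then immediate from Theorem \ref{Thm:FixPoint}: non-emptiness of $\mathrm{Fix}(\Phi)$ yields the required non-abelian $SO(3)$-representation of $\pi_1(Y)$.

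For the second bullet, take $g=2$ and $Y$ hyperbolic. Thurston's hyperbolisation theorem forces $\phi$ to be pseudo-Anosov; since $\Gamma_2 = \Gamma_2^{hyp}$, the lift $\tilde\phi$ represents a pseudo-Anosov element of $\tilde{\Gamma}^{hyp}_2$, which we identify with $\widehat\Gamma(\Sigma_2)$ via Newstead's isomorphism $\scrM(\Sigma_2) \cong Q^4_0 \cap Q^4_1$. Theorem \ref{Thm:Entropy} then gives $h_{Floer}(\Phi) > 0$, so $\rk\, \HF(\Phi^d)$ grows exponentially in $d$ along a subsequence. The main obstacle is converting this Floer-cohomological growth into a lower bound on $|\mathrm{Fix}(\Phi^d)|$. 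I would address it by the standard dichotomy: if $\mathrm{Fix}(\Phi^d)$ contains a positive-dimensional component then a continuum of conjugacy classes of representations is already produced; otherwise, the fixed points are isolated and a local Floer-theoretic argument gives $|\mathrm{Fix}(\Phi^d)| \geq \rk\, \HF(\Phi^d)$. In either case, the number of conjugacy classes of non-abelian $SO(3)$-representations of $\pi_1(Y_d)$ grows exponentially in $d$.
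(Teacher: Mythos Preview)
Your argument follows the same route as the paper's: fixed points of $\hat\rho(\tilde\phi)^d$ on $\scrM(\Sigma_g)$ correspond to the relevant representations of $\pi_1(Y_d)$, Theorem~\ref{Thm:FixPoint} handles the first bullet, and for $g=2$ Thurston's hyperbolisation plus Theorem~\ref{Thm:Entropy} (equivalently, the isomorphism of Equation~(\ref{Eq:RingIsoGeneral}) combined with Equation~(\ref{Eqn:PosEntropy})) gives exponential growth of $\rk HF(\Phi^d)$. The paper is in fact terser than you are here, simply citing ``the general discussion of Section~\ref{Subsec:3manifolds}'' for the passage from Floer rank to representation count.

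One caveat on your final dichotomy: the assertion that isolated fixed points yield $|\mathrm{Fix}(\Phi^d)| \geq \rk HF(\Phi^d)$ via a ``local Floer-theoretic argument'' is not quite right as stated. Pozniak's spectral sequence requires \emph{clean} intersections, and for an isolated fixed point cleanness is exactly non-degeneracy; an isolated but degenerate fixed point can contribute local Floer cohomology of arbitrarily large rank, so the inequality need not hold. The paper does not address this point either. One can close the gap by noting that the fixed loci are real-algebraic, hence either positive-dimensional (giving infinitely many representations, which suffices) or finite; in the latter case a generic Hamiltonian representative of the mapping class has all iterates non-degenerate, and the resulting perturbed fixed points still correspond to genuine (holonomy-perturbed) flat connections in the instanton picture.
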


The first result was proved by Kronheimer-Mrowka \cite{KM2} using gauge theory; the second (which should generalise to higher genus fibres) appears to be new though closely related results are in the literature, cf. Section \ref{Subsec:3manifolds}. In any case, it's interesting that these are amenable to techniques of symplectic geometry.

\subsection{Instanton Floer homology}\label{Sec:InstFloer}

 Let $f: Y_h \rightarrow S^1$ be a fibred 3-manifold with a distinguished section, defined by a monodromy map $h \in \Gamma(\Sigma_{g,1})$ in the mapping class group of a once-marked surface $f^{-1}(pt)$.  Up to isomorphism there is a unique non-trivial $SO(3)$-bundle $E\rightarrow f^{-1}(pt)$, with $\langle w_2(E),  f^{-1}(pt)\rangle \neq 0$; the section of $Y_h \rightarrow S^1$ defines a distinguished extension of $E$ to an $SO(3)$ bundle (still denoted) $E\rightarrow Y_h$.  If $Y_h$ is a homology $S^1 \times S^2$, this is the unique non-trivial $SO(3)$-bundle over $Y_h$ up to isomorphism.  Any such $E$  admits no reducible flat connexion, hence is ``admissible" in the sense of Donaldson \cite{Donaldson:Floer}, so there is an associated $\bZ_4$-graded instanton Floer homology group  $HF_{inst}(Y_h; E)$ computed from the Chern-Simons functional on connexions in $E$. Now $h$  acts canonically on $\scrM(f^{-1}(pt))$ via an element $\hat{h} \in \widehat{\Gamma}(f^{-1}(pt))$, and the famous theorem of Dostoglou and Salamon \cite{DS} gives an isomorphism $HF_{inst}(Y_h; E) \cong HF(\hat{\rho}(\hat{h}))$.   Appealing in addition to Theorems \ref{Thm:Embed} and (the proof of) \ref{Thm:FixPoint} yields:

\begin{Corollary} \label{Cor:AllSummands}
Fix a  3-manifold $Y_{h} \rightarrow S^1$ fibred by genus 2 curves, which in addition is a homology $S^1 \times S^2$, and let $E \rightarrow Y_h$ be the non-trivial $SO(3)$-bundle.  There is an isomorphism of $\bZ_2$-graded $\bC$-vector spaces
\begin{equation} \label{Equation:HFinstanton}
HF_{inst}(Y_{h}; E) \ \cong \ \bC \oplus HF(h) \oplus \bC.
\end{equation}
\end{Corollary}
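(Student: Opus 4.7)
The plan is to chain together three identifications and a spectral decomposition. First, apply the Dostoglou--Salamon theorem to convert $HF_{inst}(Y_h; E)$ into the fixed-point Floer cohomology $HF(\hat\rho(\hat h))$ of the induced symplectomorphism of $\scrM(\Sigma_2)$, and use Newstead's identification $\scrM(\Sigma_2) \cong Q_0 \cap Q_1 \subset \bP^5$ to view the target as a monotone Fano three-fold with Fano index $2$.

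Next, decompose the Floer cohomology by generalized eigenvalues of quantum cup-product by $c_1$ on $QH^*(Q_0\cap Q_1)$. For any $\phi\in\Symp(Q_0\cap Q_1)$ the monotone Fukaya package gives a splitting
\[
HF(\phi) \;=\; \bigoplus_\lambda HF(\phi;\lambda),
\]
and since $\hat\rho(\hat h)$ acts on $QH^*$ as a ring automorphism preserving $c_1$, this splitting is preserved by the particular map in question. One would check that the spectrum of $c_1\ast$ on $QH^*(Q_0\cap Q_1)$ consists of $\lambda=0$ with generalized eigenspace of dimension $\dim H^*(\Sigma_2)=6$, and two further eigenvalues $\lambda_\pm$ with one-dimensional generalized eigenspaces, consistent with $\chi(\scrM(\Sigma_2))=8$.

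For the $\lambda=0$ summand, Theorem~\ref{Thm:Embed} together with Addendum~\ref{Thm:Action} yields a $\bC$-linear equivalence $D^\pi\scrF(Q_0\cap Q_1;0)\simeq D^\pi\scrF(\Sigma_2)$ intertwining the autoequivalence induced by $\hat\rho(\hat h)$ with that induced by $h$; since the fixed-point Floer cohomology of a symplectomorphism is a categorical (Morita) invariant of the induced autoequivalence of the split-closure of the Fukaya category --- via the open-closed map identifying $HF(\phi;\lambda)$ with the twisted Hochschild homology of $\phi_*$ on $\scrF(Q_0\cap Q_1;\lambda)$ --- this identifies $HF(\hat\rho(\hat h);0)$ with $HF(h)$ on $\Sigma_2$. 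For each non-zero eigenvalue $\lambda_\pm$, the summand $\scrF(Q_0\cap Q_1;\lambda_\pm)$ has one-dimensional Hochschild homology and (after split-closure) is generated by a single object on which the hyperelliptic mapping class group acts by a categorically trivial autoequivalence --- this is exactly the input provided by the fixed-point analysis underlying Theorem~\ref{Thm:FixPoint} --- so each contributes a copy of $\bC$. Summing the three pieces gives the stated isomorphism.

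The main obstacle I anticipate is the open-closed/categorical-trace step: one needs a version of the identification of $HF(\phi;\lambda)$ with the twisted Hochschild homology of the induced autoequivalence that is valid summand by summand, and one needs to know that the split-generators of $\scrF(Q_0\cap Q_1;\lambda_\pm)$ produced in the proofs of Theorems~\ref{Thm:Embed} and \ref{Thm:FixPoint} are rigid enough that the weak $\tilde\Gamma^{hyp}_g$-action is categorically trivial on them. A secondary, more routine delicacy is the bookkeeping of gradings, where the naturally $\bZ_4$-graded instanton Floer group is matched to the $\bZ_2$-graded categorical output by reducing mod $2$ on the monotone side.
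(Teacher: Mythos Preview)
Your outline is essentially the paper's argument for the central summand: Dostoglou--Salamon reduces to $HF(\hat\rho(\hat h))$ on $\scrM(\Sigma_2)\cong Q_0\cap Q_1$, one splits by generalized eigenvalues of $\ast c_1$, and for $\lambda=0$ the paper's Corollary~\ref{Cor:HFisHH} gives $HF(\hat\rho(\hat h);0)\cong HH^*(\scrG_{\hat h})$, which is then transported to $HF(h)$ on $\Sigma_2$ via Theorem~\ref{Thm:Embed} and Addendum~\ref{Thm:Action}. (The paper phrases this as Hochschild \emph{cohomology} of the induced endofunctor rather than twisted Hochschild homology, but the mechanism---fully faithful embedding of the diagonal/graph summand via quilts, after split-generation by vanishing cycles---is exactly the ``categorical trace'' step you are invoking.)

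Where your proposal diverges from the paper is the treatment of the $\lambda_\pm=\pm4$ summands. You propose to argue categorically: the summands $\scrF(Q_0\cap Q_1;\pm4)$ are semisimple with a single generator, and the mapping class acts trivially there. The paper does \emph{not} establish split-generation of those summands (Remark~\ref{Rmk:PreciseConj} only exhibits an object), so Corollary~\ref{Cor:HFisHH} is unavailable for $\lambda\neq 0$. Instead, the paper works directly on fixed-point Floer cohomology as a $QH^*$-module: writing $\hat h$ as a word in Dehn twists along non-separating curves, one feeds each twist into the Wehrheim--Woodward exact triangle (Theorem~\ref{Thm:WWtriangle}) and observes, via Lemma~\ref{Lem:Evals} and Munoz's theorem, that the third term has no $4(g-1)$-eigenspace. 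Induction on word length then shows the $\pm4$-generalized eigenspaces of $HF(\hat\rho(\hat h))$ are each of rank~$1$, giving the two copies of $\bC$ without any categorical input on those summands. This is the content of the proof of Corollary~\ref{Cor:FixPoint}, which you correctly flag as the relevant ingredient---but what it supplies is this direct module-theoretic rank computation, not a categorical-triviality statement.

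So your obstacle is real but already bypassed in the paper: replace your categorical argument for $\lambda_\pm$ by the inductive exact-triangle argument on the $QH^*$-module $HF(\phi)$.
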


By work of Cotton-Clay \cite{Cotton-Clay} the central summand $HF(h)$ on the right side, which is computed on the curve $\Sigma_2$ itself rather than on any associated moduli space, can be algorithmically computed from  a presentation of $h$ as a word in positive and negative Dehn twists, or from a suitable traintrack.   In general, if we make no assumption on the action of $h$ on $H_1(\Sigma_2; \bZ)$, the instanton Floer homologies $HF_{inst}(Y_h; E)$ of the different principal bundles $E \rightarrow Y_h$ arising as mapping tori of $h$ are given by Hochschild cohomologies $\bC \oplus HH^*(h_{\xi}) \oplus \bC$, with $h_{\xi}$ the various autoequivalences of $\scrF(\Sigma_2)$  obtained by combining the action of $h$ and tensoring by some flat line bundle $\xi \in H^1(\Sigma_2;\bZ_2)$.  We should emphasise that it seems to be very hard to  compute these instanton Floer homology groups  by direct computations on $Y_h$, or even to write down the underlying chain groups.

\subsection{Witten's conjecture} \label{Sec:Speculative}

We include a brief speculative discussion, which should nonetheless set these topological results in a helpful context.  The space $\scrM(\Sigma_g)$ is always a simply-connected Fano variety, symplectomorphic to the variety of $(g-2)$-dimensional linear subspaces of $Q_0\cap Q_1 \subset \bP^{2g+1}$, hence there is an evaluation from the total space of a projective bundle
\[
\bP^{g-2} \, \widetilde{\times} \, \scrM(\Sigma_g) \longrightarrow Q_0\cap Q_1.
\]
Probably deeper analysis of the birational structure of this map would help relate the Fukaya category of the intersection of quadrics to that of the moduli space.  Explicitly:

\begin{Conjecture} \label{Conj:SecondHighest}
There is a $\bC$-linear equivalence of $\bZ_2$-graded triangulated categories
\[
D^{\pi} \scrF(\Sigma_g) \ \simeq \ D^{\pi} \scrF(\scrM(\Sigma_g); 4(g-2)).
\]
\end{Conjecture}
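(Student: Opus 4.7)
My strategy is to build the equivalence explicitly from the incidence correspondence highlighted just above the Conjecture, and then to invoke Theorem~\ref{Thm:Embed}.  Write $\cI := \bP^{g-2} \, \widetilde{\times} \, \scrM(\Sigma_g)$, with its projections $\pi\colon \cI \to \scrM(\Sigma_g)$ (the universal $(g-2)$-plane, a Zariski-locally trivial $\bP^{g-2}$-bundle) and $e\colon \cI \to Q_0\cap Q_1$, whose fibre over a point $p$ parametrises $(g-2)$-planes on $Q_0\cap Q_1$ passing through $p$.  First I would prove a symplectic analogue of Orlov's projective bundle formula for $\pi$, either by exhibiting a relative Lagrangian exceptional collection along the $\bP^{g-2}$-fibres or by applying the Ma'u--Wehrheim--Woodward calculus to the graph of $\pi$, so as to obtain
\[
D^{\pi} \scrF(\cI) \ \simeq \ \bigoplus_{k=0}^{g-2}\, D^{\pi} \scrF(\scrM(\Sigma_g))\otimes [\cO(k)]
\]
in a form compatible with the spectral decomposition of $QH^*$ under quantum cup-product by $c_1$.

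Next I would regard the graph of $e$ as a monotone Lagrangian correspondence $\Gamma_e \subset \overline{\cI} \times (Q_0\cap Q_1)$ and use quilted Floer theory to produce a $\bZ_2$-graded $\bC$-linear $A_\infty$-functor $\Phi\colon D^{\pi}\scrF(\cI) \to D^{\pi}\scrF(Q_0\cap Q_1)$ respecting the $c_1$-eigenvalue splittings on both sides.  The Conjecture follows if $\Phi$ restricts to an equivalence from the appropriate Orlov summand -- namely the one determined by the $4(g-2)$-eigenvalue component of $\scrF(\scrM(\Sigma_g))$ tensored with a specific $\cO(k)$ -- onto $D^{\pi}\scrF(Q_0\cap Q_1; 0)$, which by Theorem~\ref{Thm:Embed} equals $D^{\pi}\scrF(\Sigma_g)$.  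The specific value $4(g-2)$ should emerge by pairing the zero eigenvalue on the target against the independent contributions of $c_1$ from the $\bP^{g-2}$-fibre and from $\scrM(\Sigma_g)$; reassuringly, when $g=2$ the map $e$ is an isomorphism and the Conjecture specialises to Theorem~\ref{Thm:Embed} with $4(g-2)=0$.

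The main obstacle is the eigenvalue bookkeeping: one must spectrally resolve quantum cup-product by $c_1$ on both $\scrM(\Sigma_g)$ and $\cI$ and verify that the Orlov summand whose image under $\Phi$ lies in $\scrF(Q_0\cap Q_1;0)$ is indeed the $4(g-2)$-one rather than some other eigenspace.  This amounts to a set of genus-zero Gromov--Witten computations relating $\scrM(\Sigma_g)$, $\cI$ and $Q_0\cap Q_1$, using the known (Verlinde-type) quantum cohomology of the moduli space. A secondary foundational obstacle, perhaps equally serious, is that the Ma'u--Wehrheim--Woodward construction of $A_\infty$-functors from smooth correspondences of monotone Fano symplectic manifolds has not been developed in the form needed here, namely compatibly with the eigenvalue splitting of $QH^*$ by $c_1$; this requires careful control of sphere bubbling in quilted moduli so that $\Phi$ splits correctly along eigenspaces.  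A last concern is essential surjectivity of $\Phi$ on the relevant summand rather than only full faithfulness: the algebraic analogue of Bondal--Orlov and Kuznetsov suggests this should hold, but verifying it requires showing that $\Phi$ applied to a split-generating set of Lagrangians on the $4(g-2)$-side generates $\scrF(Q_0\cap Q_1; 0)$, in particular hitting the matching Lagrangian spheres coming from vanishing cycles of the hyperelliptic pencil.
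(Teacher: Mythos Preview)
The statement you are attempting to prove is a \emph{Conjecture}, not a theorem: the paper does not contain a proof of it, and explicitly presents it as open.  So there is no ``paper's own proof'' to compare against; your proposal is a heuristic programme for attacking an open problem, not a proof of a result already established.

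That said, your strategy is very much in the spirit of what the paper itself suggests: using the incidence variety $\cI = \bP^{g-2}\,\widetilde{\times}\,\scrM(\Sigma_g)$ and its evaluation map to $Q_0\cap Q_1$, together with Theorem~\ref{Thm:Embed}, is exactly the route the paper gestures at when it writes ``Probably deeper analysis of the birational structure of this map would help relate the Fukaya category of the intersection of quadrics to that of the moduli space.''  The paper also discusses the closely related Thaddeus flip-diagram approach (Section~\ref{Sec:BirModel} and Section~\ref{Sec:Speculative}), and is candid about why it does not work with current technology: ``The intermediate spaces in Thaddeus' work have less good monotonicity properties than $\scrM(\Sigma_g)$ itself, whilst there is some delicacy in the choice of symplectic form on $\Sym^k(\Sigma_g)$.''

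Your proposal runs into exactly these obstructions, and while you flag them as ``obstacles,'' they are in fact genuine gaps that block the argument from going through.  Concretely: (i) there is no established symplectic analogue of Orlov's projective bundle formula for Fukaya categories --- your first step is itself an open problem; (ii) you assert that the graph of $e$ is a \emph{monotone} Lagrangian correspondence, but you give no verification of this, and the paper's experience (cf.\ the delicate hypothesis~(\ref{Equation:Hypothesis2}) and Remark~\ref{Rem:HypothesisHolds}) shows that monotonicity of such correspondences is a substantive constraint that can and does fail; (iii) the quilted-functor machinery compatible with $c_1$-eigenvalue splittings that you invoke has not been developed.  Each of these would need to be a theorem before your outline becomes a proof.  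What you have written is a reasonable research programme --- indeed, one the paper itself endorses as the natural line of attack --- but it is not a proof, and the paper does not claim one either.
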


We give a slightly more precise statement in Remark \ref{Rmk:PreciseConj}. Conjecture \ref{Conj:SecondHighest} should be seen in light of Witten's conjecture relating Seiberg-Witten and Donaldson invariants in low-dimensional topology.  The spaces $\scrM(\Sigma_g)$ arise as moduli spaces of instantons on $\Sigma_g \times S^1$.  The corresponding moduli spaces of Seiberg-Witten solutions on $\Sigma_g \times S^1$ are given by symmetric products of $\Sigma_g$.    A theorem of Munoz \cite{Munoz} implies the spectrum of quantum multiplication by the first Chern class on $\scrM(\Sigma_g)$ is given (dropping some factors of $i$) by 
\[
\{-4(g-1), -4(g-2), \ldots, -4, 0, 4, \ldots, 4(g-2),  4(g-1)\}.
\]
The Fukaya category $\scrF(\scrM(\Sigma_g))$  breaks into summands indexed by these values.  Moreover, the summands for $\pm \lambda$ should be equivalent.  Naively, one would expect the Fukaya category summand $\scrF(\scrM(\Sigma_g); 4(g-k-1))$ to be built out of the Fukaya category of the symmetric product $\Sym^k(\Sigma_g)$, when $|k| < g$ (this is an intentionally vague statement, not least because we have not specified a symplectic form on the symmetric product).  In particular, the summands corresponding to the outermost eigenvalues $\pm 4(g-1)$ are expected to be semisimple (equivalent to the Fukaya category of a point).  Whilst we do not prove this, Theorem \ref{Thm:FixPoint} provides rather strong evidence. More precisely, that theorem is proved by showing that the symplectic Floer cohomology $HF(\phi)$ has a distinguished rank one summand, given by its generalized eigenspace for the eigenvalue $4(g-1)$.  This should be compared to similar rank one pieces of monopole or Heegaard Floer homology \cite{OzSz,KronheimerMrowka:Sutured}, in that context associated to the ``top" $Spin^c$-structure.  In this vein, Conjecture \ref{Conj:SecondHighest} -- or Theorem \ref{Thm:Embed}, when $g=2$ -- can be viewed as an instance of the general ``Seiberg-Witten = Donaldson" philosophy, for invariants of mapping tori coming from the next highest $Spin^c$-structure.  

Practically, one might hope -- by analogy with derived categories of sheaves, cf. Section \ref{Sec:HMS} below -- that Fukaya categories behave well under both blowing up and down, and under passing to the total space of a projective bundle.  Then 
Thaddeus' work on flip diagrams \cite{Thaddeus}, in which flips along projective bundles over symmetric products explicitly relate a projective bundle over $\scrM(\Sigma_g)$ to a projective space, would seem to give a cut-and-paste route to assembling $\scrF(\scrM(\Sigma_g))$ from the $\scrF(\Sym^k(\Sigma_g))$.  In reality, it seems hard to give a complete argument on these lines using current technology. The intermediate spaces in Thaddeus' work have less good monotonicity properties than $\scrM(\Sigma_g)$ itself, whilst there is some delicacy in the choice of symplectic form on $\Sym^k(\Sigma_g)$. Note that the latter delicacy is absent precisely for the summands corresponding to $\Sym^k(\Sigma_g)$ for $k=0,1$, which are those treated in this paper.

\subsection{Homological Mirror Symmetry}\label{Sec:HMS}

The proof of Theorem \ref{Thm:Embed} involves establishing Fukaya category analogues of three well-known results concerning derived categories of sheaves.

\begin{itemize}
\item (Bondal-Orlov) The derived category of sheaves on the intersection of two even-dimensional quadrics has a semi-orthogonal decomposition  in which one factor is the  derived category of sheaves on an underlying hyperelliptic curve \cite{BondalOrlov}.
\item (Kapranov) The derived category of sheaves on an even-dimensional quadric has a semi-orthogonal decomposition in which one factor is the derived category of sheaves on a pair of points \cite{Kapranov}.
\item (Bondal-Orlov) If $X \rightarrow Y$ is the blow-up along a smooth centre $B\subset Y$ of codimension at least 2, the derived category of sheaves on $B$ embeds in the derived category of sheaves on $X$ \cite{BondalOrlov}.
\end{itemize}

Obviously, Theorem \ref{Thm:Embed} is exactly the symplectic analogue of the first of these results.  Our proof of Theorem \ref{Thm:Embed} relates both categories appearing in its statement with the Fukaya category of the \emph{relative quadric} $Z$, given by blowing up $\bP^{2g+1}$ along $Q_0\cap Q_1$. We construct  equivalences with quasi-isomorphic images
\[
D^{\pi}\scrF(\Sigma_g) \ \hookrightarrow \ D^{\pi}\scrF(Z) \ \hookleftarrow  \ D^{\pi}\scrF(Q_0\cap Q_1;0).
\]
Although it occupies only a small part of the final proof, the geometric heart of $\hookrightarrow$ and arguably of Theorem \ref{Thm:Embed} is Section \ref{Section:Quadric}, which gives a derived equivalence between the nilpotent summand of the Fukaya category of a quadric $Q \subset \bP^{2g+1}$ and the Fukaya category of a pair of points $S^0$.     This is precisely the symplectic analogue of Kapranov's theorem.  From that perspective, a hyperelliptic curve and the total space of the associated pencil of quadrics behave, categorically, like fibrations with the same fibre and monodromy.   To pass between $D^{\pi}\scrF(Z)$ and $D^{\pi} \scrF(Q_0 \cap Q_1)$ involves relating the derived Fukaya category of a base locus and of a blow-up, akin to the second mentioned theorem of Bondal-Orlov (in this case, however, the symplectic story is carried out in more restricted circumstances, for blow-ups of codimension two complete intersections satisfying a raft of convenient hypotheses). In light of these analogies, one is led to  compare the chains of spaces
\begin{equation} \label{Eqn:Kuznetsov}
\begin{array}{ccccccc} 
\emptyset & \subset & 2 \ \textrm{points} & \subset & \Sigma_g & \subset & X \\
\updownarrow &  &\updownarrow & &\updownarrow & & \updownarrow \\ 
\bP^{2g+1} & \supset & Q & \supset & Q_0\cap Q_1 & \supset & X^{\vee} 
\end{array}
\end{equation}
where the top line corresponds to double branched covers of projective space of dimensions $d=-1,0,1,2$ over divisors of degree $2g+2$, and the lower line the base locus of a $d$-dimensional family of quadrics in $\bP^{2g+1}$.   There are equivalences between the Fukaya category of a space in the top line, and the nilpotent summand of the Fukaya category of the space below it, in each of the first 3 columns, mirror to a  small part of Kuznetsov's ``homological projective duality" \cite{Kuznetsov}.  The situation in the final column is less clear. For instance, when $g=2$, both $X$ and $X^{\vee}$ are $K3$ surfaces, but in general they don't have equivalent derived categories of coherent sheaves unless one introduces a twist by a class in the Brauer group.  It would be interesting to know whether this has any symplectic analogue. The hypersurface of degenerate quadrics is singular in codimension 3, so technical difficulties arise in pushing this any further.

Given the proposed  Landau-Ginzburg mirror $w:Y_{Q_0 \cap Q_1} \rightarrow \bC$ of  $Q_0 \cap Q_1$ \cite{Katzarkov, Przyj}, and the proofs by Seidel and Efimov of mirror symmetry for curves of genus $\geq 2$,  \cite{Seidel:HMSgenus2, Efimov}, Theorem \ref{Thm:Embed}  essentially proves one direction of Homological Mirror Symmetry for the  Fano variety which is the complete intersection of two quadrics:
\[ D^{\pi}\scrF(Q_0 \cap Q_1;0) \ \simeq \ D^b_{sing}(Y_{Q_0 \cap Q_1},w|_{\text{open subset}}) \]
(restricting $w$ throws out certain ``massive modes").  In particular, this gives the first proof of HMS for a moduli space of bundles on a curve of higher genus.

\begin{Remark}
Homological Mirror Symmetry typically relates symplectic and algebraic geometry.   The relation of Theorem \ref{Thm:Embed} to HMS is roughly as follows. The mirror of a Fano variety $X$ is a Landau-Ginzburg model $w: Y \rightarrow \bC$; blowing up the Fano $X' \rightarrow X$ is expected to introduce new singular fibres $Y \subset Y' \stackrel{w'}{\rightarrow} \bC$ into the LG-model.  Blow-ups induce semi-orthogonal decompositions of the derived category $D^b(X') = \langle D^b(X), \mathcal{C}\rangle$, hence corresponding semi-orthogonal decompositions relating the Fukaya-Seidel categories of the mirrors.  On the other hand, the new singular fibre of $(Y',w')$  leads to a new summand to its derived category of singularities, which should correspond to a new summand in the Fukaya category of  $X'$ (and perhaps some bulk deformation of the summands coming from $X$).   For more on the mirror symmetric background and motivation for Theorem \ref{Thm:Embed}, see \cite{KKOY}.  

\end{Remark}

\subsection{Another birational motivation} \label{Sec:BirModel}
There is another description of $\scrM(\Sigma_2) = Q_0 \cap Q_1 \subset \bP^5$ which, whilst not used in this paper, is undoubtedly related to the $g=2$ case of Theorem \ref{Thm:Embed}, and provided motivation for the result.
 Let's say that a line $l \subset Q_0\cap Q_1$ is \emph{generic} if its normal bundle is holomorphically trivial $\mathcal{O} \oplus \mathcal{O}$ (rather than $\mathcal{O}(i) \oplus \mathcal{O}(-i)$.  Recall that a genus 2 curve, whilst not a complete intersection, admits an embedding as a $(2,3)$-curve on a quadric hypersurface $\bP^1\times \bP^1 \subset \bP^3$.  

\begin{Lemma}
The blow-up $W \rightarrow \scrM(\Sigma_2)$ at a generic line inside $Q_0\cap Q_1$ is isomorphic to the blow-up $W' \rightarrow \bP^3$ along an embedded genus 2 curve of degree 5.
\end{Lemma}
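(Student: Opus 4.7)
My plan is to realise both $W$ and $W'$ as the graph closure of a single birational map. Let $\pi_l \co \bP^5 \dashrightarrow \bP^3$ be the linear projection away from $l$, so a point of the target is a $2$-plane $\Lambda \subset \bP^5$ through $l$. For any such $\Lambda$ each quadric cuts $\Lambda$ in a conic containing $l$, giving $Q_i \cap \Lambda = l \cup m_i^\Lambda$ for residual lines $m_i^\Lambda \subset \Lambda$; generically the two residuals meet transversely in a single point, which is the residual of $\Lambda \cap (Q_0 \cap Q_1)$. Hence $\pi_l|_{Q_0 \cap Q_1}$ is birational onto $\bP^3$, with inverse $\phi \co \bP^3 \dashrightarrow Q_0 \cap Q_1$ sending $[\Lambda]$ to $m_0^\Lambda \cap m_1^\Lambda$.

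Next I would identify the two indeterminacy loci and show both are smooth of codimension $2$. That of $\pi_l|_{Q_0 \cap Q_1}$ is $l$ itself, a smooth curve in the $3$-fold $Q_0 \cap Q_1$. That of $\phi$ is the locus $C \subset \bP^3$ of those $[\Lambda]$ for which $m_0^\Lambda = m_1^\Lambda$; equivalently $[\Lambda] \in C$ iff $\Lambda$ contains a line $m \subset Q_0 \cap Q_1$ meeting $l$, and $C$ is the image of the scheme $\mathcal{L}$ of such lines $m$ under $m \mapsto [\mathrm{span}(l,m)]$. The assumption that $l$ is generic (normal bundle $\cO \oplus \cO$) is exactly what is needed to guarantee that $\mathcal{L}$ is smooth and that $\mathcal{L} \to C$ is a closed embedding, so that $C$ is a smoothly embedded curve. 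Granted this, both $l$ and $C$ are smooth codimension-$2$ loci in smooth ambient threefolds, so the corresponding blow-ups each resolve the rational map in question, and a standard graph-closure argument identifies the closure of the graph of $\pi_l|_{Q_0 \cap Q_1}$ inside $(Q_0 \cap Q_1) \times \bP^3$ with both $W = \mathrm{Bl}_l(Q_0 \cap Q_1)$ and $W' = \mathrm{Bl}_C \bP^3$.

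It remains to identify $C$ as a smooth genus $2$ curve of degree $5$. For the degree, a generic hyperplane $H \subset \bP^3$ corresponds to a hyperplane $\widetilde H \supset l$ in $\bP^5$, and $\widetilde H \cap (Q_0 \cap Q_1)$ is a smooth $(2,2)$-complete intersection in $\widetilde H \cong \bP^4$, i.e.\ a del Pezzo surface of degree $4$. Realising such a del Pezzo as the blow-up of $\bP^2$ at $5$ general points and computing in its Picard group, one checks that every $(-1)$-line is met by exactly $5$ others (regardless of which of the three types of line one starts from), so $C \cap H$ has $5$ points and $\deg C = 5$. For the genus, by the classical theorem of Newstead and Desale-Ramanan the Fano variety of lines in $Q_0 \cap Q_1$ is canonically isomorphic (after translation by $[l]$) to $\mathrm{Jac}(\Sigma_2)$, and the divisor $\mathcal{L}$ of lines meeting $l$ is an Abel-Jacobi image of $\Sigma_2$, hence a smooth curve of genus $2$.

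The main technical obstacle is verifying that $C$ is genuinely smoothly embedded in $\bP^3$ (not merely a nodal or cuspidal image of the smooth curve $\mathcal{L}$) and that $\mathrm{Bl}_C \bP^3$ really is the graph closure of $\phi$. Both statements should follow from a local normal-form analysis of the pencil of conics $\{\Lambda \cap Q_t\}_{t \in \bP^1}$ as $[\Lambda]$ approaches a point of $C$: the generic normal bundle hypothesis $N_{l/(Q_0 \cap Q_1)} \cong \cO \oplus \cO$ ensures that two members of this pencil become tangent to first order along the residual line, so the base ideal of $\phi$ is locally cut out by two equations with independent differentials, and the resulting blow-up has the expected $\bP^1$-bundle exceptional divisor over a smooth curve.
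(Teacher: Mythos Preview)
The paper does not supply its own proof of this lemma: it simply cites \cite{GH} (Griffiths--Harris) and remarks that the same diagram appears as the simplest case of Thaddeus' flip diagrams \cite{Thaddeus}. Your argument via linear projection from $l$ is precisely the classical construction in Griffiths--Harris, so you are reconstructing the cited proof rather than giving an alternative route.

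Your outline is correct in its essentials. The identification of $C$ with a theta divisor in the Fano surface of lines (hence with $\Sigma_2$ itself) is standard, and the degree-$5$ count via the $16$ lines on a del Pezzo quartic surface is the right mechanism. The point you flag as the ``main technical obstacle'' --- that $\mathcal{L} \to C$ is a closed embedding and that $\mathrm{Bl}_C\bP^3$ really coincides with the graph closure --- is genuinely where the work lies; the generic normal bundle hypothesis $N_{l/(Q_0\cap Q_1)} \cong \cO\oplus\cO$ is exactly what guarantees there is no infinitesimal line through $l$ tangent to $\mathcal{L}$, which is what rules out cusps of $C$. One small addendum: you should also check that no two distinct lines $m,m' \in \mathcal{L}$ span the same $2$-plane with $l$ (ruling out nodes of $C$); this again follows from genericity of $l$, since such a coincidence would force $l, m, m'$ to be coplanar lines in $Q_0\cap Q_1$, which does not occur for generic $l$.
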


A proof is given in \cite{GH}.  The diagram $\bP^3 \leftarrow Bl_C(\bP^3) = Bl_l(Q_0\cap Q_1) \rightarrow Q_0\cap Q_1$ is also the simplest of the ``flip-diagrams" constructed using stable pairs by Thaddeus in \cite{Thaddeus} (in this special case there are actually no flips, see \emph{op. cit.} section 3.19; the map to $Q_0\cap Q_1$ is his non-abelian Abel-Jacobi map). 

If one believes that Fukaya categories should behave well under birational transformations -- a view espoused by Katzarkov and his collaborators, see for instance \cite{KKOY}, and encouraged by the particular cases treated in this paper -- then one expects $\scrF(W)$ to be built out of $\scrF(\bP^1)$, which has two semisimple summands,  and of $\scrF(\scrM(\Sigma_2))$, whilst $\scrF(W')$ should be assembled from $\scrF(\bP^3)$ (four semisimple summands) and $\scrF(\Sigma_2)$.  Equating $\scrF(W)$ and $\scrF(W')$ would quickly give Theorem \ref{Thm:Embed} in the case $g=2$. Whilst the argument we give is more roundabout, it seems technically simpler (because of its appeal to the special geometric features of Lefschetz fibrations), and also applies in greater generality ($g \geq 2$).

\subsection{Outline of proof}

As mentioned previously, 
our proof of Theorem \ref{Thm:Embed} constructs  equivalences with quasi-isomorphic images
\[
D^{\pi}\scrF(\Sigma) \ \hookrightarrow \ D^{\pi}\scrF(Z) \ \hookleftarrow  \ D^{\pi}\scrF(Q_0\cap Q_1;0).
\]
Both $\scrF(\Sigma_g)$ and $\scrF(Q_0 \cap Q_1)$ are naturally $\bZ_{4g-4}$-graded, but only the underlying $\bZ_2$-graded categories are equivalent; from this perspective, that is because the intermediate category $\scrF(Z)$ only admits a $\bZ_2$-grading.   At a technical level, the proof of Theorem \ref{Thm:Embed} combines five principal ingredients.

\begin{enumerate}
\item \emph{Lefschetz fibrations}  provide the basic geometric setting in which all the investigations take place.  Thus, a hyperelliptic curve and the relative quadric $Z$ are viewed as Lefschetz fibrations with categorically-equivalent fibres.
\item \emph{Eigenvalue splittings} for Fukaya categories; in particular, we derive sufficient conditions for a collection of Lagrangian spheres to split-generate a particular summand.
\item \emph{Quilt theory}, in the sense of Mau-Wehrheim-Woodward; specifically, we appeal to their work to construct embeddings of categories associated to (idempotent summands of)  Lagrangian correspondences which arise from blowing up certain codimension two complete intersections.  
\item \emph{Surgery} and \emph{$\bZ$-gradings}, whereby certain holomorphic polygons are constrained by knowledge of related polygons for a Lagrange surgery at an isolated intersection point, or by their intersection numbers with divisors of poles of holomorphic volume forms.
\item \emph{Finite determinacy}, introduced in this setting in Seidel's beautiful paper \cite{Seidel:HMSgenus2}, which relies on Kontsevich's formality theorem to describe certain $A_{\infty}$-structures in terms of polyvector fields.  Eventually, the Fukaya categories of the curve and the relative quadric are identified by singling out a quasi-isomorphism class of $A_{\infty}$-structures on an exterior algebra with the desired Hochschild cohomology. 
\end{enumerate}

These ingredients are assembled as general tools in Section \ref{Section:Fukaya}, and deployed in the subsequent parts of the paper.  The connection $\hookleftarrow$ between $\scrF(Z)$ and  the Fukaya category of the base locus seems to be part of a wider phenomenon for blow-ups; it also makes a connection to a well-known ``spinning" construction of Lagrangian spheres, which provides a link between algebra and geometry useful for establishing Addendum \ref{Thm:Action}.   The material on Fukaya categories of blow-ups should be compared to the work of Abouzaid, Auroux and Katzarkov \cite{AAK}, who derive closely related results from the ``Strominger-Yau-Zaslow" perspective of Lagrangian torus fibrations; similar techniques are also applied in Seidel's recent \cite{Seidel:Flux}.  These results might be viewed as an open-string counterpart to the symplectic birational geometry programme of Yongbin Ruan and his collaborators.
  
\begin{Remark} \label{Rem:OurSpacesAreMonotone}
The symplectic manifolds which occur in the paper are real surfaces or Fano varieties, hence are monotone with minimal Chern number $\geq 1$, and the Lagrangian submanifolds we consider are monotone of minimal Maslov number $\geq 2$.  These hypotheses considerably simplify the definition of $\scrF(\bullet; \lambda)$, which can then be constructed using essentially classical tools, and allow us to make systematic use of the quilted Floer theory of Mau, Wehrheim and Woodward \cite{MWW:Announce}. We emphasise that Fukaya categories  enter the main argument in a rather formal manner, and the properties we require would hold independent of the finer details of the construction. That underlying construction is nonetheless a  very substantial undertaking, and we are accordingly indebted to the foundational work of Fukaya-Oh-Ohta-Ono, of Seidel, and of Mau-Wehrheim-Woodward.  \end{Remark}

\begin{small}
\noindent \emph{Organisation of the paper}.  Section \ref{Sec:Top} gives background on the spaces $\scrM(\Sigma_g)$ and derives  the results of Sections \ref{Sec:RepVar} and \ref{Sec:InstFloer}, assuming Theorem \ref{Thm:Embed}.   This section is largely self-contained, though at one or two points we refer ahead for general Floer-theoretical results.  Such results are collected in  Section \ref{Section:Fukaya}.  The proof of Theorem \ref{Thm:Embed} itself takes up the remainder of the paper, Sections \ref{Sec:BaseToQuadric} and \ref{Sec:CurveToQuadric}.  A low-dimensional topologist happy to take the proof of Theorem \ref{Thm:Embed} on faith can stop reading at the end of Section \ref{Sec:Top}. A symplectic topologist interested in Theorem \ref{Thm:Embed} but unconcerned with 3-manifolds can start reading from Section \ref{Section:Fukaya}.
\end{small}

\vspace{0.5cm}

\noindent \Acknowledgements  I am extremely indebted to Mohammed Abouzaid, Denis Auroux and Paul Seidel for all their help and their numerous patient explanations. This paper is written very much in the shadow of Seidel's \cite{Seidel:HMSgenus2}.  Conversations with Tom Bridgeland, Alex Dimca, Tobias Dyckerhoff, Sasha Efimov, Tobias Ekholm, Rahul Pandharipande, Tim Perutz, Catharina Stroppel and Burt Totaro were also influential.  Detailed and insightful comments from the anonymous referees have dramatically improved the paper,  both clarifying and simplifying the exposition. \newline 

\noindent The author is partially supported by grant ERC-2007-StG-205349 from the European Research Council. Part of this work was undertaken whilst the author held a Research Professorship at the Mathematical Sciences Research Institute, Berkeley; thanks to MSRI for hospitality.
%%%%%%%%%%%%%%%%%%%%%%%%%%%%%%%%%%%%%%%%%%%%
%%%%%%%%%%%%%%%%%%%%%%%%%%%%%%%%%%%%%%%%%%%%

\section{The representation variety}\label{Sec:Top}

\subsection{Topology\label{Sec:Topology}}
Recall that $\scrM(\Sigma_g)$ denotes the variety of twisted representations of the fundamental group of a punctured surface $\Sigma_g\backslash \{p\}$:
\begin{equation}\label{Eqn:RepVar}
\scrM(\Sigma_g) =  \left\{ (A_1,B_1,\ldots, A_g,B_g) \in SU(2)^{2g} \ \big| \ \prod_i [A_i,B_i] = -I \right\} \Big/ SO(3)
\end{equation}
where $SO(3) = SU(2)/\{\pm I\}$ acts diagonally by conjugation.   There are no reducible representations (since an abelian representation couldn't have holonomy $-I$ around the puncture), and the moduli space is a smooth compact manifold of real dimension $6g-6$.   It has a natural symplectic structure, which comes from a skew form on the tangent space $H^1(ad \,E)$ given by combining the Killing form in the Lie algebra $\frak{su}_2$ with wedge product.  The representation variety is symplectomorphic to the moduli space of stable bundles of rank two and fixed odd determinant on a Riemann surface of genus $g$ \cite{NS}, which in turn carries a canonical K\"ahler form arising from its interpretation as an infinite-dimensional quotient of the space of connexions on $\Sigma_g$:  gauge theoretically,  the symplectic structure is inherited from that on adjoint-bundle-valued 1-forms given by $\langle a,b \rangle = \int_{\Sigma} a\wedge \ast b$.  
Let $\Gamma(\Sigma)$ denote the mapping class group of isotopy classes of diffeomorphisms of $\Sigma$ which preserve  $p$; let $\cI(\Gamma)$ denote the Torelli subgroup of $\Gamma(\Sigma)$ of elements which act trivially on homology.   There is a short exact sequence 
\begin{equation} \label{Eqn:ExSeqNotSplit}
1 \rightarrow \pi_1(\Sigma) \rightarrow \Gamma(\Sigma) \rightarrow \Gamma_g \rightarrow 1,
\end{equation}
where the classical mapping class group $\Gamma_g = \pi_0\Diff^+(\Sigma) = \textrm{Out}(\pi_1(\Sigma))$ acts on the variety $\Hom(\pi_1(\Sigma), SO(3))/SO(3)$, which is a symplectic orbifold.  This latter space has two connected components, corresponding to flat connexions in the trivial respectively non-trivial $SO(3)$-bundle over $\Sigma$.  The latter component  is a quotient of $\scrM(\Sigma)$ by the finite group $\bZ_2^{2g} = H^1(\Sigma;\bZ_2) = \Hom(\pi_1(\Sigma),\bZ_2)$. This acts via  $A_i \mapsto \pm A_i, B_j \mapsto \pm B_j$ in the explicit description of the moduli space given above, or by tensoring by order two elements of the Jacobian torus of degree zero line bundles if one regards $\scrM(\Sigma)$ as a moduli space of stable bundles.  It follows that the representation $\rho: \Gamma(\Sigma) \rightarrow \pi_0\Symp(\scrM(\Sigma))$ actually factors through a representation
\[
\hat{\rho}: \widehat{\Gamma}(\Sigma) \rightarrow \pi_0\Symp(\scrM(\Sigma))
\]
where $\widehat{\Gamma}(\Sigma)$ is an extension of $\Gamma_g$ by $\bZ_2^{2g}$. 

\begin{Remark} \label{Remark:GpActingOnModuliSpace}
Finite subgroups of $\Gamma(\Sigma) \cong \Gamma_{g,1}$ are cyclic, which implies that the sequence of Equation \ref{Eqn:ExSeqNotSplit} does not split \cite{FarbMargalit}.  Results of Morita \cite{Morita, Morita2} imply that  for $g\geq 2$
\[
\bZ_2 \ \hookrightarrow \ H^2(\Gamma_g; H^1(\Sigma;\bZ_2)), 
\]  the map being an isomorphism for large $g$.  It follows that there are \emph{two} distinct extensions of the mapping class group by the finite group $H^1(\Sigma;\bZ_2)$, and $\widehat{\Gamma}$ is isomorphic to the \emph{non-trivial} extension (cf. \cite[Section 9]{Earle} and \cite[Proposition 4]{Morita2}).  That is, the quotient sequence 
\begin{equation} \label{Eqn:DoesSplit}
1 \rightarrow H^1(\Sigma;\bZ_2) \rightarrow \widehat{\Gamma}(\Sigma) \rightarrow \Gamma_g \rightarrow 1
\end{equation}
does \emph{not} split; compare to Remark \ref{Remark:GpActingOnCurve}.  However, the pullback of the extension to the mapping class group $\Gamma_{g,1}$  \emph{does} split, since Morita computes that $H_1(\Gamma_{g,1}; H_1(\Sigma;\bZ)) \cong \bZ$, which implies $H^2(\Gamma_{g,1}; H^1(\Sigma; \bZ_2)) =  0.$ 
It therefore makes good sense to compare the (weak) actions of the split extension of $\Gamma_{g,1}$ by $H^1(\Sigma;\bZ_2)$ on $\scrF(\Sigma_g)$ and $\scrF(\scrM(\Sigma_g))$. 
\end{Remark}

 There is a universal rank 2 bundle $\bE \rightarrow \Sigma\times\scrM(\Sigma)$; this is not uniquely defined, but its endomorphism bundle $\End(\bE)$ is unique.  By decomposing the second Chern class $c_2(\End(\bE))$ into K\"unneth components (i.e. using slant product) one obtains a map
\begin{equation}\label{Equation:mu}
\mu: H_*(\Sigma;\bZ) \longrightarrow H^{4-*}(\scrM(\Sigma);\bZ)
\end{equation}
which we refer to as the $\mu$-map.   We recall a number of well-known facts (see \cite{Thaddeus:Notes, Newstead, Donaldson, GH} for proofs; many other references are also appropriate).

\begin{enumerate}
\item  The space $\scrM(\Sigma_g)$ is simply connected (so there is no distinction between Hamiltonian and symplectic isotopy).  
\item $\scrM(\Sigma_g)$ has  Euler characteristic zero.  It admits a perfect Morse-Bott function with critical submanifolds (i) two $S^3$-bundles over $\scrM(\Sigma_{g-1})$, one the absolute minimum and one the absolute maximum; and (ii) a torus $T^{2g-2}$ of middle index.  
\item The cohomology ring $H^*(\scrM(\Sigma);\bZ)$ is generated by the image of the $\mu$-map. The action $\widehat\Gamma(\Sigma) \rightarrow Aut(H^*(\scrM(\Sigma);\bR)$ factors through the symplectic group $Sp_{2g}(\bZ)$, in particular $H^1(\Sigma;\bZ_2)$ acts trivially. 
\item $\scrM(\Sigma)$ is a smooth Fano variety with $H^2(\scrM(\Sigma);\bZ) \cong \bZ$. The first Chern class is twice the generator. 
\end{enumerate}

\begin{Example} \label{Ex:RingIsoQH}
We noted in the Introduction that $\scrM(\Sigma_2) \cong Q_0\cap Q_1 \subset \bP^5$.   The map $\mu: H_1(\Sigma;\bZ) \rightarrow H^3(\scrM(\Sigma);\bZ)$ is an isomorphism,  equivariant for the action of the symplectic group $Sp_4(\bZ)$. 
\end{Example}

The quantum cohomology of any closed symplectic manifold splits into generalised eigenspaces for the action of quantum multiplication by the first Chern class, cf. Section \ref{SubSec:Floer} for a fuller discussion.  For $\scrM(\Sigma_2)$ one can compute this explicitly, starting from a presentation of quantum cohomology derived by Donaldson.

\begin{Lemma} \label{Lem:QHmodulispace}
There is a ring isomorphism 
\[
QH^*(\scrM(\Sigma_2)) \ \cong \ H^*(pt) \oplus H^*(\Sigma_2) \oplus H^*(pt)
\]
with summands the generalised eigenspaces for quantum cup with $c_1(\scrM(\Sigma_2))$.
\end{Lemma}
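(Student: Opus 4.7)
The plan is to use an explicit presentation of the quantum cohomology $QH^*(\scrM(\Sigma_2))$ -- originally due to Donaldson and refined by Siebert--Tian and Mu\~noz -- together with Mu\~noz's computation of the spectrum of multiplication by $c_1$, and then to decompose the resulting finite-dimensional commutative $\bC$-algebra via idempotents attached to the distinct eigenvalues.

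Concretely, $H^*(\scrM(\Sigma_2);\bC)$ has total dimension $8$ (Newstead) and is generated via the $\mu$-map by $\alpha = \mu([\Sigma]) \in H^2$, by $\psi_i = \mu(\gamma_i) \in H^3$ for a symplectic basis $\{\gamma_i\}_{i=1}^{4}$ of $H_1(\Sigma_2;\bZ)$, and by $\beta = \mu(\pt) \in H^4$. Since $c_1 = 2\alpha$, Mu\~noz's theorem gives the eigenvalues of $L_{c_1}$ on $QH^*$ as $\{-4, 0, 4\}$, all distinct, so the minimal polynomial of $L_{c_1}$ is $(L_{c_1}+4)\, L_{c_1}\,(L_{c_1}-4)$. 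The Chinese remainder theorem then produces three orthogonal idempotents $e_{-4}, e_0, e_4 \in QH^*$ summing to the unit, and since quantum multiplication is commutative this yields a ring decomposition
\[
QH^*(\scrM(\Sigma_2)) \ = \ e_{-4} QH^* \ \oplus \ e_0 QH^* \ \oplus \ e_4 QH^*
\]
into unital subrings whose dimensions are exactly the eigenvalue multiplicities.

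To finish, I would identify the summands. From Mu\~noz's detailed eigenspace analysis -- or directly from the Donaldson presentation, exploiting the hyperelliptic symmetry that swaps the $\pm 4$ eigenspaces -- the outer summands are one-dimensional, hence canonically $\cong \bC = H^*(\pt)$. This forces the middle summand to have dimension $6 = \dim H^*(\Sigma_2;\bC)$. For the ring structure on $e_0 QH^*$, one checks that the projections $\{e_0 \psi_i\}_{i=1}^{4}$ span a four-dimensional subspace, that $(e_0 \psi_i)\ast (e_0 \psi_j) = \langle \gamma_i, \gamma_j \rangle_{\Sigma_2} \cdot (e_0 \beta)$ reproduces the intersection pairing on $H^1(\Sigma_2)$, and that triple products vanish -- exactly the relations defining $H^*(\Sigma_2;\bC)$. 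The main obstacle is this last identification: Donaldson's relations present $QH^*$ as a whole, and extracting the component structure on $e_0 QH^*$ requires showing that after multiplication by $e_0$ the quantum corrections in $\psi_i \ast \psi_j$ collapse to precisely the classical intersection term, with no residue in the zero-eigenspace. Granted this bookkeeping, the ring isomorphism follows since $H^*(\Sigma_2;\bC)$ is uniquely determined up to isomorphism by a non-degenerate symplectic pairing on its degree-one piece.
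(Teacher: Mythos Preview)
Your proposal is correct and follows essentially the same route as the paper: both start from Donaldson's explicit presentation of $QH^*(\scrM(\Sigma_2))$ and decompose it into eigenspaces for quantum product by the hyperplane class, with the paper simply writing down the explicit generators $\langle h^2/16\rangle$, $\langle 1-h^2/16,\ \mu(H_1(\Sigma)),\ h^3-16h\rangle$, $\langle (h^3+4h^2)/128\rangle$ and checking orthogonality and closure by hand, where you invoke the Chinese remainder theorem and Mu\~noz's dimension count more abstractly. One harmless slip: since $c_1 = 2h$ and the eigenvalues of $\ast h$ are $\{0,\pm 4\}$, the eigenvalues of $\ast c_1$ are $\{0,\pm 8\}$, not $\{0,\pm 4\}$.
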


\begin{proof}
Let $h_j$ denote the generator of $H^j(\scrM(\Sigma_2))$, for $j=2,4,6$.  Let $\gamma_i$ denote classes in $H_1(\Sigma)$, and $\delta(\cdot,\cdot)$ the intersection pairing on $\Sigma$.  Starting from the classical facts that
\begin{itemize}
\item there are four lines through the generic point of $\scrM(\Sigma_2)$;
\item given two generic lines $l_1,l_2$, there are  two other lines which meet both $l_i$;
\end{itemize}
Donaldson \cite{Donaldson} proved that $h_2\ast h_2 = 4h_4+1$ and $h_2\ast h_4 = h_6 + 2h_2$, whilst  $h_j \ast \mu(\gamma_i)=0$ for degree reasons.  Write $h=h_2$, to simplify notation;  one then obtains that $QH^*(\scrM(\Sigma_2))$ is the ring with generators and relations
\[
 \langle h, \mu(\gamma_i) \ | \ h^4=16h^2, \, h\mu(\gamma_i)=0, \, \mu(\gamma_i)\mu(\gamma_j) = \delta(\gamma_i,\gamma_j)(h^3/4-4h)\rangle
\]
Since $h=c_1/2$, we are interested in the generalised eigenspaces for $\ast h$.  Now take the three summands to be respectively generated  by 
\[
\langle h^2/16\rangle, \quad \langle 1-h^2/16,  \ \mu(H_1(\Sigma)), \ h^3-16h\rangle, \quad \langle (h^3+4h^2)/128\rangle.
\]   and observe that the summands are mutually orthogonal and closed under quantum product.
\end{proof}

\subsection{Monodromy and degenerations\label{Subsec:Monodromy}}

There are several easy similarities between Floer cohomology computations on $\Sigma_2$ and on $\scrM(\Sigma_2)$ which helped to motivate  Theorem \ref{Thm:Embed}.
 Let $\gamma\subset \Sigma$ denote a homologically essential simple closed curve.   Taking the subspace of representations trivial along $\gamma$ defines a co-isotropic submanifold $V_{\gamma} \subset \scrM(\Sigma)$ which is an $SU(2)$-bundle over $\scrM(\Sigma_{g-1})$, where the lower genus surface is essentially the normalisation of the nodal surface $\Sigma/\{\gamma\}$. When $g=2$, $\scrM(\Sigma_{g-1})$ is a single point, and $V_{\gamma} \cong SU(2)$ is a Lagrangian 3-sphere.

\begin{Lemma}\label{Lem:HolonomySpheresMeetRight}
Fix $g=2$.  Let $\gamma$ and $\gamma'$ be homologically independent simple closed curves in $\Sigma_2$. 
\begin{itemize}
\item If $\gamma \cap \gamma' = \emptyset$ then $V_{\gamma} \cap V_{\gamma'} = \emptyset$.  
\item If $\gamma \pitchfork \gamma' = \{pt\}$ then $V_{\gamma} \pitchfork V_{\gamma'} = \{pt\}$.
\end{itemize}
\end{Lemma}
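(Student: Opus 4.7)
The plan is to reduce both statements to model configurations using the mapping class group, then verify the claims by direct computation in the matrix description \eqref{Eqn:RepVar}. By the change-of-coordinates principle in surface topology, on $\Sigma_2$ there is a unique $\Gamma_2$-orbit of disjoint homologically independent pairs of simple closed curves -- represented by $(\alpha_1,\alpha_2)$ -- and a unique $\Gamma_2$-orbit of pairs meeting transversely in a single point -- represented by $(\alpha_1,\beta_1)$. Since $\Gamma(\Sigma_2)$ acts by symplectomorphisms of $\scrM(\Sigma_2)$ sending $V_\gamma$ to $V_{\phi(\gamma)}$, it suffices to verify the lemma in these two models.

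For the disjoint case, $V_{\alpha_1}\cap V_{\alpha_2}$ is the image in the quotient of $\{A_1=A_2=I\}$; substituting into the relation $[A_1,B_1][A_2,B_2]=-I$ gives $I=-I$, so the intersection is empty. For the transverse case, $V_{\alpha_1}\cap V_{\beta_1}$ is cut out by $A_1=B_1=I$, which reduces the relation to $[A_2,B_2]=-I$. A quaternionic calculation shows this forces $\tr A_2=\tr B_2=0$ with orthogonal axes in $\operatorname{Im}\bH$; any such pair is conjugate to $(i,j)$, whose simultaneous stabiliser in $SO(3)$ is trivial. The residual free parameter $B_1\in SU(2)$ is absorbed by this diagonal $SO(3)$-action, so the set-theoretic intersection is the single orbit $[(I,I,i,j)]$.

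For transversality I would identify $T_{[\rho]}\scrM(\Sigma_2)$ with $H^1(\pi_1(\Sigma_2);\mathrm{ad}\,\rho)$, described concretely as cocycles $\xi$ on the free group $F_4=\pi_1(\Sigma_2\setminus p)$ satisfying $\xi(c)=0$ for $c=[A_1,B_1][A_2,B_2]$, modulo coboundaries; then $T_{[\rho]}V_\gamma$ is cut out by the further condition $\xi(\gamma)=0$. Transversality of $V_{\alpha_1}\pitchfork V_{\beta_1}$ thus amounts to surjectivity of the joint evaluation $H^1\to \mathfrak{su}_2\oplus\mathfrak{su}_2$ at the generators $A_1,B_1$. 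At the model representation $(I,I,i,j)$, the values $\xi(A_1),\xi(B_1)$ can be prescribed independently of $\xi(A_2),\xi(B_2)$ (since $F_4$ is free, and the linearised constraint $\xi(c)=0$ involves only $\xi(A_2),\xi(B_2)$), while all coboundaries vanish on $A_1,B_1$ because $\rho(A_1)=\rho(B_1)=I$. Hence the evaluation is already surjective at the cocycle level, and since both sides have real dimension $6$ it is an isomorphism on $H^1$.

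The main obstacle is the transversality step: the set-theoretic content is essentially immediate once the model is in place, but surjectivity of the joint evaluation requires choosing the correct cohomological framework. The crucial simplification is that representations in $V_\gamma\cap V_{\gamma'}$ are trivial on $\gamma$ and $\gamma'$, so coboundary contributions at those generators vanish and the dimension count closes cleanly.
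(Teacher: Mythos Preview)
Your argument is correct, with essentially the same set-theoretic content as the paper but a genuinely different transversality proof. One small slip: in the sentence ``the residual free parameter $B_1\in SU(2)$ is absorbed by this diagonal $SO(3)$-action'', there is no residual $B_1$ once you have imposed $A_1=B_1=I$; what is absorbed by the $SO(3)$-action is the $3$-dimensional solution set $\{(A_2,B_2):[A_2,B_2]=-I\}$. This does not affect the conclusion.

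On transversality the two routes diverge. You work cohomologically: identify $T_{[\rho]}\scrM(\Sigma_2)\cong H^1(\pi_1;\mathrm{ad}\,\rho)$ via cocycles on the free group, observe that at the model point the linearised relator constraint involves only $\xi(\alpha_2),\xi(\beta_2)$ while coboundaries vanish on $\alpha_1,\beta_1$, and conclude by a dimension count. The paper instead constructs an explicit Weinstein-type chart: using that $-I$ is a regular value of the commutator map $f:SU(2)^2\to SU(2)$, it embeds a neighbourhood of $V_\gamma\cong SU(2)$ as the zero-section in $T^*SU(2)\cong\frak{su}_2\times SU(2)\hookrightarrow\scrM(\Sigma_2)$, with the cotangent fibre $\frak{su}_2$ parametrised by the holonomy of the transverse curve $\gamma'$. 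Your approach is more systematic and generalises readily; the paper's hands-on chart has the advantage of identifying the Weinstein neighbourhood concretely, which feeds into the later discussion of Dehn twists and fibred coisotropics.
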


\begin{proof}
In the first case, a representation in $V_{\gamma} \cap V_{\gamma'}$ defines a flat connexion on the sphere given by cutting $\Sigma_2$ open along $\gamma \cup \gamma'$, which however has non-trivial holonomy at the puncture.  No such representation can exist.  Similarly, if $\gamma \pitchfork \gamma' = \{pt\}$, any element of $V_{\gamma} \cap V_{\gamma'}$ defines a flat connexion on the torus given by cutting along $\gamma \cup \gamma'$, with non-trivial monodromy at the puncture. Such a connexion is defined by a pair of matrices $(A,B) \in SU(2)^2$ with $[A,B]=-I$, and it is a straightforward exercise to see that there is a unique such pair up to simultaneous conjugation.  We must check that the corresponding intersection point of $V_{\gamma}$ and $V_{\gamma'}$ is transverse, a result  easily extracted from either of  \cite{Donaldson, Seidel:Notes}.
 The map $f:(A_2,B_2) \mapsto [A_2,B_2]$ from $SU(2)^2 \rightarrow SU(2)$ has $-I$ as a regular value, and there is an embedding
\[
\eta: f^{-1}(-I) \times B_{I}(\delta) \hookrightarrow SU(2) \times SU(2)
\] 
for which $f\circ \eta(x,u) = -u$, with $B_{I}(\delta)$ a small open neighbourhood of $I \in SU(2)$.  We can therefore embed an open neighbourhood of the zero-section $V_{\gamma} = SU(2) \subset T^*SU(2) \cong \frak{su}_2 \times SU(2) \hookrightarrow \scrM(\Sigma)$ by a map
\[
g: (\frak{h}, B_2) \ \mapsto \ (-e^{-\frak{h}}, B_2, \eta(A_2, B_2, [e^{-\frak{h}},B_2]^{-1}).
\]
This shows that locally near $V_{\gamma}$, the fibre direction $\frak{su}_2$ is parametrised by the holonomy of the transverse curve $\gamma'$, which implies that $V_{\gamma}$ and $V_{\gamma'}$ meet transversely. 
 \end{proof}

\begin{Remark} \label{Rem:TwoHolChoices}
An essential simple closed curve $\gamma \subset \Sigma_2$ defines two Lagrangian 3-spheres, namely $hol_{\gamma}^{-1}(\pm I)$. Call these $V_{\gamma}$ and $V_{\bar\gamma}$. Since these spheres are disjoint, one obviously has
\begin{equation} \label{Eqn:TwoHolChoices}
HF(V_{\gamma},V_{\gamma}) \cong H^*(S^3) \cong HF(V_{\bar\gamma},V_{\bar\gamma}); \quad HF(V_{\gamma}, V_{\bar\gamma}) = 0.
\end{equation}
The spheres $V_{\gamma}$ and $V_{\bar\gamma}$ lie in the same homology class and the same orbit under the action of $H^1(\Sigma;\bZ_2)$. This should be compared with Remark \ref{Remark:2objects-1}.
\end{Remark}

The action on $\scrM(\Sigma)$ of a Dehn twist on $\Sigma$ has been studied by Callahan and Seidel (both unpublished, now discussed in detail in \cite{WW:triangle}). 
Let $\sigma \subset \Sigma$ denote a nullhomologous but not nullhomotopic simple closed curve, dividing $\Sigma$ into subsurfaces of genus $1$ and $g-1$ say (see Figure \ref{Fig:genus2pencil}).
There is a unique conjugation-invariant function
\[
SU_2 \longrightarrow [0,1] \quad \textrm{for which} \quad  log\left( \begin{array}{cc} e^{i\pi t} & 0 \\ 0 & e^{-i\pi t}  \end{array}\right) \, \mapsto \, t, \quad 0\leq t \leq 1.
\]
This induces a map $f: \scrM(\Sigma) \rightarrow [0,1]$, by taking the conjugacy class of the holonomy around  $\sigma$.  A theorem of Goldman \cite{Goldman} implies that, on the open subset $f^{-1}(0,1)$ where $f$ is smooth, it is the Hamiltonian function of a circle action. Explicitly, if $\sigma$ corresponds to the curve defining the matrix $A_g$ in the notation above, and if $A_g \neq \pm I$, there is a unique homomorphism 
\[
\phi: U(1) \rightarrow SU_2 \quad \textrm{with} \quad A_g \in \phi(\{ \Im(z) > 0 \} )
\] and the circle acts via
\[
\lambda \cdot (A_i, B_i) \ = \ (A_1, B_1, \ldots, A_g, \phi(\lambda)B_g).
\]  
Let $V_{\sigma}=f^{-1}(\epsilon)$ for a small $\epsilon > 0$; $V\subset \scrM(\Sigma)$ is a separating hypersurface which is preserved and acted upon freely by this  $S^1$-action.  This manifests it as a fibred co-isotropic submanifold, with base the moduli space corresponding to  the disconnected curve given by normalising  the nodal surface in which $\sigma$ is collapsed to a point.

\begin{Proposition}[Seidel, Callahan] \label{Prop:SeidelCallahan}
Let $t_{c}$ denote the positive Dehn twist along a curve $c\subset \Sigma$ and let $\tau_c$ denote the induced action of the twist on $\scrM(\Sigma)$.  
\begin{itemize}
\item $\tau_{\gamma}$ is a rank 3 fibred positive Dehn twist in the $S^3$-fibred co-isotropic $V_{\bar\gamma}$.
\item $\tau_{\sigma}$  is a rank 1 fibred positive Dehn twist in the $S^1$-fibred co-isotropic  $V_{\sigma}$.
\end{itemize}
\end{Proposition}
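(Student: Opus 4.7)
The plan is to realise each induced symplectomorphism $\tau_c$ as the time-one flow of a Hamiltonian built from Goldman's moment map for holonomy along $c$, and then to identify this flow, on a Weinstein neighbourhood of the appropriate critical co-isotropic, with the standard model fibred Dehn twist.

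First, for $c \in \{\gamma, \sigma\}$ I would define $f_c\colon \scrM(\Sigma_2) \to [0,1]$ as the composition of $\rho \mapsto hol_\rho(c) \in SU(2)$ with the conjugation-invariant function recalled above the statement. Goldman's theorem \cite{Goldman} then gives that $f_c$ is smooth on $f_c^{-1}(0,1)$ and generates a Hamiltonian $S^1$-action rotating the holonomies of loops transverse to $c$; the explicit formula given in the preamble for $\sigma$ transcribes to $\gamma$ once one picks a generator of $\pi_1(\Sigma_2,p)$ crossing $\gamma$. Next I would translate the Dehn twist into its effect on representations: the positive twist $t_c$ sends each generator $\alpha \in \pi_1(\Sigma_2,p)$ to $\alpha$ with a copy of $c$ spliced in at each signed intersection point, so $\tau_c$ multiplies the holonomies of loops transverse to $c$ by $hol(c)$ to the appropriate power. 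Comparing this against Goldman's formula identifies $\tau_c$, on $f_c^{-1}(0,1)$, with the time-one flow of the Hamiltonian $H_c := \tfrac{1}{2}(2\pi f_c)^2$ (up to normalisation); the flow extends continuously across the critical levels of $f_c$ in exactly the manner characteristic of a fibred Dehn twist.

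The remaining step is to produce, near the relevant critical loci, Weinstein-type normal forms simultaneously identifying the symplectic form, the function $f_c$ and the circle action with a standard flat model. For $\gamma$, the critical level $V_{\bar\gamma} = f_\gamma^{-1}(1)$ is the Lagrangian sphere $SU(2)$ of Remark \ref{Rem:TwoHolChoices}, and the explicit parametrisation $g$ constructed in Lemma \ref{Lem:HolonomySpheresMeetRight} exhibits a neighbourhood as a piece of $T^*SU(2)$ in which $f_\gamma$ reads as a function of the fibre norm, the $S^1$-action becomes normalised geodesic flow, and the time-one flow of $H_\gamma$ is the Seidel model rank-$3$ fibred twist along $S^3$. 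For $\sigma$, symplectic reduction along $\sigma$ identifies the reducible stratum $f_\sigma^{-1}(0)$ with a product of two once-punctured genus-$1$ moduli spaces, and $V_\sigma = f_\sigma^{-1}(\epsilon)$ becomes a principal $S^1$-bundle over this product; an $S^1$-equivariant normal form realises a neighbourhood of $V_\sigma$ as $V_\sigma \times (-\delta,\delta)$ on which the time-one flow of $H_\sigma$ is a single $S^1$-rotation in the fibres, i.e.\ the model rank-$1$ fibred twist.

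The delicate point, which I expect to be the main obstacle, is the $\sigma$-case normal form. Unlike at $V_{\bar\gamma}$, the degenerate locus $f_\sigma^{-1}(0)$ is singular (reducibles along $\sigma$), the Goldman circle action does not extend smoothly across it, and $H_\sigma$ is only $C^0$ there; so the normal-form argument cannot simply be pulled back from a single cotangent model but must be assembled fibrewise via symplectic reduction, with simultaneous control of the $f_\sigma$-direction, the reduced symplectic form on the product of lower-genus moduli spaces, and the connection 1-form on the $S^1$-bundle $V_\sigma$.
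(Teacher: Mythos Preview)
Your approach via Goldman's Hamiltonian circle actions is a genuine alternative to the paper's, which instead invokes the monodromy of \emph{Morse--Bott degenerations}: one lets the curve $\Sigma$ acquire a node along $c$, producing a one-parameter family of moduli spaces $\scrM(\Sigma_t)$ with a Morse--Bott singular fibre; the general machinery of Perutz and Seidel (and the detailed account in Wehrheim--Woodward \cite{WW:triangle}) then identifies the monodromy with a fibred Dehn twist in the vanishing co-isotropic, and the vanishing co-isotropic is seen directly to be $V_{\bar\gamma}$ respectively $V_\sigma$. The degeneration argument is more conceptual: it treats both cases uniformly, works at every genus without modification, and explains \emph{a priori} why the twist sits at $V_{\bar\gamma}$ rather than $V_\gamma$ (it is the level on which the nodal fibre forces holonomy $-I$). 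Your intrinsic route avoids families entirely, but pays for this in the local analysis you flag, and as written is tied to $g=2$ in the $\gamma$-case since you rely on the explicit Weinstein chart of Lemma~\ref{Lem:HolonomySpheresMeetRight}; for higher genus you would need a fibred version of that chart over $\scrM(\Sigma_{g-1})$.

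Two points to tighten. First, the sentence ``the time-one flow of $H_\sigma$ is a single $S^1$-rotation in the fibres'' is at best ambiguous: a rank-1 fibred twist is a \emph{shear} in the transverse coordinate, rotating by a full turn only along $V_\sigma$ itself and tapering to the identity at the edges of the collar, so you should say this explicitly rather than describe it as a uniform rotation. Second, you should check the sign: the degeneration picture makes positivity automatic (algebraic nodal degenerations give positive twists), whereas in your picture it comes down to the sign convention relating $t_c$'s action on $\pi_1$ to the direction of Goldman's flow, and this deserves a line of justification.
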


Both results are proved by considering the monodromy of suitable Morse-Bott degenerations, cf. related discussions in \cite{Perutz, Seidel:Notes, WW:triangle} (the last reference contains proofs of both cases).   
  If there is \emph{any} equivalence of categories $D^{\pi}\scrF(\Sigma_2) \simeq D^{\pi}\scrF(\scrM(\Sigma_2);0)$ which respects the actions of the mapping class group, then necessarily $[\gamma] \mapsto [V_{\bar\gamma}]$, by combining Proposition \ref{Prop:SeidelCallahan} and Corollary \ref{Cor:TwistDeterminesSphere} (which says that Lagrangian spheres are essentially determined by their associated twist functors).  Unfortunately, we do not know of any (smooth and embedded) Lagrangian correspondence $\Gamma \subset \Sigma_2 \times \scrM(\Sigma_2)$ which might define such a functor.  The upshot of the following sections will be to show that a certain generalised Lagrangian correspondence
\[
\Sigma_2 \rightsquigarrow Z \rightsquigarrow  \scrM(\Sigma_2) 
\]
does define an equivalence on suitable subcategories, where $Z$ denotes the relative quadric $Bl_{\scrM(\Sigma)} (\bP^5)$.  Even here, two subtleties arise: the first correspondence (from the curve to the relative quadric) would be singular, and not amenable to quilt theory (so we will avoid it and use more algebraic arguments); and the second correspondence will not itself be fully faithful, but will have an idempotent summand which induces the desired equivalence.

\subsection{Fibred 3-manifolds\label{Subsec:3manifolds}}

Recall that an $SO(3)$-bundle on a closed oriented 3-manifold $Y$ is determined up to isomorphism by its second Stiefel-Whitney class $w_2(E) \in H^2(Y;\bZ_2)$. 
Let $f : Y\rightarrow S^1$ be a 3-manifold which fibres smoothly over the circle.  The fibre $f^{-1}(pt)$ is necessarily homologically essential and primitive, and we fix an $SO(3)$-bundle $E$ over $Y$ for which $\langle w_2(E), [f^{-1}(pt)] \rangle \neq 0$.   Indeed, we take $E$ to be the mapping torus of a lift of the monodromy of $Y$ to the unique non-trivial $SO(3)$-bundle over a preferred fibre $f^{-1}(pt)$, with  $w_2(E)$ Poincar\'e dual to a section of $f$. The Chern-Simons functional
\[
a \mapsto \int_Y \, a \wedge da + a\wedge a \wedge a
\]
on the affine space $\Omega^1(Y)$ of connexions $A = A_0 + a$ on $E$ has critical points the flat connexions, which give rise to irreducible representations of $\pi_1(Y)$ into $SO(3)$ whose restriction to the fibre $\Sigma = f^{-1}(pt)$ defines a flat connexion in the original bundle $E|_{\Sigma}$.  In other words, the relevant representations of $\pi_1(Y)$ are exactly those arising from fixed-points of the natural action of the monodromy $\phi$ of $f$ (lifted to $E$) on $\scrM(\Sigma)$.  All of these representations are non-abelian, since they restrict to non-abelian representations of $\pi_1(\Sigma)$.  By  counting solutions to the anti-self-dual Yang-Mills equations on $Y\times \bR$, one can define a Morse-Floer theory for the Chern-Simons functional and hence an \emph{instanton Floer homology} group $HF_{inst}(Y; E)$. The classical theorem of Dostoglou and Salamon \cite{DS} asserts  
\begin{equation} \label{Eqn:DostSal}
HF_{inst}(Y;E) \ \cong \ HF(\hat{\rho}(\phi)).
\end{equation}
Since the fixed points of mapping classes on $\scrM(\Sigma)$ define representations of $\pi_1(Y)$, the latter can be studied via symplectic Floer homology; this will be the approach taken in the proof of Corollary \ref{Cor:RepnGrowth} later. 

\begin{Remark}
If $Y_h \rightarrow S^1$ has monodromy $h$ with $im(1-h_*) = H_1(\Sigma;\bZ)$ for the induced action on  $H_1(\Sigma;\bZ)$, the mapping torus is a homology $S^1 \times S^2$, and there is a unique $SO(3)$-bundle over $Y_h$. In particular, the left hand side of Equation (\ref{Eqn:DostSal}) is independent of the lift of $h$ to the principal bundle $E \rightarrow \Sigma$.  It follows that the Floer homologies on the representation variety of all lifts of $h$ to $\widehat{\Gamma}(\Sigma)$ are actually isomorphic, which doesn't seem obvious directly.  \end{Remark}

 Finally, one can consider the set of non-abelian $SO(3)$-representations of $\pi_1(Y)$ and its behaviour under coverings, analogously to recent interest in the growth of Betti numbers or Heegaard genus of covers.  Long and Reid \cite{LongReid} proved that any hyperbolic 3-manifold has finite covers with arbitrarily many representations into the dihedral group $A_5$ and hence non-abelian $SO(3)$-representations. However, these covers are not cyclic.  Ian Agol pointed out to the author that if a 3-manifold $Y$ has a non-trivial JSJ-decomposition -- for instance, the mapping torus of a reducible surface diffeomorphism --  some finite cover has fundamental group mapping onto a rank 2 free group, hence there are covers with infinitely many non-abelian $SO(3)$-representations. However, if $Y$ is actually hyperbolic, properties of $SO(3)$-representations under cyclic covers seem mysterious, and the second statement in Corollary \ref{Cor:RepnGrowth} seems to be new.

\subsection{Faithfulness}\label{Sec:Faithful}

For a symplectic manifold $X$  consider the conjugation-invariant \emph{Floer-theoretic entropy} of  a mapping class $\phi \in \Symp(X)/\Ham(X)$ 
\[ 
h_{Floer}(\phi) \ = \ \limsup \frac{1}{n} \log \rk_{\bK} HF(\phi^n).
\]
As mentioned in the Introduction, for area-preserving diffeomorphisms of a surface $\Sigma$ the Floer-theoretic entropy (of the action on $\Sigma$ itself) co-incides with the topological entropy which in turn co-incides with the periodic entropy, or rather their minimal values on the isotopy class \cite{Cotton-Clay,FLP, Felshtyn}. Thus 
\begin{equation} \label{Eqn:PosEntropy}
h_{Floer}(\phi)>0 \, \Leftrightarrow \, \phi \ \textrm{has a pseudo-Anosov component.}
\end{equation}
This follows from the classical fact that pseudo-Anosov maps have exponentially growing numbers of periodic points realising exponentially many different Nielsen classes;  Floer-theoretically one can actually make much more precise statements \cite{Cotton-Clay}.  Recall also there is a natural representation
\[
\hat\rho: \widehat{\Gamma}(\Sigma) \longrightarrow \pi_0\Symp(\scrM(\Sigma))
\]
where $\Gamma(\Sigma) \rightarrow \Gamma_g$ is a $\bZ_2^{2g}$-extension of the mapping class group $\pi_0\Diff^+(\Sigma_g)$. Mildly abusing notation, we will say $\hat\phi \in \widehat{\Gamma}(\Sigma)$ has a pseudo-Anosov component  if that is true of its image $\phi \in \Gamma_g$.  Our faithfulness criterion Lemma \ref{Lem:EntropyCriterion} relies on a classical observation of Thurston:

\begin{Lemma}[Thurston] \label{Lem:BuildPseudoAnosov}
Let $\Sigma_g$ be a closed surface and $\sigma \subset \Sigma_g$ a separating simple closed curve. There is another separating simple closed curve $\sigma' \subset \Sigma_g$ for which the products $\tau_{\sigma}^k \tau_{\sigma'}^{-k}$ are pseudo-Anosov mapping classes for all $k\geq 1$.
\end{Lemma}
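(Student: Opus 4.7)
The plan is to invoke Thurston's classical construction of pseudo-Anosov diffeomorphisms from a pair of filling simple closed curves. The first step is to produce a separating simple closed curve $\sigma' \subset \Sigma_g$ such that $\sigma \cup \sigma'$ \emph{fills} $\Sigma_g$, i.e.\ every component of $\Sigma_g \setminus (\sigma \cup \sigma')$ is a topological disk. For $g \geq 2$ this is elementary: starting from any separating curve $\sigma_0'$ meeting $\sigma$ essentially, one takes $\sigma' = \tau_\gamma^N(\sigma_0')$ for a suitable auxiliary curve $\gamma$ and sufficiently large $N$. Since $\sigma_0'$ is nullhomologous, the algebraic intersection $[\sigma_0'] \cdot [\gamma]$ vanishes, so the Picard--Lefschetz formula forces $\tau_\gamma^N(\sigma_0')$ to remain nullhomologous, hence separating (for simple curves, being nullhomologous is equivalent to separating). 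For $N$ large the image spirals around $\gamma$ enough to chop all complementary regions of $\sigma \cup \sigma'$ into disks.

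Given such a filling pair, set $n = i(\sigma,\sigma') \geq 1$. Thurston's construction (see e.g.\ \cite{FarbMargalit}, Theorem~14.1) furnishes a representation
\[ \rho \co \langle \tau_\sigma, \tau_{\sigma'}\rangle \longrightarrow PSL_2(\bR), \qquad \tau_\sigma \longmapsto \begin{pmatrix} 1 & n \\ 0 & 1 \end{pmatrix}, \quad \tau_{\sigma'} \longmapsto \begin{pmatrix} 1 & 0 \\ -n & 1 \end{pmatrix}, \]
with the essential property that any word $w$ in $\tau_\sigma^{\pm 1}, \tau_{\sigma'}^{\pm 1}$ represents a pseudo-Anosov mapping class on $\Sigma_g$ as soon as $\rho(w)$ is hyperbolic, i.e.\ $|\tr \rho(w)| > 2$. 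The filling hypothesis is exactly what is needed to ensure that Thurston's criterion detects pseudo-Anosovness on $\Sigma_g$ itself (rather than merely on a proper subsurface).

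A direct matrix computation gives
\[ \rho(\tau_\sigma^k \tau_{\sigma'}^{-k}) = \begin{pmatrix} 1 & kn \\ 0 & 1 \end{pmatrix} \begin{pmatrix} 1 & 0 \\ kn & 1 \end{pmatrix} = \begin{pmatrix} 1 + k^2 n^2 & kn \\ kn & 1 \end{pmatrix}, \]
with trace $2 + k^2 n^2 > 2$ for every $k \geq 1$. Thurston's criterion therefore yields pseudo-Anosovness of $\tau_\sigma^k \tau_{\sigma'}^{-k}$ for all $k \geq 1$ simultaneously, from the single fixed pair $(\sigma,\sigma')$.

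The only substantive step is the initial geometric construction of the separating $\sigma'$ that fills together with $\sigma$; once that is secured, the remainder is a black-box invocation of Thurston's criterion followed by a trivial trace computation. The construction of $\sigma'$ is the main obstacle in the sense that it combines two constraints (separating and filling) that are mildly in tension, but the spiralling argument above resolves it cleanly.
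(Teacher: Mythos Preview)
Your approach is essentially the same as the paper's: choose $\sigma'$ separating so that $\sigma \cup \sigma'$ fills $\Sigma_g$, then invoke Thurston's construction. The paper simply asserts the existence of such a $\sigma'$ and cites \cite[Expos\'e 13, Th\'eor\`eme III.3]{FLP} for the stronger statement that \emph{any} word in positive twists along $\sigma$ and negative twists along $\sigma'$ (with both appearing) is pseudo-Anosov; you instead give an explicit construction of $\sigma'$ via spiralling and verify the hyperbolicity of $\rho(\tau_\sigma^k\tau_{\sigma'}^{-k})$ by a direct trace computation, which amounts to reproving the special case actually needed.
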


In fact, one chooses $\sigma'$ so that $\sigma \cup \sigma'$ fill $\Sigma_g$ in the sense that their complement is a union of disks. Then any word in positive twists along $\sigma$ and negative twists along $\sigma'$ (with both twists appearing)  yields a pseudo-Anosov diffeomorphism; a proof is given in  \cite[Expos\'e 13, Th\'eor\`eme III.3]{FLP}.

\begin{Lemma} \label{Lem:EntropyCriterion}
Let $g(\Sigma)\geq 2$.  Suppose that for any  $\hat\phi \in \widehat\Gamma(\Sigma)$  with a  pseudo-Anosov component, $\hat\rho(\hat\phi)$  has strictly positive Floer-theoretic entropy.   Then $\hat{\rho}$ is faithful.
\end{Lemma}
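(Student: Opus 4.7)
Assume for contradiction that $\hat\phi\in\ker(\hat\rho)$ is non-trivial, with image $\phi\in\Gamma_g$. The first observation is that $\hat\rho(\hat\phi^n)=\mathrm{id}$ for all $n$, so the Floer cohomologies $HF(\hat\rho(\hat\phi^n))\cong H^*(\scrM(\Sigma);\bC)$ have uniformly bounded rank and $h_{Floer}(\hat\rho(\hat\phi))=0$; by the contrapositive of the hypothesis, $\hat\phi$ has no pseudo-Anosov component. The same computation extends to yield the key identity
\[
h_{Floer}\bigl(\hat\rho(\hat\phi\hat\psi)\bigr)\;=\;h_{Floer}\bigl(\hat\rho(\hat\psi)\bigr)\qquad\text{for every }\hat\psi\in\widehat\Gamma(\Sigma),
\]
and the plan is to find $\hat\psi$ for which the two sides are provably unequal.

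I will first handle the main case $\phi\neq 1$ in $\Gamma_g$. An extension of the Thurston construction underlying Lemma~\ref{Lem:BuildPseudoAnosov} (via classical train-track technology, see \cite[Expos\'e~13]{FLP}) produces a simple closed curve $\gamma\subset\Sigma$ and an integer $N\geq 1$ for which $\phi\cdot\tau_\gamma^N$ is pseudo-Anosov. Fixing a lift $\hat\tau_\gamma\in\widehat\Gamma(\Sigma)$, the element $\hat\phi\hat\tau_\gamma^N$ then has a pseudo-Anosov projection, and the hypothesis gives $h_{Floer}(\hat\rho(\hat\phi\hat\tau_\gamma^N))>0$; by the displayed identity, the same conclusion holds for $\hat\tau_\gamma^N$ alone. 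On the other hand, Proposition~\ref{Prop:SeidelCallahan} identifies $\hat\rho(\hat\tau_\gamma)$ with a rank-three fibered Dehn twist along the coisotropic sphere bundle $V_{\bar\gamma}\subset\scrM(\Sigma)$; a polynomial-growth estimate for iterates of such fibered Dehn twists (the analogue for higher-rank coisotropics of Seidel's familiar bound for ordinary Dehn twists) then forces $h_{Floer}(\hat\rho(\hat\tau_\gamma^N))=0$, contradicting the previous sentence.

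The residual case $\phi=1$, where $\hat\phi\in H^1(\Sigma;\bZ_2)\setminus\{0\}$ acts on $\scrM(\Sigma)$ by tensoring with a non-trivial order-two line bundle, is not reached directly by the entropy hypothesis: both sides of the displayed identity are still positive whenever $\hat\psi$ is pseudo-Anosov. I would handle it via the Dostoglou--Salamon isomorphism of Section~\ref{Sec:InstFloer}: for a suitable pseudo-Anosov $\chi\in\Gamma_g$ the two lifts $\hat\chi$ and $\hat\phi\hat\chi$ correspond to non-isomorphic $SO(3)$-bundles $E,E'\to Y_\chi$, and the equality $\hat\rho(\hat\chi)=\hat\rho(\hat\phi\hat\chi)$ forced by $\hat\phi\in\ker(\hat\rho)$ would identify $HF_{inst}(Y_\chi;E)$ with $HF_{inst}(Y_\chi;E')$, which can be shown to fail by gauge-theoretic computation.

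\textbf{Main obstacle.} The crux is the polynomial-rank bound $\mathrm{rk}\,HF(\hat\rho(\hat\tau_\gamma^N))=O(N^d)$, which converts the one-sided entropy hypothesis into an effective faithfulness criterion. Supplying this estimate for iterated fibered Dehn twists along a coisotropic sphere bundle is the key geometric input and the step demanding the most care; the residual $H^1$-case, lying formally outside the scope of the hypothesis, is a secondary obstacle requiring the independent instanton-theoretic input sketched above.
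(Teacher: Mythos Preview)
Your strategy in the main case $\phi\neq 1$ rests on the assertion that for \emph{every} non-trivial $\phi\in\Gamma_g$ there is a single simple closed curve $\gamma$ and an integer $N$ with $\phi\,\tau_\gamma^N$ pseudo-Anosov. This is not what \cite[Expos\'e~13]{FLP} proves, and it is not clear how to extract it from train-track technology: that reference treats words in positive twists along one curve and negative twists along another curve which together fill $\Sigma$, not compositions of an arbitrary mapping class with powers of a single twist. Concretely, if $\phi$ is periodic and $\gamma$ is $\phi$-invariant, then $\phi\,\tau_\gamma^N$ fixes the isotopy class of $\gamma$ and restricts to a finite-order map on the complement, so it has no pseudo-Anosov component for any $N$; a careful choice of $\gamma$ is therefore essential, and none is offered. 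The paper sidesteps this by two preliminary reductions you omit: it uses the $\mu$-map of Equation~\eqref{Equation:mu} to eliminate any $\phi$ acting non-trivially on $H^*(\Sigma)$ (disposing of non-trivial periodic pieces), and it treats the $H^1(\Sigma;\bZ_2)$-kernel separately. Only once $\phi$ is reduced to a product of powers of \emph{separating} Dehn twists does it invoke Lemma~\ref{Lem:BuildPseudoAnosov}, with two filling separating curves, which is exactly the regime where Thurston's construction applies. The polynomial-growth input you flag as the main obstacle is in fact the easy part and is used in the paper too (for the rank-one fibred twist along a separating curve, via Proposition~\ref{Prop:SeidelCallahan}).

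For the residual case $\hat\phi\in H^1(\Sigma;\bZ_2)\setminus\{0\}$, your proposed route through Dostoglou--Salamon is both vague and fragile: the paper itself observes (in the Remark following Equation~\eqref{Eqn:DostSal}) that when the monodromy $\chi$ satisfies $\mathrm{im}(1-\chi_*)=H_1(\Sigma;\bZ)$ the instanton Floer groups of all lifts of $\chi$ coincide, so for many pseudo-Anosov $\chi$ the two sides you hope to distinguish are provably equal. The paper's argument here is direct and avoids gauge theory entirely: by Narasimhan--Ramanan the fixed locus of $\iota$ on $\scrM(\Sigma)$ is a Prym torus $T^{2g-2}$, and Po\'zniak's local-to-global spectral sequence gives $\dim HF(\iota)\leq \dim H^*(T^{2g-2}) < \dim H^*(\scrM(\Sigma)) = \dim HF(\mathrm{id})$, so $\iota\notin\ker(\hat\rho)$.
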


\begin{proof}
Suppose a mapping class $\hat\phi \neq \id$ lies in the kernel of $\hat\rho$. We first claim that $\hat\phi$ cannot be in the finite subgroup $H^1(\Sigma;\bZ_2)$ which is the kernel of $\widehat{\Gamma}(\Sigma) \rightarrow \Gamma_g$.   Indeed, a theorem of Narasimhan and Ramanan \cite{NR} implies that for $\iota \in H^1(\Sigma;\bZ_2) \backslash \{\id\}$, 
\[
\textrm{Fix}(\iota) \ =  \ \textrm{Prym}(\iota)\  \cong \ T^{2g-2} 
\]
is an abelian variety of dimension $g-1$, isomorphic to the Prym variety of the double cover of $\Sigma$ defined by $\iota$.  These fixed points form a smooth Morse-Bott manifold, which arises as a clean intersection between the graph of $\iota$ and the diagonal.  Pozniak's ``local-to-global" spectral sequence for Floer cohomology \cite{Pozniak} then implies that
\[
dim_{\bK}(HF(\iota)) \  \leq \ dim_{\bK}(H^*(T^{2g-2}))
\]
which is strictly smaller than $dim_{\bK}(QH^*(\scrM(\Sigma))$, cf. Section \ref{Sec:Topology}. It follows that $HF(\iota) \not\cong HF(\id)$, so $\iota \not\in \ker(\hat\rho)$.   Accordingly, if $\hat\phi$ does lie in this kernel, it has non-trivial image $\phi$ in the classical mapping class group $\Gamma_g$.  The hypothesis implies that $\phi$ has no pseudo-Anosov component;  Thurston's classification of surface diffeomorphisms \cite{FLP} implies $\phi$ is reducible with all components periodic.   If any periodic component  is non-trivial, the mapping class $\hat\rho(\hat\phi)$ acts non-trivially on cohomology, via the $\mu$-map of Equation \ref{Equation:mu} and the non-trivial action on $H^*(\Sigma)$. We therefore reduce to $\phi$ being a product of powers of Dehn twists on disjoint separating simple closed curves. 

If $\sigma\subset \Sigma$ is a separating curve, by Lemma \ref{Lem:BuildPseudoAnosov} we can find another separating curve $\sigma'$ for which the elements $\tau_{\sigma}^k \tau_{\sigma'}^{-k}$ are pseudo-Anosov when $k>0$. By the hypothesis,  these therefore map to elements of positive Floer-theoretic entropy under $\hat\rho$. If a lift to $\widehat{\Gamma}(\Sigma)$ of some power of the twist $\tau_{\sigma}$ was in the kernel of $\hat\rho$, then  the Floer cohomology of iterates of $\tau_{\sigma'}^{-1}$ would grow exponentially in rank, hence so would the Floer cohomologies of iterates of $\tau_{\sigma'}$ recalling $HF(\psi^{-1}) \cong HF(\psi)^*$.  However,  it is straightforward using Proposition \ref{Prop:SeidelCallahan} to write down an explicit representative for the action of $\tau_{\sigma'}$ on $\scrM(\Sigma)$  for which the Floer chain groups grow only linearly in rank under iteration. It follows that the Dehn twist in the separating curve $\sigma$ actually maps to an element of infinite order.  The same trick shows a product of separating twists in disjoint curves  is also infinite order.  The result follows.
\end{proof}

\begin{Remark}
It may be worth emphasising that the hypotheses of the faithfulness criterion Lemma \ref{Lem:EntropyCriterion} could be derived from something much weaker than Conjecture \ref{Conj:SecondHighest};  for instance it would follow (over $\bK = \Lambda_{\bC}$) from the existence of a formal deformation of categories from $\scrF(\scrM(\Sigma_g);4(g-2))$ to $\scrF(\Sigma_g)$. Such (bulk) deformations seem likely to arise in the obvious strategies to attack Conjecture \ref{Conj:SecondHighest} as coming from flip diagrams, cf. Section \ref{Sec:BirModel}. 
\end{Remark}

\begin{Lemma} \label{Lem:Genus2Case}
If $g(\Sigma) =2$, the hypothesis of Lemma \ref{Lem:EntropyCriterion} is satisfied.
\end{Lemma}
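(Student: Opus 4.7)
The plan is to combine the main theorem with the fact that in genus two every mapping class is hyperelliptic, and then transfer exponential growth of Floer cohomology from the surface to the representation variety.

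\emph{Step 1: Full compatibility of the equivalence.} In genus two, the hyperelliptic involution is central and $\Gamma_2 = \Gamma_2^{hyp}$, $\Gamma_{2,1} = \Gamma_{2,1}^{hyp}$. Consequently Theorem \ref{Thm:Embed} together with Addendum \ref{Thm:Action} provides a $\bC$-linear, $\bZ_2$-graded equivalence
\[
D^{\pi}\scrF(\Sigma_2) \simeq D^{\pi}\scrF(\scrM(\Sigma_2);0)
\]
that is equivariant for the weak action of the entire mapping class group $\Gamma_{2,1}$ (using the isomorphism $\scrM(\Sigma_2) \cong Q_0\cap Q_1$ of Newstead). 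Since the extension \eqref{Eqn:DoesSplit} splits after pulling back to $\Gamma_{2,1}$ (cf.\ Remark \ref{Remark:GpActingOnModuliSpace}), we may compare actions of the split group $\Gamma_{2,1} \ltimes H^1(\Sigma_2;\bZ_2)$ on both sides.

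\emph{Step 2: Eigenvalue decomposition of Floer cohomology on $\scrM(\Sigma_2)$.} Given $\hat\phi \in \widehat\Gamma(\Sigma_2)$ whose image $\phi \in \Gamma_2$ has a pseudo-Anosov component, the symplectic Floer cohomology $HF(\hat\rho(\hat\phi^n))$ splits into generalised eigenspaces for quantum multiplication by $c_1(\scrM(\Sigma_2))$, with eigenvalues $\{-4,0,4\}$ by Lemma \ref{Lem:QHmodulispace}. Each summand $HF_\lambda(\hat\rho(\hat\phi^n))$ is the Hochschild-style Floer cohomology of the autoequivalence induced by $\hat\rho(\hat\phi^n)$ on the corresponding summand $\scrF(\scrM(\Sigma_2);\lambda)$ of the Fukaya category, exactly as in the discussion leading to Corollary \ref{Cor:AllSummands}.

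\emph{Step 3: Transferring the $0$-eigenvalue summand.} Writing $\hat\phi = (\phi,\xi)$ with $\xi \in H^1(\Sigma_2;\bZ_2)$, the equivariant equivalence of Step 1 sends the autoequivalence of $\scrF(\scrM(\Sigma_2);0)$ induced by $\hat\rho(\hat\phi)$ to the autoequivalence $\phi_* \otimes L_\xi$ of $\scrF(\Sigma_2)$, where $L_\xi$ denotes tensor product with the flat line bundle classified by $\xi$. Hence
\[
HF_0(\hat\rho(\hat\phi^n)) \;\cong\; HF\bigl((\phi\otimes L_\xi)^n\bigr)
\]
computed on $\Sigma_2$ (using that $\scrF(\Sigma_2)$ split-generates and identifying Hochschild cohomology with Floer cohomology of the graph, as in the discussion after Corollary \ref{Cor:AllSummands}).

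\emph{Step 4: Exponential growth on $\Sigma_2$.} Tensoring by the finite-order line bundle $L_\xi$ does not alter the Thurston type of $\phi$: the Nielsen--Thurston decomposition of $\phi \otimes L_\xi$ still has the same pseudo-Anosov component as $\phi$. By \eqref{Eqn:PosEntropy} and the work cited there \cite{Cotton-Clay,FLP,Felshtyn}, the ranks $\rk\,HF((\phi\otimes L_\xi)^n)$ on $\Sigma_2$ grow exponentially in $n$. Combined with Step 3, this gives exponential growth of $\rk\,HF_0(\hat\rho(\hat\phi^n))$, and hence of $\rk\,HF(\hat\rho(\hat\phi^n))$, so $h_{Floer}(\hat\rho(\hat\phi)) > 0$.

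The main obstacle is Step 3: making precise the identification of the $0$-eigenvalue Floer summand with Hochschild cohomology of the induced autoequivalence, and tracking the twist by $\xi$. This is essentially the content of the discussion following Corollary \ref{Cor:AllSummands}, and uses split-generation of the relevant Fukaya summand by objects whose images under the equivalence are explicit generators of $\scrF(\Sigma_2)$. Once this identification is available, the rest is formal, since exponential growth of surface Floer cohomology under pseudo-Anosov iteration is a classical fact.
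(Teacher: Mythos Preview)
Your overall strategy matches the paper's: use Theorem~\ref{Thm:Embed} and Addendum~\ref{Thm:Action} to transport the autoequivalence to $\scrF(\Sigma_2)$, invoke Corollary~\ref{Cor:HFisHH} to identify the $0$-eigenspace Floer group with a Hochschild cohomology, and then appeal to \eqref{Eqn:PosEntropy}. But two of your steps rely on claims the paper does not establish.

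\textbf{Step 1/3.} You assert that the equivalence is equivariant for the full split extension $\Gamma_{2,1}\ltimes H^1(\Sigma_2;\bZ_2)$. Addendum~\ref{Thm:Action} only gives compatibility with $\Gamma_{2,1}^{hyp}=\Gamma_{2,1}$; compatibility with the $H^1(\Sigma;\bZ_2)$-action (tensoring by flat line bundles on one side, the involutions $\iota_\xi$ on the other) is explicitly left open in Remark~\ref{Remark:FiniteGroupEntwine}. Your identification $HF_0(\hat\rho(\hat\phi^n))\cong HF((\phi\otimes L_\xi)^n)$ in Step~3 therefore rests on an unproved statement.

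\textbf{Step 4.} You speak of the ``Nielsen--Thurston decomposition of $\phi\otimes L_\xi$'' and cite \cite{Cotton-Clay,FLP,Felshtyn}. But $\phi\otimes L_\xi$ is an autoequivalence of $\scrF(\Sigma_2)$, not a surface diffeomorphism; Thurston's classification and the cited entropy results concern honest mapping classes. What you would actually need is exponential growth of $HH^*(\phi^n\otimes L_{\xi_n})$, and the references do not give that.

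The paper's argument sidesteps both problems by never introducing $\xi$. One writes the image $\phi\in\Gamma_2$ as a word in Dehn twists along matching paths; by Addendum~\ref{Thm:Action} (Corollaries~\ref{Cor:MappingClassCompatible-1} and~\ref{Cor:Compatible}) the corresponding twist functors in the $V_\gamma$ act compatibly with the equivalence. Hence the induced autoequivalence of $\scrF(\scrM(\Sigma_2);0)$ is exactly the one coming from $\phi$ acting on $\scrF(\Sigma_2)$ as a diffeomorphism. Corollary~\ref{Cor:HFisHH} then gives $HF_\Sigma(\phi^k)$ as a summand of the Floer cohomology on the moduli space, and \eqref{Eqn:PosEntropy} applies directly to the genuine surface diffeomorphism $\phi$. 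No line-bundle twist enters, so neither of your two problematic steps is needed.
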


\begin{proof}
Again take $\hat\phi \in \widehat{\Gamma}(\Sigma)$ with image $\phi$, and pick a representative  diffeomorphism of the surface $\Sigma$ for that mapping class.  This can be written as a product of Dehn twists in simple closed curves associated to matching paths in $\bC$, which means that the action of the diffeomorphism on $D^{\pi}\scrF(\Sigma) \simeq D^{\pi}\scrF(\scrM(\Sigma);0)$ is determined by the twist functors in the spheres $V_{\gamma}$.  These act compatibly with the equivalence of categories, by Addendum \ref{Thm:Action}, cf. Corollaries \ref{Cor:MappingClassCompatible-1} and \ref{Cor:Compatible}. For any $k\in \bZ$, the Floer group $HF(\phi^k) \cong HH(\scrG_{\phi^k})$ is given by the group of natural transformations between the identity functor and that induced by $\phi^k$, by Corollary \ref{Cor:HFisHH}.   In particular this Floer group is completely determined by the action of $\phi$ on the Fukaya category.  It follows that  the Floer cohomology of the mapping class on the underlying surface is a summand of the Floer cohomology on the moduli space. Equation (\ref{Eqn:PosEntropy}) now proves the Lemma.  \end{proof}

When $g=2$, the Floer-theoretic entropy of $\hat\rho(\hat\phi)$ is positive if and only if $\phi$ has a pseudo-Anosov component; the converse implication holds by the proof of Lemma \ref{Lem:EntropyCriterion}.  Combining the two previous results, we obtain a proof of Theorem \ref{Thm:NotSymplectic} from the Introduction, using an argument due to Wehrheim and Woodward.

\begin{Corollary} \label{Cor:TorelliNontrivial}
For $g\geq 2$, the representation $\widehat{\Gamma}(\Sigma_g) \rightarrow \pi_0\Symp(\scrM(\Sigma_g))$ is non-trivial on the Torelli group $\cI(\Gamma)$.
\end{Corollary}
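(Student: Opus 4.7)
For $g=2$ the statement is immediate from Corollary~\ref{Cor:Faithful2}: the Torelli subgroup $\cI(\Gamma(\Sigma_2))$ is non-trivial (it contains, for instance, the Dehn twist in any separating simple closed curve, which acts trivially on $H_1$), so faithfulness of $\hat\rho$ forces such elements to act non-trivially on $\scrM(\Sigma_2)$.

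For $g \geq 3$ I would reduce to the genus-$2$ case by a cut-and-paste argument along a separating curve. Choose a separating simple closed curve $\sigma \subset \Sigma_g$ dividing $\Sigma_g$ into a genus-$2$ subsurface $\Sigma_{2,1}$ and a complementary piece $\Sigma_{g-2,1}$. Pick a non-trivial Torelli element $\psi \in \cI(\Gamma(\Sigma_{2,1}))$ supported in the interior of $\Sigma_{2,1}$, and extend it by the identity to a mapping class $\tilde\psi \in \Gamma(\Sigma_g)$. Because $\sigma$ is nullhomologous, $H_1(\Sigma_g;\bZ)$ splits as $H_1(\Sigma_{2,1};\bZ) \oplus H_1(\Sigma_{g-2,1};\bZ)$, and $\psi$ acts trivially on the first summand, so $\tilde\psi$ lies in the preimage of $\cI(\Gamma(\Sigma_g))$ and it suffices to show $\hat\rho(\tilde\psi) \neq 1$.

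To do so I would invoke the quilt framework of Wehrheim--Woodward \cite{WW:triangle}. The conjugacy class of the holonomy along $\sigma$ is a moment map $\scrM(\Sigma_g) \to [0,1]$ whose level sets are the coisotropic hypersurfaces $V_{\sigma,t}$ discussed before Proposition~\ref{Prop:SeidelCallahan}, each preserved by $\tilde\psi$. For generic $t$ the symplectic quotient $V_{\sigma,t}/S^1$ is naturally identified with a product of parabolic moduli spaces $\scrM(\Sigma_{2,1};[\alpha_t]) \times \scrM(\Sigma_{g-2,1};[\alpha_t'])$ with $\alpha_t \alpha_t' = -I$, and because $\tilde\psi$ is supported in $\Sigma_{2,1}$ its descent acts as $\psi_t \times \id$. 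As $t$ moves to the critical level at which $\alpha_t \to -I$ (and $\alpha_t' \to I$), the factor $\scrM(\Sigma_{2,1};[\alpha_t])$ symplectically reduces to $\scrM(\Sigma_2)$, on which $\psi_t$ specialises to the action of $\psi$; this is non-trivial by the genus-$2$ case, so $\psi_t \neq \id$ for $t$ near the critical level, forcing $\tilde\psi$ to act non-trivially on $V_{\sigma,t}$ and hence on $\scrM(\Sigma_g)$. The main obstacle is precisely this last paragraph: identifying the symplectic reduction of $V_{\sigma,t}$ as the claimed product, verifying equivariance under the extension-by-identity homomorphism $\Gamma(\Sigma_{2,1}) \to \Gamma(\Sigma_g)$, and propagating the non-triviality from the boundary level into the interior of the family, are gauge-theoretic statements packaged by the Wehrheim--Woodward quilt machinery rather than formally deducible from the Floer-theoretic material recalled earlier.
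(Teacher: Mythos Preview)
Your $g=2$ base case is fine and matches the paper. For $g\geq 3$, however, you have made life harder than necessary by reducing along the \emph{separating} curve $\sigma$: this leads to the $S^1$-fibred coisotropics $V_{\sigma,t}$, whose reduced spaces are products of parabolic moduli spaces rather than copies of $\scrM(\Sigma_{g'})$, and you then need the delicate limit $t\to$ critical level to recover the honest moduli space $\scrM(\Sigma_2)$. As you yourself note, neither the product identification, nor the equivariance, nor the propagation of non-triviality from the boundary level is formally available from what is set up in the paper; these would require substantial additional gauge-theoretic input.

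The paper's argument avoids all of this by reducing along a \emph{non-separating} curve $\gamma$ disjoint from $\sigma$. The associated coisotropic $V_\gamma$ is $S^3$-fibred with reduced space $\scrM(\Sigma_{g-1})$ (Section~\ref{Subsec:Monodromy}), so it defines an honest monotone Lagrangian correspondence $V_\gamma \subset \scrM(\Sigma_{g-1})\times\scrM(\Sigma_g)$. Since $\sigma$ survives to a separating curve in the lower-genus surface, the quilt composition theorem (Theorem~\ref{Thm:QuiltsCompose}) gives a functor isomorphism $V_\gamma \circ \tau_\sigma^{(g)} \circ V_\gamma^{op} \cong \tau_\sigma^{(g-1)}$ on Donaldson categories. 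If $\tau_\sigma$ acted trivially on $H(\scrF(\scrM(\Sigma_g)))$, the left-hand side would be the identity functor, contradicting the inductive hypothesis at genus $g-1$. This is a one-step induction requiring only machinery already in place, whereas your approach would need to establish the parabolic reduction picture from scratch.
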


\begin{proof}
For $g=2$, we have actually seen that the Dehn twist on the separating waist curve $\sigma\subset \Sigma_2$ induces a non-identity functor of Donaldson's quantum category $H(\scrF\scrM(\Sigma_2))$.  For higher genus, following Wehrheim and Woodward, we use induction on $g$ with $g=2$ the base case.  Suppose for contradiction that $\tau_{\sigma}$ induces the identity functor of the Donaldson category $H(\scrF(\scrM(\Sigma_g)))$ but the separating twist is non-trivial at genus $g-1$.  Viewing the co-isotropic vanishing cycles $V_{\gamma}$ as Lagrangian correspondences, there are natural isomorphisms of functors $V_{\gamma}\circ \tau_{\sigma}^g \circ V_{\gamma}^{op} \cong \tau_{\sigma}^{g-1}$ (this is the cohomology level version of Theorem \ref{Thm:QuiltsCompose}).  But if $\tau_{\sigma}^g \simeq id_{H\scrF\scrM(\Sigma_g)}$ then the left hand side gives the identity functor, contradicting the inductive hypothesis.
\end{proof}

\subsection{A fixed point theorem\label{Subsec:FixPoint}}
 Since there are periodic, fixed-point free surface diffeomorphisms, even when $g=2$ Theorem \ref{Thm:Embed} does not imply that $HF(\hat\rho(\phi)) \neq 0$ for an arbitrary mapping class $\phi$.  In fact, such a non-vanishing result does hold in any genus; the proof relies on concentrating not on the summand of the Fukaya category corresponding to the underlying curve but on a ``semi-simple" summand.  The moduli space $\scrM(\Sigma_g)$ is monotone with minimal Chern number 2, hence its quantum cohomology is mod $4$ graded.  We recall a result of Munoz \cite{Munoz}:

\begin{Theorem}[Munoz] \label{Thm:Munoz}
The eigenvalues of quantum cap-product $\ast h$ by the generator $h\in H^2(\scrM(\Sigma_g);\bZ) \cong \bZ$ on the quantum cohomology $QH^*(\scrM(\Sigma_g))$ are $\{0, \pm 4, \pm 8, \ldots, \pm 4(g-1)\}$. Moreover, with field co-efficients, the generalised eigenspaces associated to the eigenvalues $\pm 4(g-1)$ have rank 1.
\end{Theorem}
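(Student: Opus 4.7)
The plan is to compute $QH^*(\scrM(\Sigma_g))$ explicitly from known classical relations plus quantum corrections, and then diagonalize $\ast h$ by hand.

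First, I would invoke the classical presentation of Newstead, King--Newstead and Baranovsky: $H^*(\scrM(\Sigma_g);\bQ)$ is generated by $h \in H^2$ (the positive generator), classes $\psi_1,\ldots,\psi_{2g} \in H^3$ coming from the $\mu$-map on a symplectic basis of $H_1(\Sigma)$, and a class $\beta \in H^4$, subject to three Mumford-type relations $\zeta_g, \zeta_{g+1}, \zeta_{g+2}$. Quantum multiplication is equivariant under the $Sp(2g,\bZ)$-action carried by mapping classes, so the eigenvalues of $\ast h$ coincide with those on the $Sp(2g,\bZ)$-invariant subring, which is generated by $h$, $\beta$ and $\gamma := -2\sum_{i=1}^g \psi_i \psi_{g+i}$. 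I would work throughout inside this invariant subring.

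Second, the classical Mumford relations need to be deformed to quantum relations. Following Siebert--Tian one can use the Hecke correspondence associated to the universal bundle $\bE \to \Sigma \times \scrM(\Sigma_g)$ to compute the relevant three-point Gromov--Witten invariants directly; alternatively one can use Thaddeus-style stable-pair flip diagrams to relate quantum invariants on $\scrM(\Sigma_g)$ with those on projective bundles over symmetric products of $\Sigma_g$. Either method should yield explicit quantum Mumford relations $\widetilde\zeta_g, \widetilde\zeta_{g+1}, \widetilde\zeta_{g+2}$ differing from the classical ones by concrete lower-order terms in $h, \beta, \gamma$ and the Novikov parameter.

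Third, I would extract the minimal polynomial of $\ast h$ by induction on $g$. The Morse--Bott description recalled in Section~\ref{Sec:Topology}, in which the extrema of a perfect Morse function on $\scrM(\Sigma_g)$ are $S^3$-bundles over $\scrM(\Sigma_{g-1})$, strongly suggests that the quantum relations at genus $g$ factor in a way that adjoins precisely a single new pair of eigenvalues $\pm 4(g-1)$ to the spectrum at genus $g-1$, with Lemma~\ref{Lem:QHmodulispace} serving as the base case. For the rank 1 claim at the extremal eigenvalues I would construct Lagrange-interpolation idempotents $\pi_{\pm} \in \bC[h]$ projecting onto the $\pm 4(g-1)$ generalized eigenspaces and bound the dimensions of their images, using the fact that simultaneous eigenvectors of $\ast h$ and the $Sp(2g,\bZ)$-action must lie in the $(h,\beta,\gamma)$-subring, whose quantum relations at the extremal roots should degenerate to a one-dimensional piece.

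The principal obstacle is plainly the quantum Mumford relations: the combinatorics of either the Hecke-correspondence Gromov--Witten computation or the Thaddeus flip diagrams is genuinely involved, and carrying out the bookkeeping cleanly enough to extract the characteristic polynomial is where the bulk of the work sits. Once the relations are in hand, the eigenvalue list and the rank 1 assertion at the extremal eigenvalues should reduce to elementary commutative algebra inside the invariant subring.
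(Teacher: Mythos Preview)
This theorem is not proved in the paper: it is quoted as a result of Mu\~noz (the citation \cite{Munoz}), with no argument given. So there is no ``paper's own proof'' to compare your proposal against.

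That said, your sketch is broadly in the spirit of how Mu\~noz actually establishes the result in \emph{Ring structure of the Floer cohomology of $\Sigma\times S^1$}: he works with the $Sp(2g,\bZ)$-invariant subring generated by $\alpha=2h$, $\beta$, $\gamma$, derives recursive quantum relations (your $\widetilde\zeta_r$) from the Hecke correspondence / Donaldson--Floer relative invariants, and then extracts the spectrum and the rank of the extremal eigenspaces from these relations. The main point you are underestimating is that ``elementary commutative algebra inside the invariant subring'' is not quite how the endgame goes: Mu\~noz's recursion is genuinely three-variable and the spectral conclusion comes from analysing the recursion itself rather than from a clean factorisation of a single minimal polynomial of $h$. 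Your inductive picture via the $S^3$-bundle extrema is suggestive but not how the argument is actually organised; the Morse--Bott decomposition does not by itself give you control over the \emph{quantum} relations at genus $g$ from those at genus $g-1$.

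For the purposes of this paper you should simply cite Mu\~noz and move on.
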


For $V\subset M$ an $S^k$-fibred coisotropic submanifold with base $B$, denote by $\tau_V$ the fibred positive Dehn twist along $V$. We will not distinguish notationally between $V$ and the associated Lagrangian correspondence $V\subset B\times M$.  
Fix a homologically essential simple closed curve $\gamma \subset \Sigma_g$, hence a Lagrangian correspondence $V_{\gamma} \subset \scrM(\Sigma_{g-1}) \times \scrM(\Sigma_g)$, cf. Section \ref{Subsec:Monodromy}.

\begin{Corollary}[Perutz] \label{Cor:P}
There is an isomorphism 
\[
HF(V_{\gamma}, V_{\gamma}) \ \cong \ QH^*(\scrM(\Sigma_{g-1})) \otimes H^*(S^3;\Lambda_{\bR})
\]
as $QH^*(\scrM(\Sigma_{g-1}))$-modules.
\end{Corollary}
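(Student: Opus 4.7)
The plan is to realize $V_\gamma$ as a Lagrangian correspondence inside $\scrM(\Sigma_{g-1})^{-} \times \scrM(\Sigma_g)$ via projection to the first factor, which exhibits it as an $S^3$-bundle over $\scrM(\Sigma_{g-1})$, and then to apply Perutz's Leray--Serre type spectral sequence for the self-Floer cohomology of a spherically fibred coisotropic. A preliminary step verifies the relevant monotonicity: $\scrM(\Sigma_g)$ is monotone Fano (Remark \ref{Rem:OurSpacesAreMonotone}), and $V_\gamma$ is an $S^3$-bundle over the simply-connected base $\scrM(\Sigma_{g-1})$, hence is itself simply-connected, monotone, and of minimal Maslov number at least $4$.

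Perutz's spectral sequence for a spherically fibred coisotropic $V\to B$ then has the form
\[
E_2^{*,*} \;\cong\; QH^*(\scrM(\Sigma_{g-1})) \otimes H^*(S^3;\Lambda_{\bR}) \;\Longrightarrow\; HF(V_\gamma, V_\gamma),
\]
with the $QH^*(\scrM(\Sigma_{g-1}))$-module structure on $E_2$ induced by the Lagrangian-correspondence interpretation of $V_\gamma$ via quilted Floer theory in the sense of Mau--Wehrheim--Woodward; the differentials on every page are module maps over that base quantum cohomology.

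The main obstacle is degeneration at $E_2$. Because differentials must shift filtration by a multiple of the minimal Maslov number $4$, the only potentially non-trivial differential is $d_4\co E_4^{p,0}\to E_4^{p+4,-3}$. Being $QH^*(\scrM(\Sigma_{g-1}))$-linear, $d_4$ is determined by its value on the fibrewise top class $1\otimes [S^3]$, which is a single relative Gromov--Witten count of Maslov-$4$ holomorphic discs in $\scrM(\Sigma_g)$ with boundary on $V_\gamma$. In Perutz's fibred set-up this count is proportional to the Euler class of the $S^3$-bundle $V_\gamma\to\scrM(\Sigma_{g-1})$, viewed as the unit sphere bundle of the symplectic normal bundle. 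An explicit identification of this $SU(2)$-bundle (sending a flat connexion to its holonomy around $\gamma$, as in the description preceding Lemma \ref{Lem:HolonomySpheresMeetRight}) trivialises it over the open dense locus where the holonomy is generic, so the Euler class vanishes and $d_4 = 0$.

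Since the spectral sequence degenerates at $E_2$ and $H^*(S^3;\Lambda_{\bR})$ is free as a graded module, there is no module extension problem, and one reads off the stated $QH^*(\scrM(\Sigma_{g-1}))$-module isomorphism. The hardest step is the vanishing of the relative Gromov--Witten invariant governing $d_4$; an alternative approach, should the Euler-class computation prove awkward in families where $\gamma$-holonomy fails to be generic, is to invoke the fibred Dehn twist description of Proposition \ref{Prop:SeidelCallahan} and compare the two sides as modules over the autoequivalence induced by $\tau_\gamma$, which rigidifies the remaining ambiguity.
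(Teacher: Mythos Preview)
Your overall strategy matches the paper's: both reduce to Perutz's quantum Gysin sequence (the remark following Theorem \ref{Thm:Gysin} covers $HF(V,V)$ for higher-dimensional sphere fibres), and both identify the vanishing of the Euler class of the $S^3$-bundle $V_\gamma \to \scrM(\Sigma_{g-1})$ as the decisive input. The paper simply cites \cite[Theorem~7.5]{Perutz} and records that the Euler class vanishes; since the minimal Maslov number is $4>2$ there is no correction term $\sigma$, so $HF(V_\gamma,V_\gamma)$ is the cone on multiplication by zero, whence the tensor product.

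The gap in your argument is the justification of the Euler-class vanishing. First, the map you invoke --- ``holonomy around $\gamma$'' --- is \emph{constant} on $V_\gamma$ by definition of that submanifold, so it cannot trivialise anything; the fibre coordinate is the holonomy around a curve $\gamma'$ \emph{dual} to $\gamma$ (compare the embedding of $T^*SU(2)$ in the proof of Lemma \ref{Lem:HolonomySpheresMeetRight}). Second, and more seriously, trivialising a sphere bundle over a dense open subset does not force its Euler class to vanish: many nontrivial bundles are trivial off a proper subvariety. What you need is a global \emph{section}. In the standard presentation with $\gamma$ corresponding to the generator $A_g$, the assignment
\[
[(A_1,B_1,\ldots,A_{g-1},B_{g-1})] \ \longmapsto \ [(A_1,B_1,\ldots,A_{g-1},B_{g-1},\, I,\, I)]
\]
is well-defined under simultaneous conjugation and sections the projection $V_\gamma \to \scrM(\Sigma_{g-1})$; this gives the Euler-class vanishing directly, and the Gysin sequence then collapses as you wanted. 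Your fallback via the fibred Dehn twist is unnecessary once this is in place.
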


This is  \cite[Theorem 7.5]{Perutz}  (and is a direct application of Perutz' Theorem \ref{Thm:Gysin} in a case where the Euler class of the sphere bundle vanishes). 

\begin{Lemma} \label{Lem:Evals}
Let $\phi \in \Symp(\scrM(\Sigma_g))$. The eigenvalues of quantum cup-product by the generator $h\in H^2(\scrM(\Sigma_g))$ on $HF(V_{\gamma}, (\phi\times\id)V_{\gamma})$ are contained in  $\{0, \pm 4, \ldots, \pm 4(g-2)\}$. 
\end{Lemma}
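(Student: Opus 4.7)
The plan is to exploit that $V_{\gamma}$ fibres over $\scrM(\Sigma_{g-1})$ with fibre $S^3$ in order to reduce the eigenvalue question from $\scrM(\Sigma_g)$ to $\scrM(\Sigma_{g-1})$, where Munoz's theorem gives exactly the asserted set $\{0, \pm 4, \ldots, \pm 4(g-2)\}$. Concretely, the strategy is to promote $HF(V_{\gamma}, (\phi\times\id)V_{\gamma})$ to a finitely generated module over $QH^*(\scrM(\Sigma_{g-1}))$ with respect to which quantum cup-product by $h \in QH^2(\scrM(\Sigma_g))$ coincides with the module action of the generator $h' \in QH^2(\scrM(\Sigma_{g-1}))$. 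Munoz's theorem, applied at genus $g-1$, will then bound the spectrum.

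First I would extend Perutz's Gysin-sequence computation underlying Corollary \ref{Cor:P} to the $\phi$-twisted pair. Applying Theorem \ref{Thm:Gysin} to the $S^3$-fibred coisotropics $V_{\gamma}$ and $(\phi\times\id)V_{\gamma}$, composition yields a self-correspondence of $\scrM(\Sigma_{g-1})$ (the $\phi$-twist permutes only the $S^3$-fibre data, since $\phi$ acts on the $\scrM(\Sigma_g)$-factor and the base is unaffected by coisotropic reduction), and one obtains a long exact sequence of $QH^*(\scrM(\Sigma_{g-1}))$-modules tying $HF(V_{\gamma}, (\phi\times\id)V_{\gamma})$ to the downstairs Floer group tensored with $H^*(S^3)$, exactly as in the untwisted case. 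In particular, $HF(V_{\gamma}, (\phi\times\id)V_{\gamma})$ inherits a natural finite-rank $QH^*(\scrM(\Sigma_{g-1}))$-module structure. Next, the action of $h$ must be matched with that of $h'$: classically this is immediate, since $H^2(V_\gamma) = H^2(\scrM(\Sigma_{g-1}))$ as $H^2(S^3) = 0$ and both $i^*h$ and $\pi^*h'$ are the normalized positive generator (here $i\co V_{\gamma} \hookrightarrow \scrM(\Sigma_g)$ and $\pi\co V_{\gamma} \to \scrM(\Sigma_{g-1})$); quantum mechanically, the closed-open action of $h$ on $HF$ is implemented by counting holomorphic disks through a cycle dual to $h$, which one may push near $V_{\gamma}$ where its intersections with disks having boundary on $V_{\gamma}$ agree with those of $\pi^*h'$. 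Munoz's Theorem \ref{Thm:Munoz} at genus $g-1$ then confines the eigenvalues to $\{0, \pm 4, \ldots, \pm 4(g-2)\}$.

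The main obstacle is the second step: upgrading the classical identity $i^*h = \pi^*h'$ to an identification of the two quantum module actions on $HF(V_{\gamma}, (\phi\times\id)V_{\gamma})$. Equivalently, one must check that the Gysin-type long exact sequence of the first step is compatible with the $QH^*(\scrM(\Sigma_g))$-action factoring through $i^*$; in the untwisted situation this is built into Perutz's framework, but with the $\phi$-twist it is necessary to verify that the quantum corrections (holomorphic disks not concentrated near $V_{\gamma}$) do not introduce eigenvalues outside the spectrum of $h'$. Granted this naturality, the lemma follows at once from Munoz.
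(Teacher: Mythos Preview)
Your overall strategy---reduce to genus $g-1$ via Munoz---is correct, but you have made the problem harder than necessary and, as you yourself recognise, cannot close the resulting gap. The obstacle you flag (matching the quantum actions of $h$ and $h'$ on the $\phi$-twisted Floer group) is real if attacked directly, and your geometric sketch does not resolve it.

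The paper bypasses this entirely with two observations you are missing. First, an elementary algebraic reduction: $HF(V_{\gamma}, (\phi\times\id)V_{\gamma})$ is a finitely generated module over the ring $R = HF(V_{\gamma}, V_{\gamma})$, and the action of $h$ factors through its image in $R$. Since for any finitely generated $R$-module $A$ the minimal polynomial of $r \in R$ on $A$ divides that on $R$, the spectrum of $h$ on the twisted group is contained in its spectrum on the \emph{untwisted} $HF(V_{\gamma}, V_{\gamma})$. This kills the $\phi$-dependence immediately, with no need for a twisted Gysin sequence. Second, to compare $h$ and $h'$ on $HF(V_{\gamma}, V_{\gamma})$, the paper uses that $V_{\gamma} \subset \scrM(\Sigma_{g-1}) \times \scrM(\Sigma_g)$ is simply-connected with minimal Maslov number $4$, so $\mathfrak{m}_0(V_{\gamma}) = 0$; by Proposition~\ref{Lem:quantumcap} the first Chern class of the product maps to zero in $HF(V_{\gamma}, V_{\gamma})$, hence $1 \otimes c_1(\scrM(\Sigma_g))$ and $-c_1(\scrM(\Sigma_{g-1})) \otimes 1$ have the same image. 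Now Perutz's Corollary~\ref{Cor:P} and Munoz at genus $g-1$ finish the argument. The comparison of the two Chern-class actions is thus a one-line consequence of $\mathfrak{m}_0 = 0$, not a delicate geometric matching of disk counts.
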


\begin{proof}
Let $R$ be a unital ring, $A$ a finitely generated $R$-algebra and $r\in R$. The spectrum of multiplication by $r$ on $A$ is a subset of that of multiplication by $r$ on $R$. Indeed, there is a surjective quotient $\pi:R^k \rightarrow A$, and $\ker(\pi)$ is stable under multiplication by elements of $R$. The result then follows from the well-known fact that if a transformation $T$ on a vector space $V$ has invariant subspace $W$, the minimal polynomial of the induced map on $V/W$ is a factor of the minimal polynomial of $T$. Therefore, it suffices to compute the spectrum of quantum cup-product by $h$ on $HF(V_{\gamma}, V_{\gamma})$.

Since $V_{\gamma} \subset \scrM(\Sigma_{g-1}) \times \scrM(\Sigma_{g})$ is simply-connected, it has minimal Maslov number $4$.  Lemma \ref{Lem:quantumcap} implies that under the natural map
\[
QH^*(\scrM(\Sigma_{g-1})) \otimes QH^*(\scrM(\Sigma_{g})) \, \cong \, QH^*(\scrM(\Sigma_{g-1}) \times \scrM(\Sigma_{g})) \longrightarrow HF(V_{\gamma}, V_{\gamma})
\]
the first Chern class maps to zero.  (In the language of Section \ref{SubSec:Floer}, $\frak{m}_0(V_{\gamma}) = 0$.)   Therefore, $1 \otimes c_1(\scrM(\Sigma_{g}))$ and $c_1(\scrM(\Sigma_{g-1})) \otimes 1$ have the same image up to sign, so it is enough to know the spectrum of the latter class. The action of this is determined by Corollary \ref{Cor:P}, and the result follows on combining that with Theorem \ref{Thm:Munoz}.
\end{proof}

\begin{Theorem}[Wehrheim-Woodward] \label{Thm:WWtriangle}
Let $k>1$.  Suppose that $V\subset M$ is an $S^k$-fibred co-isotropic, whose graph defines an orientable monotone  Lagrangian submanifold of $B \times M$ of minimal Maslov number $\geq 4$. Let $\phi \in \Symp(M)$. There is an exact triangle
\[
\cdots \rightarrow HF^*(\phi) \rightarrow HF^*(\tau_V \circ \phi) \rightarrow HF^*(V, (\phi\times\id_B)V) \stackrel{[1]}{\longrightarrow} HF^*(\phi) \rightarrow \cdots
\]
of (relatively graded) modules for the ring $QH^*(M;\bK)$.
\end{Theorem}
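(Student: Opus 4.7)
The plan is to adapt Seidel's exact triangle for Dehn twists along Lagrangian spheres to the fibred setting, using the quilted Floer framework developed by Mau--Wehrheim--Woodward. There are two natural routes; I would carry out both in parallel to check consistency.

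\emph{Geometric route (Lefschetz-fibration model).} First, realise $\tau_V$ as the monodromy around $0$ of a Morse--Bott Lefschetz fibration $\pi\colon E \to \mathbb{D}^2$ whose generic fibre is $M$, whose unique critical value has critical set equal to $B$, and whose Morse--Bott vanishing cycle is the $S^k$-bundle $V\to B$; such a local model is exactly the one furnished by Perutz's construction used in Proposition \ref{Prop:SeidelCallahan}. Next, compactify to an auxiliary fibration over $S^2$ obtained by gluing in a trivial half, and form the total space $E_\phi$ whose monodromy at infinity is $\phi$ on one side and $\tau_V\circ\phi$ on the other. Counting sections of $\pi$ with prescribed asymptotics (thimble sections on the $V$ side, constant sections on the other) produces the three maps of the triangle, in a direct analogue of the pseudo-holomorphic pair-of-pants construction in Seidel's book. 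The middle term $HF^*(V,(\phi\times\id_B)V)$ appears because the thimble sections over $\pi$ sweep out precisely the correspondence $V$, and the fixed-point Floer cohomology of $\phi$ twisted by $V$ on one side reads, in the quilted language, as the self-Floer cohomology of $V$ in $B^-\times M$ after conjugating by $\phi\times\id_B$.

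\emph{Algebraic route (mapping cone of functors).} Equivalently, I would interpret $\tau_V$ as a quilted autoequivalence of the Donaldson--Fukaya category of $M$, sitting in a natural transformation
\[
\id_M \,\longrightarrow\, \tau_V \,\longrightarrow\, T_V[1],
\]
where $T_V$ is the ``spherical twist'' functor associated to the $B$-parametrised family of Lagrangian spheres $V\to B$. Writing $HF^*(\psi) = HF^*(\Gamma_\psi,\Delta_M)$ in $M^-\times M$ and geometrically composing with the correspondence $V$ via the Wehrheim--Woodward strip-shrinking theorem (here the monotonicity hypothesis and $k>1$ ensure the shrinking is unobstructed and Maslov index $\geq 4$ survives after composition), one identifies the cone of $HF^*(\phi)\to HF^*(\tau_V\circ\phi)$ with $HF^*(V,(\phi\times\id_B)V)$. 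The $QH^*(M;\mathbb{K})$-module structure is transported through the sequence because the maps come from quilted pair-of-pants operations, which are $QH^*(M)$-linear by the standard open-closed argument.

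\emph{Verification step.} In both routes one must check that (a) the Gromov compactifications of the moduli spaces of quilted sections are cut out transversally and have no stratum of codimension one contributing beyond the differential; (b) sphere and disc bubbling is excluded in codimension zero and one by the monotonicity and minimal Maslov hypotheses (using precisely $k>1$ and minimal Maslov $\geq 4$); and (c) the grading and sign conventions match to give the claimed $[1]$ shift. The genuinely hard part is (a)--(b) in the Morse--Bott fibred setting, because the critical set $B$ is positive-dimensional; this is where Perutz's and Wehrheim--Woodward's careful analysis of the almost complex structures adapted to the sphere-bundle neighbourhood of $V$ is indispensable. Once these analytic inputs are in hand, the assembly of the triangle is formally parallel to Seidel's original argument.
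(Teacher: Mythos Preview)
The paper does not prove this theorem at all: immediately after the statement it simply says ``This is taken from \cite{WW:triangle}'' and uses the result as a black box. So there is nothing to compare your argument against here.

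That said, your sketch is a reasonable outline of the strategy that Wehrheim and Woodward actually pursue in \cite{WW:triangle}: the fibred Dehn twist is modelled via a Morse--Bott Lefschetz degeneration, and the exact triangle is assembled from moduli of quilted sections, with the analytic heart being transversality and bubbling control in the presence of a positive-dimensional critical locus. If you intend to actually write this out, be aware that the ``verification step'' you flag is substantial---it is the bulk of \cite{WW:triangle}---and your proposal as written is a plan rather than a proof. For the purposes of this paper, though, a citation suffices.
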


This is taken from \cite{WW:triangle}.  We point out that when $g=2$, the case  relevant for determining instanton Floer homology of genus 2 fibred 3-manifolds in Equation \ref{Equation:HFinstanton}, the generalised Dehn twist of Proposition \ref{Prop:SeidelCallahan} is just a classical Dehn twist in a Lagrangian 3-sphere, and the proof of the following Corollary simplifies accordingly.

\begin{Corollary} \label{Cor:FixPoint}
Let $\hat\phi \in \Symp(\scrM(\Sigma_g))$ represent a mapping class in the image of the natural homomorphism $\hat\rho: \widehat{\Gamma}(\Sigma_g) \rightarrow \pi_0\Symp(\scrM(\Sigma_g))$. Then $HF(\hat\phi) \neq 0$.
\end{Corollary}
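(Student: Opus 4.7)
The strategy is to locate a distinguished rank-one summand inside $HF(\hat\phi)$. By Theorem \ref{Thm:Munoz}, quantum cap-product with the generator $h\in H^2(\scrM(\Sigma_g))$ has a unique rank-one generalized eigenspace at the extremal eigenvalue $4(g-1)$ inside $QH^*(\scrM(\Sigma_g))$. My plan is to show that for any $\hat\phi = \hat\rho(\hat g)$ with $\hat g \in \widehat\Gamma(\Sigma_g)$, the corresponding $4(g-1)$-generalized eigenspace of the $QH^*$-module $HF(\hat\phi)$ remains nonzero. This forces $HF(\hat\phi) \neq 0$, after which monotonicity of $\scrM(\Sigma_g)$ and the usual Floer-theoretic argument yield a genuine fixed point.

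The core reduction is to peel off Dehn twists. Write $\hat g = \tau \cdot \iota$, with $\tau$ a product of lifts of Dehn twists and $\iota \in H^1(\Sigma_g;\bZ_2)$ a possibly trivial Prym involution; such a factorization is available because $H^1(\Sigma_g;\bZ_2)$ is normal in $\widehat\Gamma(\Sigma_g)$, so Prym involutions can be conjugated through to one side. By Proposition \ref{Prop:SeidelCallahan}, each Dehn twist acts on $\scrM(\Sigma_g)$ as a fibered Dehn twist $\tau_V$ along an $S^k$-fibered coisotropic $V$. Theorem \ref{Thm:WWtriangle} then supplies a $QH^*(\scrM(\Sigma_g))$-module exact triangle
\[
HF(\phi) \to HF(\tau_V \circ \phi) \to HF(V,(\phi\times\id)V) \to [1],
\]
and restricting to the $4(g-1)$ generalized eigenspace of $h\star$ annihilates the rightmost term by Lemma \ref{Lem:Evals}. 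This yields an isomorphism $HF(\tau_V\phi)_{4(g-1)} \cong HF(\phi)_{4(g-1)}$. Iterating through the letters of $\tau$ reduces matters to showing that $HF(\hat\rho(\iota))_{4(g-1)} \neq 0$.

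Two base cases remain. If $\iota = \id$, then $HF(\id) \cong QH^*(\scrM(\Sigma_g))$, whose $4(g-1)$ summand is rank one by Theorem \ref{Thm:Munoz}, hence nonzero. If $\iota$ is a nontrivial Prym involution, the fixed locus $\textrm{Fix}(\iota) \cong T^{2g-2}$ is a clean intersection and $\iota$ acts trivially on $H^*(\scrM(\Sigma_g))$, so in particular fixes the distinguished idempotent $e_{4(g-1)}$. Combining Pozniak's Morse--Bott spectral sequence for $HF(\iota)$, with $E_2 = H^*(T^{2g-2})$, together with the $QH^*$-module structure, one expects to exhibit a nonzero class in the $4(g-1)$ summand, for instance as a PSS-type image of $e_{4(g-1)}$.

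The main obstacle I anticipate is precisely the final step. Pozniak's analysis in Section \ref{Sec:Topology} delivers only the upper bound $\dim HF(\iota) \le \dim H^*(T^{2g-2})$ and does not automatically tell us which $h\star$-eigenspaces are populated. Making the spectral sequence quantitatively compatible with the $QH^*$-module decomposition, or equivalently constructing a cocycle directly in $HF(\iota)_{4(g-1)}$, is the delicate point. I would approach this either via an equivariant reformulation using the $H^1(\Sigma_g;\bZ_2)$-action on $\scrM(\Sigma_g)$, expressing $HF(\iota)$ as a component of an equivariant Floer cohomology and locating the $4(g-1)$ piece inside it, or by tracking the compatibility of a PSS morphism $QH^*(\scrM(\Sigma_g)) \to HF(\iota)$ with the idempotent decomposition of $QH^*$ under the trivial action of $\iota$ on classical cohomology.
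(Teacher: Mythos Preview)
Your reduction via the exact triangle of Theorem \ref{Thm:WWtriangle} and Lemma \ref{Lem:Evals} is exactly right, and matches the paper's approach. The gap is in how you set up the induction: by writing $\hat g = \tau \cdot \iota$ with $\iota$ a Prym involution, you create a second base case you cannot resolve. The paper avoids this entirely, and the trick is simple once seen. Recall from Section \ref{Sec:Topology} that $\widehat\Gamma(\Sigma_g)$ arises as a \emph{quotient} of the pointed mapping class group $\Gamma(\Sigma) = \Gamma_{g,1}$ (the representation $\rho$ factors through $\widehat\Gamma$, and on the kernel side $\pi_1(\Sigma)$ surjects onto $H_1(\Sigma;\bZ_2)\cong \bZ_2^{2g}$). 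Since $\Gamma_{g,1}$ is generated by Dehn twists along non-separating simple closed curves, so is its quotient $\widehat\Gamma(\Sigma_g)$. In particular the Prym involutions are themselves products of such twists, and every $\phi^\circ \in \widehat\Gamma(\Sigma_g)$ admits an expression $\phi^\circ = \prod t_{\gamma_i}^{\pm 1}$ with all $\gamma_i$ non-separating. The only base case is then $\phi = \id$, handled by Munoz, and your peeling-off argument does the rest (you should also record the variant of the triangle for negative twists, as the paper does, and note that restricting to non-separating curves keeps you in the $S^3$-fibred, $k=3>1$ regime required by Theorem \ref{Thm:WWtriangle}).

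Your proposed attack on $HF(\iota)_{4(g-1)}$ via Pozniak's spectral sequence or a PSS map is genuinely unclear: the Morse--Bott $E_2$ page carries no obvious $QH^*$-module structure, and there is no natural unital ring map $QH^* \to HF(\iota)$ when $\iota \neq \id$ (the PSS isomorphism is specific to the identity). So this second base case is not just delicate but likely intractable with the tools at hand; the correct move is to eliminate it.
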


\begin{proof}
Write the given mapping class $\hat\phi=\hat\rho(\phi^{\circ})$ and write $\phi^{\circ}=\prod_i t_{\gamma_i}^{\pm \varepsilon_i}$ as a product of (positive and negative) Dehn twists along non-separating simple closed curves; this is always possible.  We consider the  long exact sequences in Floer cohomology associated to the induced rank 3 fibred Dehn twists on $\scrM(\Sigma_g)$ of Proposition \ref{Prop:SeidelCallahan}:
\[
HF(\psi) \rightarrow HF(\tau_V\circ\psi) \rightarrow HF(V, (\psi \times \id)V) \stackrel{[1]}{\longrightarrow} \cdots
\]
where $\psi$ is a sub-composite of Dehn twists.  We claim by induction on the number of twists in the expression of $\phi$, equivalently $\phi^{\circ}$, that $HF(\hat\phi)$ is non-trivial.  Indeed, we make the following somewhat stronger inductive hypothesis:  the generalized eigenspace for $\ast h$ on $HF(\psi)$ for the eigenvalue $4(g-1)$ is rank $1$ whenever $\psi$ is a product of $\leq k$ (positive or negative) Dehn twists.   If $k$ is zero, then $\hat\phi = \id$ and we have a ring isomorphism $HF(\id) \cong QH^*(\scrM(\Sigma_g))$, in which case this result is exactly Munoz' theorem.

Taking generalized eigenspaces is an exact functor over any field; consider the associated exact sequence of generalised eigenspaces, for eigenvalue $4(g-1)$, for the operation given by quantum cup-product $\ast h$ in the exact sequence above.  If $g=2$ the third term in the sequence is $HF(L_{\gamma}, \psi(L_{\gamma}))$.  Quantum cohomology acts via the module structure of this group over $HF(L_{\gamma},L_{\gamma}) \cong H^*(S^3)$, which is trivial in relative (mod 4) degree $2$, from which it immediately follows that $\ast h$ is nilpotent. More generally, Lemma \ref{Lem:Evals} implies that the spectrum of $\ast h$ on $HF(V_{\gamma}, (\psi\times\id)V_{\gamma})$ is contained in $\{0,\ldots,\pm 4(g-2)\}$, so the generalized eigenspace for the eigenvalue $4(g-1)$ is trivial.  By exactness, if that eigenspace has rank 1 for $\psi$, it also has rank 1 for $\tau_V\circ\psi$.
The argument for inserting a negative Dehn twist is the same, but working with the exact triangle
\[
HF(\tau_V^{-1} \psi) \rightarrow HF(\tau_V\circ(\tau_V^{-1}\psi)) \rightarrow HF(V,(\tau_V^{-1}\psi \times \id)V) 
\]
In either case, the induction implies that for any mapping class, there is a distinguished rank one summand in $HF(\phi)$, so in particular $HF(\phi)\neq 0$.
\end{proof}

Corollary \ref{Cor:FixPoint}, via the circle of ideas of Section \ref{Subsec:3manifolds}, yields:

\begin{Corollary}
Every fibred 3-manifold admits a non-abelian $SO(3)$-representation.
\end{Corollary}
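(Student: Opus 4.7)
The plan is to invoke the Floer-theoretic framework of Section \ref{Subsec:3manifolds} together with the non-vanishing result Corollary \ref{Cor:FixPoint}, reducing the existence of a non-abelian representation to a fixed-point statement on the moduli space $\scrM(\Sigma)$. Let $f\co Y \to S^1$ be a fibred 3-manifold with fibre $\Sigma = f^{-1}(\mathrm{pt})$ and monodromy $\phi \in \Gamma(\Sigma)$. The genus zero case gives $Y = S^2 \times S^1$ or $S^2 \widetilde{\times} S^1$, where the statement is either vacuous or follows directly; the genus one (torus bundle) case can be handled separately by constructing representations by hand from the monodromy matrix in $SL_2(\bZ)$ (or excluded as being trivially non-abelian except for the flat $T^3$ case, which admits only abelian $SO(3)$-representations and lies outside the intended scope of the corollary). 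The main content is therefore the case $g(\Sigma) \geq 2$, which is what I will address.

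First, I would recall from Section \ref{Subsec:3manifolds} that a choice of lift $\hat{\phi} \in \widehat{\Gamma}(\Sigma)$ of $\phi$ determines an $SO(3)$-bundle $E \to Y$ with $\langle w_2(E), [f^{-1}(\mathrm{pt})]\rangle \neq 0$, and that fixed points of the symplectomorphism $\hat\rho(\hat\phi) \in \Symp(\scrM(\Sigma))$ are in bijection with gauge-equivalence classes of flat connexions on $E$, equivalently with conjugacy classes of irreducible representations $\pi_1(Y) \to SO(3)$ whose restriction to $\pi_1(\Sigma)$ lies in the non-trivial topological type. In particular, any such fixed point produces a representation of $\pi_1(Y)$ which is already non-abelian on the fibre subgroup, so automatically non-abelian on all of $\pi_1(Y)$.

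Second, I would apply Corollary \ref{Cor:FixPoint} to the class $\hat\rho(\hat\phi)$, obtaining $HF(\hat\rho(\hat\phi)) \neq 0$. Then the key step is to translate non-vanishing of symplectic Floer cohomology into the existence of an honest fixed point of $\hat\rho(\hat\phi)$. In the non-degenerate case the chain complex is generated by fixed points, so non-vanishing of $HF$ forces the fixed-point set to be non-empty; in the degenerate case one perturbs to a non-degenerate Hamiltonian isotopic representative, obtains fixed points of the perturbation, and extracts a fixed point of the original by compactness (the set-valued fixed-point map is upper semicontinuous under $C^0$-limits on a compact manifold). Either way, one obtains a fixed point of $\hat\rho(\hat\phi)$ on $\scrM(\Sigma)$.

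Putting the steps together: the fixed point produced in the previous paragraph corresponds, via the Dostoglou--Salamon dictionary set up in Section \ref{Subsec:3manifolds}, to an irreducible flat $SO(3)$-connexion on $E \to Y$, hence to a non-abelian representation $\pi_1(Y) \to SO(3)$. The expected main obstacle is the fixed-point extraction in the degenerate setting -- although this is standard, it is worth stating carefully since Corollary \ref{Cor:FixPoint} is purely algebraic and says nothing geometric about $\hat\rho(\hat\phi)$. A secondary, purely bookkeeping concern is ensuring that low-genus fibres (where the categorical machinery is vacuous) are either covered by direct arguments or acknowledged as outside the scope.
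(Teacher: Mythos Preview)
Your proposal is correct and follows the same route as the paper: the paper treats this corollary as an immediate consequence of Corollary \ref{Cor:FixPoint} together with the dictionary of Section \ref{Subsec:3manifolds}, which is exactly the chain of reasoning you spell out (non-vanishing of $HF(\hat\rho(\hat\phi))$ forces a fixed point of $\hat\rho(\hat\phi)$ on $\scrM(\Sigma)$, and such a fixed point is a flat connexion in $E$, hence a non-abelian representation of $\pi_1(Y)$). Your explicit discussion of the passage from $HF \neq 0$ to an honest fixed point, and of the low-genus edge cases, is more detailed than what the paper records, but the underlying argument is the same.
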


A stronger statement was proved by Kronheimer-Mrowka \cite{KM2}, who used Feehan and Leness' deep work on the relation between Seiberg-Witten and Donaldson invariants of closed 4-manifolds (note the proof given above uses the Dehn twist exact triangle, but not the existence of the Fukaya category).

Any element of the mapping class group $\Gamma_{2,1}$ can be written as a product of Dehn twists in simple closed curves lifted from arcs in $\bC$.  Addendum \ref{Thm:Action} and Corollary \ref{Cor:FixPoint} imply that over $\bK = \bC$, there is an isomorphism 
\begin{equation} \label{Eq:RingIsoGeneral}
HF(\hat\rho(\phi)) \cong \bC \oplus HF(\phi) \oplus \bC
\end{equation}
determining $HF(\hat\rho(\phi))$ for any $\phi \in \Gamma_{2,1}$, and not just for $\phi=\id$ (the special case of Lemma \ref{Lem:QHmodulispace}, which we knew previously).  The central term on the RHS can be computed, via results of \cite{Cotton-Clay}, from knowledge of the Thurston decomposition of $\phi$ or from the train-track obtained from a description of $\phi$ as a product of positive and negative Dehn twists in simple closed curves.  Equation \ref{Eqn:PosEntropy} and the general discussion of Section \ref{Subsec:3manifolds} now implies Corollary \ref{Cor:RepnGrowth}.  Viewed in terms of Corollary \ref{Cor:HFisHH},  Equation \ref{Eq:RingIsoGeneral} describes the Hochschild cohomology of general functors of the Fukaya category obtained from mapping classes on the surface, rather than just of the identity functor.  Modulo Remark \ref{Remark:FiniteGroupEntwine}, the instanton Floer homologies of other $SO(3)$-bundles on the mapping torus are computed by the Hochschild cohomologies of the other lifts of the monodromy to the split extension of $\Gamma_g$ by $H^1(\Sigma;\bZ_2)$ which acts on $\scrF(\Sigma_g)$.

\begin{Remark} \label{Rmk:PreciseConj}
Corollary \ref{Cor:FixPoint} implies that the summand $\scrF(\Sigma_g;4(g-1)) \neq \emptyset$.  One can exhibit explicit Lagrangian submanifolds in this summand using toric degeneration methods. For instance, when $g=2$, Nishinou \emph{et. al.} \cite{Nishinou} show by degenerating $\scrM(\Sigma_2)$ to the toric intersection $\{x^2=yz, \ p^2=qr\} \subset \bP^5$ that there is a Lagrangian torus with $\frak{m}_0(T^3) = 4$ and with Floer cohomology a Clifford algebra.

In general, take an idempotent summand of a  Lagrangian  $L^+ \in Tw^{\pi}\scrF(\scrM(\Sigma_{g-1}))$ lying in the top summand of its Fukaya category,  corresponding to eigenvalue $4(g-2)$, and with $HF(L^+, L^+) \cong \bK$.  Given any homologically essential simple closed curve $\gamma \subset \Sigma_{g}$ with associated Lagrangian correspondence $V_{\gamma} \subset \scrM(\Sigma_{g-1}) \times \scrM(\Sigma_{g})$, one gets an object $\Phi(V_{\gamma})(L^+) \in Tw^{\pi}\scrF(\scrM(\Sigma_{g}))$. This seems a good candidate for the image of $\gamma$ under the conjectural equivalence $D^{\pi} \scrF(\Sigma_{g}) \stackrel{?}{\simeq} D^{\pi}\scrF(\scrM(\Sigma_{g}); 4(g-2))$.
\end{Remark}

%%%%%%%%%%%%%%%%%%%%%%%%%%%%%%%%%%%%%%%%%%%
\section{The Fukaya category\label{Section:Fukaya}}

\begin{Notation} Fix a coefficient field $\bK$ which is algebraically closed and of characteristic zero; unless otherwise specified we assume $\bK=\bC$, but for much of the paper one could work with the Novikov field $\Lambda_{\bC}$  of formal series $\sum_{i\in \bZ} a_it^{q_i}$ with $a_i\in\bC$, $q_i\in\bR$ and $q_i \rightarrow \infty$.  \end{Notation}

\subsection{Floer and quantum cohomology\label{SubSec:Floer}}
Let $(M^{2n},\omega)$ be a spherically monotone closed symplectic manifold, meaning that the homomorphisms $\pi_2(M) \rightarrow \bR$ defined by the symplectic form and the first Chern class are positively proportional. Recall that a Lagrangian submanifold $L$ is monotone if  the symplectic area and Maslov index homomorphisms $\pi_2(M,L)\rightarrow \bR$ are positively proportional.  If $M$ is spherically monotone and $\pi_1(L)=0$ then $L$ is automatically monotone, with minimal Maslov index given by twice the first Chern class $2c_1(M)$. We will always assume that $L$ is orientable and equipped with a $Spin$ structure.  Floer cohomology is particularly benign in the monotone case, since energy and index of holomorphic curves are correlated.  For general background see \cite{Floer:lagrangian, Oh, BiranCornea}.  Here we collect a number of more specialised results to be used later, and fix notation.

 Monotonicity and orientability imply that $L$ bounds no non-constant holomorphic disk of Maslov index $<2$; since the virtual dimension of unparametrised holomorphic disks with boundary on $L$ in a class $\beta$ is 
\[
n+\mu(\beta)-3
\]
the cycle swept out by boundary values of such disks only contains contributions from Maslov index 2 disks, and defines an \emph{obstruction class} $\frak{m}_0(L)[L] \in H_n(L)$ which is a multiple of the fundamental class (the multiple counts how many Maslov index 2 disks pass through the generic point of $L$, weighted by their symplectic areas). 
If the Maslov number of $L$ is $>2$, $\frak{m}_0(L)=0$ and $HF(L,L)$ is well-defined as a $\bK$-vector space; moreover, one can take $\bK = \bC$ (the $Spin$ structures give orientations of moduli spaces and hence induce a signed differential in the Floer complex, so one can work in characteristic zero \cite[Chapter 9]{FO3}; moreover, there are no convergence issues since only finitely many holomorphic curves contribute to any given differential).  More generally, $HF(L,L')$ is well-defined provided $\frak{m}_0(L)=\frak{m}_0(L')$, since the square of the differential in the complex $CF(L,L')$ is given by the \emph{difference} between these two values, coming from bubbling along either of the boundary components of the strip $[0,1]\times\bR$.  
The group $HF(L,L)$ is naturally a unital ring via the holomorphic triangle product, with the unit $1_L \in HF^{ev}(L,L)$ defined by counting perturbed holomorphic half-planes with boundary on $L$.

The \emph{quantum cohomology} $QH^*(M)$ refers to the ``small quantum cohomology", namely the vector space $H^*(M;\bK)$ with grading reduced modulo $2$ and with product $\ast$ defined by the 3-point Gromov-Witten invariants counting rational curves through appropriate cycles in $M$; see \cite{McD-S} for the details of the construction.  The Floer cohomology $HF(L,L')$ is a bimodule for the quantum cohomology ring $QH^*(M;\bK)$, in particular there is a natural map $QH^*(M) \rightarrow HF^*(L,L)$, which is  a unital ring homomorphism. 
Quantum cohomology itself splits naturally as a ring
\begin{equation} \label{Eqn:QHsummands}
QH^*(M) \ = \ \bigoplus_{\lambda \in \textrm{Spec}(\ast c_1)} QH^*(M;\lambda)
\end{equation}
into the generalised eigenspaces (Jordan blocks) for the linear transformation given by  quantum product $\ast c_1(M): QH^*(M) \rightarrow QH^*(M)$. This splitting is into subrings which are mutually orthogonal for the quantum product.  For later, we quote the following result of Auroux, Kontsevich and Seidel \cite{Auroux:toric} (recall we work with $\bC$-coefficients):

\begin{Proposition}\label{Lem:quantumcap}
The map $QH^*(M)\rightarrow HF^*(L,L)$ takes $c_1(M) \mapsto \frak{m}_0(L)\cdot 1_L$.
\end{Proposition}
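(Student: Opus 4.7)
The plan is to unwind the closed–open map $CO^0\co QH^*(M) \to HF^*(L,L)$ geometrically and to identify the contributions of Maslov index 2 disks on each side. Recall that $CO^0(\alpha)$ is computed by counting (rigid) holomorphic disks $u\co (D^2,\partial D^2)\to (M,L)$ with one interior marked point $z\in\mathrm{int}(D^2)$ constrained to lie on a cycle $D$ Poincaré dual to $\alpha$, and one boundary output marked point used to define a chain in $CF^*(L,L)$. For $\alpha=c_1(M)$, represent $c_1(M)$ by a smooth divisor (or generic pseudocycle) $D\subset M$ disjoint from $L$; this is permissible since we have the freedom to perturb the representative.

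Next I would argue that only Maslov index 2 disks contribute to $CO^0(c_1)$. In the monotone setting with orientable $L$ of minimal Maslov number $\geq 2$, the virtual dimension of the relevant moduli of disks with one interior constraint of codimension $2c_1(\beta)$ and one boundary output is
\[
n+\mu(\beta)-3 - (2c_1(\beta)-2) + 1 = n,
\]
using the relation $\mu(\beta)=2c_1^{\mathrm{rel}}(\beta)$ for disks with boundary on an oriented Lagrangian. Thus the component of $CO^0(c_1)$ in $HF^0(L,L)$ receives contributions from classes with $\mu(\beta)=2$, while higher Maslov disks would contribute in degrees $\leq 0$ but are in fact constrained by the grading to sit in even degree; the output lies in the unit line by the standard argument that the only chain-level obstruction-free contribution to $HF^{ev}(L,L)$ in top degree is a multiple of $1_L$.

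The substantive step is the \emph{divisor axiom for Maslov 2 disks}: for a disk $u$ with $\mu(u)=2$, the topological intersection $D\cdot u$ equals $c_1^{\mathrm{rel}}(u)=\mu(u)/2=1$. Consequently, the forgetful map from the moduli space of Maslov 2 disks with one interior marked point on $D$ to the moduli space of Maslov 2 disks (with just the boundary marked point) has degree $1$ on each component. Summing with signs dictated by the $Spin$ structure, the count of Maslov 2 disks through a generic point of $L$ weighted by their symplectic areas is precisely $\frak{m}_0(L)$, and one obtains $CO^0(c_1) = \frak{m}_0(L)\cdot 1_L$.

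The main obstacle is making the divisor axiom rigorous: one must ensure that the interior constraint $z\in u^{-1}(D)$ is generically transverse and contributes each geometric intersection once with the correct sign. In the monotone setting this is controlled by the monotonicity inequality (ruling out sphere bubbling in codimension $\leq 1$ in the moduli of Maslov 2 disks), combined with the coherence of orientations provided by the $Spin$ structure on $L$; all non-constant sphere and disk bubbles of Maslov/Chern index $\leq 1$ are excluded. Once transversality and orientations are in place, the argument reduces to the topological identity $c_1\cdot u = \mu(u)/2$ and the definition of $\frak{m}_0$. I would refer to \cite{Auroux:toric} for the detailed analytic setup, and only carry out the purely topological identification sketched above.
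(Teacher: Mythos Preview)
Your approach is essentially the same as the paper's, but there is a genuine gap in your ``divisor axiom'' step. You assert that for any cycle $D$ representing $c_1(M)$ and disjoint from $L$, one has $D\cdot u = c_1^{\mathrm{rel}}(u) = \mu(u)/2$ for every disk $u$. This is false: the intersection number $D\cdot u$ depends on the lift of $c_1(M)$ to $H^2(M,L)$ determined by the particular choice of $D$, and different such lifts differ by classes in the image of $H^1(L)\to H^2(M,L)$. Concretely, for the Clifford torus $L\subset\bC\bP^n$, take $D=(n+1)H_0$ with $H_0$ a single coordinate hyperplane: then $D\cdot u_0 = n+1$ and $D\cdot u_i = 0$ for the other basic Maslov~2 disks, rather than $1$ on each. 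The forgetful map does \emph{not} have degree $1$ on each component; only the total weighted count is invariant.

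The paper avoids this by working directly with the Maslov class, which lives canonically in $H^2(M,L)$ and by definition pairs to $\mu(u)$ with every disk. The relative closed--open map of \eqref{Eqn:AurouxEvaluates} then sends the Maslov cycle to $2\frak{m}_0(L)\cdot 1_L$ (each Maslov~2 disk through a generic point of $L$ meets it twice). The extra ingredient you are missing is the observation that this relative map factors through the restriction $H^*(M,L)\to H^*(M)$; since the Maslov class maps to $2c_1(M)$ there, dividing by $2$ gives the result. Your argument is repaired either by replacing $D$ with the Maslov cycle and carrying the factor of $2$, or by invoking this factorization rather than the per-disk identity. (A smaller issue: your dimension count treats the codimension of the interior constraint as $2c_1(\beta)$; it is simply $2$, the codimension of $D$.)
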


\begin{proof}[Sketch]
 Counting holomorphic disks with one interior marked point, constrained to lie on a closed cycle in $M\backslash L$, and one boundary marked point defines a map
\begin{equation} \label{Eqn:AurouxEvaluates}
H^*(M,L) \cong H^*_{ct}(M \backslash L) \longrightarrow HF^*(L,L).
\end{equation}
The Maslov class defines an element of $H^2(M,L)$, hence a class in $H_{2n-2}(M\backslash L)$. Fix a cycle $D\subset M\backslash L$ representing this class (in \cite{Auroux:toric} this cycle is taken to be a holomorphic anticanonical divisor disjoint from $L$, which need not exist in general). The argument of \cite{Auroux:toric} implies that, provided $L$ bounds no Maslov index $\leq 0$ disks, \eqref{Eqn:AurouxEvaluates} takes $D$ to twice the obstruction class $2\frak{m}_0(L)$.  (This argument computes the quantum cap action of $D$ on the fundamental class of $L$; the fact that $L \subset M\backslash D$ eliminates the possibility of a non-trivial contribution from constant holomorphic disks.)  On the other hand, over $\bC$ the map in  \eqref{Eqn:AurouxEvaluates} factors through the natural map $H^*(M,L) \rightarrow H^*(M)$, and the Maslov cycle maps to $2c_1(M) \in H^*(M)$. The result follows.
\end{proof}

The map $QH^*(M) \rightarrow HF^*(L,L)$ counts disks with an interior marked point, viewed as input, and a boundary marked point, which is the output.  Following Albers \cite{Albers}, see also \cite{Abouzaid}, one can reverse the roles of input and output to obtain a map $HF^*(L,L) \rightarrow QH^*(M)$.  (The domain of this map would more naturally be the Floer \emph{homology} of $L$, but we have used Poincar\'e duality to identify this with Floer cohomology; in particular, the second map is not a ring map, and not of degree zero.)

\begin{Lemma}\label{Lem:Composition}
The composite map $QH^*(M) \rightarrow HF^*(L,L) \rightarrow QH^*(M)$ is given by quantum cup-product with the fundamental class $[L]$.
\end{Lemma}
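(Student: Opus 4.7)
The plan is to realize both sides of the claimed equality as counts in a single moduli space of holomorphic disks with two interior marked points and boundary on $L$, and to read off the two operations as the opposite ends of a one-parameter degeneration of that moduli space; this is in the spirit of the ``Cardy'' relation between open-closed and closed-open maps.

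Geometrically, the first map $CO:QH^*(M)\to HF^*(L,L)$ counts configurations $(u;z,w)$ with $u:(D,\partial D)\to(M,L)$ pseudo-holomorphic, $z\in\mathrm{int}(D)$ constrained to a cycle $A\subset M$ representing $\alpha$, and $w\in\partial D$ a boundary marked point whose image represents the output Floer cochain. The second (Albers) map $OC:HF^*(L,L)\to QH^*(M)$ counts similar configurations, this time with the boundary marked point constrained to a cycle $B\subset L$ and the interior marked point free; its image under interior evaluation is a chain in $M$ and then, via Poincar\'e duality on $M$, a class in $QH^*(M)$. A standard gluing argument along the shared boundary marked point identifies $OC\circ CO(\alpha)$ with the count over the moduli space $\cN$ of disks $(u;z_1,z_2)$ with boundary on $L$, $z_1$ constrained to $A$, $z_2$ free, and no boundary marked points: the boundary marked points of the two sub-disks are glued, and the Floer pairing between output of $CO$ and input of $OC$ sums over dual bases.

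Now introduce a parameter $r\in[0,\infty]$ measuring the conformal distance between $z_1$ and $z_2$. At $r\to\infty$ the family degenerates by pinching a boundary arc: the disk breaks into two sub-disks joined at a boundary node, one carrying $(z_1,A)$ and the other $z_2$ (free output), which reconstructs $OC\circ CO(\alpha)$. At $r\to 0$, the two interior marked points collide and a sphere bubble pinches off at an interior node: the domain becomes a disk with one interior marked point (the node) attached to a sphere carrying $z_1,z_2$. Dimension counting and monotonicity force the disk component to be (generically) constant at some point $p\in L$, while the sphere $u_S:S^2\to M$ passes through $A$ at $z_1$, through $p\in L$ at the node, and has free output at $z_2$. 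Integrating over $p\in L$ realizes the count as the three-point Gromov--Witten invariant with inputs $\alpha$ and $[L]\in H^n(M;\bK)$ (Poincar\'e dual of the fundamental class of $L$) and free output: this is precisely $[L]\ast\alpha$.

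The main technical obstacle is to verify that no further codimension-one boundary strata of the $r$-cobordism contribute. Monotonicity, orientability and the spin structure on $L$ rule out Maslov $\leq 0$ disk bubbles; Maslov $2$ disk bubbles would contribute via $\mathfrak{m}_0(L)\cdot 1_L$ and are compatible with Proposition~\ref{Lem:quantumcap}. Sphere bubbles away from the marked points contribute in the standard way to the definition of the quantum product. Since the setup is spherically monotone with $L$ spin and of minimal Maslov number $\geq 2$, working over $\bC$ reduces everything to standard transversality and gluing in the monotone setting, with no virtual perturbation theory needed. Matching the two ends of the cobordism yields the desired identity $OC\circ CO(\alpha)=[L]\ast\alpha$.
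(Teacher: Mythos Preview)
Your proposal is correct and follows essentially the same approach as the paper's proof: both glue the two half-disks at their boundary marked points to obtain a disk with two interior marked points and no boundary marked point, then run the one-parameter family (your $r$, the paper's $d$) between the nodal-disk end realising $OC\circ CO$ and the sphere-bubble end realising $\ast[L]$. The paper's version is a terse sketch; your discussion of the possible extra boundary strata (Maslov $2$ disk bubbles, etc.) is somewhat more explicit but does not differ in substance.
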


\begin{proof}[Sketch]
The composite counts pairs of disks, each with one interior marked point, and with an incidence condition at their boundary marked points on $L$.  Gluing, one obtains a disk with boundary on $L$ and two interior marked points lying on the real diameter, but no boundary marked point. The relevant one-dimensional moduli space (with modulus the distance $d$ between the interior marked points) has another boundary component, where $d \rightarrow 0$ rather than $d\rightarrow \infty$; this is  geometrically realised by a degeneration to a disk  attached to a sphere bubble which carries the two interior marked points.  Since the disk component has no boundary marked point and a unique interior marked point, for rigid configurations it will actually be constant, so the sphere passes through $L$.  This shows the composite map is chain homotopic to quantum product with $[L]$.  (Compare to \cite[Section 2.4]{BiranCornea:Uniruling} or \cite[Section 6]{Abouzaid}, which study the composite of the maps in the other order via a similar degeneration argument.)
\end{proof}

 Let $V\subset M$ be a co-isotropic submanifold which is fibred by circles $S^1 \hookrightarrow V \rightarrow B$ with reduced space $B$.  Suppose $B$ and $M$ are monotone and that viewed as a Lagrangian correspondence, $V\subset B\times M$ has minimal Maslov number $k$.  If in addition $k=2$, fix a \emph{global angular chain} $[\sigma]\subset V$, which by definition is a chain with boundary the pull-back $\pi_B^*(e_V)$ of the Euler class of the circle bundle $V\rightarrow B$. Finally, let $L_1, L_2$ be Lagrangian submanifolds of $B$, and let $\tilde{L}_i \subset M$ be the lifts of the $L_i$ via $V$, i.e. the circle bundles over the $L_i$ defined by $V|_{L_i}$; note $\tilde{L}_i \subset M$ is also Lagrangian. Then Perutz' quantum Gysin sequence \cite[Sections 1.4 \& 6.1]{Perutz}, together with Wehrheim-Woodward quilt theory,  implies:

\begin{Theorem}[Perutz] \label{Thm:Gysin}
The Floer cohomology $HF(\tilde{L}_1, \tilde{L}_2)$ is the cohomology of the mapping cone of quantum cup-product
\[
\ast(e_V + \sigma): CF(L_1,L_2) \rightarrow CF(L_1,L_2)
\]
with the sum of the Euler class of $V\rightarrow B$ and a correction term $\sigma \cdot 1 \in QH^*(B)$, where $\sigma$ is the algebraic count of the number of Maslov index 2 disks meeting both a generic point of $V$ and the global angular chain [$\sigma$]. 
\end{Theorem}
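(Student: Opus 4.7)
The plan is to combine the Mau--Wehrheim--Woodward geometric composition theorem with a Morse--Bott/neck-stretching analysis of Floer strips, following Perutz. Since $V$ is fibered, the geometric composition of any monotone Lagrangian $L\subset B$ with $V \subset B^-\times M$ is precisely the embedded lift $\tilde L \subset M$, so MWW quilt theory yields an isomorphism
\[
HF(\tilde L_1, \tilde L_2) \ \cong \ HF^{\mathrm{quilt}}(L_1, V, L_2).
\]
Quilted strips then consist of a holomorphic strip in $B$ with boundary on $L_1\cup L_2$ together with lifts of its boundary values along the circle bundle $V\to B$, putting the analysis on the base $B$ rather than the larger space $M$.

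The next step is to identify this quilted complex with a mapping cone. Projecting a Floer strip in $M$ with boundary on $\tilde L_1\cup \tilde L_2$ down to $B$ gives a strip between $L_1$ and $L_2$, and the space of lifts is a torsor for the circle bundle $V\to B$. A Morse--Bott degeneration (or an $S^1$-equivariant localisation on the moduli of quilted strips) identifies the underlying chain complex, before differential, with $CF(L_1,L_2)\otimes H^*(S^1)$; since $H^*(S^1)$ has only two rows, the resulting spectral sequence collapses to a mapping cone
\[
HF(\tilde L_1, \tilde L_2) \ \cong \ H\bigl(\mathrm{Cone}(d\co CF(L_1, L_2) \to CF(L_1, L_2))\bigr),
\]
where $d$ records the obstruction to trivialising the $S^1$-torsor of lifts of a given strip in $B$.

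Identifying $d$ with quantum cup-product by $e_V+\sigma$ is the geometric heart of the argument. The \emph{classical} part, from lifts of closed configurations in $V$, is cup-product with the Euler class $e_V\in H^2(B;\bK)$ of the circle bundle $V\to B$, reproducing the standard Gysin differential. The \emph{quantum} correction arises because $\tilde L_i$ has minimal Maslov number $2$: Maslov index $2$ holomorphic disks in $M$ bounded by $\tilde L_i$, whose projections to $B$ are closed cycles, contribute boundary terms counted by intersections with both a generic fiber of $V$ and the global angular chain $[\sigma]$ (which satisfies $\partial[\sigma]=\pi_B^*e_V$ precisely so that these contributions glue coherently). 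Reinterpreting the total count via the module map $QH^*(B)\to\mathrm{End}(CF(L_1,L_2))$ produces the operator $\ast(e_V+\sigma)$.

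The main obstacle is the compactness-and-gluing bookkeeping behind this identification. The choice of $[\sigma]$ with $\partial[\sigma]=\pi_B^*(e_V)$ is exactly what is needed so that Maslov-$2$ disk bubbles in $M$ absorb the boundary mismatch coming from non-trivial monodromy of the $S^1$-torsor of lifts; verifying that no further sphere or disk bubbling can contribute relies on the monotonicity hypothesis and the minimal Maslov number bound $k\geq 2$. This enumeration is the technical core of Perutz' quantum Gysin sequence in \cite{Perutz}, while MWW quilt theory contributes the non-trivial input that the whole computation can be read off downstairs on the base $B$.
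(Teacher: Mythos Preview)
Your proposal is essentially correct and matches the paper's approach, though the paper itself does not give a proof: it simply attributes the result to Perutz (citing Sections~1.4 and~6.1 of \cite{Perutz}) together with Wehrheim--Woodward quilt theory, exactly the two ingredients you invoke. Your sketch is a reasonable unpacking of what those citations contain; the only caveat is that the detailed compactness-and-gluing analysis you flag as ``the technical core'' is genuinely nontrivial and is carried out in \cite{Perutz} rather than being something one can fill in from the outline alone.
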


Under the same hypotheses, the Floer cohomology $HF(V,V)$ (computed in $B\times M$) is the (chain-level) mapping cone on quantum product by the same element on $QH^*(B)$.  If the co-isotropic $V$ has minimal Maslov number $>2$ there is no correction term, and the relevant isomorphisms hold with the map being quantum product by the Euler class.  The exact triangle
\begin{equation} \label{Eqn:LagQuantumGysin}
\cdots \longrightarrow HF(L_1,L_2) \xrightarrow{\ast(e_V+\sigma)}HF(L_1,L_2) \longrightarrow HF(\tilde{L}_1, \tilde{L}_2) \stackrel{[1]}{\longrightarrow} \cdots
\end{equation}
is an exact triangle of $QH^*(B)$-modules, and in particular one obtains analogous triangles for particular generalised eigenspaces in the sense of Equation \ref{Eqn:QHsummands}.

\subsection{Higher order products\label{SubSec:Fukaya}}

The monotone Fukaya category $\scrF(M)$ is a $\bZ_2$-graded $A_{\infty}$-category, linear over $\bC$.  By definition, $\scrF(M)$ has:
\begin{itemize}
\item objects being monotone oriented Lagrangian submanifolds equipped with $Spin$ structures  and additional perturbation data\footnote{One can also equip the Lagrangians with flat unitary line bundles; we will not need that refinement in this paper, though see Remark \ref{Remark:2objects-1}.};
\item morphisms given by Floer cochain complexes $CF(L,L')$; and
\item higher order composition operations from counting pseudoholomorphic polygons.
\end{itemize}
The higher order operations of the $A_{\infty}$-structure comprise a collection of maps $\mu_{\scrF}^d$ of degree $d$ (mod 2), for $d\geq 1$, with $\mu_{\scrF}^1$ being the differential and $\mu_{\scrF}^2$ the holomorphic triangle product mentioned previously:
\[
\mu_{\sF}^d: CF(L_{d-1},L_d) \otimes \cdots \otimes CF(L_0,L_1) \rightarrow CF(L_0,L_d)[2-d]
\]
The $\{\mu^d_{\scrF}\}$ have matrix coefficients which are defined by counting holomorphic disks with $(d+1)$-boundary punctures, whose arcs map to the Lagrangian submanifolds $(L_0,\ldots, L_d)$ in cyclic order.   The construction of the operations $\mu^d_{\scrF}$ is rather involved, and we  defer to \cite{FCPLT} for details.  We should emphasise that  \emph{monotonicity enters crucially} in ensuring that the only disk and sphere bubbling in the zero- or one-dimensional moduli spaces of solutions we wish to count comes from bubbling of Maslov index 2 disks on the Lagrangian boundary conditions,  because the other possible bubbles sweep out subsets of $M$ or the $L_j$ of sufficiently high codimension to not interact with holomorphic polygons of index at most 1.   Monotonicity, together with Lemma \ref{Lem:quantumcap}, further ensures that the Maslov index 2 disks are multiples of chain-level representatives for the units $1_{L_i}$ in the complexes $CF(L_i, L_i)$, which in turn means that their contributions to the boundary strata of one-dimensional solution spaces cancel algebraically.

\begin{Remark} \label{Rem:Pearls}
The simplest construction of $\scrF(M)$ uses Hamiltonian perturbations to to guarantee that the Lagrangian submanifolds $\{L_j\}$ which are boundary conditions for a  given higher-order product are pairwise transverse. In the sequel, we will encounter Lagrangians fibred over arcs in $\bC$ (as matching cycles in Lefschetz fibrations), and the Riemann mapping theorem will provide a useful constraint on the $\mu^d_{\scrF}$. To take advantage of this systematically would preclude using Hamiltonian perturbations.   There is another approach to the Fukaya category, based on counting \emph{pearls} -- configurations of holomorphic polygons and gradient flow trees -- which is the open string counterpart of work of Biran, Cornea and Lalonde.  For closed curves in monotone $M$ the theory is developed in \cite{BiranCornea}, and for open curves the theory is described in \cite[Section 7]{Seidel:HMSgenus2} and \cite[Section 4]{Sheridan} (in the balanced, or monotone, and exact settings respectively). The fact that a pearly definition gives rise to a quasi-isomorphic category to that obtained through Hamiltonian perturbations is addressed by a ``mixed category" trick in \cite[Section 4.8]{Sheridan}, cf. \cite[Section 10a]{FCPLT}. 

 In general, pearly moduli spaces cannot be made transverse, and a complete definition of the Fukaya category using only pearls relies on virtual perturbation techniques.  In this paper, the $A_{\infty}$-structures are essentially always constrained by their formal algebraic properties, rather than by explicit computations (or the computations involve rather benign non-transversal situations, for instance a pair of Lagrangians meeting cleanly in a Morse-Bott intersection). However, a pearly model would be more natural in Section \ref{Section:FiniteDeterminacy}, even if not strictly required. 
  \end{Remark}

Suppose $\lambda \in \bK$ is not an eigenvalue of the quantum cup-product
\[
\ast c_1(M): QH^*(M;\bK) \rightarrow QH^*(M;\bK).
\]
Then $c_1(M) - \lambda 1_M$ is invertible in $QH^*(M)$; on the other hand, if $L \in \scrF(M;\lambda)$, then Lemma \ref{Lem:quantumcap} implies that $c_1(M) - \lambda 1_M \mapsto 0 \in HF^*(L,L)$, which implies that $HF^*(L,L) = 0$.  
Lemma \ref{Lem:quantumcap} therefore implies that a monotone symplectic manifold $M$ gives rise to a collection of mutually orthogonal categories $\scrF(M;\lambda)$,  which are non-trivial only for $\lambda$ an eigenvalue of $\ast c_1(M): QH^*(M) \rightarrow QH^*(M)$.  The summand $\scrF(M;\lambda)$ has as objects the Lagrangian submanifolds $L$ for which $\frak{m}_0(L) = \lambda$, i.e. for which  the natural map of $\bZ_2$-graded unital rings
\begin{equation} \label{Eq:quantumcap}
QH^*(M) \rightarrow HF(L,L) \qquad \textrm{takes} \quad c_1(M) \mapsto \frak{m}_0(L)\cdot (\textrm{unit}).
\end{equation}
We will refer to the category $\scrF(M;0)$ as the \emph{nilpotent summand} of the Fukaya category.  We next recall the definition of the \emph{Hochschild cohomology} of an $A_{\infty}$-category, which is defined by a bar complex  $CC^*(\scrA)$ as follows.  A degree $r$ cochain is a sequence $(h^d)_{d\geq 0}$ of collections of linear maps
\[
h^d_{(X_1,\ldots,X_{d+1})}: \bigotimes_{i=d}^1 hom_{\scrA}(X_i,X_{i+1}) \rightarrow hom_{\scrA}(X_1,X_{d+1})[r-d]
\]
for each $(X_1,\ldots,X_{d+1})\in \Ob(\scrA)^{d+1}$.   The differential is defined by the sum over  concatenations
\begin{equation} \label{Eqn:Hochschild}
\begin{aligned}
(\partial h)^d & (a_d,\ldots, a_1) = \\
& \sum_{i+j<d+1} (-1)^{(r+1)\maltese_i} \mu_{\scrA}^{d+1-j}(a_d,\ldots,a_{i+j+1},h^j(a_{i+j},\ldots,a_{i+1}),a_{i},\ldots,a_1) \\
+ &  \sum_{i+j\leq d+1} (-1)^{\maltese_{i} +r +1}  h^{d+1-j} (a_d,\ldots,a_{i+j+1},\mu_{\scrA}^j(a_{i+j},\ldots,a_{i+1}),a_{i},\ldots,a_1).
\end{aligned}
\end{equation}
It is a basic fact that  $HH^*(\scrA) = H(hom_{fun(\scrA,\scrA)}(\id,\id))$ computes the morphisms in the $A_{\infty}$-category of endofunctors of $\scrA$ from the identity functor to itself; moreover, the Hochschild cohomology of an $A_{\infty}$-category over a field is invariant under passing to a split-closed triangulated envelope \cite[Theorem 4.12]{BM}. The maps of Equation \ref{Eq:quantumcap} are the lowest order pieces of a natural ``open-closed string map" 
\[
QH^*(M) \longrightarrow HH^*(\scrF(M))
\]
from quantum cohomology to Hochschild cohomology, given by counting holomorphic polygons with one interior marked point,  constrained to a cycle in $M$, and a collection of boundary punctures, one of which is outgoing and the rest incoming. Equation \ref{Eq:quantumcap} corresponds to the case in which there are no boundary inputs.  By construction, the open-closed string map is compatible with the decomposition of the Fukaya category into orthogonal summands and with the splitting of Equation \ref{Eqn:QHsummands}, giving maps
\[
QH^*(M;\lambda) \longrightarrow HH^*(\scrF(M; \lambda))
\]
for each $\lambda \in \textrm{Spec}(\ast c_1(M))$.

\begin{Remark} \label{Rem:ScaleSymplecticForm}
The monotone Fukaya category $\scrF(M)$ is invariant, up to quasi-isomorphism, under rescaling the symplectic form. Some care must be taken, however, when considering products of monotone manifolds, since the factors cannot be scaled independently preserving monotonicity. 
\end{Remark}

\subsection{Twisting and generation}

Let $\scrA$ be an $A_{\infty}$-category.   There is a two-stage purely algebraic operation which formally enlarges $\scrA$ to yield a more computable object:  first, passing to twisted complexes to give $Tw \, \scrA$; second, idempotent-completing to give $\Pi(Tw\,\scrA) = Tw^{\pi}\scrA$.  Twisted complexes themselves form the objects of a non-unital $A_{\infty}$-category $Tw(\scrA)$, which has the property that all morphisms can be completed with cones to sit in exact triangles.   Idempotent completion, or split-closure, includes objects quasi-representing all cohomological idempotents, and is discussed further in Section \ref{Section:AlgebraicInterlude}.  We write $nu$-$fun(\scrA,\scrB)$ for the $A_{\infty}$-category of non-unital functors from $\scrA$ to $\scrB$; $mod$-$\scrA$ for $nu$-$fun(\scrA^{opp},Ch)$, where $Ch$ is the $dg$-category  (viewed as an $A_{\infty}$-category with vanishing higher differentials) of chain complexes of $\bK$-vector spaces.  Given $Y \in$ Ob$\,\scrA$ and an $\scrA$-module $\scrM$, we define the \emph{algebraic twist} $\scrT_Y\scrM$ as the module
\[
\scrT_Y\scrM(X) = \scrM(Y)\otimes hom_{\scrA}(X,Y)[1] \oplus \scrM(X)
\]
(with operations we shall not write out here).    The twist is the cone over the canonical evaluation morphism
\[
\scrM(Y) \otimes \scrY \rightarrow \scrM
\]
where $\scrY$ denotes the Yoneda image of $Y$.  For two objects $Y_0, Y_1 \in$ Ob$\, \scrA$, the essential feature of the twist is that it gives rise to a canonical exact triangle in $H(\scrA)$
\[
\cdots \rightarrow Hom_{H(\scrA)}(Y_0,Y_1) \otimes Y_0 \rightarrow Y_1 \rightarrow T_{Y_0}(Y_1) \stackrel{[1]}{\rightarrow} \cdots
\]
(where $T_{Y_0}(Y_1)$ is any object whose Yoneda image is $\scrT_{Y_0}(\scrY_1)$).  Note that if $\scrA$ decomposes into a collection of orthogonal subcategories, and if $Y_0$ lives purely in one of these, the algebraic twist by definition acts trivially on all the other summands.

 The twist functor $\scrT_L \in nu$-$fun(\scrF(M),\scrF(M))$ plays an essential role when $L$ is \emph{spherical}, meaning that $Hom_{H(\scrF(M))}(L,L) \cong H^*(S^n)$.  Suppose $L\subset M$ is a Lagrangian sphere;  the \emph{geometric twist} $\tau_L$ is the autoequivalence of $\scrF(M)$ defined by the positive Dehn twist in $L$.  On the other hand, since $M$ is spherically monotone, such a sphere gives rise to a well-defined (though not \emph{a priori} non-zero) object of the Fukaya category by the classical work of Oh \cite{Oh}, hence an algebraic twist functor in the sense described above. 

\begin{Proposition}[Seidel] \label{Prop:twists}
If $L\subset M$ is a Lagrangian sphere, equipped with the non-trivial \emph{Spin} structure if $L \cong S^1$, then the geometric twist and the algebraic twist are quasi-isomorphic in $nu$-$fun(\scrF(M),\scrF(M))$.
\end{Proposition}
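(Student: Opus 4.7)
The plan is to construct, for every Lagrangian $K \in \Ob \scrF(M)$, a quasi-isomorphism $\scrT_L(K) \simeq \tau_L(K)$ that is natural in $K$ up to coherent higher homotopies, thereby producing a morphism $\scrT_L \to \tau_L$ in $nu\text{-}fun(\scrF(M),\scrF(M))$ which induces the identity on cohomology. Both sides fit into canonical exact triangles
\[
HF(L,K)\otimes L \longrightarrow K \longrightarrow (?)(K) \stackrel{[1]}{\longrightarrow},
\]
where $(?) = \scrT_L$ by the defining property of the algebraic twist, and $(?) = \tau_L$ by Seidel's geometric long exact sequence for Dehn twists. The task is therefore to realise both triangles as coming from a single natural transformation of functors, and then to promote that comparison to the $A_\infty$ level.

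The mechanism is the Lefschetz fibration model for $\tau_L$. One considers a Lefschetz fibration $\pi\colon E \to \bD$ with regular fibre $M$, a unique critical point whose vanishing cycle is $L$, and monodromy $\tau_L$ around $\partial\bD$. For a monotone $K \subset M$, parallel transport along a radial arc in $\bD$ produces a Lagrangian $\widetilde{K}\subset E$ whose two endpoint-fibres recover $K$ and $\tau_L(K)$. Counting pseudoholomorphic sections of $\pi$ with boundary on $\widetilde K$ and on a similarly constructed Lefschetz thimble $\widetilde L$, organised as morphisms in a \emph{relative} Fukaya category of $E$, yields chain-level operations whose leading term is precisely the evaluation map $CF(L,K) \otimes CF(X,L) \to CF(X, \tau_L K)$. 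Assembled over tuples of objects, these operations supply the higher components of the desired natural transformation $\scrT_L \Rightarrow \tau_L$.

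The main obstacle is $A_\infty$-naturality together with quasi-invertibility. Seidel handles the former by packaging parallel-transport lifts and moduli of holomorphic sections into the relative Fukaya category for $(E,\pi)$, which maps to $\scrF(M)$ via restriction to a regular fibre; compactifications of the one-dimensional moduli of marked sections then reproduce exactly the quadratic relations needed for an $A_\infty$-morphism, while monotonicity of the $\widetilde{K_i}$, inherited from that of the $K_i$, controls the bubbling. Quasi-invertibility is then verified by Yoneda, reducing to computing the transformation at $K = L$, where both triangles can be described explicitly. This last comparison is where the $\mathrm{Spin}$-structure hypothesis enters: for $L \cong S^1$ the non-trivial $\mathrm{Spin}$ structure is required to match signs between the two triangles, reflecting the framing implicit in writing $\tau_L$ as a normal-form symplectomorphism on a Weinstein neighbourhood of $L$.
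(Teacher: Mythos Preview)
The paper does not give an independent proof here: it simply invokes \cite[Corollary 17.17]{FCPLT} for the exact case and notes that the argument carries over to the monotone setting, pointing to \cite{WW:triangle} for a detailed verification. Your proposal is not in conflict with this; rather, you are sketching the content of those references. The Lefschetz-fibration picture, the relative Fukaya category of $(E,\pi)$, and the comparison of the two exact triangles via section-counting are exactly the ingredients of Seidel's argument in \cite{FCPLT}, so in that sense your outline is on the right track and matches what the paper is citing.

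A few points where your sketch is loose relative to the actual argument. First, quasi-invertibility in \cite{FCPLT} is not checked ``by Yoneda at $K=L$'' but follows because the constructed natural transformation fits into a map of exact triangles whose other two vertical arrows are already known to be isomorphisms; the five-lemma then forces the third to be a quasi-isomorphism for every $K$. Second, the role of the Spin structure when $L\cong S^1$ is not about matching signs between two triangles at a single object, but about ensuring that $L$ is a genuine object of the monotone Fukaya category with $HF(L,L)\cong H^*(S^1)$ (the trivial Spin structure gives the wrong disk count and can kill $HF$), so that the algebraic twist functor is defined and spherical in the first place. Finally, you should flag explicitly that in the monotone (as opposed to exact) setting the compactness and bubbling analysis for sections of $E$ requires care; this is precisely the gap the paper acknowledges by citing \cite{WW:triangle} rather than claiming the exact-case proof applies verbatim.
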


This is \cite[Corollary 17.17]{FCPLT} for exact symplectic manifolds; the argument carries over \emph{mutatis mutandis} to the monotone case (see \cite{WW:triangle} for a detailed account, but note most of their work is needed only for the strictly more difficult case of fibred Dehn twists).  Seidel also proves that if $\scrG: \scrA \rightarrow \scrB$ is an $A_{\infty}$-functor, then for $Y\in \scrA$,
\begin{equation} \label{Eqn:TwistFunctor}
\scrG \circ \scrT_Y \ \simeq \ \scrT_{\scrG(Y)}\circ \scrG \ \in nu{\--}fun(\scrA,\scrB).
\end{equation}

\begin{Corollary}\label{Cor:TwistDeterminesSphere}
A Lagrangian sphere $L \subset M$ is  determined up to quasi-isomorphism in $\scrF(M)$ by its associated twist functor $\scrT_L$ together with the natural transformation $\id \rightarrow \scrT_L[-1]$. 
\end{Corollary}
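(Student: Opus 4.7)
The approach is to extract $L$ from $(\scrT_L, \eta)$ by identifying the third vertex of the defining exact triangle of the twist, and then invoking the Yoneda paradigm. By construction of the algebraic twist, there is a canonical exact triangle of $A_{\infty}$-endofunctors of $\scrF(M)$
\[
\mathcal{E}_L \xrightarrow{\,\mathrm{ev}\,} \id \xrightarrow{\,\eta\,} \scrT_L \longrightarrow \mathcal{E}_L[1], \qquad \mathcal{E}_L(Y) \, := \, \Hom_{\scrF(M)}(L, Y) \otimes L,
\]
in which the middle arrow (or equivalently its shift $\id \to \scrT_L[-1]$ appearing in the statement) is the given natural transformation $\eta$. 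Rotating this triangle exhibits $\mathcal{E}_L$ together with its evaluation $\mathrm{ev}: \mathcal{E}_L \to \id$ as the (shifted) fiber of $\eta$, so the data $(\scrT_L, \eta)$ determines the pair $(\mathcal{E}_L, \mathrm{ev})$ up to quasi-isomorphism of endofunctors.

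Given a second Lagrangian sphere $L' \subset M$ with the same data, $(\scrT_{L'}, \eta_{L'}) \simeq (\scrT_L, \eta_L)$, one therefore obtains an equivalence $\phi \co \mathcal{E}_L \simeq \mathcal{E}_{L'}$ compatible with the evaluations. Evaluating $\phi$ at $Y = L$ yields a quasi-isomorphism in $\scrF(M)$
\[
L \oplus L[-n] \, \cong \, HF^*(L, L) \otimes L \, \simeq \, HF^*(L', L) \otimes L'
\]
intertwining the evaluations to $L$. The sub-summand $L$ on the left attached to $1_L \in HF^0(L,L)$ is characterised categorically as the unique piece on which $\mathrm{ev}$ restricts to $\id_L$; its image under $\phi_L$ produces a summand of $HF^*(L', L) \otimes L'$ whose composite with $\mathrm{ev}$ is $\id_L$, forcing a quasi-isomorphism $L' \xrightarrow{\sim} L$ (possibly up to shift) in $H(\scrF(M))$. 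Interchanging the roles of $L$ and $L'$ supplies the inverse.

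The main technical obstacle, and the reason the natural transformation $\eta$ is included in the data rather than just $\scrT_L$, is degree bookkeeping: the twist $\scrT_L$ alone would only determine $L$ up to a shift $L \simeq L'[k]$, and the compatibility $\phi \circ \eta_L = \eta_{L'}$---i.e.\ that $\phi$ is a degree-zero equivalence intertwining the given natural transformations---is what forces $k = 0$. Formalising this shift-matching rests on the indecomposability of a Lagrangian sphere in $H(\scrF(M))$, whose endomorphism algebra $HF^*(L,L) \cong H^*(S^n;\bK)$ is a local graded ring; a Krull--Schmidt-style argument then applies to the equivalence above. The circle case ($n=1$, with the non-trivial Spin structure) requires one mild additional check, as the summands $L$ and $L[-n]$ then sit in neighbouring degrees and could in principle be confused; Proposition \ref{Prop:twists}, identifying the algebraic and geometric twists under precisely this Spin hypothesis, rules out anomalous self-identifications in that case.
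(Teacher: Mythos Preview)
Your approach is correct and matches the paper's: both recover the evaluation functor $\mathcal{E}_L$ (together with $\mathrm{ev}\colon \mathcal{E}_L \to \id$) as the fibre of the given natural transformation $\eta$, and then extract $L$ from it. The paper's extraction is phrased more tersely---it notes that the essential image of $\mathcal{E}_L$ consists of twisted complexes of the form $V \otimes L$ for graded vector spaces $V$, among which the unique spherical object is $L$ itself ``for reasons of rank''---whereas your Krull--Schmidt argument at $Y=L$, tracking the summand on which $\mathrm{ev}$ restricts to $\id_L$, is a hands-on unpacking of that same rank observation. Your attention to the shift ambiguity is well placed (the paper's rank argument, as written, also pins $L$ down only up to shift); but your singling out of $n=1$ and the appeal to Proposition~\ref{Prop:twists} there are unnecessary---the indecomposability and Krull--Schmidt steps go through uniformly in $n$, and that proposition is relevant only for identifying the geometric and algebraic twists in the first place, not for the categorical extraction argument.
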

 
 \begin{proof}
 In general:  if $Y_1$ and $Y_2 \in \scrA$ are spherical and $\scrT_{Y_1}$ and $\scrT_{Y_2}$ are quasi-isomorphic as objects of $nu$-$fun(mod$-$\scrA,mod$-$\scrA)$, by an isomorphism which entwines the natural transformations $\id \rightarrow \scrT_{Y_i}[-1]$, then $Y_1$ and $Y_2$ are quasi-isomorphic objects of $\scrA$.   In the geometric situation at hand, the natural transformation of functors $\id \rightarrow \scrT_L[-1]$ arises from counting holomorphic sections of a Lefschetz fibration over the annulus with a unique interior critical point having vanishing cycle the given Lagrangian sphere.  The cone on this natural transformation is the evaluation map, whose image lies in the subcategory of $mod$-$\scrA$ generated by the object $Y$ itself; indeed, all elements in the image are twisted complexes of the form $V\otimes Y$, for graded vector spaces $V$.  The only spherical such object is $Y$ itself, for reasons of rank, hence $\id \rightarrow \scrT_Y[-1]$ determines $Y$.  \end{proof}

We will later need a split-generation criterion for finite collections of Lagrangian spheres, which is a minor variant on an argument due to Seidel \cite{Seidel:HMSgenus2}.  Let $M$ be a closed symplectic manifold and $\{V_1,\ldots, V_k\}$ a collection of Lagrangian spheres in $M$ for which there is a \emph{positive relation}, i.e. some word $w$ in the positive Dehn twists $\tau_{V_j}$ is equal to the identity in $\Symp(M)/\textrm{Ham}(M)$; if $M$ is simply-connected, this quotient is the mapping class group $\pi_0\Symp(M)$.  The word $w$, strictly speaking together with a choice of Hamiltonian isotopy from that product of twists to the identity,  defines a Lefschetz fibration $\scrW \rightarrow S^2$ with fibre $M$, where $w$ encodes the monodromy homomorphism of the fibration.  We suppose that the Lagrangian spheres all lie in the $\lambda$-summand $\scrF(M;\lambda)$.  Fix a homotopy class of sections $\beta$ of $\scrW \rightarrow S^2$; the moduli space of $J$-holomorphic sections in this homotopy class has complex virtual dimension
\[
\langle c_1(T^{vt}(\scrW), \beta \rangle + \dim_{\bC}(M).
\]
Suppose now the moduli spaces of sections define  pseudocycles, or more generally carry a virtual class.  Evaluation at a point therefore defines, by Poincar\'e duality, a cycle in $H^{ev}(M;\bC)$ depending on $\beta$, and arranging these for different homology classes of section defines an element of $H^{ev}(M;\Lambda_{\bR})$, or of $H^{ev}(M;\bC)$ in the fibre-monotone case, which we call the cycle class $\scrC(w)$ of the word $w$.

\begin{Proposition} \label{Prop:Split-Generate-basic}
If for some positive relation $w$ in the $\tau_{V_j}$ quantum cup-product by the cycle class $\scrC(w)$ is nilpotent as an operation on $QH^*(M;\lambda)$,  the spheres $\{V_j\}$ split-generate  $D^{\pi}\scrF(M;\lambda)$. 
\end{Proposition}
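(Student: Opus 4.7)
The plan is to follow Seidel's strategy from \cite{Seidel:HMSgenus2}, adapting it to the monotone $\lambda$-eigensummand, with the goal of exhibiting $\id_{\scrF(M;\lambda)}$ as a direct summand of an object in the thick triangulated envelope of $\{V_1, \ldots, V_k\}$. By Yoneda, this implies split-generation.

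The positive relation, together with a choice of Hamiltonian isotopy witnessing $\tau_{V_{i_r}} \cdots \tau_{V_{i_1}} \simeq \id$, is encoded as a Lefschetz fibration $p: \scrW \to S^2$ with smooth fibre $M$ and ordered vanishing cycles $(V_{i_1}, \ldots, V_{i_r})$; by construction, counting pseudoholomorphic sections of $p$ in each relative homology class and summing with appropriate weights produces the cycle class $\scrC(w) \in QH^{ev}(M)$, with monotonicity of $M$ and of the $V_j$ ensuring well-definedness. Proposition \ref{Prop:twists} identifies each geometric Dehn twist $\tau_{V_j}$ with the algebraic twist functor $\scrT_{V_j}$, so composition gives a quasi-isomorphism $\scrT_w := \scrT_{V_{i_r}} \circ \cdots \circ \scrT_{V_{i_1}} \simeq \id$ in $nu$-$fun(\scrF(M),\scrF(M))$. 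Concatenating the natural transformations $\id \to \scrT_{V_{i_j}}$ coming from the defining triangle of each twist produces a canonical natural transformation $\id \to \scrT_w$, which combined with the above quasi-isomorphism yields a distinguished class $\alpha \in HH^*(\scrF(M))$; by counting sections of $\scrW$, $\alpha$ is identified with the image of $\scrC(w)$ under the open-closed string map $QH^*(M) \to HH^*(\scrF(M))$.

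By construction, the fiber of $\alpha$, viewed as a natural transformation of the identity functor, has image in the thick envelope $\langle V_1, \ldots, V_k \rangle$ of the $V_j$. Now restrict to $\scrF(M;\lambda)$. Under the hypothesis that $\ast\scrC(w)$ is nilpotent on $QH^*(M;\lambda)$, and using that the open-closed map intertwines quantum product with the Yoneda product on Hochschild cohomology, $\alpha^N = 0 \in HH^*(\scrF(M;\lambda))$ for some $N$. Iterating the octahedral axiom, the fiber $F_N$ of $\alpha^N$ is an $N$-fold extension of $\mathrm{fib}(\alpha)$ with itself and so still lies in $\langle V_1, \ldots, V_k\rangle$; but since $\alpha^N = 0$, the defining triangle of $F_N$ splits and $F_N \simeq \id \oplus \id[-1]$ as a functor, exhibiting $\id_{\scrF(M;\lambda)}$ as a direct summand of an object of $\langle V_1, \ldots, V_k\rangle$. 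Evaluating at an arbitrary $Y \in \scrF(M;\lambda)$ then places $Y$ in the split-closed thick envelope of $\{V_1, \ldots, V_k\}$ inside $D^{\pi}\scrF(M;\lambda)$.

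The main obstacle is the identification of the natural transformation $\alpha$ with the image of $\scrC(w)$ under the open-closed map. This is the heart of Seidel's original argument and requires careful control of parametric moduli spaces of sections of $\scrW$ as critical values of $p$ collide, together with a systematic $A_\infty$-compatibility between the various natural transformations derived from the defining triangles for $\scrT_{V_j}$. The passage from the exact to the monotone setting and to a fixed eigenvalue summand is comparatively routine once one knows that $\scrC(w)$ respects the decomposition of Equation \ref{Eqn:QHsummands} and that the open-closed map is compatible with this decomposition.
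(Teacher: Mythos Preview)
Your argument is correct and follows essentially the same strategy as the paper's proof: concatenate the exact triangles for the twists $\scrT_{V_{i_j}}$, identify the resulting endomorphism with a section count for the Lefschetz fibration $\scrW \to S^2$, and then use nilpotency to kill that endomorphism. The geometric core you flag as ``the main obstacle'' --- that gluing the annular models for the individual maps $\id \to \scrT_{V_{i_j}}[-1]$ yields the section-counting invariant of $\scrW$ --- is precisely what the paper invokes via Seidel's gluing theorem.

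There are two stylistic differences worth noting. First, the paper works object-by-object: for each fixed $K \in \scrF(M;\lambda)$ the concatenated map $K \to K$ is identified with the image of $\scrC(w)$ under $QH^*(M) \to HF^{ev}(K,K)$, whereas you package this uniformly as a class in $HH^*(\scrF(M;\lambda))$ via the open-closed map. Your formulation is cleaner but requires the open-closed string map as a ring homomorphism, which the paper's object-level argument avoids. Second, to handle nilpotency the paper takes an iterated \emph{fibre sum} of $\scrW$ with itself (so the new cycle class is the quantum power $\scrC(w)^{\ast N}$ by a direct Gromov--Witten gluing), whereas you take the $N$-th Yoneda power $\alpha^N$ in Hochschild cohomology and invoke compatibility of the open-closed map with products. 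These are equivalent, but the paper's route is slightly more self-contained since it stays within the section-counting framework already set up.
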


\begin{proof}
By the correspondence between algebraic and geometric Dehn twists, Proposition \ref{Prop:twists}, there are exact triangles
\[
\cdots \rightarrow  HF(V,K)\otimes V \rightarrow K \rightarrow \tau_V(K) \stackrel{[1]}{\longrightarrow} \cdots
\]
for any Lagrangian $K \subset M$ lying in the $\lambda$-summand of the category.  Concatenating the triangles for the $\tau_{V_j}$ occuring in $w$ defines a natural map
\begin{equation}\label{eqn:concatenate}
K \rightarrow \prod_{i_j\in I} \tau_{V_{i_j}} K \cong K
\end{equation}
defined by an element of $HF^{ev}(K,K)$.  Recall from  the construction of the long exact sequence in Floer cohomology \cite{Seidel:triangle}, cf. the proof of Corollary \ref{Cor:TwistDeterminesSphere}, that the natural map $\id \mapsto \scrT_L[-1]$ arises from counting sections of a Lefschetz fibration over an annulus with a single critical point.  It is well-known that one can achieve transversality for holomorphic curves without fibre components using almost complex structures which make the fibration map pseudo-holomorphic.  The gluing theorem \cite[Proposition 2.22]{Seidel:triangle} implies that the concatenation of such maps counts holomorphic sections of the Lefschetz fibration given by sewing several such annuli together.  The map in  Equation \ref{eqn:concatenate} is accordingly given by the image of  the cycle class $r(\scrC(w)) \in HF^{ev}(K,K)$ under the natural restriction map $r: H^{ev}(M) \rightarrow HF^{ev}(K,K)$.  If $\scrC(w)$ vanishes, one map in the concatenated exact triangle vanishes, and we therefore see that $K$ is a summand in an iterated cone amongst the objects $\{V_{i_j}\}$.  If the cycle class is not trivial but quantum product with the cycle class is nilpotent, we can run the same argument after taking an iterated fibre sum of the Lefschetz fibration with itself. 
\end{proof}

\begin{Proposition} \label{Prop:Split-Generate}
If  the vanishing cycles $\{V_j\}$ all lie in the  summand $\scrF(M;\lambda)$ and for some positive relation $w$ in the $\tau_{V_j}$ the cycle class $\scrC(w)$ is a multiple of $c_1(M)-\lambda\id$, then the $\{V_j\}$ split-generate $D^{\pi}\scrF(M;\lambda)$.
\end{Proposition}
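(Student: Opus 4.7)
The plan is to deduce this strengthening directly from Proposition \ref{Prop:Split-Generate-basic}. The geometric content of that earlier result -- identifying the concatenated composition of exact twist triangles with the image of $\scrC(w)$ under the restriction map $r \colon QH^*(M) \to HF^*(K,K)$, and using iterated fibre sums of Lefschetz fibrations to upgrade ``vanishing'' to ``nilpotent action'' -- is already in hand. What remains is purely algebraic: one must check that the factorisation hypothesis forces quantum cup-product with $\scrC(w)$ to be nilpotent as an endomorphism of the summand $QH^*(M;\lambda)$.

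First I would unpack the definition of $QH^*(M;\lambda)$ as the generalised eigenspace of quantum cup by $c_1(M)$ for the eigenvalue $\lambda$: tautologically, the operator $\ast(c_1(M) - \lambda \id)$ is nilpotent on this summand, so there exists $N \geq 1$ with
\[
(c_1(M) - \lambda \id)^{\ast N} \ast x \ = \ 0 \quad \text{for all } x \in QH^*(M;\lambda).
\]
Second, I would use the hypothesis to write $\scrC(w) = \alpha \ast (c_1(M) - \lambda \id)$ for some $\alpha \in QH^{ev}(M)$.  Because the small quantum product is graded-commutative and our factors live in even degree, one has
\[
\scrC(w)^{\ast N} \ = \ \alpha^{\ast N} \ast (c_1(M) - \lambda \id)^{\ast N}.
\]
Since the decomposition \eqref{Eqn:QHsummands} is as mutually orthogonal subrings, $QH^*(M;\lambda)$ is a $\ast$-ideal in $QH^*(M)$, and in particular is preserved by quantum cup with $\alpha^{\ast N}$. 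Combined with the vanishing of the right-hand factor on this summand, we conclude that $\ast \scrC(w)$ is nilpotent on $QH^*(M;\lambda)$, which is exactly the hypothesis needed to invoke Proposition \ref{Prop:Split-Generate-basic} in the summand $\scrF(M;\lambda)$.

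The main (and indeed only) potential obstacle is a bookkeeping one: the proof of Proposition \ref{Prop:Split-Generate-basic} was phrased in terms of quantum nilpotence on the global ring $QH^*(M)$ rather than a distinguished summand, so one should verify that its argument restricts cleanly to $\scrF(M;\lambda)$. This is immediate from the orthogonality of the decomposition of $\scrF(M)$ into the subcategories $\scrF(M;\mu)$ established via Lemma \ref{Lem:quantumcap}: for any $K$ with $\frak{m}_0(K) = \lambda$ the restriction map $r \colon QH^*(M) \to HF^*(K,K)$ kills the orthogonal summands and so factors through $QH^*(M;\lambda)$. Consequently, nilpotence of $\ast \scrC(w)$ on this summand propagates to nilpotence of $r(\scrC(w))$ on $HF^*(K,K)$, and the concatenation-and-fibre-sum argument of Proposition \ref{Prop:Split-Generate-basic} applies verbatim to exhibit every $K \in \scrF(M;\lambda)$ as a summand of an iterated cone built from the $V_j$.
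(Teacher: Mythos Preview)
Your proof is correct and rests on the same underlying observation as the paper, but it takes an unnecessary detour through nilpotence on $QH^*(M;\lambda)$. The paper's argument is a single line: since the unital ring map $r\colon QH^*(M) \to HF^*(K,K)$ sends $c_1(M) \mapsto \lambda\cdot 1_K$ for any $K \in \scrF(M;\lambda)$ (this is exactly Equation~\eqref{Eq:quantumcap}), any multiple of $c_1(M) - \lambda\cdot 1$ maps to $0$ on the nose, so $r(\scrC(w)) = 0$. The concatenated map $K \to K$ in the proof of Proposition~\ref{Prop:Split-Generate-basic} therefore vanishes outright, and no iterated fibre sum is needed. Your Steps~1--4, establishing nilpotence of $\ast\scrC(w)$ on $QH^*(M;\lambda)$ so as to invoke the full strength of Proposition~\ref{Prop:Split-Generate-basic}, are valid but superfluous once you make this observation---which you essentially do in your final paragraph, only stopping at ``nilpotent'' rather than ``zero''.
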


\begin{proof}
The image of the cycle class under the natural map $QH^*(M) \rightarrow HF(L,L)$ is zero for any $L \in \textrm{Ob} \, \scrF(M;\lambda)$, by definition of that summand of the category, cf. Equation \ref{Eq:quantumcap}.  The proof now proceeds as before.
\end{proof}

This is particularly useful when $\lambda =0$, for the nilpotent summand.

\subsection{Functors from quilts\label{Section:Quilts}}

Geometrically, functors between Fukaya categories are obtained from counts of quilted holomorphic surfaces, using the theory developed by Mau, Wehrheim and Woodward \cite{WW1, WW2,MWW:Announce,Mau}.  They begin by defining an extended category $\scrF^{\#}(M)$ which comes with a fully faithful embedding $\scrF(M) \hookrightarrow \scrF^{\#}(M)$; objects of the extended category are generalised Lagrangian submanifolds, which comprise an integer $k\geq 1$ and a sequence of symplectic manifolds and Lagrangian correspondences
\[
(\{pt\}=M_0, M_1,\ldots, M_k=M); \qquad L_{i,i+1} \subset M_i^-\times M_{i+1}, \ 0\leq i\leq k-1
\]
where $(X,\omega)^-$ is shorthand for $(X,-\omega)$.  These form the objects of a category $H(\scrF^{\#}(M))$, in which morphisms are given by quilted Floer cohomology groups. A \emph{quilted $(d+1)$-marked disc} is a disc $D\subset \bC$ with boundary marked points $\{z_0, z_1, \ldots, z_d\}$ and with a distinguished horocycle -- the seam -- at the point $z_0$.   Any quilted Riemann surface whose boundary components are labelled by Lagrangian submanifolds and whose seams are labelled by Lagrangian correspondences determines an elliptic boundary value problem.   This problem studies a collection of holomorphic maps, one defined on each subdomain of the surface, subject to Lagrangian boundary conditions as prescribed by the labelling data along boundaries and seams. There is  an $A_{\infty}$-category $\scrF^{\#}(M)$ underlying the cohomological category $H(\scrF^{\#}(M))$, in which the higher order operations count suitable quilted disks.

The moduli space $\mathcal{M}_{d,1}$ of \emph{nodal stable} quilted $(d+1)$-marked discs  (which we shall not define) is a convex polytope homeomorphic to the multiplihedron.  The codimension one boundary faces of the multiplihedron correspond to the terms of the quadratic $A_{\infty}$-functor equation
\begin{multline*}
\sum_{i_j, k} \mu_{\sB}^k(\cF(a_n,\ldots,a_{i_1}),\cF(a_{i_1-1},\ldots, a_{i_2}), \ldots, \cF(a_{i_k-1},\ldots a_1)) \\
= \sum_d (-1)^{\maltese_{j-d}} \cF(a_n,\ldots, a_{j}, \mu_{\sA}^d (a_{j-d},\ldots, a_{j-d-1}), a_{j-d},\ldots a_1) 
\end{multline*}
for an $A_{\infty}$-functor $\cF: \sA \rightarrow \sB$.  Mau's gluing theorem \cite{Mau} shows that the counts of nodal stable quilted discs indeed reflect the combinatorial boundary structure of the $\mathcal{M}_{d,1}$ and hence the $A_{\infty}$-functor equations.  It follows that  to every monotone, oriented and spin Lagrangian correspondence $L^{\flat} \subset M^-\times N$, there is a $\bZ_2$-graded $A_{\infty}$-functor $\cF_{L^{\flat}}: \scrF^{\#}(M) \rightarrow \scrF^{\#}(N)$ defined on objects by
\[
\bL \longrightarrow  \cF_{L^{\flat}}(\bL) \quad \textrm{taking} \quad 
(L_1, L_{23}, \ldots, L_{k-1,k}) \,   \mapsto \,  (L_1,L_{23},\ldots, L_{k-1,k},L^{\flat})
\]
and on morphisms and higher products by a signed count of quilted discs. 
The upshot is the following.  

\begin{Theorem}[Mau, Wehrheim, Woodward] \label{Thm:QuiltsGiveFunctors} 
Let $M$ and $N$ be monotone symplectic manifolds.  The association $L^{\flat} \mapsto \cF_{L^{\flat}}$ defines a $\bZ_2$-graded $A_{\infty}$-functor
\[
\Phi: \scrF(M^- \times N) \ \longrightarrow \ nu\mbox{-}fun(\scrF^{\#}(M),\scrF^{\#}(N)).
\]
\end{Theorem}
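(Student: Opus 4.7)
The statement packages a two-layered $A_\infty$-structure: each object $L^\flat$ produces an $A_\infty$-functor $\cF_{L^\flat}$ via counts of quilted discs, and then the assignment itself must be upgraded to an $A_\infty$-functor into the $dg$-category of non-unital functors. My plan is to construct this outer functor $\Phi$ by counting \emph{doubly quilted surfaces} with an additional distinguished seam carrying a chain of correspondences in $M^-\times N$, and to verify the $A_\infty$-functor equations via a boundary analysis of the relevant moduli spaces.

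First I would fix the combinatorial model. Given objects $L_0^\flat,\ldots,L_e^\flat \in \scrF(M^-\times N)$, a string of objects $\bL^0,\ldots,\bL^d$ of $\scrF^\#(M)$, and an object $\bL^{d+1}$ of $\scrF^\#(N)$, I would consider a disc with $d+2$ boundary marked points carrying the outer labels, a chain of seams between the $\bL^i$ labels that quilts the $M$-region to the $N$-region, and $e$ additional marked points on the quilting seam carrying the inputs $L_i^\flat \subset M^-\times N$. The asymptotic strip at $z_0$ is labelled by the generalised correspondence $(\bL^0,L_0^\flat)$ on one end and $(\bL^{d+1},L_e^\flat,\bL^0)$ on the other, set up so that the quilted Cauchy--Riemann problem admits a Fredholm theory computing hom spaces in $\scrF^\#(N)$. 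The moduli space of such configurations, modulo reparametrisation, is a polytope $\cM_{d,e}$ which interpolates between the multiplihedron $\cM_{d,1}$ (for $e=0$) and the associahedron (for $d=0$), and degenerates at its codimension one boundary either by the $\bL^i$ coming together, by the $L_j^\flat$ coming together, or by a seam marked point crashing into a boundary marked point. The functor $\Phi$ would be defined on the $d$-th $A_\infty$-functor component by setting
\[
(\Phi^e(x_e,\ldots,x_1))^d(a_d,\ldots,a_1) \;=\; \#\cM_{d,e}(a_d,\ldots,a_1;x_e,\ldots,x_1)\cdot (\text{output}),
\]
counting rigid solutions.

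Second, I would carry out the boundary analysis of $\cM_{d,e}$ of dimension one. Mau's gluing theorem already identifies the $e=0$ boundary with the terms of the $A_\infty$-functor equation for $\cF_{L^\flat}$, and the $d=0$ boundary with the $A_\infty$-category composition in $\scrF(M^-\times N)$. The new mixed codimension-one faces break into products $\cM_{d',e'}\times\cM_{d'',e''}$ which are exactly the terms of the $A_\infty$-functor equation for $\Phi$: each face represents either pre-composition with $\mu_\scrF^e$ on the input side, or post-composition via $\mu^k$ of the target $dg$-category $nu$-$fun(\scrF^\#(M),\scrF^\#(N))$ applied to several factors $\cF_{L_{i_j}^\flat}$ and inner evaluations on quilts. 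The $\bZ_2$-grading falls out of the Maslov index computation for the quilted problem.

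The main obstacle, and the heart of where I would rely on Mau--Wehrheim--Woodward \cite{MWW:Announce, Mau}, is the \emph{analytic} content of setting up transversality and compactness for these doubly quilted moduli spaces, and identifying their degenerations with products of lower-dimensional moduli spaces bijectively and with matching signs. Monotonicity (Remark \ref{Rem:OurSpacesAreMonotone}) ensures that the only bubbles appearing in dimension one strata are Maslov index $2$ disks on the boundary Lagrangians, whose contributions are multiples of units and so cancel in the relevant signed sum, exactly as in the verification of the $A_\infty$-axioms for $\scrF(M)$ itself; this is what allows the argument to work over $\bC$ rather than requiring virtual perturbation machinery. Once the gluing/compactness package is in hand, the combinatorics of $\cM_{d,e}$ dictate that the signed boundary count vanishes, which is precisely the $A_\infty$-functor equation for $\Phi$. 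Consistency under homotopy of the quilt perturbation data then gives the statement up to quasi-isomorphism, completing the construction.
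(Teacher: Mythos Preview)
The paper does not prove this theorem at all: it is stated as a result of Mau, Wehrheim and Woodward, with the construction of $\cF_{L^\flat}$ and the relevant moduli spaces $\mathcal{M}_{d,1}$ of quilted discs sketched in the surrounding paragraphs, and the analytic content (Mau's gluing theorem, the $A_\infty$-functor equations arising from codimension-one faces of the multiplihedron) attributed to \cite{MWW:Announce, Mau, WW1, WW2}. So there is nothing to compare against beyond that outline.

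Your sketch is a plausible account of how one would flesh out the MWW construction, and the broad shape (doubly-marked quilted discs, a polytope interpolating between the multiplihedron and associahedron, boundary strata giving the functor equations, monotonicity controlling bubbling) is in the right spirit. But be aware that you are reconstructing a substantial body of work rather than reproducing an argument from this paper; in particular, the precise combinatorial model for $\mathcal{M}_{d,e}$, the consistency of perturbation data across all strata, and the sign analysis are genuinely delicate and are the content of the cited references. For the purposes of this paper the theorem is a black box, and your proposal should be read as an informal summary of that black box rather than as an independent proof.
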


We will combine this general theory with an argument from \cite{Abouzaid-Smith}, which uses properties of Yoneda embeddings to deduce that, in special cases, the Mau-Wehrheim-Woodward functor is actually fully faithful.

\begin{Corollary}\label{Cor:QuiltsQH}
In the situation of Proposition \ref{Prop:Split-Generate}, the Hochschild cohomology
\[
HH^*(D^{\pi}\scrF(M;\lambda)) \cong QH^*(M;\lambda)
\]
is isomorphic to the $\lambda$-generalised eigenspace of quantum cohomology.
\end{Corollary}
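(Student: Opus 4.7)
The desired isomorphism should be the open-closed string map $OC_\lambda\colon QH^*(M;\lambda) \to HH^*(\scrF(M;\lambda))$ of Section \ref{SubSec:Fukaya}, restricted to the $\lambda$-summand. Since Hochschild cohomology is invariant under passage to twisted complexes and split-closure, the target is naturally $HH^*(D^\pi\scrF(M;\lambda))$, so it suffices to show $OC_\lambda$ is a quasi-isomorphism. The eigenvalue splitting of quantum cohomology (\ref{Eqn:QHsummands}) is orthogonal for the quantum product, and the Fukaya category splits into mutually orthogonal summands indexed by the same eigenvalues, so the problem decouples into individual summands.

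My strategy would be to realise $OC$ geometrically via Theorem \ref{Thm:QuiltsGiveFunctors}. The diagonal $\Delta \subset M^-\times M$ is a monotone, oriented, spin Lagrangian correspondence whose quilted Floer cohomology is $HF^*(\Delta,\Delta) \cong QH^*(M)$, and the Mau-Wehrheim-Woodward functor $\Phi$ sends $\Delta$ to (a representative of) the identity endofunctor of $\scrF^\#(M)$. The induced map on endomorphism spaces
\[
QH^*(M)\;\cong\; HF^*(\Delta,\Delta) \;\longrightarrow\; \Hom_{nu\text{-}fun}(\id,\id) \;=\; HH^*(\scrF^\#(M))
\]
agrees with $OC$ via comparison of low-order components, both being defined by counts of holomorphic disks with one interior marked point and several boundary punctures. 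Passage to $\scrF(M) \hookrightarrow \scrF^\#(M)$ preserves Hochschild cohomology, since the inclusion is fully faithful.

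To verify that $OC_\lambda$ is an isomorphism I would argue in the spirit of \cite{Abouzaid-Smith}, using the split-generators $\{V_j\}$ supplied by Proposition \ref{Prop:Split-Generate}. Any class in $HH^*(D^\pi\scrF(M;\lambda))$ is determined by its restriction to the full $A_\infty$-subcategory spanned by the $V_j$; the hypothesis that $\scrC(w)$ is a multiple of $c_1(M)-\lambda\cdot\id$ (hence nilpotent on the $\lambda$-summand) ensures that the exact triangles used in the proof of Proposition \ref{Prop:Split-Generate}, which write any object as an iterated cone-and-summand of the $V_j$, match the ring structure of $QH^*(M;\lambda)$ under $OC_\lambda$ termwise. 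The main obstacle will be showing that $OC_\lambda$ is injective, and not merely surjective: this is a Cardy-type non-degeneracy statement whose verification in the monotone setting demands careful bubbling analysis on the moduli spaces of quilted discs used to construct $\Phi$, and is the place where the full strength of Proposition \ref{Prop:Split-Generate} (rather than mere generation) is consumed.
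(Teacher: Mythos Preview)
Your framework is right — the diagonal $\Delta \subset M^- \times M$, the Mau--Wehrheim--Woodward functor $\Phi$, and the identification $HF^*(\Delta,\Delta)\cong QH^*(M)$ are exactly the ingredients the paper uses. But your argument has a gap at the decisive point, and the ``Cardy-type non-degeneracy'' you flag as the main obstacle is not how the paper closes it.

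The issue is that $\Phi$ is \emph{not} fully faithful in general, so the map $HF^*(\Delta,\Delta)\to \Hom_{nu\text{-}fun}(\id,\id)$ is not a priori an isomorphism. What the paper does instead is \emph{resolve the $\lambda$-summand of the diagonal by product Lagrangians}. Concretely: the exact triangle $\id \to \scrT_L \to \Hom(L,\cdot)\otimes L$ is the image under $\Phi$ of a triangle $\Delta_M \to \Gamma(\tau_L) \to L\times L$ in $\scrF(M^-\times M)$. Concatenating these for the twists appearing in the positive relation $w$ gives a triangle
\[
\Delta_M \longrightarrow \Gamma\Big(\textstyle\prod_{i_j}\tau_{V_{i_j}}\Big) \longrightarrow \langle V_i\times V_j'\rangle
\]
whose first map is quantum product by the cycle class $\scrC(w)$. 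By hypothesis this vanishes on the $\lambda$-idempotent summand $\Delta^\lambda$ of the diagonal, so $\Delta^\lambda$ lies in the split-closure of the product Lagrangians $V_i\times V_j$. Now one invokes the Abouzaid--Smith result that $\Phi$ restricted to the subcategory $\scrA(M\times M)$ generated by such products \emph{is} fully faithful (this uses the existence of strict chain-level units, which requires passing to modules). Full faithfulness then extends automatically to the split-closure, hence to $\Delta^\lambda$, and one reads off
\[
QH^*(M;\lambda)\;\cong\; HF((\Delta,\Delta);\lambda) \;\cong\; \Hom_{nu\text{-}fun}(\id,\id) \;=\; HH^*(D^\pi\scrF(M;\lambda)).
\]
No separate injectivity argument, bubbling analysis, or Cardy relation is needed: once $\Delta^\lambda$ is resolved by products, full faithfulness gives both directions at once. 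Your sentence about the exact triangles ``matching the ring structure termwise'' gestures toward this but doesn't capture the mechanism — the lift to $M\times M$ and the resolution of $\Delta^\lambda$ are the missing steps.
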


\begin{proof}
We will use quilts to resolve the $\lambda$-summand of the diagonal $\Delta \subset M \times M$, which in turn will yield information on Hochschild cohomology.   
Recall from Proposition \ref{Prop:twists} that the Dehn twist acts on the Fukaya category via an algebraic twist, hence sits in a canonical exact triangle of functors
\[
\rightarrow id \rightarrow \scrT_L \rightarrow Hom(L,\cdot)\otimes L \stackrel{[1]}{\longrightarrow}
\]
in $nu$-$fun(\scrF(M),\scrF(M))$.  This triangle is the image under the Mau-Wehrheim-Woodward functor $\Phi$ of a triangle
\begin{equation} \label{Eqn:LiftTriangle}
\rightarrow \Delta_M \rightarrow \Gamma(\tau_L) \rightarrow L\times L \stackrel{[1]}{\longrightarrow}
\end{equation}
with $\Delta_M$ the diagonal and $\Gamma(\tau_L)$ the graph of the geometric Dehn twist, cf. \cite[Theorem 7.2]{WW:triangle}.  The functor $\Phi$ is in general not fully faithful. In the setting of Proposition \ref{Prop:Split-Generate}, we let $\scrA(M)$ denote the subcategory generated by the vanishing cycles $\{V_j\}$ and let $\scrA(M\times M)$ denote the subcategory generated by product Lagrangians $V_i \times V_j$.  We define $\scrA^{\oplus} \subset \scrF^{\#}(M)$ to comprise those generalised correspondences
\[
(\{pt\}=M_0, M_1,\ldots, M_k=M); \qquad L_{i,i+1} \cong V_{i_0} \times V_{i_1} \subset M_i^-\times M_{i+1}, \ 0\leq i\leq k-1
\]
so each correspondence lies in $\scrA(M\times M)$. Lemma 7.5 of \cite{Abouzaid-Smith} shows $Tw(\scrA) \simeq Tw(\scrA^{\oplus})$, and moreover that  $\Phi$ induces a functor
\[
\Phi: \scrA(M \times M) \rightarrow nu\-- fun(Tw^{\pi}\scrA^{\oplus}(M),Tw^{\pi}\scrA^{\oplus}(M))
\]
which is a fully faithful embedding \cite[Lemma 7.8]{Abouzaid-Smith}. Note that this argument relies on the existence of chain-level units in the Fukaya category satisfying 
\[
\mu^2(e,e) = e; \quad \mu^k(e,\ldots,e) = 0 \qquad \textrm{for} \ k>2.
\] In \cite{Abouzaid-Smith} these existed for grading reasons; in general, to obtain such units one must pass from the Fukaya category to its category of modules, in the manner of \cite[Section 4]{Abouzaid}.
 In the situation at hand, $Tw^{\pi}\scrA(M) \simeq Tw^{\pi}(\scrF(M;\lambda))$, by Proposition \ref{Prop:Split-Generate}.  We now go back to the proof of that Proposition;  the argument, lifted by $\Phi$ to the category $\scrF(M\times M)$ as in Equation \ref{Eqn:LiftTriangle}, leads to a collection of exact triangles which concatenate to one of the shape
\[
\Delta_M \rightarrow \Gamma(\prod_{i_j} \tau_{V_{i_j}}) \rightarrow \langle V\times V' \rangle
\]
where the third term is built out of product Lagrangians of the shape $V_i \times \tau_{V_j}(V_k)$ and the first arrow is given by quantum product by the cycle class $\mathcal{C}(w)$.  Since $HF(\Delta_M, \Delta_M) \cong QH^*(M)$ as a $QH^*(M)$-module, the first arrow in this triangle vanishes on the idempotent summand of $\Delta_M$ corresponding to $QH^*(M;\lambda)$.  In other words, that idempotent summand $\Delta^{\lambda}$ of the diagonal is split-generated by product Lagrangians, and lies in the split-closure of the category $\scrA(M\times M)$.  At this point, we infer that $\Phi$ restricted to the extended category
\[
\scrA^{\Delta^{\lambda}}(M\times M)
\]
which comprises product Lagrangians and the relevant summand of the diagonal, is also a fully faithful embedding. Since the diagonal maps to the identity functor of $Tw^{\pi}\scrA(M)\subset Tw^{\pi}\scrF^{\#}(M)$, this implies that 
\[
HF((\Delta, \Delta);\lambda) \cong Hom_{nu-fun}(\id, \id)
\]
where the RHS is computed in endofunctors of $Tw^{\pi}\scrA(M) \simeq Tw^{\pi}\scrF(M;\lambda)$.  But then the natural transformations of the identity functor exactly compute Hochschild cohomology.
\end{proof}

For any endofunctor $\scrG: \scrA \rightarrow \scrA$ of an $A_{\infty}$-category, there is a Hochschild cohomology group $HH^*(\scrG) = H(hom_{fun(\scrA,\scrA)}(\scrG,\id)$, generalising the Hochschild cohomology of the category which arises for $\scrG = \id$.  Symplectomorphisms of a monotone symplectic manifold act naturally on the monotone Fukaya category. Write $HF(\phi)$ for the fixed-point Floer cohomology of a symplectomorphism $\phi$, so $HF(\id) \cong QH^*(M)$ by the Piunikhin-Salamon-Schwarz isomorphism. As usual, the group $HF(\phi)$ is a module for quantum cohomology, hence has an eigenspace splitting into summands indexed by $\textrm{Spec}(\ast c_1(M))$.

\begin{Corollary} \label{Cor:HFisHH}
In the situation of Proposition \ref{Prop:Split-Generate}, for any  $\phi \in \Symp(M)$ in the subgroup generated by Dehn twists in the vanishing cycles $V_j$ inducing a functor $\scrG_{\phi}$ of $\scrF(M;\lambda)$, one has
\[HF(\phi;\lambda) \cong HH^*(\scrG_{\phi}).\]
\end{Corollary}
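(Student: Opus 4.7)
The plan is to upgrade the argument of Corollary \ref{Cor:QuiltsQH} by replacing the diagonal $\Delta_M \subset M^- \times M$ with the graph $\Gamma(\phi) \subset M^- \times M$, exploiting the fact that $\phi$ is already expressed as a composition of Dehn twists in the spheres $V_j$. Throughout, I use the standard identification $HF(\phi) \cong HF(\Gamma(\phi), \Delta_M)$ as $QH^*(M)$-modules, which intertwines the eigenspace decomposition so that the $\lambda$-summand on each side corresponds.

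First I would apply the Mau-Wehrheim-Woodward functor $\Phi: \scrF(M^- \times M) \to nu\text{-}fun(\scrF^{\#}(M), \scrF^{\#}(M))$ to the graph of $\phi$. Writing $\phi = \prod_i \tau_{V_{j_i}}^{\pm 1}$ as a word in Dehn twists and applying the natural transformation functoriality $\Phi(\Gamma(\tau_L)) \simeq \scrT_L$ (Proposition \ref{Prop:twists}, at the level of correspondences as in \cite{WW:triangle}), one identifies $\Phi(\Gamma(\phi))$ with the composition of algebraic twist autoequivalences, which is precisely $\scrG_\phi$. Next, iterating the exact triangle \eqref{Eqn:LiftTriangle} for each twist appearing in $\phi$ yields a resolution of $\Gamma(\phi)$ by an iterated cone whose terms are the diagonal $\Delta_M$ and product Lagrangians of the form $V_{i} \times \tau_{V_{j}}(V_{k}) \cdots$. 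Restricting to the $\lambda$-summand $\Gamma(\phi)^\lambda$ and invoking the same cycle-class-vanishing step that proved Proposition \ref{Prop:Split-Generate} (together with Corollary \ref{Cor:QuiltsQH}, which already placed $\Delta^\lambda$ in the split-closure of the product-Lagrangian subcategory), one concludes that $\Gamma(\phi)^\lambda$ lies in the split-closure of $\scrA^\oplus(M \times M)$.

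I would then extend the fully-faithful embedding from the proof of Corollary \ref{Cor:QuiltsQH}: exactly as there, Lemma 7.8 of \cite{Abouzaid-Smith} applies to the enlarged subcategory $\scrA^{\Delta^\lambda, \Gamma(\phi)^\lambda}(M \times M)$ containing both the diagonal summand and the graph summand, showing that
\[
\Phi: \scrA^{\Delta^\lambda, \Gamma(\phi)^\lambda}(M \times M) \longrightarrow nu\text{-}fun(Tw^\pi \scrA(M), Tw^\pi \scrA(M))
\]
is fully faithful on morphisms between these objects. In particular
\[
HF\bigl(\Gamma(\phi)^\lambda, \Delta^\lambda\bigr) \;\cong\; Hom_{nu\text{-}fun}\bigl(\scrG_\phi, \id\bigr),
\]
and the right side is by definition $HH^*(\scrG_\phi)$ computed in endofunctors of $Tw^\pi \scrA(M) \simeq Tw^\pi \scrF(M; \lambda)$. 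The left side is $HF(\phi; \lambda)$, giving the desired isomorphism.

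The main obstacle is verifying that the Abouzaid--Smith fully-faithful embedding argument genuinely extends from the diagonal summand treated in Corollary \ref{Cor:QuiltsQH} to include $\Gamma(\phi)^\lambda$ as an additional object, and that the resulting isomorphism intertwines the module structures and the identification of $\Phi(\Gamma(\phi))$ with $\scrG_\phi$ on the nose (rather than merely up to an ambiguous automorphism). The former requires checking that strict chain-level units exist for all relevant pairs of objects -- addressed by passing to the Fukaya category's module category as in \cite[Section 4]{Abouzaid} -- and the latter requires only that the triangle \eqref{Eqn:LiftTriangle} is $\Phi$-natural, which is built into the construction of the long exact sequence.
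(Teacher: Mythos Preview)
Your proposal is correct and follows essentially the same approach as the paper: identify $HF(\phi)$ with $HF(\Gamma_\phi,\Delta)$, resolve the $\lambda$-summand of the graph by product Lagrangians via the iterated Dehn-twist triangles (exactly as was done for $\Delta^\lambda$ in Corollary \ref{Cor:QuiltsQH}), and then invoke the fully faithful Mau--Wehrheim--Woodward embedding on the enlarged subcategory. The paper's own proof is much terser---it simply says ``the proof now proceeds as before''---and you have accurately unpacked what that phrase entails.
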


\begin{proof}
We use the standard identification $HF(\phi) \cong HF(\Gamma_{\phi}, \Delta)$ where $\Gamma_{\phi} \subset M\times M$ denotes the graph.  The $\lambda$-summand of this graph is resolved by suitable product Lagrangians built from the vanishing cycles, as in Corollary \ref{Cor:QuiltsQH}.   The proof now proceeds as before, using fullness and faithfulness of the Mau-Wehrheim-Woodward functor in this setting.
\end{proof}

\begin{Remark}
For any subcategory $\scrA \subset \scrF(M)$, there are always natural open-closed string maps
\[
HH_*(\scrA,\scrA) \rightarrow QH^*(M) \rightarrow HH^*(\scrA,\scrA).
\]
Abouzaid \cite{Abouzaid} proves (in the exact case, and for wrapped Floer cohomology) that \emph{if the unit is in the image of the first map, then $\scrA$ split-generates the Fukaya category}.  Such split-generation results are often proved by showing that the diagonal is resolved by products, in the manner of Beilinson's classical argument for sheaves on projective space.  In our case, we know from Proposition \ref{Prop:Split-Generate} that the appropriate $\scrA$ split-generates, without knowing that the diagonal is resolved by products.  Quilts enable us to refine split-generation to this more geometric fact, which in turn has consequences for quantum cohomology. 
\end{Remark}

Suppose $L_0^{\flat} \subset M^-\times N$ and $L_1^{\flat}\subset N^-\times P$ are Lagrangian correspondences. Their  geometric composition is given by pulling $L_0^{\flat}$ back to $M\times N\times N\times P$, intersecting with the diagonal $M \times \Delta_N \times P$, and pushing forward to $N^-\times P$; in general, although the intersection can be made transverse, the push-forward is only immersed. 

\begin{Theorem}[Mau, Wehrheim, Woodward] \label{Thm:QuiltsCompose}
Given correspondences $L_0^{\flat} \subset M^-\times N$ and $L_1^{\flat}\subset N^-\times P$ with the geometric composition $L_1^{\flat} \circ L_0^{\flat} \subset M^-\times P$ smooth, embedded and transversely cut out, there is a quasi-isomorphism of functors 
\[
\cF_{L_1^{\flat}\circ L_0^{\flat}} \ \simeq \  \cF_{L_1^{\flat}} \circ \cF_{L_0^{\flat}}.
\]
\end{Theorem}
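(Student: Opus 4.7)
The plan is to compare the two $A_\infty$-functors by interpolating between them through a one-parameter family of quilted moduli spaces. By definition, $\cF_{L_1^\flat} \circ \cF_{L_0^\flat}$ is computed from quilts carrying two parallel seams, the outer regions mapping to $M$ and $P$ and a middle region of some fixed width mapping to $N$, with the seams labeled by $L_0^\flat$ and $L_1^\flat$ respectively. On the other hand, $\cF_{L_1^\flat \circ L_0^\flat}$ is computed from quilts with a single seam labeled by the composed correspondence, with outer regions mapping to $M$ and $P$. The natural geometric operation linking these is strip-shrinking: shrink the width $\epsilon$ of the $N$-strip to zero.

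First I would introduce, for each $d$, a parametrized moduli space $\mathcal{M}_{d,1}^{\epsilon}$ of $(d+1)$-marked quilted disks with the width of the middle $N$-strip set equal to $\epsilon>0$. For fixed $\epsilon$ these moduli spaces compute the composite functor $\cF_{L_1^\flat}\circ \cF_{L_0^\flat}$. The goal is to show that in the Gromov-type limit $\epsilon \to 0$, the quilted holomorphic curves converge, modulo standard degenerations, to holomorphic curves with a single seam labeled by $L_1^\flat \circ L_0^\flat$, thereby recovering $\cF_{L_1^\flat \circ L_0^\flat}$. Counting the compactified total space of the family, with boundary the two endpoints $\epsilon \to 0$ and $\epsilon > 0$ fixed, will then yield the desired natural transformation
\[
T \co \cF_{L_1^\flat} \circ \cF_{L_0^\flat} \Longrightarrow \cF_{L_1^\flat \circ L_0^\flat}
\]
in $nu$-$fun(\scrF^\#(M),\scrF^\#(P))$.

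The key technical input, and the main obstacle, is the strip-shrinking analysis. Concretely, one needs: (i) a removal-of-singularities / convergence statement that any sequence of holomorphic quilts with $\epsilon_n \to 0$ and uniformly bounded energy subconverges to a holomorphic curve with seam $L_1^\flat \circ L_0^\flat$ (together with possible sphere and disk bubbles); (ii) a gluing theorem providing the converse, so that every such limit curve arises from a unique family for small $\epsilon$; and (iii) uniform transversality for the family. The hypothesis that $L_1^\flat \circ L_0^\flat$ is smooth, embedded and transversely cut out is precisely what allows steps (i)--(iii) to go through without the pathologies (sphere bubbles at the seam, non-embedded limits) that would otherwise obstruct the analysis. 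Under the monotonicity hypotheses in force throughout the paper, bubbling in codimension one is controlled by the usual Maslov-index arguments, and only contributes the unit-valued cancellation already built into the $A_\infty$-equations.

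Once $T$ is constructed, it remains to verify it is a quasi-isomorphism of $A_\infty$-functors; by a standard argument it suffices to check this on each object $\bL \in \scrF^\#(M)$, i.e.\ to check that the chain map on quilted Floer complexes
\[
CF(\bL,L_0^\flat,L_1^\flat) \longrightarrow CF(\bL, L_1^\flat\circ L_0^\flat)
\]
is a quasi-isomorphism. This cohomological version of the composition theorem is the main result of \cite{WW1,WW2}, proved by exactly the same strip-shrinking technique applied to the pair-of-pants (strip) domain; the functorial upgrade then follows by applying the same analysis on all the higher multiplihedra simultaneously, compatibility with the $A_\infty$-structure being encoded by Mau's gluing theorem \cite{Mau} for the codimension-one boundary strata of $\mathcal{M}_{d,1}^\epsilon$.
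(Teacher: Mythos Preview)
Your proposal is correct and matches the paper's own treatment: the paper does not give a proof but merely sketches that the morphism of functors is built by counting biquilted disks with two parallel seams, and that the quasi-isomorphism follows from the delicate strip-shrinking analysis of \cite{WW2} at the cohomological level (with Mau's gluing theorem \cite{Mau} handling the $A_\infty$-compatibility). Your outline of the strip-shrinking parametrized-family argument, reduction to objects, and invocation of the Wehrheim--Woodward cohomological result is exactly this.
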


The construction of a morphism in the functor-category between $\cF_{L_1^{\flat}\circ L_0^{\flat}}$ and $\cF_{L_1^{\flat}} \circ \cF_{L_0^{\flat}}$ comes from counting ``biquilted" disks with two interior seams (two parallel horocycles). The quasi-isomorphism statement is a  difficult result which relies on a delicate strip-shrinking argument  for the associated cohomological functors, proved in detail in \cite{WW2}. A different proof of the cohomological isomorphism has recently been given by Lekili and Lipyanskiy \cite{LekiliLipyanskiy}.

\subsection{Grading and deformations\label{Sec:Indices}}
A one-parameter \emph{deformation of $A_{\infty}$-categories} is an $A_{\infty}$-category $\scrA_q$ over $\bK[[q]]$, with $\mu^d_{\scrA_q} = \mu^d_{\scrA} + O(q)$, i.e.
\begin{equation} \label{Eqn:Deformation}
\mu^d_{\scrA_q} = \mu^d_{\scrA} + q\mu^d_{\scrA_q,1} + q^2\mu^d_{\scrA_q,2} + \cdots
\end{equation}
with each $\mu^d_{\scrA_q,j}$ comprising $\bK$-linear maps 
\[
hom_{\scrA}(X_{d_1},X_d) \otimes \cdots \otimes hom_{\scrA}(X_1,X_2) \rightarrow hom_{\scrA}(X_1,X_d)[2-d]
\]
for any $(X_1,\ldots, X_d) \in $(Ob$ \, \scrA)^d$.  The operations $\mu^d_{\scrA_q}$ should be $q$-adically convergent (for us they will always vanish for $q \gg 0$).   In applications, such deformations of Fukaya categories will arise from (partial) compactifications of symplectic manifolds $M\subset \bar{M}$, obtained by adding in a divisor $\Delta = \bar{M} \backslash M$ disjoint from the Lagrangian submanifolds under consideration.  The operation $\mu_{\scrF(\bar{M}),j}^d$ will count pseudoholomorphic $d$-gons with multiplicity $j$ over $\Delta$, so setting $q=0$ corresponds to working in the open part $M=\bar{M}\backslash \Delta$.  Assuming that one can achieve transversality within the class of almost complex structures which make $\Delta \subset \bar{M}$ a pseudoholomorphic submanifold, positivity of intersections ensures that the $\mu^d_{\scrA_q}$ are compatible with the filtration on Floer cochain complexes in $\bar{M}$ by powers of $q$.  Under suitable monotonicity hypotheses on $\bar{M}$, one can further set $q=1$ since all series converge.  

The formal parameter $q$ of a deformation arising from a partial compactification of an open symplectic manifold with $c_1=0$ typically has non-zero degree.

\begin{Lemma} \label{Lem:DeformationDegree}
If the first Chern class $c_1(\bar{M}) = r[\Delta]$, with $r\in \bQ$, then $deg(q) = 2r$. 
\end{Lemma}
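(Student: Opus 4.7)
The plan is to relate the degree of $q$ to the Maslov index defect of holomorphic polygons intersecting the compactifying divisor $\Delta$. Since $c_1(\bar M) = r[\Delta]$, the line bundle $K_{\bar M}^{\otimes 2}(2r\Delta)$ is trivial, so there is a meromorphic volume form $\Omega$ on $\bar M$ whose square has a pole of order $2r$ along $\Delta$ and no other zeroes or poles. The restriction $\Omega|_M$ is a holomorphic volume form which underlies the $\bZ$-grading of the undeformed Fukaya category $\scrA = \scrF(M)$: gradings of Lagrangians $L \subset M$ are lifts to $\bR$ of the phase map $\arg(\Omega^{\otimes 2}|_L)\co L \to S^1$.

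Next I would verify the Maslov index identity
\[
\mu_{\bar M}(u) \ = \ \mu_\Omega(u) + 2r(u\cdot\Delta)
\]
for any smooth map $u\co (D,\partial D) \to (\bar M, \bigcup L_i)$ with graded Lagrangian boundary conditions $L_i \subset M$, where $\mu_{\bar M}(u)$ is the Maslov index computed using any trivialization of $K_{\bar M}^{\otimes 2}$ over $u$ and $\mu_\Omega(u)$ is the Maslov index determined by the gradings of the $L_i$ via $\Omega^{\otimes 2}$. This follows from a local winding-number computation at each intersection of $u$ with $\Delta$: going around a small loop encircling a simple intersection point, the phase of $\Omega^{\otimes 2}$ winds $-2r$ times relative to any trivialization of $K_{\bar M}^{\otimes 2}$, because $\Omega^{\otimes 2}$ has a pole of order $2r$ there.

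Now $\mu^d_{\scrA_q,j}$ counts rigid holomorphic $(d+1)$-gons $u$ in $\bar M$ with $u\cdot\Delta = j$, and as an operation on Floer complexes in $\bar M$ it is homogeneous of total degree $2-d$ (with respect to the $\bar M$-grading). When the boundary intersection points are regraded by the finer $\bZ$-grading induced by $\Omega|_M$, the identity above shifts each contributing disk's effective Maslov index by $-2rj$. Hence, viewed as a map of $\bZ$-graded Floer complexes, $\mu^d_{\scrA_q,j}$ has degree $(2-d) - 2rj$. For the deformed operation
\[
\mu^d_{\scrA_q} \ = \ \mu^d_{\scrA} + q\mu^d_{\scrA_q,1} + q^2\mu^d_{\scrA_q,2} + \cdots
\]
to be homogeneous of degree $2-d$, as required of an $A_\infty$-structure, each term $q^j \mu^d_{\scrA_q,j}$ must have degree $2-d$, which forces $\deg(q) = 2r$.

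The only nontrivial step is the Maslov index identity in the second paragraph; the rest is bookkeeping, exhibiting $q$ as the formal variable that repairs the grading defect coming from disks meeting $\Delta$. The identity is the open-string analogue of the familiar fact that a closed curve $C \subset \bar M$ with $C\cdot\Delta = j$ has $\langle c_1(\bar M), [C]\rangle = rj$, which is just $c_1(\bar M) = r[\Delta]$ paired with $[C]$; in the closed case this gives the standard degree formula for the Novikov variable, and the disk version propagates it to each $q^j\mu^d_{\scrA_q,j}$.
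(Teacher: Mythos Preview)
Your proposal is correct and follows essentially the same approach as the paper: the paper simply cites the index theorem for Cauchy-Riemann operators with totally real boundary conditions (referring to \cite[Section 11]{FCPLT}) and then, in the discussion immediately following the lemma, records exactly the index formula you derive, namely that a holomorphic volume form with poles of order $r$ along $\Delta$ grades the open part $M$ and yields $\mathrm{vdim}(u) = i^{\eta}(x_0) - \sum i^{\eta}(x_j) + k - 2 + 2r\,\Delta\cdot[\mathrm{im}(u)]$. Your write-up is more explicit than the paper's one-line citation, and your use of $\Omega^{\otimes 2}$ rather than $\Omega$ itself is a sensible precaution given that $r$ is only assumed rational.
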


This is a consequence of the index theorem for the Cauchy-Riemann equations with totally real boundary conditions, see  \cite[Section 11]{FCPLT}.  It is most familiar in the case when $c_1(\bar{M})=0$, and the deformed category is $\bZ$-graded, or when $\Delta \subset \bar{M}$ is an anticanonical divisor and the deformation parameter has degree 2 (for instance, deforming the cohomology of the zero-section $T^n \subset (\bC^*)^n$ from an exterior algebra to the Clifford algebra on compactifying to $\bC\bP^n$, with $\Delta$ the toric anticanonical divisor).   

Suppose then $c_1(\bar{M})$ is effective, that $c_1(M) = 0$,  and fix a holomorphic volume form $\eta \in H^0(K_{\bar{M}})$ with \emph{poles} of order $r$ along the divisor $\Delta \subset \bar{M}$.  We denote by $\iota^{\eta}$ the absolute indices for generators of Floer complexes between graded Lagrangian submanifolds (which are assumed to lie within the open part $M$).   If $L'$ is obtained as a Hamiltonian perturbation of $L$ for a Hamiltonian function whose restriction to $L$ is Morse,  the index $i^{\eta}$ of a point of $L\cap L'$ is equal to the Morse index of the corresponding critical point. Suppose now one has a holomorphic polygon $u:D\rightarrow \bar{M}$ with Lagrangian boundary conditions in $M$.  In this setting,
\begin{equation} \label{Eqn:IndexFormula}
\textrm{vdim}(u) = i^{\eta}(x_0) - \sum_{j=1}^k i^{\eta}(x_k) + k -2 + 2r \Delta \cdot \im (u)
\end{equation}
for a holomorphic $(k+1)$-gon with one outgoing boundary puncture $x_0$ (the sign of the last term corresponds to the fact that $\eta$ has poles on $\Delta = r\cdot PD[c_1(\bar{M})]$, and the virtual dimension is increased by adding \emph{positive} Chern number components).

\subsection{Formality\label{Section:Formality}}

A natural and recurring question studying Fukaya categories is to determine the $A_{\infty}$-structure on an algebra  $A=\oplus_{i,j} HF(L_i,L_j)$, where the $\{L_i\}$ are some finite collection of Lagrangians distinguished by the geometry in some fashion.  In general this is a rather intractable problem, but in the case when $A$ is smooth and commutative, $A_{\infty}$-structures up to gauge equivalence correspond to classical Poisson structures, by Kontsevich's formality theorem.  The particular case of this that pertains to exterior algebras was used by Seidel in computing $\scrF(\Sigma_2)$, and will play an important role later.

Let $A$ be a $\bZ_2$-graded algebra.  We will be interested in $\bZ_2$-graded $A_{\infty}$-structures on $A$ which have trivial differential. Such a structure comprises a collection of maps 
\[
\{ \alpha^i: A^{\otimes i} \rightarrow A \}_{i\geq 2}
\]
of parity $i$; usually we will assume that the product agrees with the given product on $A$, and consider $(\alpha^i)_{i\geq 3}$.  We introduce the mod 2 graded Hochschild cochain complex
\[
CC^{\bullet+1}(A,A) \ = \ \prod_{i \geq 2} Hom^{(\bullet+i)}(A^{\otimes i},A)
\]
where $Hom^{(\bullet +i)}$ means we consider homomorphisms of parity $\bullet+i$. This carries the usual differential $d_{CC^{\bullet}}$ and the Gerstenhaber bracket $[\cdot, \cdot]$.

\begin{Remark}  \label{Rem:Pronilpotent}
The shift in grading follows the convention of \cite{Seidel:HMSgenus2}, since we will later imitate some arguments from that paper. The shift fits with a view of $A_{\infty}$-structures as defined by formal super-vector fields,  but disagrees with the usual conventions of the bar complex.  One typically incorporates a formal parameter $q$ of even degree into $CC^{\bullet+1}(A,A)$, which makes the resulting Lie algebra $\frak{g} = CC^{\bullet+1}[[q]]$ filtered pronilpotent.  This is necessary to guarantee convergence of formal gauge transformations; $L_{\infty}$-quasi-isomorphisms of filtered pronilpotent $L_{\infty}$-algebras  induce bijections between equivalence classes of Maurer-Cartan elements.  In the sequel, we will encounter gauge transformations on exterior algebras $\Lambda^*(\bC^3)$ arising from formal diffeomorphisms of $\bC^3$ which can be seen explicitly to act on $A_{\infty}$-structures defined over $\bC$, cf. Equation \ref{Eqn:exp-gamma}.  Geometrically, the formal parameter $q$ can be incorporated as that of a deformation obtained by partial compactification, in the sense of Equation \ref{Eqn:Deformation}.  The power of $q$ in any given expression is therefore determined by Lemma \ref{Lem:DeformationDegree}, and can be reconstructed from the other geometric data. To simplify notation, we omit this parameter throughout, compare to \cite[Sections 4,5]{Seidel:HMSgenus2}.
\end{Remark}
 
  $A_{\infty}$-structures $(\alpha^i)$ on $A$ which extend its given product correspond to elements of $CC^{1}(A,A)$ satisfying the Maurer-Cartan equation
\begin{equation} \label{Eqn:Maurer-Cartan}
\partial \alpha + \frac{1}{2} [\alpha, \alpha] = 0\qquad \textrm{with} \quad \alpha^1 = \alpha^2 = 0.
\end{equation}
The first terms of the Maurer-Cartan equation give
\begin{equation} \label{Eqn:Maurer-Cartan-FirstTerms}
\partial \alpha^3 = 0; \ \partial \alpha^4 + \frac{1}{2}[\alpha^3,\alpha^3]=0; \ \partial \alpha^5 + [\alpha^3,\alpha^4]=0, \ldots
\end{equation}
Elements $(g^i)_{i\geq 1}$ of $CC^0(A,A)$, comprising maps $A^{\otimes i} \rightarrow A$ of parity $i-1$,   act by gauge transformations on the set of Maurer-Cartan solutions $(\alpha^i)_{i\geq 2}$, at least when suitable convergence conditions apply.   Suppose that $A$ is finite-dimensional in each degree; let $g \in CC^0(A,A)$ have constant term $g^0 \in A^1$ vanishing. Define
\begin{equation} \label{Eqn:exp-gamma}
\begin{cases}
\phi^1 = \mathrm{id} + g^1 + \frac{1}{2}g^1 g^1 + \cdots = \exp(g^1), \\
\phi^2 = g^2 + \frac{1}{2} g^1 g^2 + \frac{1}{2} g^2(g^1 \otimes \mathrm{id}) +
\frac{1}{2} g^2(\mathrm{id} \otimes g^1) + \textstyle\frac{1}{3} g^2(g^1 \otimes g^1) + \cdots, \\
\vdots
\end{cases}
\end{equation}
The term $\phi^j$ is the sum of all possible concatenations of components of $g$ to yield a $j$-linear map. For a term involving $r$ components, which may be ordered in $s$ different ways compatibly with their appearance in the concatenation,  the constant in front of the associated term is $s(r!)^{-1}$, which  ensures convergence of $\phi^j$.   If $\alpha$ and $\tilde\alpha$ are Maurer-Cartan elements which are related by $(g^i)$, the associated $A_\infty$-structures  are related by $\phi$, which is an $A_\infty$-isomorphism. There are obvious simplifications to the formulae when $g^1=0$, which will be the case for gauge transformations acting trivially on cohomology relating solutions with $\alpha^2=0$.

\begin{Lemma} \label{Lem:DeformedHH}
Let $\scrA$ denote the $A_{\infty}$-algebra defined by the Maurer-Cartan element $\alpha \in CC^1(A,A)$, with $H(\scrA) = A$.  The Hochschild cohomology $HH^*(\scrA,\scrA)$ is the cohomology of the complex $CC^{\bullet}(A,A)$ with respect to the twisted differential $d_{CC^{\bullet}} + [\cdot, \alpha]$.
\end{Lemma}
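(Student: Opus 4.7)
The plan is to recognise that both sides of the claimed equality are instances of the same construction applied to different $A_\infty$-structures on the same graded vector space $A$, and then to identify the Hochschild differential in Gerstenhaber-bracket terms. Recall that the Hochschild cochain complex $CC^{\bullet+1}(A,A)$, equipped with the Gerstenhaber bracket $[\cdot,\cdot]$, is naturally a graded Lie algebra (indeed a dg-Lie algebra, once a differential is specified by choosing an $A_\infty$-structure), whose Maurer--Cartan elements of degree one are precisely the $A_\infty$-structures on $A$ extending its given differential. The Hochschild differential associated to an $A_\infty$-structure $\mu_{\scrA}=\sum_{i\geq 1}\mu_{\scrA}^i$ is, up to the usual shift and sign, given by $\phi\mapsto[\mu_{\scrA},\phi]$, and the fact that $[\mu_{\scrA},\mu_{\scrA}]=0$ (the $A_\infty$-relations) is equivalent to $[\mu_{\scrA},\cdot]^2=0$ by the graded Jacobi identity.

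With this formalism in place, the proof is nearly tautological. First I would fix notation: let $\mu_A\in CC^{1}(A,A)$ denote the element whose only non-zero component is the given associative product on $A$, so that by definition $d_{CC^\bullet}=[\mu_A,\cdot]$ is the Hochschild differential of the underlying algebra $A$. The $A_\infty$-structure $\scrA$ of the statement has operations $\mu^1_{\scrA}=0$, $\mu^2_{\scrA}$ equal to the product of $A$, and $\mu^i_{\scrA}=\alpha^i$ for $i\geq 3$; in Gerstenhaber-bracket terms this is $\mu_{\scrA}=\mu_A+\alpha$. Unpacking the defining formula \eqref{Eqn:Hochschild} (which is manifestly linear in the collection of $A_\infty$-operations), the Hochschild differential $d_{\scrA}$ on $CC^\bullet(A,A)=CC^\bullet(\scrA,\scrA)$ splits as
\[
d_{\scrA}\ =\ [\mu_A+\alpha,\,\cdot\,]\ =\ [\mu_A,\,\cdot\,]+[\alpha,\,\cdot\,]\ =\ d_{CC^\bullet}+[\cdot,\alpha],
\]
up to a sign absorbed into the convention for $[\cdot,\alpha]$. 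This is the desired identification of complexes, and hence of cohomology groups.

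The only point requiring a modest amount of bookkeeping, rather than pure formalism, is verifying that the two pieces of $d_{\scrA}$ really correspond term-by-term to the two sums appearing in \eqref{Eqn:Hochschild}: the contributions involving $\mu^2_{\scrA}=\mu_A$ assemble into the ordinary bar differential $d_{CC^\bullet}$, while those involving the higher operations $\mu^i_{\scrA}=\alpha^i$ assemble into the bracket $[\cdot,\alpha]$ (and symmetrically for the outer insertions). Given that the bracket is defined precisely to make the graded Jacobi identity hold, the closure relation $d_{\scrA}^2=0$ is nothing other than the Maurer--Cartan equation \eqref{Eqn:Maurer-Cartan} satisfied by $\alpha$: expanding $[\mu_A+\alpha,\mu_A+\alpha]=0$ gives $[\mu_A,\mu_A]=0$ (associativity of $A$), $2[\mu_A,\alpha]=2\,d_{CC^\bullet}\alpha$, and $[\alpha,\alpha]$, whose vanishing combines into $d_{CC^\bullet}\alpha+\tfrac12[\alpha,\alpha]=0$.

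I do not anticipate any real obstacle; the lemma is a compatibility statement whose entire content is the observation that the Hochschild differential of an $A_\infty$-algebra is the adjoint action of its structure element in the Gerstenhaber Lie algebra. The only thing one needs to be careful about is consistency of sign and shift conventions between \eqref{Eqn:Hochschild} and the formulae \eqref{Eqn:Maurer-Cartan}--\eqref{Eqn:Maurer-Cartan-FirstTerms} earlier in the section, which dictates whether one writes $[\alpha,\cdot]$ or $[\cdot,\alpha]$; the statement of the lemma uses the latter, which matches the convention in \eqref{Eqn:Maurer-Cartan} once the standard shifts are applied.
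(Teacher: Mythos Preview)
Your proposal is correct and takes essentially the same approach as the paper's proof, which is a two-sentence sketch: the Maurer--Cartan equation guarantees the twisted differential squares to zero, and the identification with $HH^*(\scrA,\scrA)$ follows by comparing term-by-term with the bar complex of Equation~\eqref{Eqn:Hochschild}. Your write-up simply unpacks this comparison explicitly via the identity $\mu_{\scrA}=\mu_A+\alpha$ and the fact that the Hochschild differential is adjoint action by the structure element.
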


\begin{proof}
The Maurer-Cartan equation for $\alpha$ implies that the twisted differential does square to zero. The Lemma follows on comparing the resulting complex with the bar complex defining Hochschild cohomology of an $A_{\infty}$-algebra given in Equation \ref{Eqn:Hochschild}.
\end{proof}

Kontsevich's remarkable result \cite{Kontsevich}, specialised to the case of relevance in the sequel, is then:

\begin{Theorem}[Formality Theorem, Kontsevich] \label{Thm:KontsevichFormality}
When $A=\Lambda(V)$ is an exterior algebra, with its natural $\bZ_2$-grading placing $V$ in degree 1, there is an $L_{\infty}$-quasi-isomorphism
\[
\Phi: CC^{\bullet}(A,A) \longrightarrow Sym^{\bullet}(V^{\vee}) \otimes \Lambda(V)
\]
between the Hochschild complex and its cohomology, the Lie algebra of polyvector fields.
\end{Theorem}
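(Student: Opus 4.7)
The plan is to specialise Kontsevich's general formality theorem; the result for exterior algebras is the specialisation to a particularly simple smooth (super-)affine variety. Recall that Kontsevich constructs, for $A = \mathcal{O}(X)$ the ring of regular functions on any smooth formal affine variety $X$, an explicit $L_\infty$-quasi-isomorphism
\[
\Phi \co CC^\bullet(A,A) \longrightarrow T^\bullet_{\mathrm{poly}}(X),
\]
whose linear term is the Hochschild--Kostant--Rosenberg map, and whose higher Taylor coefficients are sums over ``admissible graphs'' weighted by integrals over compactified configuration spaces of points in the closed upper half-plane. Taking this construction as a black box, what remains is to identify the two sides of the stated isomorphism in the exterior algebra case and to note that Kontsevich's formalism extends to the $\bZ_2$-graded setting.

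First I would interpret $A = \Lambda(V)$ as the ring of functions on the odd formal superspace $X = \Pi V^\vee$. Graded derivations of $\Lambda(V)$ over $\bK$ form a free $\Lambda(V)$-module on $V^\vee$, placed in parity opposite to that of $V$. Consequently the tangent sheaf to $X$ is $V^\vee \otimes \Lambda(V)$, and polyvector fields -- the free graded-commutative algebra over $\Lambda(V)$ on the shifted tangent module -- are exactly $\Sym^\bullet(V^\vee) \otimes \Lambda(V)$, with the symmetric power recording polyvector degree and matching the cohomological degree on the Hochschild side. The Schouten bracket on this target and the Gerstenhaber bracket on $CC^\bullet(\Lambda V,\Lambda V)$ then supply the two DGLAs that $\Phi$ is required to relate, and the HKR map is a quasi-isomorphism on cohomology by a direct Koszul resolution argument for $\Lambda(V)$ in characteristic zero.

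The main obstacle, were one to attempt a self-contained proof rather than quote Kontsevich, is verifying that the universal graph formula for $\Phi$ transports without modification to the super-commutative setting and satisfies the quadratic $L_\infty$-relations; these relations come from Stokes' theorem applied to the codimension-one boundary strata of the Fulton--MacPherson configuration spaces, and the requisite Koszul sign bookkeeping is combinatorially identical to Kontsevich's original argument once $V$ is declared odd. Alternatively, one can invoke Tamarkin's operadic proof via a Drinfeld associator, which establishes formality of the little disks operad and therefore applies uniformly to graded-commutative algebras. For the purposes of this paper the entire construction is black-boxed: only the existence of $\Phi$ and its first-order behaviour (HKR) will be used, to transfer questions about $A_\infty$-structures on $\Lambda(V)$ into questions about Maurer--Cartan elements in polyvector fields.
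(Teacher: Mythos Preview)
Your proposal is correct and matches the paper's approach: the paper does not prove this statement at all but simply quotes it as Kontsevich's result \cite{Kontsevich}, treating the entire construction as a black box exactly as you suggest. Your sketch of the specialisation (identifying $\Lambda(V)$ with functions on the odd formal superspace $\Pi V^\vee$, and polyvector fields with $\Sym^\bullet(V^\vee)\otimes\Lambda(V)$) is more explanatory than anything the paper offers, and is accurate.

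One small point worth noting: the paper's subsequent Remark records that Kontsevich's explicit graph-sum map $\Psi$ actually runs in the \emph{opposite} direction, from polyvector fields into the Hochschild complex, and that the $\Phi$ of the statement is a left-inverse to it. This does not affect the validity of anything you wrote, but since the paper later refers to the linear term $\Phi^1$ as the HKR restriction map (in the direction Hochschild cochains $\to$ polyvector fields), it is worth being aware that the two directions are both in play.
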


\begin{Remark}\label{Rem:Quasi-Inverse}
In fact, this $\Phi$ is strictly left-inverse to an explicit quasi-isomorphism
\[
\Psi: \Sym^{\bullet}(V^{\vee}) \otimes \Lambda(V) \rightarrow CC^{\bullet}(\Lambda(V),\Lambda(V))
\]
defined by Kontsevich. The components of $\Psi$ are linear combinations $\Psi^i = \sum \lambda^i \otimes \mathcal{U}_{\Gamma^i}$, where $\mathcal{U}_{\Gamma^i}$ is a multilinear operation indexed by a certain class of graphs $\{\Gamma^i\}$, and the coefficients $\lambda^i$ are explicit integrals over configuration spaces of points in the upper half-plane.
\end{Remark}

The Lie algebra of polyvector fields on a finite-dimensional vector space $V$ is the algebra 
\[
Sym^{\bullet}(V^{\vee}) \otimes \Lambda(V) \ = \ \bC[[V]] \otimes \Lambda(V).
\]
The Lie algebra structure comes from the Schouten bracket
\begin{equation}
\begin{aligned} {}
& [f \,\xi_{i_1} \wedge \cdots \wedge \xi_{i_k}, g \,\xi_{j_1} \wedge \cdots \xi_{j_l}] = \\
& \textstyle \qquad \sum_q (-1)^{k-q-1} f \, (\partial_{i_q}g) \,
\xi_{i_1}  \wedge \cdots \wedge \widehat{\xi_{i_q}} \wedge \cdots
\wedge \xi_{i_k} \wedge \xi_{j_1} \wedge \cdots \wedge \xi_{j_l} + \\ & \textstyle \qquad
 \sum_q (-1)^{l-q+ (k-1)(l-1)} g \,(\partial_{j_q}f)\, \xi_{j_1} \wedge \cdots \wedge \widehat{\xi_{j_q}} \wedge \cdots \wedge \xi_{j_l} \wedge \xi_{i_1} \wedge
 \cdots \wedge \xi_{i_k}.
\end{aligned}
\end{equation}
The quasi-isomorphism $\Phi$ identifies gauge equivalence classes of Maurer-Cartan solutions.  In particular, since the Schouten bracket vanishes on functions, any formal function $W\in\bC[[V]]$ defines a solution of Maurer-Cartan, hence an $A_{\infty}$-structure on $\Lambda(V)$.  Lemma \ref{Lem:DeformedHH} and the Formality Theorem \ref{Thm:KontsevichFormality} imply that the Hochschild cohomology of a $\bZ_2$-graded $A_{\infty}$-structure on $\Lambda(V)$ defined by an element $\alpha \in HH^1$ is given by the cohomology of the $\bZ_2$-graded complex
\begin{equation}
 (\bC[[V]]\otimes \Lambda^{ev}(V))\  \xrightleftharpoons[{\ [\cdot, \alpha]\ }]{{\ [\cdot, \alpha]\ }} \ (\bC[[V]]\otimes \Lambda^{odd}(V)).
\end{equation}

The following Example will recur in the discussion of Fukaya categories of quadric hypersurfaces in Section \ref{Section:Quadric}.

\begin{Example} \label{Ex:AinftyExterior}
Let $V$ have dimension 1 and $A=\Lambda(V)$, with basis $1, x$ say.  Any formal function $W\in\bC[[x]]$ of degree $\geq 2$ defines an $A_{\infty}$-structure on $A$, which induces the given product if and only if the degree 2 component vanishes.  Up to formal change of variables (gauge transformation), any 
\[
W(x) = x^k + O(x^{k+1}) \ \simeq \ y^k \quad \textrm{via} \ y = x(1+\alpha_1x + \alpha_2x^2 + \cdots)
\]  
is equivalent to a monomial, hence the resulting $A_{\infty}$-structures are equivalent to those defined by monomial functions.  Equation \ref{Eqn:HHPolyFields} simplifies to the complex
\[
0 \longrightarrow \bC[[x]] \otimes \langle dx\rangle   \xrightarrow{\ \iota_{dW} \ } \bC[[x]] \longrightarrow 0
\]
with cohomology $\bC[[x]]/\langle W'(x) \rangle$ given by the Jacobian ring, so  the Hochschild cohomology of $\scrA_k = (A, W=x^k)$ has rank $k-1$. 
\end{Example}

\begin{Remark} \label{Rem:HH}
For the ring $R=\bC[x]/ \langle x^2-1 \rangle$, the Hochschild cohomology\footnote{Classically, $HH^*$ would be 2-periodic and we're just taking $HH^0$ and $HH^1$.} group of the category of $\bZ_2$-graded modules has rank 2 if $deg(x)$ is even and rank 1 if $deg(x)$ is odd (the latter fact is the computation of Example \ref{Ex:AinftyExterior} in the special case $k=1$, where the underlying product on $A$ is itself deformed).  This comes down to the fact that the group
\[
Hom_{R\otimes R}(R\otimes R, R)
\]
of bimodule homomorphisms has rank 2 in the first case, comprising the elements $1\otimes 1 \mapsto \alpha 1 + \beta x$, but only rank 1 in the second, since for a $\bZ_2$-graded map one must have $1\otimes 1 \mapsto \alpha 1$.  From another viewpoint, when $deg(x)$ is even, $R$ is semisimple, splitting into two orthogonal idempotent summands, but when $deg(x)$ is odd the idempotents are not of pure degree and there is no such splitting.
\end{Remark}

\subsection{The Fukaya category of a curve\label{Section:Curve}}

Take $\Sigma_g\cong C \stackrel{p}{\rightarrow} \bP^1$ to be the unique curve branched over the points $\{0, \xi^j, \ 1\leq j\leq 2g+1\}$, where $\xi$ is a primitive $(2g+1)$-st root of unity.  We can exhibit embedded circles in $C$ via matching arcs amongst these points, i.e. the projections of $\bZ_2$-invariant curves to $\bP^1$. For instance, when $g=2$,  5 Lagrangian submanifolds $\{L_j\}$ of $C$ are determined by the arcs of the pentagram on the left of Figure \ref{Figure:Pentagram} (the vertices are the $5$th roots of unity); more generally there is a spikier $(2g+1)$-gram defining $2g+1$ associated spheres permuted by the obvious cyclic symmetry.  In an act of shameless innumeracy, we will refer to all these figures as \emph{pentagrams}.  If we divide $C$ by the lift of this cyclic group action from $\bP^1$ to $C$,  all the circles $\{L_j\}$ are identified with a single immersed circle $\bar{L} \subset S^2_{orb}$ in an orbifold $S^2$ with 3 orbifold points each of order $2g+1$ (these come from the unique preimage of $0\in\bP^1$ in $C$ and the two preimages of $\infty \in \bP^1$).  This immersed curve actually bounds no teardrops, hence Floer theory can be defined unproblematically for it, or one can work equivariantly upstairs.

\begin{center}
\begin{figure}[ht]
\includegraphics[scale=0.5]{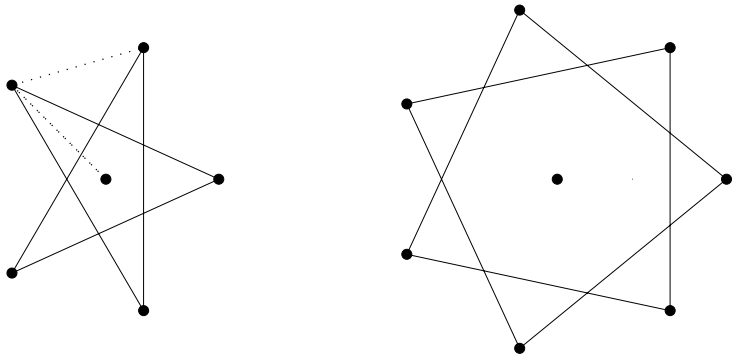} 
\caption{Lagrangian spheres from the $g=2$ pentagram and its $g=3$ cousin; on the left one radial path and one basic path are also dotted.\label{Figure:Pentagram}}
\end{figure}
\end{center}

For any symplectic surface $\Sigma_g, \omega_{\Sigma}$ of genus $g\geq 2$, let $\sigma: ST\Sigma \rightarrow \Sigma$ be the projection from the unit tangent bundle.  Seidel defines the \emph{balanced} Fukaya category, an $A_{\infty}$-category linear over $\bC$, with respect to an auxiliary choice of one-form  $\theta \in \Omega^1(ST\Sigma)$ which is a primitive for the pullback of the symplectic form, $d\theta = \sigma^*\omega_{\Sigma}$.  (Such forms exist since, by the Gysin sequence, $H^2(ST\Sigma,\bR) = 0$ for $g(\Sigma) \geq 2$.)  Objects of the balanced Fukaya category are simple closed curves $L \subset \Sigma$ for which $\int_L \iota_L^*\theta = 0$, where $\iota_L: L \rightarrow ST\Sigma$ is the tangent lift of $L\hookrightarrow \Sigma$ (these curves should be equipped with orientations and $Spin$ structures as usual).  Homotopically trivial curves are never balanced; there is a unique balanced representative up to Hamiltonian isotopy in every other homotopy class of embedded curve.  The balancing condition is an analogue of monotonicity; it implies that energy and index are correlated for holomorphic curves with balanced boundary conditions, which in turn means that the balanced category $\scrF(\Sigma)$ is defined over $\bC$.  Different  choices of $\theta$ yield quasi-isomorphic categories, and the mapping class group $\Gamma_g$ acts by autoequivalences of $D^{\pi}\scrF(\Sigma_g)$, cf. \cite{Seidel:HMSgenus2}.

\begin{Remark} \label{Remark:2objects-1}
An isotopy class of essential simple closed curve $\gamma \subset \Sigma$ defines two objects in $\scrF(\Sigma)$ modulo shifts, namely the balanced representative of $\gamma$ equipped with either $Spin$ structure.  Label these $\gamma$ and $\bar{\gamma}$ for the non-trivial respectively trivial $Spin$-structure.  Then
\begin{equation} \label{Eqn:TwoSpinChoices}
HF(\gamma,\gamma) \cong \bC \cong HF(\bar{\gamma},\bar{\gamma}); \quad HF(\gamma, \bar{\gamma}) = 0.
\end{equation}
One can equivalently regard $\bar{\gamma}$ as the pair $(\gamma, \xi)$ where $\xi \rightarrow \gamma$ is the non-trivial line bundle with holonomy in $\bZ_2$, from which point of view one sees that the group $H^1(\Sigma;\bZ_2)$ acts on $\scrF(\Sigma)$, tensoring by flat line bundles. \end{Remark}

\begin{Remark} \label{Remark:GpActingOnCurve}
One can combine the actions of diffeomorphisms and tensoring by flat line bundles into an action of the canonical split extension $\widehat{\Gamma}_{split}(\Sigma_g)$ of $\Gamma_g$ by $H^1(\Sigma;\bZ_2)$.  Strictly, this is only a \emph{weak} action, meaning that the functors $\scrG_{g_i}$  associated to group elements $g_i$ satisfy $\scrG_{g_1}\scrG_{g_2} \cong \scrG_{g_1g_2}$ (but we do not keep track of coherence amongst these isomorphisms).    We will suppress this point in the sequel, but it would probably enter in reconciling the appearance of $\widehat{\Gamma}_{split}$ with Remark \ref{Remark:GpActingOnModuliSpace}; compare to \cite[Remark 10.4]{FCPLT}.
\end{Remark}

We collect a number of other facts from the work of Seidel \cite{Seidel:HMSgenus2} and Efimov \cite{Efimov}:  let $V$ be a 3-dimensional vector space over $\bK = \bC$. 
\begin{enumerate}
\item The pentagram circles $\{L_1,\ldots, L_{2g+1}\}$, each equipped with the non-trivial $Spin$ structure, split-generate $\scrF(\Sigma_g)$ (this is an application of Proposition \ref{Prop:Split-Generate-basic});
\item The Floer cohomology algebra $A=\bigoplus_{i,j=1}^{2g+1} HF(L_i, L_j) \cong \Lambda^*(V) \rtimes \bZ/(2g+1)$, where $V$ is graded in degree 1 and the exterior algebra inherits the obvious $\bZ/2$-grading;
\item The $A_{\infty}$-structure $\scrA$ on $A=H(\scrA)$ is characterised up to $A_{\infty}$-equivalence as follows: the operation $\mu^d = \mu^d_0 + \mu^d_1 + \ldots $ decomposes as a sum of pieces $\mu^d_i$ of degrees $6-3d+(4g-4)i$; and there is a basis $\{x_1, x_2, x_3\}$ for $V$ such that, if $e$ generates $\Lambda^0(V)$, then
\begin{equation} \label{Eqn:AinftyCurve}
\mu^3_0 (x_1,x_2,x_3) = -e; \quad \mu^{2g+1}_1 (x_i, \ldots, x_i) = e \ \textrm{for} \  i\in \{1,2,3\}
\end{equation}
\item Concretely, the operation $\mu^d_i$ counts holomorphic polygons $u$ which, projected to $\bP^1$, have $i = mult_u(\infty) + (1/2) mult_u(0)$.
\end{enumerate}

The critical statement (3) is a ``finite determinacy" theorem, and at heart is an application of Theorem \ref{Thm:KontsevichFormality}.  Unfortunately, we will not be able to use the result in quite this form:  even working over $\bC^*\subset \bP^1$, the Fukaya category of the relative quadric $Z_{\neq 0,\infty}$ which appears later is not $\bZ$-graded.    However, Seidel and Efimov deduce this finite determinacy statement from the result in singularity theory that a power series with an isolated singularity at the origin is determined up to formal change of variables by a finite part of its Taylor series.  This result does not rely on gradings, so we will reduce ourselves to applying that directly.

%%%%%%%%%%%%%%%%%%%%%%%%%%%%%%%%%%%%%%%%%%%%%

%%%%%%%%%%%%%%%%%%%%%%%%%%%%%%%%%%%%%%%%%%

\section{From the base locus to the relative quadric}\label{Sec:BaseToQuadric}

\subsection{The Fukaya category of a quadric\label{Section:Quadric}}

Recall from Equation \ref{Eqn:Kuznetsov} we are supposed to identify, categorically, a pair of points $S^0$ and a smooth even-dimensional quadric $Q$.  For completeness, we discuss quadrics of both even and odd dimension. 

\begin{Lemma} \label{Lem:uniquesphere}
A quadric hypersurface $Q \subset \bP^{n+1}$contains a distinguished isotopy class of Lagrangian sphere $L$, which is the unique sphere arising as the vanishing cycle of an algebraic degeneration. If $n>1$, this sphere  lies in the nilpotent summand $\scrF(Q;0)$.
\end{Lemma}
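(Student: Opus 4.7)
The plan is three-part, addressing existence, uniqueness, and membership in the nilpotent summand separately.

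For existence, I would produce $L$ as the vanishing cycle of a standard nodal degeneration: embed $Q$ in the pencil $Q_t = \{\sum_{i=0}^n z_i^2 + t z_{n+1}^2 = 0\}$, with smooth generic fibre and an isolated ordinary double point at $t=0$, and use symplectic parallel transport to identify $L \subset Q$ with the collapsing Lagrangian $n$-sphere. For uniqueness, I would invoke connectedness of the space of such degenerations: the group $PO(n+2,\bC)$ acts transitively on smooth quadrics, and the corank-one stratum of the discriminant hypersurface inside $|\mathcal{O}_{\bP^{n+1}}(2)|$ is a connected, open, dense subset of the discriminant. Consequently any two algebraic nodal degenerations of $Q$ can be joined through a continuous family of pencils meeting the discriminant transversely at a single corank-one point, and parallel transport in this family realizes the corresponding vanishing cycles as symplectically isotopic.

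The substantive claim is that $\frak{m}_0(L) = 0$ when $n > 1$, which I would deduce from the stronger assertion that the minimal Maslov number of $L \subset Q$ is at least $4$; this immediately precludes any Maslov index $2$ holomorphic disk with boundary on $L$ from contributing to the obstruction class. Since $L \cong S^n$ is simply connected for $n \ge 2$, Maslov indices on $\pi_2(Q,L)$ reduce to twice Chern numbers of sphere classes, modulo the image of $\pi_2(L) \to \pi_2(Q)$. For $n \geq 3$ this image is zero, so $\pi_2(Q,L) \cong \pi_2(Q)$ and the minimal Maslov number equals $2 c_1^{\min}(Q) = 2n \geq 6$, using $c_1(Q) = n H|_Q$. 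For $n = 2$, $Q^2 \cong \bP^1\times \bP^1$, and the Lagrangian condition with monotonicity forces $c_1(Q)\cdot[L] = 0$; the self-intersection constraint $[L]^2 = -\chi(S^2) = -2$ then pins down $[L] = \pm([F_1] - [F_2])$ in the basis of the two rulings, so the Maslov index descends to $\pi_2(Q)/\langle [L]\rangle \cong \bZ$ where the minimum value is $2\cdot 2 = 4$. Proposition \ref{Lem:quantumcap} now gives $\frak{m}_0(L) = 0$, placing $L$ in $\scrF(Q;0)$.

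The main obstacle is the homology class computation in the $n=2$ case: one has to confirm that the vanishing cycle produced by the pencil above does indeed have class $\pm([F_1] - [F_2])$ rather than living in some alternative $\bZ^2$-class. This can be checked either via the Picard--Lefschetz formula for the monodromy around the nodal fibre, or by recognizing the standard $\bR$-form sphere $\{\sum x_i^2 = 1\} \subset \{\sum z_i^2 = -1\}$ as the antidiagonal after identifying the affine Milnor fibre with a disc bundle over $S^2$. Once that identification is in hand the remaining pieces are formal, and the conclusion follows from routine index bookkeeping.
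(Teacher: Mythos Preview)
Your proposal is correct and follows essentially the same approach as the paper: connectedness of the locus of corank-one (nodal) quadrics gives uniqueness of the vanishing cycle up to isotopy, and a Maslov index bound rules out Maslov $2$ disks so that $\frak{m}_0(L)=0$. The paper's proof is terser --- it simply asserts that the minimal Maslov number of $L$ is $2n$ --- whereas you carry out the computation explicitly, treating $n\ge 3$ and $n=2$ separately; your extra care in the $n=2$ case (using $[\omega]\cdot[L]=0$ and $[L]^2=-2$ to pin down the class) is sound and indeed recovers $2n=4$. The invocation of Proposition~\ref{Lem:quantumcap} at the end is unnecessary: $\frak{m}_0(L)=0$ follows directly from its definition as a count of Maslov $2$ disks once none exist.
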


\begin{proof}
A singular quadric is a cone on a quadric of lower-dimension, hence a quadric with isolated singularities has a unique singular point.  In particular, a quadric can have at most one node. Moreover, the space of quadrics with one node is connected, since the generic point of the space of singular quadrics is given by specifying the location of the node and the smooth quadric of lower dimension which is the base of the cone, and these parameter spaces are irreducible.  This implies the first statement by a standard Moser argument.  The minimal Maslov number of the Lagrangian sphere is $2n$, so when $n>1$ the sphere trivially lies in the nilpotent summand of the category (the obstruction class $\frak{m}_0$ counts Maslov 2 disks through the generic point of $L$).  
 \end{proof}
 
 \begin{Lemma} \label{Lem:QuadricTwist}
 Let $n=\dim_{\bC}(Q)$ and let $\tau_L$ denote the Dehn twist in the Lagrangian sphere $L$ of Lemma \ref{Lem:uniquesphere}. 
 \begin{enumerate}
\item  If $n$ is odd, then $\tau_L$ is Hamiltonian isotopic to the identity.
 \item If $n$ is even, then $\tau_L \in \pi_0\Symp(Q)$ has order 2.
 \end{enumerate}
 \end{Lemma}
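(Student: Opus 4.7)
My plan combines the Picard--Lefschetz formula for the homological action of $\tau_L$, a monodromy computation in the space of smooth quadrics, and an explicit Hamiltonian isotopy coming from the $SO(n+2,\bC)$-action on $Q$.

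By the Picard--Lefschetz formula, $\tau_L$ acts on $H_n(Q;\bZ)$ by $\gamma \mapsto \gamma - (-1)^{n(n-1)/2}(\gamma \cdot [L])[L]$, and $[L]\cdot[L] = (-1)^{n(n-1)/2}\chi(S^n)$ vanishes for $n$ odd while equalling $\pm 2$ for $n$ even. When $n$ is even the twist therefore sends $[L] \mapsto -[L]$, so is not smoothly isotopic to the identity; this establishes the non-triviality statement in part (2). Next, the space $\mathcal{U}$ of smooth quadrics in $\bP^{n+1}$ is the complement in $\bP^N$ of an irreducible discriminant hypersurface of degree $n+2$ (the zero-locus of $\det$ on $\Sym^2\bC^{n+2}$), hence has cyclic fundamental group $\bZ/(n+2)\bZ$ generated by a meridian that maps to $\tau_L$ under the symplectic monodromy; this forces $\tau_L^{n+2} = \id$ in $\pi_0\Symp(Q)$.

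To promote $\tau_L^{n+2}=\id$ to the sharper conclusions, I would exploit the $SO(n+2,\bR)$-symmetry of $Q$. The Lagrangian $L$ is stabilized setwise by an $SO(n+1,\bR)$-subgroup acting on $L \cong S^n$ by isometries, and one identifies $\tau_L$ with the monodromy of a normalized Hamiltonian flow on a tubular neighborhood of $L$ that extends globally by $SO(n+2,\bR)$-equivariance. When $n$ is odd, $S^n$ admits a free $S^1$-action (of Hopf type), and the corresponding global Hamiltonian $S^1$-action on $Q$ gives an explicit null-isotopy of $\tau_L$ as its time-$2\pi$ map. When $n$ is even, only the antipodal involution on $S^n$ is available as an isometric free action: $\tau_L^2$ arises as the time-$2\pi$ map of a Hamiltonian circle action whose halfway point realizes this involution, while $\tau_L$ itself represents an outer symmetry lying outside the identity component (compare the Dynkin diagram automorphism of $D_{k+1}$ for $Q^{2k}$).

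The main obstacle is extending the local Hamiltonian $S^1$-action from a neighborhood of $L$ to a global Hamiltonian action on $Q$, and checking that its time-$2\pi$ map is genuinely $\tau_L$ (or $\tau_L^2$) rather than a conjugate or product with a further symplectomorphism. A cleaner substitute would be to exhibit $\tau_L$ (respectively $\tau_L^2$) as the monodromy of an explicit one-parameter family of quadrics which bounds in $\mathcal{U}$, exploiting the cyclicity of $\pi_1(\mathcal{U})$ together with a specific algebraic trivialization on the bounding disk; this is where the special Fano geometry of quadrics, as opposed to arbitrary spherically monotone symplectic manifolds containing a Lagrangian sphere, becomes essential.
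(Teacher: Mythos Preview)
Your Picard--Lefschetz computation for the non-triviality in even dimension is fine and matches the paper, and your observation that $\pi_1$ of the space of smooth quadrics is cyclic of order $n+2$, giving $\tau_L^{n+2}=\id$, is also correct (the paper obtains the same relation from the explicit pencil $\{\sum \lambda_j z_j^2\}$ with $n+2$ singular fibres). So two of the three ingredients are in place.

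The genuine gap is exactly where you identify it: you have no argument that the local geodesic-flow model of $\tau_L$ extends to, or coincides with, a global Hamiltonian circle action on $Q$, and the vague appeal to Hopf-type actions on $S^n$ or to the outer automorphism of $D_{k+1}$ does not produce an isotopy in $\Symp(Q)$. Without this you cannot improve $\tau_L^{n+2}=\id$ to $\tau_L=\id$ (odd $n$) or $\tau_L^2=\id$ (even $n$).

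The paper bypasses this entirely with a second, very short pencil argument. Consider the Lefschetz pencil of \emph{hyperplane sections} of $Q^n\subset\bP^{n+1}$: the generic fibre is $Q^{n-1}$, the base locus is $Q^{n-2}$, and an Euler characteristic count shows there are exactly \emph{two} singular fibres. By Lemma~\ref{Lem:uniquesphere} both vanishing cycles are the distinguished sphere, so the global monodromy over $\bP^1$ gives $\tau_L^2=\id$ for every $n$. Now combine: when $n$ is odd, $n+2$ is odd, so $\gcd(2,n+2)=1$ and $\tau_L^2=\tau_L^{n+2}=\id$ force $\tau_L=\id$; when $n$ is even, $\tau_L^2=\id$ together with your homological non-triviality gives order exactly $2$. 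This replaces your problematic third step with a two-line argument using only Lefschetz-pencil monodromy, and is precisely the ``specific algebraic trivialization'' you were looking for.
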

 
 \begin{proof}
Consider a Lefschetz pencil of hyperplane sections of $Q^{n} \subset \bP^{n+1}$, with general fibre $Q^{n-1}$ and base locus $Q^{n-2}$.  An Euler characteristic count shows that there are exactly two singular fibres, and the vanishing cycles for both singularities must be given by the sphere $L$, by Lemma \ref{Lem:uniquesphere}.  Considering the monodromy of the corresponding family over $\bP^1$, this implies that for any $n$, the square $\tau_L^2$ must be symplectically and hence Hamiltonian isotopic to the identity.  Now consider instead the pencil of quadrics defined by
\begin{equation} \label{Eqn:Pencil}
Q_0 = \{ \sum_{j=0}^{n+1} z_j^2 = 0\}, \qquad Q_1 = \{\sum_{j=0}^{n+1} \lambda_j z_j^2 = 0\}
\end{equation}
which has singular fibres precisely at the $\{\lambda_j\}$. More precisely $tQ_0-Q_1$ defines a  singular hypersurface if and only if $t\in\{\lambda_j\}$, in which case it has a node.  The vanishing cycles all equal $L$, and the monodromy of the corresponding family over $\bP^1$ shows that  $\tau_L^{n+2} = \id$. If $n$ is odd, the two results imply that $\tau_L \simeq \id$. If $n$ is even, the Dehn twist cannot have order 1 rather than 2, since it acts non-trivially on homology (reversing the orientation of $L$, on which it acts antipodally).
 \end{proof}

We recall the quantum cohomology ring of the quadric, as determined by Beauville \cite{Beauville}.

\begin{Lemma} \label{Lem:QuadricQH}
Let $Q \subset \bP^{n+1}$ denote a smooth $n$-dimensional quadric hypersurface.  The quantum cohomology ring admits the presentation
\begin{itemize}
\item if $n=4k+2$, $\langle h, a, b \ | \ h^{n+1} = 4h, \ h(a-b) = 0, \ ab = (h^n-2)/2, \ a^2=1=b^2 \rangle$;

\item if $n=4k$, $\langle h, a, b \ | \ h^{n+1} = 4h, \ h(a-b) = 0, \ ab = 1, \ a^2=(h^n-2)/2=b^2 \rangle$;

\item if $n=2k+1$, $\langle h \ | \ h^{n+1} = 4h \rangle$.

\end{itemize}
The element $h\in QH^2$ is induced by the hyperplane class, whilst $a,b \in QH^{2k}$ are the classes of the two ruling $\bP^k$'s on the quadric.
\end{Lemma}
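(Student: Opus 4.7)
My plan is to follow Beauville's original strategy \cite{Beauville}: treat $QH^*(Q)$ as the classical cohomology ring with quantum deformations arising only from Gromov-Witten counts of rational curves, and exploit Fano positivity to control which curve classes can contribute to each relation. Since $c_1(Q) = n \cdot h$, any genus-zero curve of degree $k$ contributes a quantum correction shifted in classical degree by $2kn$, so only lines can contribute to relations of classical degree at most $2n+1$. In particular, each of the asserted relations has at most one line-correction term, and the coefficients are what must be computed.

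First I would recall the classical cohomology ring. For $n$ odd, $H^*(Q;\bZ)$ is generated by $h$ with $h^n = 2[\mathrm{pt}]$ and classical relation $h^{n+1} = 0$. For $n$ even, there is an additional rank-one summand in middle degree, conveniently written using the two ruling classes $a, b \in H^n(Q;\bZ)$ of maximal linear $\bP^{n/2}$-subspaces on $Q$, which satisfy $a + b = h^{n/2}$ classically; the classical intersection numbers $a\cdot a$, $a \cdot b$, $b \cdot b$ depend on $n \bmod 4$, since two maximal linear subspaces of the same ruling meet in a linear subspace whose dimension is fixed by this parity. This immediately explains the dichotomy between the first two cases of the statement.

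Second I would compute the needed Gromov-Witten invariants. The relation $h^{n+1} = 4h$ is the same in all three cases and comes from the three-point line invariant $\langle h, h^n, h^{n-1}\rangle_{0,\ell}$: applying the divisor axiom reduces this to counting lines $\ell \subset Q$ through a generic point meeting a generic codimension-$(n-1)$ cycle, and a standard computation on the variety of lines through a point (itself a quadric of dimension $n-2$ in $\bP^{n-1}$) gives the coefficient $4$. The relation $h(a-b) = 0$ then expresses that quantum multiplication by $h$ must identify the two rulings, since they lie in the same cohomology class up to middle-degree modifications, and the only $h$-linear combination of $a$ and $b$ that can appear in $h \ast a$ or $h \ast b$ is proportional to $a+b$. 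The remaining relations $ab$, $a^2$, $b^2$ are fixed by a direct count of lines joining representatives of the two ruling classes, together with the associativity constraint (WDVV) on the ring structure.

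The main obstacle is the parity-sensitive case analysis in even dimension: one has to decide, by enumerative geometry, which ruling the line-corrections to $a \ast a$ and $a \ast b$ land in, and to distinguish the constant term in $a^2$ from that in $ab$. Once $h \ast (a-b) = 0$ and $h^{n+1} = 4h$ are established, the remaining ambiguity can in fact be pinned down algebraically by imposing associativity with $h$, reducing the purely enumerative input to the single count of lines through two general points of $Q$.
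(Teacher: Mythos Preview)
The paper does not actually prove this lemma: it is stated as a quotation, introduced by the sentence ``We recall the quantum cohomology ring of the quadric, as determined by Beauville \cite{Beauville}.'' Your sketch is a reasonable outline of Beauville's argument, so it is consistent with what the paper cites; there is nothing further to compare.
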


Now take a pencil of quadrics $\{Q_t\}_{t \in \bP^1} \subset \bP^{n+1}$ generated by two smooth quadrics $Q_0, Q_1$ meeting transversely.  Let $Z$ denote the \emph{relative quadric}, the blow-up of $\bP^{n+1}$ along $Q_0\cap Q_1$.

\begin{Lemma} \label{Lem:AmpleBlowUp}
If $Y$ is the blow-up of $\bP^{n+1}$ along a $(d_1,d_2)$-complete intersection, with $d_1\geq d_2$,  then $|pH-E|$ is very ample provided $p\geq d_1+1$.
\end{Lemma}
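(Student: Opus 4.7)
The plan is to verify the two standard separation criteria for very ampleness of $\mathcal{L} = \mathcal{O}_Y(pH - E)$. First, using the Koszul resolution of $\mathcal{I}_B$, global sections of $\mathcal{L}$ correspond to polynomials on $\bP^{n+1}$ of the form $g_1 f_1 + g_2 f_2$, where $f_i$ are the defining equations of $B$ of degree $d_i$ and $\deg g_i = p - d_i$; the hypothesis $p \geq d_1 + 1 \geq d_2+1$ ensures $p - d_i \geq 1$ for both $i$, so at least linear forms are available as multipliers.

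Away from $E$, we have $Y \setminus E \cong \bP^{n+1}\setminus B$, and for any $x \notin B$ at least one of $f_1(x), f_2(x)$ is nonzero; a section of the form $\ell^{p-d_1} f_1$ (for a linear $\ell$ not vanishing at $x$) handles base-point freeness, and refined choices of linear forms separate pairs of points and tangent vectors on $\bP^{n+1}\setminus B$. A point of $E$ is separated from a point off $E$ because every section of $|pH-E|$ vanishes on all of $E$, while the constructions above produce sections nonvanishing off $B$.

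The delicate case is two points or a tangent vector lying on $E$. Here $E = \bP(N_{B/\bP^{n+1}})$ with projection $\pi_E \colon E \to B$, and $\mathcal{L}|_E = \pi_E^* \mathcal{O}_B(p) \otimes \mathcal{O}_E(1)$, since $H|_E = \pi_E^*\mathcal{O}_B(1)$ and $(-E)|_E = \mathcal{O}_E(1)$. Working locally near $b\in B$ with $f_i = u_i$, so that on the chart $y_1 = 1$ of the blow-up one has $u_2 = y_2 u_1$ with $u_1$ a local equation for $E$, a section $s = g_1 f_1 + g_2 f_2 = u_1(g_1 + y_2 g_2)$ of $\mathcal{L}$ restricts on the fiber $E_b \cong \bP^1$ to the linear form $g_1(b) y_1 + g_2(b) y_2$. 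Surjectivity of $(g_1,g_2)\mapsto(g_1(b),g_2(b))$ amounts to global generation of $\mathcal{O}_{\bP^{n+1}}(p-d_i)$, which holds for $p\geq d_i$, so points and tangents in the fiber direction are separated. Points on distinct fibers $E_b, E_{b'}$ are separated because $\mathcal{O}_B(p-d_i)$, being the restriction of $\mathcal{O}_{\bP^{n+1}}(p-d_i)$ with $p - d_i \geq 1$, is very ample on $B$; this same input separates tangent vectors transverse to the fibers.

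The main technical point is the bookkeeping on $E$: identifying $\mathcal{L}|_E$ as the twist $\pi_E^*\mathcal{O}_B(p)\otimes \mathcal{O}_E(1)$ and confirming that the restriction map $H^0(Y,\mathcal{L}) \to H^0(E_b, \mathcal{O}_{\bP^1}(1))$ is surjective for every $b$. Both reduce, via the Koszul resolution and the projective bundle structure of $E \to B$, to very ampleness of $\mathcal{O}_{\bP^{n+1}}(k)$ for $k \geq 1$ combined with global generation for $k \geq 0$ — exactly the inputs provided by the bound $p\geq d_1+1$.
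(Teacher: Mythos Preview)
The paper does not prove this lemma at all; it simply cites Coppens and Gimigliano--Lorenzini. Your direct verification of the two separation criteria, using the identification $H^0(Y,pH-E)\cong H^0(\bP^{n+1},\mathcal{I}_B(p))$ from the Koszul resolution and the projective-bundle description of $E$, is a reasonable self-contained substitute, and the overall architecture is sound.

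There is, however, a genuine error in one step. You assert that a point of $E$ is separated from a point off $E$ ``because every section of $|pH-E|$ vanishes on all of $E$.'' This is false, and in fact contradicts your own next paragraph, where you correctly compute that the section associated to $g_1f_1+g_2f_2$ restricts on the fibre $E_b$ to the linear form $g_1(b)\,y_1+g_2(b)\,y_2$, generically nonzero. What vanishes along $E$ is the pullback $b^*(g_1f_1+g_2f_2)$ viewed as a section of $|pH|$; once you divide by a local equation for $E$ to get the section of $|pH-E|$, there is no reason for it to vanish on $E$ --- and if it always did, $E$ would lie in the base locus and very ampleness would fail outright. The separation of $y_1\in E_b$ from $y_2\in Y\setminus E$ over $x\notin B$ must be argued directly: e.g.\ choose $g_1,g_2$ of degrees $p-d_i\geq 1$ both vanishing at $b$ (so the section vanishes on all of $E_b$, in particular at $y_1$) but with $g_1(x)f_1(x)+g_2(x)f_2(x)\neq 0$. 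Your treatment of tangent vectors at points of $E$ is also a bit thin: you cover purely fibrewise and purely basewise directions, but should also address mixed directions and the direction normal to $E$ in $Y$; these go through with the same inputs (one can prescribe values and first partials of the $g_i$ at $b$ once $p-d_i\geq 1$), but it is worth saying so.
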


A proof is given in either of \cite{Coppens, Gimigliano-Lorenzini}.   Therefore  $Z$ is a Fano variety, in fact the anti-canonical class is very ample; this is obvious if one views $Z\subset \bP^{n+1} \times \bP^1$ as a divisor of bidegree $(2,1)$.  We equip $Z$ with the monotone symplectic form.  For generic pencils of quadrics, the fibration $w:Z\rightarrow \bP^1$ of proper transforms of quadric hypersurfaces is a Lefschetz fibration with $(n+2)$ singular fibres; Lemma \ref{Lem:uniquesphere} shows these all define the same vanishing cycle in $Q$.

\begin{Example} \label{Ex:LayGround}
Suppose $n=2g$.  The monodromy 
\begin{equation} \label{Eqn:monodromy}
\pi_1(\bP^1 \backslash \{\lambda_j\}) \rightarrow \pi_0\Symp(Q)
\end{equation}
of the pencil \eqref{Eqn:Pencil} factors through the cyclic subgroup $\bZ_{2}$ generated by the Dehn twist $\tau_L$, by Lemma \ref{Lem:QuadricTwist}. The monodromy therefore defines a hyperelliptic curve $\Sigma_g\rightarrow \bP^1$ branched over the points $\{\lambda_0,\ldots, \lambda_{2g+1}\}\subset \bC \subset \bP^1$.
\end{Example}

\begin{Lemma} \label{Lem:QuadricSplitGenerated}
The nilpotent summand  $\scrF(Q;0)$ is split-generated by the Lagrangian sphere of Lemma \ref{Lem:uniquesphere}.
\end{Lemma}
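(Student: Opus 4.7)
The strategy is to apply Proposition \ref{Prop:Split-Generate}. First, $L$ belongs to $\scrF(Q;0)$: by Lemma \ref{Lem:uniquesphere} its minimal Maslov number is $2n \geq 4$ (we may take $n \geq 2$), so it bounds no Maslov-$2$ disks and $\mathfrak{m}_0(L) = 0$. A natural positive relation comes from the Lefschetz fibration $w\colon Z \to \bP^1$ on the relative quadric defined by the pencil \eqref{Eqn:Pencil}: its fibre is $Q$, and by Lemma \ref{Lem:uniquesphere} each of the $n+2$ nodal critical fibres contributes a vanishing cycle isotopic to $L$, yielding the relation $\tau_L^{n+2} = \id$ already recorded in Lemma \ref{Lem:QuadricTwist}.

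It then remains to identify the cycle class $\scrC(w) \in H^{\mathrm{ev}}(Q;\bC)$ as a scalar multiple of $c_1(Q) = nh$. The most visible contribution comes from the \emph{exceptional sections} $\{S_p\}_{p \in Q_0 \cap Q_1}$ of $w$: since the rational map $[Q_0:Q_1]\colon \bP^{n+1}\dashrightarrow \bP^1$ is resolved after blowing up the base locus, each fibre of $E \to Q_0 \cap Q_1$ projects isomorphically to $\bP^1$ under $w$. An index calculation using the blow-up formula $c_1(Z) = (n+2)H - E$ and fibre class $F = 2H - E$ gives $\langle c_1(T^{\mathrm{vt}}Z), [S_p] \rangle = -1$, so the virtual dimension of this family agrees with $\dim_{\bC}(Q_0 \cap Q_1) = n-1$; evaluation at any $t \in \bP^1$ realises the embedded cycle $Q_0 \cap Q_1 \hookrightarrow Q_t \cong Q$, whose Poincar\'e dual is $2h = (2/n)c_1(Q)$.

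The hard part will be controlling the higher-degree sections: a section of class $d[\ell] + k[S]$ with $2d+k=1$ has $\langle c_1(T^{\mathrm{vt}}Z), \beta\rangle = nd-1$, and for $d \geq 1$ its moduli of complex dimension $nd + n - 1 > n$ evaluates surjectively onto $Q$, so it could contribute to the unit $1 \in H^0(Q) = \bC$. Such a unit contribution would survive projection to the nilpotent summand $QH^*(Q;0)$ and obstruct $\scrC(w) \propto c_1(Q)$. I would attempt to rule this out either by a symmetry argument exploiting the $S_{n+2}$-action permuting the critical values of $w$, which should force cancellation of the higher-degree contributions, or by noting that every evaluation image is cut from $Q$ by translates of the pencil quadrics — hence lies in the subring $\bC[h]\subset QH^*(Q)$ in the notation of Lemma \ref{Lem:QuadricQH}, with no $a$- or $b$-components — and then verifying vanishing of the $H^0$ coefficient on the $0$-summand by direct enumeration using Fano-ness of $Z$. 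With $\scrC(w) \propto h$ established, Proposition \ref{Prop:Split-Generate} (applied with $\lambda=0$) yields the desired split-generation of $D^\pi\scrF(Q;0)$ by $L$.
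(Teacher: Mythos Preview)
Your approach matches the paper's: apply Proposition~\ref{Prop:Split-Generate} to the Lefschetz fibration $w\colon Z \to \bP^1$ coming from the pencil of quadrics, and compute the cycle class $\scrC(w)$. Your identification of the exceptional ruling sections and their contribution $[Q_0 \cap Q_1] = 2h \propto c_1(Q)$ in $H^2(Q)$ is exactly what the paper finds.

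However, the ``hard part'' you flag is a phantom difficulty, arising from a misreading of what the cycle class is. By definition, the contribution of a section class $\beta$ to $\scrC(w)$ is the Poincar\'e dual of the evaluation pushforward of the (virtual) fundamental class of $\cM(\beta)$; this lives in $H_{2d_\beta}(Q)$, where $d_\beta$ is the complex virtual dimension of the moduli space. If $d_\beta > n = \dim_{\bC} Q$, that homology group is zero --- whether or not the evaluation map is surjective is irrelevant. A contribution to $H^0(Q)$ would require $d_\beta = n$ exactly, i.e.\ $nd + n - 1 = n$ in your notation, hence $nd = 1$, impossible for $n \geq 2$. So the higher-degree sections contribute nothing at all, and no symmetry or enumeration argument is needed. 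The paper dispatches this in one sentence (``the space of sections has dimension greater than the fibre, and the cycle class vanishes, unless $0 \leq c_1(Z)[\beta] \leq 2$'') and then checks that the remaining ruling curves are regular via the automatic-regularity criterion for holomorphic spheres whose normal bundle splits into line bundles of degree $\geq -1$.
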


\begin{proof}
Denote by $w: Z^{n+1} \rightarrow \bP^1$ the Lefschetz fibration constructed above.  
We compute the cycle class $\mathcal{C}(w)$;   we work with the given integrable almost complex structure $J$. Write $L$ for the class of a line in $\bP^{n+1}$, $R$ for the ruling class of the exceptional divisor $E\subset Z$, and $H$ for the hyperplane class on $\bP^{n}$; so the intersection pairing $H_{2n}(Z)\times H_2(Z) \rightarrow \bZ$ is given by 
\[
H.L = 1, \ H.R = 0, \ E.L = 0, \ E.R=-1.
\]
Since the class of the fibre $2H-E$ is base-point free, it meets every effective curve non-negatively, so effective curves live in classes $dL-rR$ with $r\leq 2d$ (and $d\geq 0$). The space of sections of $w$ in a homology class $\beta$ has virtual complex  dimension $c_1(T^{vt}(Z))[\beta]+\dim_{\bC}(Q) = c_1(Z)[\beta]+n-2$.   Therefore the space of sections has dimension greater than the fibre, and the cycle class vanishes, unless $0 \leq c_1(Z)[\beta] = (n+2)d-r \leq 2$, which forces $d=0$ and $r\in \{-1,-2\}$.    Since sections satisfy $\langle 2H-E, [\beta]\rangle =1$, the case $r=-2$ is also excluded, so we are left with sections which are ruling curves,  $d=0$ and $r=-1$.  All such curves lie inside the exceptional divisor $E$, since $E.R = -1$.   Moreover, the curves are regular by the ``automatic regularity" criterion of \cite[Lemma 3.3.1]{McD-S}: we are dealing with holomorphic spheres for an integrable almost complex structure for which all summands of the normal bundle have Chern number $\geq -1$.  The associated cycle class is therefore a copy of the base locus inside the fibre $\mathcal{C}(w)=Q_0\cap Q_1\subset Q$.    This is a multiple of the first Chern class of $Q$ by the Lefschetz theorem; the result now follows from Proposition \ref{Prop:Split-Generate}.
\end{proof}

\begin{Lemma} \label{Lem:Ainfinitytrivial}
Let $L^g\subset Q^{2g}$ be the Lagrangian sphere of Lemma \ref{Lem:uniquesphere}, in an even-dimensional quadric.  Then $HF(L,L)$ is semisimple, and carries a formal $A_{\infty}$-structure.\end{Lemma}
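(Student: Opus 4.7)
My plan is to pin down $HF(L,L)$ as a unital ring by pulling back the ring structure from $QH^*(Q)$, show it is semisimple and isomorphic to $\bC \oplus \bC$, and then invoke the standard vanishing of higher Hochschild cohomology for a semisimple algebra over $\bC$ to deduce formality.

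First, as a $\bC$-vector space $HF^*(L,L)$ has rank two: since $L \cong S^{2g}$ is a simply-connected Lagrangian sphere with minimal Maslov number $2c_1(Q^{2g}) = 4g \geq 4$, no Maslov-index-one strips exist and the Floer differential vanishes for degree reasons, so $HF^*(L,L) \cong H^*(S^{2g};\bC) = \bC\langle 1_L, \tilde x\rangle$ with $\tilde x$ a generator of top cohomology. Let $\phi\colon QH^*(Q) \to HF^*(L,L)$ denote the unital ring homomorphism of Proposition \ref{Lem:quantumcap}; because $L \in \scrF(Q;0)$, $\phi$ annihilates every summand $QH^*(Q;\lambda)$ with $\lambda \neq 0$. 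From the presentation of $QH^*(Q^{2g})$ in Lemma \ref{Lem:QuadricQH}, a direct calculation gives $h(a-b) = 0$ and $(a-b)^2 = \pm 4\, e_0$ (with the sign depending on $g \bmod 2$), where $e_0 = 1 - h^{2g}/4$ is the unit of the $0$-summand. Setting $c = a-b$, the $0$-summand is itself the semisimple ring $QH^*(Q;0) \cong \bC[c]/(c^2 \mp 4)$, and applying $\phi$ (which sends $e_0 \mapsto 1_L$) yields $\phi(c)^2 = \pm 4 \cdot 1_L \neq 0$.

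To rule out the degenerate possibility that $\phi(c)$ is a scalar multiple of $1_L$, I bring in the auxiliary map $\psi\colon HF^*(L,L) \to QH^*(Q)$ of Lemma \ref{Lem:Composition}, which satisfies $\psi \circ \phi(y) = y \ast [L]$. Direct calculation shows $e_0 \ast h = h - h^{2g+1}/4 = 0$, so $h$ has trivial projection to $QH^*(Q;0)$ and hence $\phi(h) = 0$; by Lemma \ref{Lem:Composition} this forces $h \ast [L] = 0$, placing $[L] \in \ker(\ast h) \cap H^{2g}(Q) = \bC \cdot c$, so $[L] = \lambda c$ with $\lambda \neq 0$ (the self-intersection $[L]^2 = \pm 2$ of a Lagrangian sphere in even middle real dimension is non-zero). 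If $\phi(c) = \pm 2 \cdot 1_L$ were a scalar, then on the one hand $\psi\phi(c) = \pm 2\lambda c \in H^{2g}(Q)$, whereas Lemma \ref{Lem:Composition} gives $\psi\phi(c) = c \ast [L] = \lambda c^2 = \pm 4\lambda e_0 = \pm 4\lambda \mp \lambda h^{2g}$, which has non-zero components in $H^0(Q)$ and $H^{4g}(Q)$, a contradiction in the natural $\bZ_{4g}$-graded decomposition of $QH^*(Q)$. Writing $\phi(c) = a \cdot 1_L + b\tilde x$ with $b \neq 0$ and expanding $\phi(c)^2 = \pm 4$ produces a quadratic relation for $\tilde x$ whose discriminant must be non-zero, so $HF^*(L,L) \cong \bC \oplus \bC$.

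For formality, recall that a finite-dimensional commutative semisimple $\bC$-algebra $A$ has $HH^n(A,A) = 0$ for all $n \geq 1$, because $A$ is projective as an $A \otimes_\bC A^{\mathrm{op}}$-module. Consequently the DGLA $CC^{\geq 1}(A,A)$ has trivial cohomology, and the Maurer-Cartan set is trivial up to gauge, so by Lemma \ref{Lem:DeformedHH} every $A_\infty$-structure on $A$ extending the given product is gauge-equivalent to the formal one with $\mu^d = 0$ for $d \neq 2$. Applied to $A = HF^*(L,L)$, this gives the desired formality. The main technical hurdle in this plan is the ring-theoretic identification of $[L]$ as a scalar multiple of $c \in QH^*(Q;0)$ and the subsequent exclusion of the scalar case for $\phi(c)$ via the degree-graded incompatibility above; once these are in hand, both the semisimplicity of $HF(L,L)$ and the formality of the induced $A_\infty$-structure are purely algebraic.
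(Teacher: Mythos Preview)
Your argument is correct. Both semisimplicity and formality are established, but your route differs from the paper's in instructive ways.

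For semisimplicity, the paper takes a shorter path: it observes directly from the presentation in Lemma~\ref{Lem:QuadricQH} that $a$ (or $b$) is \emph{invertible} in $QH^*(Q)$, hence $\phi(a)$ is invertible in $HF^*(L,L)$. Since the map $\phi$ respects the $\bZ_{4g}$-grading and $a \in QH^{2g}$, the image $\phi(a)$ lands in the one-dimensional piece $HF^{2g}(L,L)$, so the top generator $\tilde{x}$ is itself invertible and $\tilde{x}^2 \neq 0$. Your approach instead works with $c = a-b$, computes $c^2 = \pm 4e_0$ explicitly, and then uses the composite $\psi\circ\phi = \ast[L]$ of Lemma~\ref{Lem:Composition} together with a degree comparison in $QH^*(Q)$ to rule out $\phi(c)$ being a scalar. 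This is more involved but has the virtue of making the role of $[L]$ in middle cohomology completely explicit. (A minor point: when you assume $\phi(c) = \pm 2\cdot 1_L$, the scalar could also be $\pm 2i$ in the $n=4k$ case; your contradiction via degrees works identically for any scalar, so this is only notational.)

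For formality, your argument is actually cleaner than the paper's. You invoke the standard fact that a commutative semisimple $\bC$-algebra concentrated in even degree has $HH^{\geq 1} = 0$, so the Maurer--Cartan set is trivial up to gauge and any $A_\infty$-structure extending the product is formal. The paper instead appeals to Remark~\ref{Rem:HH} and then adds a symmetry argument using the Dehn twist $\tau_L$ (which swaps the two idempotents) to exclude a putative ``asymmetric'' non-formal structure; in fact that extra step is unnecessary once one knows $HH^{\geq 1}(\bC\oplus\bC) = 0$, which is exactly what your projectivity remark establishes.
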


\begin{proof}
Since $L$ has minimal Maslov number $4g-2 > dim_{\bR}(L)+1$, its Floer differential is undeformed and additively, $HF(L,L) \cong H^*(S^{2g})$.  We claim that  the ring structure on this group is semisimple, so $HF(L,L) \cong \bC[t] / \langle t^2=1\rangle$. From Lemma \ref{Lem:QuadricQH}, the generators $a,b$ of $QH^{2g}(Q)$  are invertible elements in the quantum cohomology ring, hence have invertible image in $HF^{2g}(L,L)$, and the natural map $QH^*(Q) \rightarrow HF^*(L,L)$ is surjective. This implies that  the generator of $HF^{2g}(L,L)$ has non-zero square; see for instance Biran-Cornea \cite[Proposition 6.33]{BiranCornea}. Formality of the $A_{\infty}$-structure now follows from the Hochschild cohomology computation of Remark \ref{Rem:HH}.  More precisely, that computation shows that up to rescaling, the unique non-formal $A_{\infty}$-structure treats the idempotent summands of $1_L$ asymmetrically, and hence the non-formal structure admits no autoequivalence which acts on $HF(L,L)$ in the same way as the Dehn twist $\tau_L$ (this must act non-trivially since it reverses orientation on $L$, hence interchanges the two idempotents).
\end{proof}

We remark that one can also derive the above formality result from the fact that the Lagrangian sphere is invariant under an anti-symplectic involution of the quadric, together with $4$-divisibility of all its Maslov indices, using \cite[Chapter 8]{FO3}. 

\begin{Lemma} \label{Lem:Ainfinitytrivial2}
Let $L^{2k+1}\subset Q^{2k+1}$ be the Lagrangian sphere of Lemma \ref{Lem:uniquesphere}, in an odd-dimensional quadric.  Then $HF(L,L) \cong Cl_1$ is quasi-isomorphic to the Clifford algebra with vanishing higher order products.\end{Lemma}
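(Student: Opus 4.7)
The plan is to mirror the strategy of Lemma \ref{Lem:Ainfinitytrivial} but, since the quantum cohomology of an odd-dimensional quadric no longer contains invertible middle-degree classes $a,b$, to determine the ring structure on $HF(L,L)$ indirectly via Hochschild cohomology.

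First I would establish the additive and generation statements.  Since $\dim_\bC Q = 2k+1$ and the Lagrangian sphere $L$ has minimal Maslov number $2(2k+1)=4k+2 \geq 4$ (for $k\geq 1$; the case $k=0$ can be handled separately by noting that the two Maslov 2 disks bounding an equator in $\bP^1$ cancel in $\frak{m}_0$), the Floer differential is undeformed, so additively
\[
HF^*(L,L) \;\cong\; H^*(S^{2k+1}) \;=\; \bC\langle 1,t\rangle, \qquad \deg(t)=2k+1 \text{ odd}.
\]
Since $c_1(Q) = (2k+1)h$ is non-zero and $\frak{m}_0(L)=0$, Lemma \ref{Lem:quantumcap} forces the natural map $QH^*(Q) \to HF^*(L,L)$ to send $h$ to $0$, so its image is just $\bC\cdot 1_L$, and the argument of the even case cannot determine $t\cdot t$ directly.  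However, the proof of Lemma \ref{Lem:QuadricSplitGenerated} is insensitive to the parity of $n$: the pencil of quadrics yields a Lefschetz fibration $w\co Z \to \bP^1$ whose cycle class is the base locus $Q_0 \cap Q_1$, a multiple of $c_1(Q)$.  Proposition \ref{Prop:Split-Generate} then shows $L$ split-generates $\scrF(Q;0)$.

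Next, I would compute the target Hochschild cohomology.  From Lemma \ref{Lem:QuadricQH}, $QH^*(Q)\cong \bC[h]/(h^{n+1}-4h)$ with $n=2k+1$; the matrix of $\ast c_1(Q)=\ast nh$ in the basis $\{1,h,\ldots,h^n\}$ has characteristic polynomial $\lambda^{n+1}-4n^{n+1}\lambda$ (up to normalization), whose zero generalized eigenspace is one-dimensional, spanned by $h^{n}-4$.  Applying Corollary \ref{Cor:QuiltsQH} with the positive relation $\tau_L^{n+2}\simeq\id$ of Lemma \ref{Lem:QuadricTwist}, and using that Hochschild cohomology is invariant under passing to the split-closure of a full subcategory generated by a split-generator, I conclude
\[
HH^*\bigl(HF(L,L)\bigr) \;\cong\; HH^*\bigl(D^\pi\scrF(Q;0)\bigr) \;\cong\; QH^*(Q;0) \;\cong\; \bC.
\]

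Finally I would run the formality argument.  Viewing $HF(L,L)=\Lambda^*(\bC)$ as a $\bZ_2$-graded algebra with $\deg(t)=1$ (mod $2$), Kontsevich's formality theorem \ref{Thm:KontsevichFormality} identifies gauge equivalence classes of $A_\infty$-structures on it with Maurer-Cartan elements in the polyvector field Lie algebra $\bC[[x]]\otimes\Lambda^*(\bC)$; for $\dim V=1$ the bracket vanishes on functions, so every $W\in\bC[[x]]$ with $W(0)=W'(0)=0$ produces such a structure, and up to formal change of variables only the truncations $W\sim x^k$ remain.  By Example \ref{Ex:AinftyExterior}, the resulting algebra has Hochschild cohomology $\bC[[x]]/(W')$ of rank $k-1$; matching this to our computed rank $1$ forces $k=2$, i.e.\ $W\sim x^2$, which is precisely the Maurer-Cartan element defining the Clifford algebra $Cl_1$ with $\mu^2(t,t)$ a unit and all higher products vanishing.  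The main obstacle I anticipate is a careful bookkeeping of conventions in applying Example \ref{Ex:AinftyExterior} in the $\bZ_2$-graded odd case (so that the nontrivial $W$ deforms $\mu^2$ itself rather than only the higher operations, in contrast to the $\bZ$-graded exterior algebras treated in Section \ref{Section:Curve}), together with verifying that Corollary \ref{Cor:QuiltsQH} applies in the odd-quadric setting; both should be routine modulo setting up the right grading and monotonicity hypotheses.
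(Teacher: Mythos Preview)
Your proposal is correct and follows essentially the same route as the paper: establish the additive isomorphism $HF(L,L)\cong H^*(S^{2k+1})$ from Maslov bounds, invoke split-generation and Corollary~\ref{Cor:QuiltsQH} to get $HH^*\cong QH^*(Q;0)\cong\bC$, then use the classification of $A_\infty$-structures on $\Lambda(\bC)$ from Example~\ref{Ex:AinftyExterior} to force $W\sim x^2$, i.e.\ the Clifford algebra. One minor caveat: your parenthetical about $k=0$ is off --- for $Q^1\cong\bP^1$ the spectrum of $\ast c_1$ on $QH^*$ is $\{\pm 2\}$, so the nilpotent summand is genuinely empty (as the paper notes in Corollary~\ref{Cor:QuadricSorted}) rather than containing a nontrivial equator.
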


\begin{proof}
Because the minimal Maslov number is $2k$, additively $HF(L,L) \cong H^*(S^{2k+1})$, which with its natural mod 2 grading is additively isomorphic to an exterior algebra.  According to Corollary \ref{Cor:QuiltsQH}, the Hochschild cohomology of the category $\scrF(Q;0)$ is isomorphic to the $0$-generalised eigenspace of $QH^*(Q)$, which from Lemma \ref{Lem:QuadricQH} has rank 1 for an odd-dimensional quadric. 
According to Section \ref{Section:Formality}, $A_{\infty}$-structures on the exterior algebra on one generator are determined by formal power series $W \in \bC[[x]]$ in one variable, and Example \ref{Ex:AinftyExterior} shows both that the only structure with the correct Hochschild cohomology is the one in which the product is deformed, and that up to gauge equivalence one can assume that the higher products vanish.
\end{proof}

Let $S^0$ denote a zero-dimensional sphere (which is a rather trivial symplectic manifold), and continue to write $Cl_1$ for the Clifford algebra structure on $\Lambda^*(\bR)$.

\begin{Corollary} \label{Cor:QuadricSorted}
We have the following equivalences:
\begin{itemize}
\item If $Q\subset \bP^{2g+1}$ is an even-dimensional quadric, 
$D^{\pi}\scrF(Q;0) \simeq D^{\pi}\scrF(S^0)$.
\item If $Q\subset \bP^{2g}$ is an odd-dimensional quadric, 
$D^{\pi}\scrF(Q;0) \simeq D^{\pi}(mod$-$Cl_1)$, except when $dim(Q)=1$ and the nilpotent summand of the category is empty.
\end{itemize}
\end{Corollary}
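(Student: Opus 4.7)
The plan is to use split-generation to reduce the computation of $D^{\pi}\scrF(Q;0)$ to the $A_{\infty}$-endomorphism algebra of a single object, namely the Lagrangian sphere $L$ of Lemma \ref{Lem:uniquesphere}. By Lemma \ref{Lem:QuadricSplitGenerated}, whenever $\dim Q > 1$ the sphere $L$ split-generates the nilpotent summand, so it is enough to identify $HF(L,L)$ together with its $A_\infty$-structure; equivalences of split-closed triangulated envelopes then follow from standard homological algebra (a formal $A_\infty$-algebra has triangulated envelope equivalent to modules over its cohomology).

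For the even-dimensional case $\dim Q = 2g$ with $g \geq 1$, Lemma \ref{Lem:Ainfinitytrivial} identifies $HF(L,L)$ with the semisimple algebra $\bC[t]/\langle t^2-1\rangle$ in which the non-trivial generator lies in $HF^{2g}(L,L)$, hence has even mod $2$ degree, and proves that the $A_\infty$-structure is formal. The complementary idempotents $e_{\pm} = (1\pm t)/2$ are therefore $\bZ_2$-homogeneous, and in the idempotent completion one obtains an orthogonal decomposition $L\simeq L_+ \oplus L_-$ with $\End(L_{\pm}) \cong \bC$ and $\Hom(L_+,L_-)=0$. This is exactly the description of $D^{\pi}\scrF(S^0)$ as the split-closure generated by the two points of $S^0$, yielding the first equivalence.

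For the odd-dimensional case $\dim Q = 2k+1$ with $k \geq 1$, Lemma \ref{Lem:Ainfinitytrivial2} identifies $HF(L,L)$ with $Cl_1$ and again yields formality (the higher products vanish). The decisive difference from the even case is that the generator of $HF^{2k+1}(L,L)$ has odd mod $2$ degree, so the would-be idempotents $(1\pm x)/2$ are not $\bZ_2$-homogeneous and no analogous splitting exists; this is precisely the dichotomy pointed out in Remark \ref{Rem:HH}. Consequently $D^{\pi}\scrF(Q;0)$ is quasi-equivalent to the triangulated split-closure of the formal $A_{\infty}$-algebra $Cl_1$, i.e.\ to $D^{\pi}(\mathrm{mod}\text{-}Cl_1)$.

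Finally, the exceptional case $\dim Q = 1$ falls outside the scope of Lemma \ref{Lem:uniquesphere}, which places $L$ in the nilpotent summand only under the hypothesis $n > 1$. In this remaining case $Q \cong \bP^1$, and any simple closed curve in $\bP^1$ bounds two holomorphic disks of Maslov index $2$, forcing the obstruction class $\mathfrak{m}_0$ to be a nonzero multiple of the unit; since simple closed curves are the only candidate Lagrangians, this shows $\scrF(\bP^1;0)$ contains no nonzero object. The only substantive obstacle in the argument is tracking the parity of the generator of $HF(L,L)$, which is what dictates whether one is in the semisimple splitting regime or the non-splitting Clifford regime; this in turn follows cleanly from $\dim_{\bR} L = \dim_{\bC} Q$.
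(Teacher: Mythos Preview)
Your proof is correct and follows essentially the same approach as the paper, which simply states that the corollary is an immediate consequence of Lemmata \ref{Lem:QuadricSplitGenerated}, \ref{Lem:Ainfinitytrivial} and \ref{Lem:Ainfinitytrivial2}. You have usefully spelled out the details the paper leaves implicit: how the idempotent splitting in the even case matches $\scrF(S^0)$, why the odd-degree generator obstructs splitting in the Clifford case (as flagged in Remark \ref{Rem:HH}), and why $\scrF(\bP^1;0)$ is empty.
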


\begin{proof}
This is an immediate consequence of Lemmata \ref{Lem:QuadricSplitGenerated}, \ref{Lem:Ainfinitytrivial} and \ref{Lem:Ainfinitytrivial2}.
\end{proof}

\begin{Remark}
Every twisted complex in $Tw\,\scrF(S^0)$ is quasi-isomorphic to a pair of graded vector spaces, one supported at each point of $S^0$.  The endomorphism algebra of any idempotent is therefore a sum of matrix algebras, hence has rank two if and only if it is a direct sum of two one-dimensional matrix algebras. It follows that, for an even-dimensional quadric, $D^{\pi}\scrF(Q;0)$ contains a unique spherical object up to quasi-isomorphism.  
\end{Remark}

\subsection{Hyperelliptic curves and quadrics\label{Subsec:Pencil}}  This section expands on Example \ref{Ex:LayGround}.  If $\Sigma \rightarrow \bP^1$ is a hyperelliptic curve branched over the (distinct) points $\{\lambda_1,\ldots, \lambda_{2g+2}\} \subset \bC\subset \bP^1$, there is an associated pencil of quadric hypersurfaces $\{sQ_0 + tQ_1\}_{[s:t]\in\bP^1}$, where
\begin{equation} \label{Eqn:PencilCoord}
Q_0 = \left\{ \sum z_j^2 = 0\right\} \subset \bP^{2g+1}; \quad Q_1 = \left\{ \sum \lambda_j z_j^2 = 0 \right\} \subset \bP^{2g+1}.
\end{equation}

\begin{Remark}\label{Rem:BranchingInfinity} 
The fundamental group of the universal family of hyperelliptic curves is a central extension 
\[
1 \rightarrow \bZ_2 \longrightarrow \Gamma_g^{hyp} \longrightarrow \pi_1\Conf_{2g+2}(\bC\bP^1) \rightarrow 1
\]
with the first factor generated by the hyperelliptic involution. More explicitly, as Seidel pointed out (MIT reading group, unpublished notes), to identify the hyperelliptic curves
\[
y^2 = \prod_j (\lambda_j - \mu_j x)  \quad \textrm{and} \quad  y^2 = \prod_j (t_j \lambda_j - t_j\mu_j x) 
\]
associated to points $(\lambda_j, \mu_j),  (t_j\lambda_j, t_j\mu_j) \subset \bC^2\backslash \{0\}$ relies on a choice of square root of $\prod t_j$, which is how the central $\bZ_2$-factor arises when constructing parallel transport maps. (Accordingly, there is no universal hyperelliptic curve  over $\Conf_{2g+2}(\bP^1)$ \cite{Mess},  rather than over a covering space, which is why Addendum \ref{Thm:Action} was stated for once-pointed curves.)  For the $(2,2)$-intersection $Q_0 \cap Q_1 \subset \bP^{2g+1}$, the corresponding fundamental group $\widetilde{\Gamma}$ fits into a sequence
\[
1 \rightarrow \bZ_2^{2g+1} \longrightarrow \widetilde{\Gamma} \longrightarrow \pi_1 \Conf_{2g+2}(\bC\bP^1) \rightarrow 1
\]
Now the first factor $\bZ_2^{2g+1} = \bZ_2^{2g+2} / (\Z_2)$ acts by changing signs of homogeneous co-ordinates in $\bP^{2g+1}$, and this can again be interpreted as choosing square roots of each $t_j$ (modulo changing all choices simultaneously) when identifying 
\[
\left\{ \sum \lambda_j x_j^2 = 0 = \sum \mu_j x_j^2 \right\}  \quad \textrm{and} \quad \left\{ \sum t_j \lambda_j x_j^2 = 0 = \sum t_j \mu_j x_j^2 \right\}.
\]
It follows that the group $\widetilde{\Gamma}$ can be expressed as an extension
\[
1 \rightarrow \bZ_2^{2g} \longrightarrow \tilde{\Gamma}_g^{hyp} \longrightarrow \Gamma_g^{hyp} \rightarrow 1
\]
where the first factor is the subgroup of even elements of $\bZ^{2g+1}$. 
This is the group occuring in Corollary \ref{Cor:Faithful2}. Denote by $\iota$ a generator of $\bZ_2^{2g}$, for instance
 \[
 [x_0: x_1: x_2 : \ldots : x_{2g+1}] \stackrel{\iota}{\longrightarrow}  [-x_0: -x_1: x_2 : \ldots : x_{2g+1}].
 \]
This has fixed point set on $Q_0 \cap Q_1$ the intersection of two quadrics in $\bP^{2g-1}$. Therefore, 
   \[
   \rk H^*(\textrm{Fix}(\iota)) \, < \, \rk H^*(Q_0 \cap Q_1).
   \]    
   The local-to-global spectral sequence of Pozniak \cite{Pozniak} now implies that $HF(\iota) \neq HF(\id)$, which means that $\iota \in \pi_0\Symp(Q_0 \cap Q_1)$ is non-trivial (by contrast, since it has zero Lefschetz number it acts trivially on cohomology).  From here, it is straightforward to deduce Corollary \ref{Cor:Faithful2} from Theorem \ref{Thm:Embed}, either by direct consideration of the action of the mapping class group on $\scrF(\Sigma_g)$,  or by following the argument of Section \ref{Sec:Faithful}.
\end{Remark}

\emph{From now on, we will always work with curves unbranched at infinity, and co-ordinate representations of the associated pencil of quadrics as in (\ref{Eqn:PencilCoord}).}   \newline

There are numerous classical connections between the topology of $\Sigma$ and that of the base locus $Q_0 \cap Q_1$, see \cite{Reid, Wall, AL}; Reid's unpublished thesis is especially lucid. 
\begin{itemize}
\item There is an isomorphism $H^1(\Sigma_g) \cong H^{2g-1}(Q_0 \cap Q_1)$ of odd cohomologies.
\item The variety of $(g-1)$-planes in $Q_0 \cap Q_1$ is isomorphic to the Jacobian $J(\Sigma_g)$.
\item The moduli spaces of smooth hyperelliptic curves and of  pencils of quadrics for which the discriminant has no multiple root co-incide.
\end{itemize}
The last statement in particular implies that, by taking parallel transport in suitable families, there is a canonical representation
\[
\rho: \Gamma_{g,1}^{hyp} \longrightarrow \pi_0\Symp(Q_0 \cap Q_1)
\]
from the hyperelliptic mapping class group of once-pointed curves (unbranched at infinity)  to the symplectic mapping class group of the associated $(2,2)$-complete intersection in $\bP^{2g+1}$.  

\begin{Lemma}[Wall] \label{Ex:genus2}
An $A_k$-chain of curves $\{\gamma_1,\ldots, \gamma_k\} \subset \Sigma$ each invariant under the hyperelliptic involution defines an $A_k$-chain of Lagrangian spheres $\{V_{\gamma_1},\ldots, V_{\gamma_k}\} \subset Q_0 \cap Q_1$.
\end{Lemma}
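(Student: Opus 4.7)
My plan is to construct each $V_{\gamma_i}$ as a Lefschetz vanishing cycle in the universal family of $(2,2)$-complete intersections over the configuration space $\Conf_{2g+2}(\bC)$, and then to read off the $A_k$-intersection pattern by analysing a local transverse slice through the discriminant of this family.

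A hyperelliptic-invariant curve $\gamma \subset \Sigma$ descends under $p:\Sigma \to \bP^1$ to a matching path $\bar{\gamma}$ between two branch points $\lambda_i, \lambda_j \in \bC$. Fixing the other $2g$ branch points and sliding $\lambda_i$ toward $\lambda_j$ along $\bar{\gamma}$ produces a one-parameter degeneration of the base locus. A direct local calculation with \eqref{Eqn:PencilCoord}, eliminating $z_j$ using the first quadric, shows that at the collision $\lambda_i = \lambda_j$ the complete intersection acquires precisely two ordinary double points at $[0{:}\cdots{:}1{:}\cdots{:}{\pm}i{:}\cdots{:}0]$, with the $\pm$ in position $j$, exchanged by the sign-change $z_j \mapsto -z_j$ coming from $H^1(\Sigma;\bZ_2)$ (compare Remark \ref{Remark:2objects-1}). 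The symplectic Picard-Lefschetz theorem then produces two Lagrangian $(2g-1)$-sphere vanishing cycles, and either may be taken as $V_{\gamma}$; its Hamiltonian isotopy class is determined by the homotopy class of $\bar{\gamma}$ rel endpoints in $\bP^1$ minus the other branch points.

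The chain structure requires comparing vanishing cycles for different collisions. Two matching paths with \emph{disjoint} endpoints may be realised by two pair-collisions performed in disjoint open regions of configuration space; because the vanishing cycle is supported arbitrarily close to the collision node, the resulting Lagrangian spheres can be made disjoint. The interesting case is when two matching paths share exactly one endpoint $\lambda_*$: here I would consider the triple collision $\lambda_i = \lambda_* = \lambda_j$ and its small resolutions. A computation with \eqref{Eqn:PencilCoord} shows that the triple-collision base locus is \emph{not} an isolated singularity but carries a non-isolated singular locus equal to the smooth conic $\{z_i^2 + z_*^2 + z_j^2 = 0\}$ in the $\bP^2$ spanned by the three distinguished coordinates. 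To extract an $A_2$-intersection pattern, I would take a generic 2-parameter resolution, that is, a transverse disk in $\Conf_{2g+2}(\bC)$ through the triple-collision stratum, and change coordinates near the singular conic so that one direction $s$ runs along the conic and the normal direction $u$ is transverse: the two adjacent pair-collisions then appear in the form $u^2 = (s-a)(s-b)$ plus a non-degenerate quadratic remainder, which is the $A_2$ normal form. The associated pair of vanishing spheres meets transversely in a single point by the standard $A_2$-plumbing picture.

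The main obstacle is this identification of the transverse slice with an $A_2$-singularity, together with the verification that the prescribed matching paths select the standard vanishing basis. I would carry it out by an explicit coordinate calculation near the singular conic, reducing the local defining equations of the complete intersection to $A_2$ normal form, and then appealing to the standard dictionary between braids in $\Conf_{2g+2}(\bC)$ and Hurwitz moves on distinguished bases to match the resulting vanishing spheres with the matching-path labels. Once the $A_2$ case is established, longer $A_k$-chains follow formally: non-adjacent matching paths are disjoint in $\bP^1$ and so their spheres are disjoint, while adjacent pairs each produce the $A_2$-intersection, giving the full $A_k$-chain in $Q_0 \cap Q_1$.
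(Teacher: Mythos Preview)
Your approach is quite different from the paper's. The paper does not attempt any local computation: it simply invokes Wall's theorem that an isolated singularity in the base locus of a linear system of quadrics has the same topological type as the corresponding singularity of the discriminant, provided all quadrics in the system have corank at most one. For pencils the discriminant singularities are $A_k$-points, so Wall gives an isolated $A_k$-singularity of the base locus and the standard Milnor fibre picture delivers the $A_k$-chain of vanishing spheres directly.

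Your hands-on strategy is natural, and the disjoint-paths case is fine, but there is a genuine gap in the adjacent case. Your proposed normal form $u^2 = (s-a)(s-b) + Q(\mathrm{rest})$ near the singular conic is not what the equations produce. Work near $[1{:}i{:}0{:}\cdots{:}0]$ in the chart $z_1=1$, with $\lambda_1=a$, $\lambda_2=0$, $\lambda_3=-b$; eliminating via the first quadric and writing $z_3=w$, the second equation becomes $bw^2 - \sum_{k\geq 4}\lambda_k z_k^2 = a$ to leading order. The conic coordinate $s$ has dropped out entirely, and $b$ enters as a \emph{coefficient}, not a root: near this point one sees only the node for the collision $a=0$, while the node for $b=0$ lives near a different point of the conic. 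In your model the fibre is singular only along $\{a=b\}$, whereas the actual discriminant in the $(a,b)$-slice is the three lines $\{a=0\}\cup\{b=0\}\cup\{a+b=0\}$. The two adjacent vanishing spheres are therefore localised near distinct points of the non-isolated singular locus, and no single-chart $A_2$ normal form captures their intersection. This is precisely the difficulty Wall's theorem sidesteps: the \emph{generic} pencil with an $A_k$-discriminant singularity is not simultaneously diagonalisable (the bad quadric has corank $1$, not $k+1$), and there the base locus does acquire an isolated $A_k$-singularity. Your explicit diagonal family hits the non-generic high-corank stratum, where the singularity is genuinely non-isolated; to finish by hand you would need either a global computation of $V_{\gamma_i}\cap V_{\gamma_{i+1}}$ in the smooth fibre (as Lemma~\ref{Lem:HolonomySpheresMeetRight} does when $g=2$) or an argument relating your vanishing spheres to those of a corank-one degeneration.
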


When $g=2$, this is precisely the content of Lemma \ref{Lem:HolonomySpheresMeetRight}.  In general, it is a special case of Wall's \cite[Theorem 1.4]{Wall}, who showed that an isolated singularity in the base locus of a linear system of quadrics has the same topological type as the singularity in the discriminant of the family, provided all the quadrics have corank at most 1.  For pencils, the only possible singularities of the (isolated) discriminant are multiple points, which are $A_k$-singularities; the Lagrangian spheres in the base locus arise as the vanishing cycles of the corresponding degeneration.    In particular, a simple closed curve $\gamma \subset \Sigma_g$ invariant under the hyperelliptic involution defines a Hamiltonian isotopy class of Lagrangian sphere $V_{\gamma} \subset Q_0 \cap Q_1$. Note that when $g=2$, once the identification $Q_0 \cap Q_1 \cong \scrM(\Sigma_2)$ is fixed, $V_{\gamma}$ is actually defined canonically and not just up to isotopy. 

\begin{Remark}
A classical result of Kn\"orrer \cite{Knorrer} asserts that if $Q_0 \cap Q_1$ is a complete intersection of two quadrics in $\bP^{2g+1}$ with isolated singularities, then the number of singular points of $Q_0 \cap Q_1$ is at most $2g+2$ (these are then necessarily all nodes, and the bound is realised).  The curve $\Sigma_g$ admits at most $g+1$ pairwise disjoint balanced simple closed curves invariant under the hyperelliptic involution, each of which can carry either of two $Spin$ structures, so this fits nicely.  
\end{Remark}

  Consider now a net of quadric $2g$-folds in $\bP^{2g+1}$, spanned by $Q_0 = \{\sum z_j^2=0\}$, $Q_1 = \{\sum \lambda_jz_j^2 = 0\}$ and $Q_2 = \{\sum_j \mu_jz_j^2 = 0\}$.   We suppose the scalars $\lambda_j$ and $\mu_k$ are generic and in particular pairwise distinct. Indeed, we make the stronger hypothesis that the discriminant
\[
\left\{ [s_0:s_1:s_2] \in \bP^2 \, \big| \, det(s_0Q_0+s_1Q_1+s_2Q_2) = 0 \right\}
\]
defines a smooth curve $B\subset \bP^2$ of degree $2g+2$, and hence a smooth surface $K\rightarrow \bP^2$ double covering the plane branched along $B$.     The preimages of a generic pencil of lines in $\bP^2$ define a genus $g$ Lefschetz pencil on $K$ with $(2g+1)(2g+2)$ singular fibres (the degree of the dual curve to the branch locus) and two base-points.  The monodromy of this pencil is well-known:  if $t_i$ denotes the Dehn twist in the curve $\zeta_i$ of Figure \ref{Fig:genus2pencil}, the monodromy when $g=2$ is given by $(t_1t_2t_3t_4t_5)^6 = \id$ (or, in the pointed mapping class group, the same expression is the product of the Dehn twists $t_{\partial_1}t_{\partial_2}$ around the two punctures, which lie in the two halves of the surface given by cutting along the $\{\zeta_j\}$).  For $g>2$ one gets the obvious generalisation $(t_1\ldots t_{2g+1})^{2g+2} = \id$.  After compactifying by adding in one of the two points of the base locus, one can view this monodromy as determined by a representation $\eta: \pi_1(S^2 \backslash \{\mathrm{Crit}\}) \rightarrow \Gamma_{g,1}$.

\begin{center}
\begin{figure}[ht]
\setlength{\unitlength}{1cm}
\begin{picture}(5,2)(0,-1)
\qbezier[200](0,0)(0,1.2)(1.5,1)
\qbezier[200](1.5,1)(2.5,0.85)(3.5,1)
\qbezier[200](3.5,1)(5,1.2)(5,0)
\qbezier[200](0,0)(0,-1.2)(1.5,-1)
\qbezier[200](1.5,-1)(2.5,-0.85)(3.5,-1)
\qbezier[200](3.5,-1)(5,-1.2)(5,0)
\qbezier[60](1,0)(1,0.3)(1.5,0.3)
\qbezier[60](2,0)(2,0.3)(1.5,0.3)
\qbezier[60](1,0)(1,-0.3)(1.5,-0.3)
\qbezier[60](2,0)(2,-0.3)(1.5,-0.3)
\qbezier[60](3,0)(3,0.3)(3.5,0.3)
\qbezier[60](4,0)(4,0.3)(3.5,0.3)
\qbezier[60](3,0)(3,-0.3)(3.5,-0.3)
\qbezier[60](4,0)(4,-0.3)(3.5,-0.3)
\put(1.5,0){\circle{1.1}}
\put(3.5,0){\circle{1.1}}
\qbezier[60](0,0)(0,-0.1)(0.5,-0.1)
\qbezier[60](1,0)(1,-0.1)(0.5,-0.1)
\qbezier[20](0,0)(0,0.1)(0.5,0.1)
\qbezier[20](1,0)(1,0.1)(0.5,0.1)
\qbezier[60](2,0)(2,-0.1)(2.5,-0.1)
\qbezier[60](3,0)(3,-0.1)(2.5,-0.1)
\qbezier[20](2,0)(2,0.1)(2.5,0.1)
\qbezier[20](3,0)(3,0.1)(2.5,0.1)
\qbezier[60](4,0)(4,-0.1)(4.5,-0.1)
\qbezier[60](5,0)(5,-0.1)(4.5,-0.1)
\qbezier[20](4,0)(4,0.1)(4.5,0.1)
\qbezier[20](5,0)(5,0.1)(4.5,0.1)
\qbezier[100](2.5,0.91)(2.6,0.91)(2.6,0)
\qbezier[100](2.5,-0.91)(2.6,-0.91)(2.6,0)
\qbezier[30](2.5,0.91)(2.4,0.91)(2.4,0)
\qbezier[30](2.5,-0.91)(2.4,-0.91)(2.4,0)
\put(-0.35,0){$\zeta_1$}
\put(0.8,0.5){$\zeta_2$}
\put(2.1,-0.35){$\zeta_3$}
\put(3.9,0.5){$\zeta_4$}
\put(5.1,0){$\zeta_5$}
\put(2.65,0.7){$\sigma$}
\end{picture}

\caption{Dehn twists for the genus 2 pencil on K3 \label{Fig:genus2pencil}}
\end{figure}
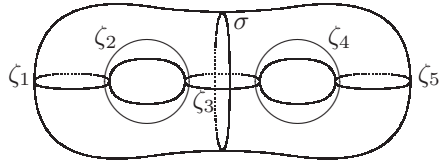
\end{center}

There is also a distinguished pencil of quadric-quadric intersections on the $2g$-fold $Q_2 = \{\sum \mu_j z_j^2=0\}$, which is a Lefschetz pencil by Lemma \ref{Ex:genus2}, again with $(2g+1)(2g+2)$ singular fibres.  Let $\scrW \rightarrow S^2$ be the Lefschetz fibration with fibre $Q_0 \cap Q_1$ given by blowing up $Q_2$ at the base of this pencil  (if $g=2$ this is a blow-up along a genus 17 curve). 

\begin{Lemma} \label{Lem:Monod}
$\scrW$ is defined by $\rho\,\circ\,\eta: \pi_1(S^2 \backslash \{\mathrm{Crit}\}) \rightarrow \pi_0\Symp(Q_0 \cap Q_1).$\end{Lemma}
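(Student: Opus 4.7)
The plan is to match both the critical loci and the vanishing cycles of the two Lefschetz fibrations on a common base $\bP^1$, so that the resulting monodromy homomorphisms are intertwined by $\rho$. The base of $\scrW \to \bP^1$ is naturally the line $\ell = \langle [Q_0], [Q_1]\rangle \subset \bP^2$. For the Lefschetz pencil on $K$, the generic pencil of lines in $\bP^2$ used in its construction can be taken to be the pencil $\scrP$ of lines through $[Q_2]$, since the space of such generic pencils is connected and different choices yield deformation-equivalent Lefschetz fibrations. Projection from $[Q_2]$ gives a canonical identification $\pi\co \ell \xrightarrow{\sim} \scrP$, $t \mapsto \ell^{(t)} := \langle [Q_2], [Q_t]\rangle$. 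Under $\pi$, the critical values in both fibrations are exactly the $t \in \ell$ for which $\ell^{(t)}$ is tangent to $B$, and the Pl\"ucker formula gives $\deg(B^\vee) = (2g+1)(2g+2)$ such tangencies in both cases, matching the number of critical fibres on each side.

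Next I would identify the generic fibres via the classical hyperelliptic-curve/pencil-of-quadrics correspondence. Over a non-critical $t\in \ell$, the fibre of $K$ is the hyperelliptic curve $C^{(t)}$ given as the double cover of $\ell^{(t)}$ branched over $\ell^{(t)}\cap B$, whilst the fibre of $\scrW$ is the $(2,2)$-intersection $X^{(t)} = Q_t \cap Q_2 \subset \bP^{2g+1}$, which is precisely the base locus of the pencil of quadrics parameterized by $\ell^{(t)}$. By the construction of $\rho$ recalled in Section \ref{Subsec:Pencil}, parallel transport in the universal family of hyperelliptic curves and parallel transport in the universal family of associated $(2,2)$-intersections are intertwined by $\rho$. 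Restricting this universal parallel transport to the 1-parameter subfamily parameterized by $\ell \setminus \{\mathrm{Crit}\}$ shows that the smooth monodromy of $\scrW$ over loops in the base agrees with $\rho$ applied to the smooth monodromy of $K$.

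It remains to check that the vanishing cycles at each critical value agree. At a critical $t^*$, the line $\ell^{(t^*)}$ is tangent to $B$ at some point $b$. On the $K$-side, two branch points of the double cover coalesce as $t\to t^*$, and the corresponding vanishing cycle is the hyperelliptic-invariant simple closed curve $\gamma$ on a nearby smooth fibre encircling them. On the $\scrW$-side, the pencil $\{sQ_{t^*} + uQ_2\}_{[s:u]}$ has discriminant with a simple tangency to $B$, hence an isolated $A_1$-degeneration, so Lemma \ref{Ex:genus2} (Wall's theorem for base loci of pencils of quadrics) identifies the resulting node of $X^{(t^*)}$ and its local Milnor fibre with the Lagrangian sphere $V_\gamma \subset X^{(t^{\mathrm{reg}})}$ associated to the same $\gamma$. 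Since by definition $\rho$ sends the Dehn twist $t_\gamma$ to $\tau_{V_\gamma}$, the vanishing-cycle data of $\scrW$ is the $\rho$-image of the vanishing-cycle data of $K$, and combining this with the matchings of critical loci and of smooth monodromy above yields the claim.

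The hard part will be the bookkeeping: one must verify that the three identifications (of base $\bP^1$'s via projection from $[Q_2]$, of generic fibres via the classical correspondence, and of vanishing cycles via Wall's theorem) are coherent in a single parametric picture, so that the resulting monodromy is literally $\rho \circ \eta$ and not twisted by some ambient automorphism. This is ultimately an instance of the naturality of the classical correspondence in 1-parameter families, though some care is needed at the tangency points to apply Lemma \ref{Ex:genus2} within the global fibration rather than just at a single degeneration.
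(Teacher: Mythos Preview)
Your proposal is correct and is essentially a careful unpacking of what the paper treats as tautological. The paper's entire proof is the single sentence ``This is true essentially tautologically, by definition of $\rho$.'' Your identification of the base of $\scrW$ with the line $\ell=\langle[Q_0],[Q_1]\rangle$, the pencil on $K$ with the pencil of lines through $[Q_2]$, and the fibrewise matching via the classical correspondence is exactly the content that makes this tautology hold, and the paper simply declines to spell it out.

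One minor remark: you describe matching vanishing cycles via Wall's theorem (Lemma~\ref{Ex:genus2}) as a separate step, but in fact this is already built into the definition of $\rho$. Since $\rho$ is constructed by parallel transport in the universal family of $(2,2)$-intersections over configuration space, and the monodromy around a branch-point collision is precisely what Lemma~\ref{Ex:genus2} describes, the vanishing-cycle identification is not an additional verification but part of what ``by definition of $\rho$'' means. Your ``hard part'' about coherence of the three identifications is therefore also absorbed into the definition: the naturality you worry about is exactly the naturality of parallel transport that defines $\rho$ in the first place.
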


This is true essentially tautologically, by definition of $\rho$.    There are $(2g+1)(2g+2)$ singular fibres in $\scrW$, defining a collection of that many Lagrangian $(2g-1)$-spheres in $Q_0 \cap Q_1$. These actually only give rise to $(2g+1)$ distinct Hamiltonian isotopy classes $\{V_j\} \subset Q_0 \cap Q_1$; when $g=2$ these are associated to the 5 distinct vanishing cycles $\{\zeta_j\}$ in the pencil on the sextic $K3$, cf. Figure \ref{Fig:genus2pencil}.

\begin{Lemma}\label{Lem:SplitGenQuadricIntersection}
The Lagrangian spheres $\{V_j\}$ split-generate $D^{\pi}\scrF(Q_0 \cap Q_1;0)$.
\end{Lemma}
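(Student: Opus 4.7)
The plan is to verify the hypotheses of Proposition \ref{Prop:Split-Generate} with $\lambda=0$, closely following the template of Lemma \ref{Lem:QuadricSplitGenerated}.

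First, the vanishing cycles lie in the nilpotent summand. Each $V_j\cong S^{2g-1}$ is simply connected for $g\geq 2$, so $\pi_2(Q_0\cap Q_1,V_j)\cong\pi_2(Q_0\cap Q_1)$, and by adjunction the complete intersection has $c_1 = (2g-2)H$ and minimal Chern number $2g-2$. Hence $V_j$ has minimal Maslov number $4g-4\geq 4$, bounds no Maslov-$2$ disks, and satisfies $\mathfrak{m}_0(V_j)=0$. The required positive relation is already supplied by the Lefschetz fibration $\scrW\to S^2$: by Lemma \ref{Lem:Monod} its monodromy is $\rho\circ\eta$ applied to the classical relation $(t_1\cdots t_{2g+1})^{2g+2}=\mathrm{id}$ in $\Gamma_{g,1}$, expressing the identity as a positive word $w$ in the twists $\tau_{V_j}$.

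The heart of the argument is to compute the cycle class $\scrC(w)$ using the integrable complex structure on $\scrW = \mathrm{Bl}_{B}(Q_2)$, where $B = Q_0\cap Q_1\cap Q_2$ is the smooth $(2,2,2)$-complete intersection serving as the base locus. Let $L$ denote the class of a line in $Q_2$, $H$ the hyperplane class, $E$ the exceptional divisor and $R$ the ruling class of the $\bP^1$-bundle $E\to B$, so that $H\cdot L=1$, $E\cdot R=-1$, and $H\cdot R = E\cdot L = 0$. Adjunction gives $c_1(\scrW) = 2gH - E$, while the fibre class is $2H-E$. A section class $\beta = dL + eR$ must satisfy $(2H-E)\cdot\beta = 2d+e = 1$, has Chern number $c_1(\scrW)\cdot\beta = (2g-2)d+1$, and therefore has virtual dimension
\[
\mathrm{vdim}(\beta) = c_1(\scrW)\cdot\beta - 2 + \dim_\bC(Q_0\cap Q_1) = (2g-2)(d+1).
\]

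As in Lemma \ref{Lem:QuadricSplitGenerated}, the case $d\geq 1$ is excluded because the virtual dimension then strictly exceeds the fibre dimension $2g-1$, forcing the cycle-class contribution to vanish; and nefness of the pulled-back class $H$ on $\scrW$ forces $d\geq 0$ for effective curves, ruling out $d\leq -1$. Only the ruling sections $\beta = R$ remain: these lie inside $E$, are cut out regularly by automatic regularity \cite[Lemma 3.3.1]{McD-S} (their normal bundle in $\scrW$ is a sum of a trivial bundle with a single $\mathcal{O}(-1)$), and evaluation at a point of $S^2$ realises the inclusion $B\hookrightarrow Q_0\cap Q_1$. Since $B$ is a degree-$2$ hyperplane section of $Q_0\cap Q_1$, the cycle class is Poincar\'e dual to $2H = \tfrac{1}{g-1}c_1(Q_0\cap Q_1)$, a nonzero multiple of $c_1$; Proposition \ref{Prop:Split-Generate} then yields split-generation. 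The main subtlety is the exclusion of higher-degree sections from the cycle-class computation, which is directly parallel to the dimension-count in Lemma \ref{Lem:QuadricSplitGenerated} but worth verifying carefully in this slightly more elaborate blow-up geometry.
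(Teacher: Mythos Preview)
Your proof is correct and follows essentially the same approach as the paper: both verify Proposition \ref{Prop:Split-Generate} by computing the cycle class of the Lefschetz fibration $\scrW\to S^2$ via the integrable complex structure, reducing to ruling curves $R$ by a dimension count and nefness, and identifying the resulting cycle with the base locus $B\subset Q_0\cap Q_1$ as a multiple of $c_1$. Your write-up is in fact slightly more careful than the paper's in two places: your virtual-dimension formula $(2g-2)(d+1)$ is valid for all $g\geq 2$ (the paper's displayed expression $c_1(\scrW)[\beta]+1$ is only literally correct for $g=2$), and you correctly note that $[B]=2H$ is merely proportional to $c_1(Q_0\cap Q_1)=(2g-2)H$ rather than equal to it, which is all that Proposition \ref{Prop:Split-Generate} requires.
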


\begin{proof}
We have a basis $H,E$ comprising the hyperplane and exceptional divisor for $H^2(\scrW)$, dual to a basis $L,-R$ of a line and ruling curve for $H_2(\scrW)$.   Working with the obvious integrable almost complex structure, rational curves in $\scrW$ must meet the fibre non-negatively, which means that curves only exist in classes $dL-rR$ with $r \leq 2d$; on the other hand, curves in classes with a positive coefficient of $R$ meet $E$ negatively, hence are entirely contained in that divisor.  The cycle class $\mathcal{C}(w)$ of $w:\scrW\rightarrow S^2$ counts curves in homotopy classes $\beta$ for which
\[
vdim_{\bC}(\mathcal{M}(\beta)) = c_1(T^{vt}(\scrW))[\beta] + dim_{\bC}(Fibre) = c_1(\scrW)[\beta]+1
\]
is at most the dimension of the fibre, here $(2g-1)$; that forces $c_1(W)[\beta] \leq 2g-2$. But $c_1(\scrW) = 2gH-E$, and together with the constraint $r \leq 2d$, it follows that only curves in classes which are multiples of $R$ can count. Since we only count sections, that means we are actually interested in $R$-curves. These are automatically regular, being spheres whose normal bundle is a sum of bundles of Chern number $\geq -1$, and the regular moduli space forms a copy of the base locus of the pencil. But this is exactly a cycle representing the first Chern class of the fibre, hence Proposition \ref{Prop:Split-Generate} applies.
\end{proof}

\begin{Corollary} \label{Cor:QHIntersectQuadrics}
There is a ring isomorphism $HH^*(\scrF(Q_0 \cap Q_1);0)) \cong H^*(\Sigma_g)$.
\end{Corollary}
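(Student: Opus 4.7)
The plan is to combine the split-generation already established in Lemma \ref{Lem:SplitGenQuadricIntersection} with Corollary \ref{Cor:QuiltsQH} in order to replace the Hochschild cohomology by a quantum cohomology summand, and then to identify that summand directly with $H^*(\Sigma_g)$ as a ring.

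Concretely, Lemma \ref{Lem:SplitGenQuadricIntersection} places us in the situation of Proposition \ref{Prop:Split-Generate} for $M = Q_0 \cap Q_1$ and $\lambda = 0$: the vanishing cycles $\{V_j\}$ from the pencil on the auxiliary quadric split-generate $\scrF(Q_0\cap Q_1;0)$ via a positive relation whose cycle class is a multiple of $c_1$. Corollary \ref{Cor:QuiltsQH} then yields a ring isomorphism
\[
HH^*(\scrF(Q_0\cap Q_1;0)) \ \cong \ QH^*(Q_0 \cap Q_1;0),
\]
so it suffices to compute the zero-generalised eigenspace of $\ast c_1$ on $QH^*(X)$, where $X = Q_0 \cap Q_1$.

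Classically, the Lefschetz hyperplane theorem applied to $X \subset \bP^{2g+1}$ gives even cohomology $H^{2k}(X) = \bC \langle H^k \rangle$ for $0 \leq k \leq 2g-1$, while the middle odd cohomology sits in a single degree $2g-1$ and is canonically identified with $H^1(\Sigma_g)$ via the Reid--Wall correspondence recalled in Section \ref{Subsec:Pencil}; the classical ring structure on the odd part is determined by the intersection pairing. The next step is to show that this odd part lies entirely in the zero eigenspace: since $\ast c_1$ preserves the $\bZ_2$-grading, it preserves the odd cohomology, which is concentrated in a single degree; the classical product $c_1 \cdot \omega \in H^{2g+1}(X) = 0$ vanishes, and any three-point Gromov--Witten correction $\langle c_1, \omega, \cdot \rangle_\beta$ is ruled out by the dimension count on moduli of rational curves in the Fano $X$. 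Hence $H^1(\Sigma_g) \cong H^{2g-1}(X) \subset QH^*(X;0)$.

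To finish, one must identify the even part of $QH^*(X;0)$ with $\bC \oplus \bC[\mathrm{pt}]$ and check that the residual quantum product on the odd classes restricts to the Poincar\'e pairing of $\Sigma_g$. For $g=2$ this is precisely Lemma \ref{Lem:QHmodulispace}, via the coincidence $X \cong \scrM(\Sigma_2)$ and Donaldson's explicit presentation. The main obstacle is the analogous eigenvalue analysis for general $g$: one lacks Munoz's theorem and Donaldson's explicit quantum relations in this generality, and must instead compute $QH^*(X)$ by other means --- for instance, via a quantum Lefschetz argument for $X \subset Q_0 \subset \bP^{2g+1}$, or by exploiting the Lefschetz pencil structure on the relative quadric $Z$ to constrain the relevant Gromov--Witten invariants. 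Verifying that the even component of the zero eigenspace has rank exactly $2$ and acts as expected on the odd part is the core computational difficulty in the argument.
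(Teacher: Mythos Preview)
Your strategy is exactly the paper's: invoke Lemma~\ref{Lem:SplitGenQuadricIntersection} to place yourself in the setting of Proposition~\ref{Prop:Split-Generate}, apply Corollary~\ref{Cor:QuiltsQH} to reduce to computing $QH^*(Q_0\cap Q_1;0)$, and then identify that summand with $H^*(\Sigma_g)$ as a ring. The first two steps are fine and match the paper verbatim.

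Where you diverge is in the final identification. You flag the general-$g$ eigenvalue analysis on the even part as ``the core computational difficulty'' and suggest attacking it via quantum Lefschetz or a pencil argument. But this difficulty is already resolved in the literature: Beauville \cite{Beauville} computed the quantum cohomology ring of any smooth complete intersection of two quadrics, and the paper simply quotes his presentation
\[
h^{\ast 2g} = 16\,h\ast h, \qquad h\ast H^{\mathrm{prim}} = 0, \qquad \alpha\ast\beta = \delta(\alpha,\beta)\bigl(h^{2g-1}/4 - 4h^{2g-3}\bigr).
\]
From this the zero-eigenspace of $\ast c_1 = \ast(2g-2)h$ is read off directly: the relation $h^{2g} = 16h^2$ shows the even part splits as in Lemma~\ref{Lem:QHmodulispace}, with the nilpotent summand spanned by $1 - h^{2g-2}/16$ and $h^{2g-1} - 16h$; the relation $h\ast H^{\mathrm{prim}} = 0$ places the entire odd cohomology in the zero eigenspace; and the third relation gives the ring structure on the odd part, matching the intersection form on $H^1(\Sigma_g)$. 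So there is no obstacle --- you are missing the reference, not an idea.

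A minor remark on your treatment of the odd part: your dimension-count argument is correct but could be sharpened. The paper instead observes that $H^{2g-1}(Q_0\cap Q_1)$ is generated by classes of Lagrangian spheres with minimal Maslov number $>2$, hence $\frak{m}_0 = 0$, and then Lemma~\ref{Lem:quantumcap} forces $c_1\mapsto 0$ in $HF(L,L)$; this is a cleaner way to see that the odd cohomology sits in the nilpotent summand.
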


\begin{proof}
Corollary \ref{Cor:QuiltsQH} implies that $HH^*(\scrF(Q_0 \cap Q_1;0)) \cong QH^*(Q_0 \cap Q_1;0)$.  The odd-dimensional cohomology  $H^{2g-1}(Q_0 \cap Q_1)$ is isomorphic to $H^1(\Sigma_g)$, is generated by Lagrangian spheres of minimal Maslov number $>2$, and therefore lies entirely in the zero eigenspace for the action of $\ast c_1$. The quantum cohomology ring of $Q_0 \cap Q_1$ was determined by  Beauville \cite{Beauville}: $QH^*(Q_0 \cap Q_1)$ is generated by the primitive middle-degree cohomology and a class $h$ subject to
\[
h^{\ast 2g} = 16 h\ast h; \ h\ast (H^{prim}) =0; \ \alpha \ast \beta = \delta(\alpha,\beta) (h^{2g-1}/4-4h^{2g-3})
\]
with $\delta$ the intersection pairing. This implies that $QH^*(Q_0\cap Q_1;0)$ has rank  $2g+2$; checking the ring structure is easy algebra (we did this for $g=2$ in Lemma \ref{Lem:QHmodulispace}).  
\end{proof}

\subsection{Idempotents for functors\label{Section:AlgebraicInterlude}}

We require a functoriality property of split-closures of $A_{\infty}$-categories.  We begin by recalling the construction of a split-closure.  An \emph{idempotent up to homotopy} for an object $Y\in\scrA$ is an $A_{\infty}$-functor from $\bK$ to $\scrA$ taking the unit of the field to $Y$.  Explicitly, this amounts to giving a collection of elements
 \[
 \wp^d \in hom_{\scrA}^{1-d}(Y,Y); \quad d\geq 1
 \]
 satisfying the equations
 \begin{equation} \label{Eq:idempotent-up-to-homotopy}
 \sum_r \sum_{s_1,\ldots, s_r} \mu_{\scrA}^r(\wp^{s_1},\ldots,\wp^{s_r}) = \begin{cases} \wp^{d-1} & \textrm{if $d$ is even}, \\ 0 & \textrm{if $d$ is odd} \end{cases}
 \end{equation}
summing over partitions $s_1+\cdots+s_r=d$.  If $p\in Hom_{H(\scrA)}(Y,Y)$ is an idempotent endomorphism in the cohomological category\footnote{Since our categories are $\bZ_2$-graded, such idempotents are necessarily of \emph{even} degree.}, there is always an idempotent up to homotopy $\wp$ for which $[\wp^1]=p$ \cite[Lemma 4.2]{FCPLT}.  Any $A_{\infty}$-category has a split-closure $\Pi\scrA$, meaning a fully faithful functor $\scrA \rightarrow \Pi\scrA$ with the property that in the larger category all \emph{asbtract images} of idempotents up to homotopy are quasi-represented.  The abstract image of $\wp$ (an idempotent with target $Y \in \scrA$) is a certain $A_{\infty}$-module $\scrZ$ in $mod$-$\scrA$, with underlying vector spaces
\[
X \mapsto hom_{\scrA}(X,Y)[q] \qquad \text{where $q$ is a formal variable of degree $-1$}
\]
and with operations beginning with
\[
\mu_{\scrZ}^1(b(q)) = \sum _r \sum_{s_2,\ldots, s_r} \delta_q^{s_2 + \cdots + s_r} \mu_{\scrA}^r (\wp^{s_r},\ldots, \wp^{s_2}, b(q)) + \Delta b(q).
\]
Here $\delta_q$ is normalised formal differentiation, $q^k \mapsto q^{k-1}$ and $q^0 \mapsto 0$, whilst $\Delta$ denotes antisymmetrization followed by $\delta_q$ (we will not write out the higher order operations). By \cite[Lemma 4.5]{FCPLT}, one has the critical property that:
\begin{equation} \label{Eq:AbstractIdempotentModules}
Hom_{H(mod-\scrA)}(\scrZ,\scrZ') \ \cong \ e\cdot Hom_{H(\scrA)}(Y,Y')\cdot e'
\end{equation}
whenever $\scrZ$ and $\scrZ'$ are the abstract images of idempotents up to homotopy with targets $Y,Y'$ corresponding to cohomological idempotents $e\in Hom_{H(\scrA)}(Y,Y)$ and $e' \in Hom_{H(\scrA)}(Y', Y')$.

\begin{Lemma} \label{Lem:SplitIdempotentFunctors}
Let $\scrA, \scrB$ and $\scrC$ be non-unital $A_{\infty}$-categories and suppose there is an $A_{\infty}$-functor $\Phi: \scrA \rightarrow nu$-$fun(\scrB,\scrC)$. To each idempotent up to homotopy $\wp$ for $Y\in\scrA$ one can canonically associate an element of $nu$-$fun(\scrB,\Pi\scrC)$.
\end{Lemma}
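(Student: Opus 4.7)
The plan is to push the idempotent structure through $\Phi$ and then extract an object-wise idempotent in $\scrC$. Regard the idempotent up to homotopy $\wp$ as a non-unital $A_{\infty}$-functor $\bK \to \scrA$ taking the unit to $Y$. Composing with $\Phi$ yields a non-unital $A_{\infty}$-functor $\widetilde\wp := \Phi\circ \wp : \bK \to nu\text{-}fun(\scrB,\scrC)$, that is, an idempotent up to homotopy for $\Phi(Y)$ in the functor category, with components
\[
\widetilde\wp^d \ = \ \sum_{r \geq 1}\; \sum_{s_1+\cdots+s_r=d} \Phi^r(\wp^{s_1},\ldots,\wp^{s_r}) \ \in \ hom_{nu\text{-}fun(\scrB,\scrC)}^{1-d}(\Phi(Y),\Phi(Y))
\]
satisfying the analogue of \eqref{Eq:idempotent-up-to-homotopy}. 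Each $\widetilde\wp^d$ is a pre-natural transformation, so decomposes into $k$-linear pieces $(\widetilde\wp^d)^k$ in morphisms of $\scrB$, for $k \geq 0$.

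For each $B \in \scrB$, evaluation is an $A_{\infty}$-functor $nu\text{-}fun(\scrB,\scrC) \to \scrC$, so the ``rank zero'' pieces $\{(\widetilde\wp^d)^0(B)\}_{d\geq 1}$ form an idempotent up to homotopy for $\Phi(Y)(B) \in \scrC$. Let $\scrG(B) \in mod\text{-}\scrC$ be its abstract image in $\Pi\scrC$; this will be the object assignment of the desired functor. To promote $\scrG$ to an $A_{\infty}$-functor $\scrB \to \Pi\scrC$, I would define the higher components
\[
\scrG^k : hom_\scrB(B_{k-1},B_k) \otimes \cdots \otimes hom_\scrB(B_0,B_1) \longrightarrow hom_{mod\text{-}\scrC}(\scrG(B_0),\scrG(B_k))
\]
by assembling the rank-$k$ pieces $(\widetilde\wp^d)^k(b_k,\ldots,b_1)$ for varying $d$, twisted appropriately by powers of the formal variable $q$ that enters the abstract image construction and its morphisms, following the prescription used to define $\mu_\scrZ$ in the paragraph preceding \eqref{Eq:AbstractIdempotentModules}.

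The remaining task is to check the $A_{\infty}$-functor equations for $\scrG$. These split, according to the partition combinatorics underlying the bar construction, into terms that are individually handled by: the Maurer--Cartan equation for $\widetilde\wp$ (encoding how the $(\widetilde\wp^d)^k$ interact with $\mu_\scrB$ and $\mu_\scrC$), the $A_{\infty}$-functor axioms for $\Phi$ (needed to deduce that $\widetilde\wp$ satisfies \eqref{Eq:idempotent-up-to-homotopy} given that $\wp$ does), and the defining relations of the abstract image module. The main obstacle will be bookkeeping: one must match, term by term, the expansion of $\mu^k_\scrG$ against the normalised differentiation $\delta_q$ so that the ``$d$ even'' clauses of \eqref{Eq:idempotent-up-to-homotopy} cancel exactly against the $q$-derivative terms, mirroring the verification in \cite[Lemma 4.5]{FCPLT} that an idempotent up to homotopy defines an $A_{\infty}$-module. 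Canonicity up to quasi-isomorphism of $\scrG \in nu\text{-}fun(\scrB,\Pi\scrC)$ then follows from the corresponding canonicity of the abstract image.
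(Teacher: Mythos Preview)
Your setup matches the paper exactly: compose $\wp$ with $\Phi$ to obtain an idempotent up to homotopy $\widetilde\wp$ for $\Phi_Y$ in the functor category, and extract from its rank-zero pieces an object-wise idempotent $\wp_{\Phi_Y(B)}$ for each $B\in\scrB$, whose abstract image defines $\scrG(B)\in\Pi\scrC$.

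Where you diverge is in constructing the higher components $\scrG^k$. You propose assembling them from the \emph{higher rank} pieces $(\widetilde\wp^d)^k$ of the pushed-forward idempotent, $q$-twisted. The paper takes a more direct route: it uses the higher components of the functor $\Phi_Y$ itself, which already provide maps
\[
hom_\scrB(B_{d-1},B_d)\otimes\cdots\otimes hom_\scrB(B_1,B_2)\ \longrightarrow\ hom_\scrC(\Phi_Y(B_1),\Phi_Y(B_d))[1-d],
\]
and then lifts these to morphisms in $\Pi\scrC$ purely by composition $\mu^d_\scrC$, exploiting that $\Pi\scrC\subset mod\text{-}\scrC$ so a morphism is specified by saying how tensors in $hom_\scrC$ act between the underlying chain complexes $hom_\scrC(C,\Phi_Y(B_i))[q]$. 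The idempotent data enters only through the module structure on each $\scrG(B)$, not through the functor's higher terms. This sidesteps entirely the bookkeeping you anticipate: there is no need to match $\delta_q$-terms against the even/odd clauses of \eqref{Eq:idempotent-up-to-homotopy}, because the higher rank pieces of $\widetilde\wp$ are simply never used.

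Your approach is not wrong in spirit---you are implicitly aiming at the abstract image of $\widetilde\wp$ inside $\Pi\bigl(nu\text{-}fun(\scrB,\scrC)\bigr)$ and then reinterpreting it as an object of $nu\text{-}fun(\scrB,\Pi\scrC)$---but the recipe as written is underspecified: the elements $(\widetilde\wp^d)^k(b_k,\ldots,b_1)$ live in $hom_\scrC$, and you have not said how to promote a $q$-series of such elements to a module morphism, which still requires a $\mu_\scrC$-action on further $hom_\scrC$-inputs. The paper's observation is that one does not need the higher pieces of $\widetilde\wp$ at all: $\Phi_Y$ already carries the requisite functor data, and the passage to $\Pi\scrC$ is just Yoneda.
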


\begin{proof}
$\wp$ is defined by a functor $\bK \rightarrow \scrA$ so there is a composite functor $\bK \rightarrow nu$-$fun(\scrB,\scrC)$ with target $\Phi_Y$.  This is defined by a sequence of elements
\[
\wp_{\Phi_Y}^d \in hom_{nufun(\scrB,\scrC)}^{1-d}(\Phi_Y,\Phi_Y).
\]
The degree zero term in this morphism of functors yields elements
\[
\wp_{\Phi_Y(B)} \in hom_{\scrC}^{1-d}(\Phi_Y(B),\Phi_Y(B))
\]
which satisfy the conditions of Equation \ref{Eq:idempotent-up-to-homotopy}.  Thus $\wp_{\Phi_Y(B)}$ is an idempotent up to homotopy for $\Phi_Y(B) \in \scrC$, for each $B \in \scrB$.  The functor $\Phi_Y$ is defined by a collection of natural maps
\[
hom_{\scrB}(B_{d-1},B_d) \otimes \cdots \otimes hom_{\scrB}(B_1,B_2) \rightarrow hom_{\scrC}(\Phi_Y(B_1), \Phi_Y(B_d))[1-d]
\]
We claim these induce maps
\[
hom_{\scrB}(B_{d-1},B_d) \otimes \cdots \otimes hom_{\scrB}(B_1,B_2) \rightarrow hom_{\Pi\scrC}(\wp_{\Phi_Y(B_1)}, \wp_{\Phi_Y(B_2)})[1-d]
\]
where $\wp_{\bullet}$ denotes the abstract image of the given idempotent.  To see this, since $\Pi\scrC$ is by definition a certain category of modules, thus of functors from $\scrC$ to chain complexes, an element of $hom_{\Pi\scrC}(\scrF,\scrG)$ is explicitly defined by the collection of maps
\[
hom_{\scrC}(C_{d-1},C_d) \otimes \cdots \otimes hom_{\scrC}(C_1,C_2) \rightarrow hom_{Ch}(\scrF(C_1), \scrG(C_d)).
\]
Taking the functors $\scrF = \Phi_Y(B_1)$ and $\scrG=\Phi_Y(B_d)$, we require maps
\begin{multline*}
hom_{\scrC}(C_{d-1},C_d) \otimes \cdots \otimes hom_{\scrC}(C_1,C_2) \longrightarrow  \\ \quad hom_{Ch}\left(hom_{\scrC}(C_1,\Phi_Y(B))[q], hom_{\scrC}(C_d,\Phi_Y(B))[q] \right) \end{multline*}
These are canonically obtained by using composition $\mu^d_{\scrC}$ in the category $\scrC$.  It is a straightforward check to see that the construction is compatible with varying $B$, hence  yields a functor from $\scrB$ to $\Pi\scrC$ as required.
\end{proof}

\begin{Corollary}
There is a natural functor $\Pi\scrA \rightarrow nu$-$fun(\scrB,\Pi\scrC)$.
\end{Corollary}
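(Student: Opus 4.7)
The plan is to reduce this to the universal property of split-closure. Observe first that the target $A_\infty$-category $nu\text{-}fun(\scrB,\Pi\scrC)$ is itself split-closed: idempotents in a functor category can be split pointwise, and the pointwise targets live in $\Pi\scrC$, which is split-closed by construction. Second, post-composing with the tautological fully faithful embedding $\scrC\hookrightarrow\Pi\scrC$ turns the given $\Phi\co \scrA\rightarrow nu\text{-}fun(\scrB,\scrC)$ into a functor
\[
\tilde\Phi\co \scrA\longrightarrow nu\text{-}fun(\scrB,\Pi\scrC)
\]
whose target is split-closed. By the universal property of $\Pi(\,\cdot\,)$, such a functor extends (uniquely up to quasi-isomorphism) to a functor $\Pi\scrA\rightarrow nu\text{-}fun(\scrB,\Pi\scrC)$, which is the functor sought.

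Alternatively, one can make the extension explicit, which is what Lemma \ref{Lem:SplitIdempotentFunctors} essentially begins. On objects, an element of $\Pi\scrA$ is the abstract image of an idempotent-up-to-homotopy $\wp$ on some $Y\in\scrA$, and Lemma \ref{Lem:SplitIdempotentFunctors} already assigns to such data an object of $nu\text{-}fun(\scrB,\Pi\scrC)$. On morphisms, one uses the identification \eqref{Eq:AbstractIdempotentModules}: a hom-space in $\Pi\scrA$ between two abstract images of $\wp,\wp'$ with targets $Y,Y'$ is cohomologically the idempotent-cut-down piece of $\Hom_{H(\scrA)}(Y,Y')$. The $A_\infty$-functor components of $\Phi$ send $hom_{\scrA}(Y,Y')$ into $hom_{nu\text{-}fun(\scrB,\scrC)}(\Phi_Y,\Phi_{Y'})$, and then into $hom_{nu\text{-}fun(\scrB,\Pi\scrC)}(\wp_{\Phi_Y},\wp_{\Phi_{Y'}})$ via the same module-level formulae used to build $\mu^1_\scrZ$ in the description of $\Pi\scrC$, but inserting components of $\wp$ and $\wp'$ on the left and right of $\Phi(b)$.

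The main obstacle is purely bookkeeping: one must verify that the assignment above is genuinely $A_\infty$-functorial, i.e.\ that the higher compositions $\mu^d$ of $\Pi\scrA$ (which interleave the $\mu^d_\scrA$ with iterated insertions of the $\wp^{s_i}$) map, under the combined action of $\Phi$ and the module operations of $\Pi\scrC$, to the higher compositions in $nu\text{-}fun(\scrB,\Pi\scrC)$. This amounts to writing out the analogues of the formula for $\mu^1_\scrZ$ with extra $\Phi$-factors and checking that the resulting signed sums reorganise into the $A_\infty$-functor equations for $\Pi\scrA$; both sides can be read off from the $A_\infty$-functor equations satisfied by $\Phi$ together with the defining relations \eqref{Eq:idempotent-up-to-homotopy} for idempotents-up-to-homotopy. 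The universal-property argument in the first paragraph sidesteps this computation by appealing to the general machinery developed in \cite{FCPLT}, and for our purposes the existence of the functor (with the correct cohomological behaviour on objects and morphisms given by \eqref{Eq:AbstractIdempotentModules}) is all that is needed.
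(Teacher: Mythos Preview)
The paper actually omits the proof of this Corollary entirely, stating only ``We will not need this stronger result and omit the proof.'' Your proposal therefore goes beyond what the paper provides. Both of your approaches are sound: the universal-property argument is the cleanest route (and is the standard way to extend functors to split-closures, as developed in \cite{FCPLT}), while the explicit construction you sketch is a natural continuation of Lemma \ref{Lem:SplitIdempotentFunctors} and makes the object-level behaviour transparent. Either would serve as a legitimate proof; there is nothing in the paper to compare against.
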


We will not  need this stronger result and omit the proof.

\subsection{An illustrative example}\label{Section:BlowupC2}

Let $X = Bl_{0}(\bC^2)$ denote the blow-up of the complex plane at the origin.  We equip this with a symplectic form giving the exceptional divisor area $c> 0$. Let $Cl_k$ denote the Clifford algebra associated to a non-degenerate quadratic form on $\bC^k$, of total dimension $2^k$.

\begin{Lemma} \label{Lem:ToricFibre}
$X$ contains a Lagrangian torus $T$ which, equipped with the \emph{Spin} structure which is non-trivial on both factors, has $HF(T,T) \cong Cl_2 \neq 0$.
\end{Lemma}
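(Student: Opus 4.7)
The plan is to realise $X = Bl_0(\bC^2)$ as a toric surface and take $T$ to be its monotone toric Lagrangian fibre. The fan of $X$ has rays $(1,0)$, $(0,1)$ and $(1,1)$, the last one corresponding to the exceptional divisor $E$. The moment polytope is
\[
\Delta \ = \ \{(x,y) \in \bR^2 \ : \ x \geq 0,\ y \geq 0, \ x+y \geq c\},
\]
with the compact edge $\{x+y=c\}$ the image of $E$ and the two non-compact edges the images of the proper transforms of the coordinate axes. I would let $T\subset X$ be the toric Lagrangian fibre over the interior point $(c,c)\in \Delta$.

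Next I would appeal to Cho--Oh's classification of Maslov index $2$ holomorphic discs bounded by toric fibres: for the integrable complex structure, there are exactly three families, one for each edge of $\Delta$, each family sweeping $T$ once and of symplectic area equal to the affine distance from $(c,c)$ to the corresponding edge. All three areas equal $c$, so $T$ is monotone with minimal Maslov number $2$. Equipping $T\cong T^2$ with the Spin structure non-trivial on both circle factors and using the sign rules of Cho, the Maslov $2$ disc potential takes the Laurent form
\[
W(z_1,z_2) \ = \ -z_1 \ - \ z_2 \ - \ z_1 z_2
\]
on $H^1(T;\bC^\ast)=(\bC^\ast)^2$, with one monomial $z^{v_i}$ for each edge of $\Delta$ having primitive outward normal $v_i$. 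A direct computation shows that $W$ has a unique critical point at $(z_1,z_2)=(-1,-1)$, and its Hessian there equals $\bigl(\begin{smallmatrix}0 & -1 \\ -1 & 0\end{smallmatrix}\bigr)$, which is a non-degenerate symmetric bilinear form on $\bC^2$.

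The Floer cohomology $HF(T,T)$ is then computed using the pearl / Biran--Cornea model in the monotone setting: additively it equals $H^\ast(T^2;\bC)=\Lambda^\ast(\bC^2)$, while the quantum product is deformed by the Maslov $2$ disc counts. By the standard identification of this deformed product with the Clifford multiplication associated to the Hessian of $W$ at a non-degenerate critical point (Cho; Fukaya--Oh--Ohta--Ono), one obtains $HF(T,T)\cong Cl_2$, which is in particular non-zero. The Spin condition in the statement is exactly what is needed for $W$ to have a non-degenerate critical point; the opposite Spin choice produces a potential without such critical points, forcing $HF(T,T)=0$.

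The main obstacle is a technical one: Cho--Oh and FOOO state their results for closed toric Fano manifolds, whereas $X$ is non-compact. However, the three Maslov $2$ disc families are compactly supported in neighbourhoods of $T$ in which the toric geometry is that of $\bC^2$ or $\bP^1\times\bC$, so the enumerative input is local, as is the identification of $HF(T,T)$ with the Clifford algebra; positivity of intersections with the toric divisors prevents any further compact discs from existing. In practice, the intended application is to $T\subset X$ viewed as sitting inside a larger compact monotone symplectic manifold (namely the ambient blow-up considered later in the paper), and one must check that the extension of $W$ contributes no further critical points or disc classes to the relevant summand; this follows in each case from monotonicity and a straightforward area/index count.
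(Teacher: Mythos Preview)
Your proposal is correct and follows essentially the same approach as the paper: both realise $T$ as the monotone toric fibre, invoke the Cho--Oh classification of Maslov $2$ discs (three families, one per facet), write down the superpotential, and deduce $HF(T,T)\cong Cl_2$ from non-degeneracy of its unique critical point. The paper's version of the potential is $W=z_1+z_2+e^c z_1z_2$ with critical point $(-e^{-c},-e^{-c})$, differing from yours only by sign and area-weight conventions; you are also more explicit than the paper about the non-compactness issue, which the paper handles simply by citing \cite{Cho, Auroux:toric}.
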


\begin{proof}
There is a natural Hamiltonian torus action on $\bC^2$, which is inherited by the blow-up since we have blown up a toric fixed point.  The relevant moment polytope is depicted in Figure \ref{Fig:MomentPolytope}; in this figure the fibre lying over the black dot, placed  symmetrically at the ``internal corner" of the polytope, defines the Lagrangian torus $T$.
\begin{center}
\begin{figure}[ht]
\includegraphics[scale=0.4]{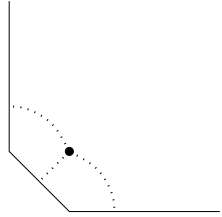}
\caption{A Lagrangian torus in $\mathcal{O}(-1)$\label{Fig:MomentPolytope}} 
\end{figure}
\end{center}
 All the holomorphic disks with boundaries on the fibres of the moment map can be computed explicitly, cf. \cite{Cho, Auroux:toric}; in this case the torus bounds 3 families of Maslov index 2 disks, all having the same area, so $T$ is in fact monotone.  The three dotted lines emanating from the black dot in Figure \ref{Fig:MomentPolytope} are schematic images of the three Maslov index 2 disks through a point; the disks actually fibre over arcs in the affine structure on the moment polytope determined by complex rather than symplectic geometry, but the disks do intersect the boundary facets as indicated.    We equip $T$ with the $Spin$ structure which is bounding on both factors. The contributions of these disks to the Floer differential and product can be determined explicitly exactly as in \emph{op. cit.}; the Lagrangian moment map fibres with non-zero Floer cohomology for some $Spin$ structure are precisely those lying over critical values of the \emph{superpotential}, which in this example is 
 \[
W(z_1,z_2) \ = \ z_1+z_2+e^c z_1z_2.
\] The constituent monomials are indexed by the toric boundary strata of the moment polytope, cf. \cite[Proposition 4.3]{Auroux:toric}, with the logarithm of the corresponding coefficient giving the distance of the defining hyperplane from the origin in $\bR^2$. There is a unique critical point $(-e^{-c},-e^{-c})$ lying over the fibre of the moment map given by the black dot $(c,c) = (-log |z_1|,-log |z_2|)$.  Since the critical point is non-degenerate, this torus has $HF(T^2, T^2) \cong Cl_2$.  
\end{proof}

\begin{Remark} \label{Remark:LocalSlag}
The torus $T$ is actually special Lagrangian with respect to the pullback of the holomorphic volume form $dz_1/z_1 \wedge dz_2/z_2$ from $\bC^2$.  The pullback has first order poles along the proper transforms of the axes and the exceptional divisor, hence any Maslov index 2 disk must meet this locus transversely once, as in Figure \ref{Fig:MomentPolytope}.
\end{Remark}

For another viewpoint on the Lagrangian torus $T \subset \mathcal{O}(-1)$, recall the construction of the symplectic blow-up.  We let 
\[
\mathcal{O}(-1) = \left \{ ((z_1,z_2), [w_1:w_2]) \in \bC^2 \times \bP^1 \ \big| \ z_1w_2=w_1z_2\right\}
\]
be the standard embedding of the tautological line bundle and write $\mathcal{O}(-1)_{\delta} = \Phi^{-1}(B(\delta))$ for the preimage of the ball $B(\delta) \subset \bC^2$ under the first projection $\Phi$. The standard symplectic form giving the exceptional sphere area $\pi\chi^2 > 0$ is just $\Omega_{\chi} = \Phi^*\omega_{std} + \chi^2 \pi_{\bP^1}^* \omega_{FS}$, where $\omega_{FS}$ is the appropriately normalised Fubini-Study form on the projective line.  The basic observation, see e.g. \cite[Lemma 7.11]{McD-S}, is that
\begin{equation} \label{Eq:blowup}
(\mathcal{O}(-1)_{\delta} \backslash \mathcal{O}(-1)_0, \Omega_{\chi}) \ \cong \ (B(\sqrt{\chi^2 + \delta^2} \backslash B(\chi), \omega_{std}) \subset \bC^2.
\end{equation}
Thus there is a symplectic form on the blow-up with area $c$ on the exceptional sphere whenever a ball of radius strictly larger than $c/\sqrt{\pi}$ embeds in the original space. Now take the equator $S^1 \subset \bP^1 = E$ inside the exceptional divisor.  Consider the union of the circles inside fibres lying over the equator which bound disks of area $c$; from the above discussion, such a tubular neighbourhood embeds in the blow-up (in general this will be true if $c$ is sufficiently small, but a ball of any size fits in $\bC^2$).  The torus $T$ swept out by these fibres of constant radius is Lagrangian, since via Equation \ref{Eq:blowup} it comes from a Clifford-type torus inside a sphere in $\bC^2$; it is invariant under the usual $S^1 \times S^1$ action, and in fact co-incides with  the toric fibre discussed in Lemma \ref{Lem:ToricFibre}.   

To put the reformulation in a more general context, note that we have correspondences
\begin{equation}\label{Eqn:correspondences}
\Gamma_1 = S^1_{eq} \subset \{pt\} \times \bP^1 \cong E; \qquad \Gamma_2 \subset E \times X
\end{equation}
with $\Gamma_2 = \partial \nu_E$ the 3-sphere which is the boundary of a tubular neighbourhood $\nu_E$ of the exceptional curve, viewed not as a submanifold in $X$ but in the product $E\times X$.  This 3-sphere is the graph of a Hopf map from $\bC^2\backslash \{0\} \rightarrow \bP^1$.  We claim that for suitable symplectic forms, these correspondences are monotone Lagrangian (compare to Remark \ref{Rem:ScaleSymplecticForm} to see why this is delicate).  Let $(\bullet,\omega)^-$ denote the symplectic manifold $(\bullet, -\omega)$.  Recall that a co-isotropic submanifold $W \subset (M,\omega_M)$ with integrable characteristic distribution and reduced space $(N, \omega_N)$ defines a Lagrangian graph in $(N,\omega_N)^- \times (M,\omega_M)$.

\begin{Lemma} \label{Lem:CorrsAreLagrangian}
Equip $E=\bP^1$ with the symplectic form $2\omega_{FS}$ of area $2\pi$, and equip $X=\mathcal{O}(-1)$ with the symplectic form $\Omega_{1}$ giving the zero-section area $\pi$.  Define  $\partial \nu_E$ as the boundary of the sphere of radius $\sqrt{2}$ in $\bC^2$, viewed via \eqref{Eq:blowup} as the boundary of the unit disk subbundle $\mathcal{O}(-1)_{1} \subset \mathcal{O}(-1)$. Then the correspondence  $\Gamma_2 \subset E^- \times X$ associated to $\nu_E$ is a monotone Lagrangian submanifold.
\end{Lemma}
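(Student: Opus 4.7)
The plan is to verify Lagrangianness via symplectic reduction and then deduce monotonicity by reducing to spherical monotonicity of the ambient total space. First I would use the identification \eqref{Eq:blowup} with $\chi=\delta=1$ to realize the unit disc bundle as $\mathcal{O}(-1)_{1}\setminus E\cong B(\sqrt{2})\setminus B(1)\subset(\bC^{2},\omega_{\mathrm{std}})$, so that $\partial\nu_{E}\subset X$ is exhibited as the round sphere $S^{3}(\sqrt{2})$. As a coisotropic hypersurface its characteristic foliation is the Hopf fibration $S^{3}(\sqrt{2})\to\bP^{1}$, and Marsden--Weinstein reduction produces $\bP^{1}$ with a Fubini--Study form of total area $\pi(\sqrt{2})^{2}=2\pi$, which is exactly the chosen form $2\omega_{FS}$. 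Consequently the graph $\Gamma_{2}$ of the Hopf projection is tautologically Lagrangian in $E^{-}\times X$, since the pull-back of $(-2\pi_{1}^{*}\omega_{FS})\oplus\pi_{2}^{*}\Omega_{1}$ along $\partial\nu_{E}\hookrightarrow\Gamma_{2}$ vanishes by construction of the reduction.

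For monotonicity I would appeal to the general principle recalled in Section \ref{SubSec:Floer}: a simply connected Lagrangian in a spherically monotone symplectic manifold is automatically monotone. Since $\Gamma_{2}\cong S^{3}$ has $\pi_{1}=\pi_{2}=0$, the long exact sequence of the pair gives $\pi_{2}(M,\Gamma_{2})\cong\pi_{2}(M)$ for $M=E^{-}\times X$, so it is enough to check spherical monotonicity of $M$. The generators of $H_{2}(M;\bZ)$ are $[\bP^{1}]\times\{\mathrm{pt}\}$ and $\{\mathrm{pt}\}\times[E_{0}]$ (the zero section of $\mathcal{O}(-1)$), on which the symplectic areas are $-2\pi$ and $\pi$. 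Taming $\omega_{M}$ by the product almost complex structure $\bar J_{E}\oplus J_{X}$ represents $c_{1}(TM)$ by $-c_{1}(TE)+c_{1}(TX)$; evaluating using the adjunction splitting $TX|_{E_{0}}\cong T\bP^{1}\oplus\mathcal{O}(-1)$ yields Chern numbers $-2$ and $2-1=1$ respectively. Both ratios of area to Chern number equal $\pi$, so $[\omega_{M}]=\pi\, c_{1}(TM)$ on spherical classes, and $\Gamma_{2}$ inherits monotonicity with constant $\pi/2$.

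The main delicacy I anticipate is sign bookkeeping of $c_{1}$ on the conjugated factor $E^{-}$, together with the fact that the two factors enter with incommensurable symplectic sizes (compare Remark \ref{Rem:ScaleSymplecticForm}, which warns against rescaling products of monotone manifolds independently). This is precisely why the plan routes through \emph{spherical} monotonicity of the ambient $M$ rather than through a direct Maslov index calculation on $\Gamma_{2}$: the vanishing of $\pi_{1}(\Gamma_{2})$ collapses the entire question to the single topological identity $[\omega_{M}]=\pi\, c_{1}(TM)\in H^{2}(M;\bR)$, which leaves no room for orientation ambiguities and fixes the normalization of the symplectic forms chosen in the statement.
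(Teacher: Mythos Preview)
Your proof is correct and follows essentially the same route as the paper's: first identify the reduced space of the coisotropic $S^{3}(\sqrt{2})$ as $\bP^{1}$ with area $2\pi$ (so $\Gamma_{2}$ is Lagrangian for the stated forms), then deduce monotonicity from the ambient first Chern class computation $c_{1}(E^{-}\times X)=(-2,1)$. Your version is more explicit about invoking the simply-connected-Lagrangian principle from Section~\ref{SubSec:Floer} and about verifying the proportionality $[\omega_{M}]=\pi\,c_{1}(TM)$ on both $H_{2}$-generators, but the underlying argument is the same.
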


\begin{proof}
The key point is that the symplectic form on $\bP^1$ obtained by taking the reduction of the coisotropic submanifold $\nu_E \subset (\mathcal{O}(-1), \Omega_1)$ has area $2\pi$, even though the zero-section in that line bundle has area $\pi$. This is clear from the description of \eqref{Eq:blowup}, or from a toric picture, Figure \ref{Fig:MomentPolytope} (the line parallel to the boundary facet defining the exceptional curve and passing through the distinguished point $\bullet$ defines a co-isotropic 3-sphere, and the reduced space is obviously a $\bP^1$ of area determined by the radius of the corresponding ball in $\bC^2$ before blowing up).   Therefore, the co-isotropic $\Gamma_2$ defines a Lagrangian graph inside $(\bP^1, -2\omega_{FS}) \times (\mathcal{O}(-1), \Omega_1)$. On the other hand, the first Chern class of this product is the class $(-2,1) \in H^2(\bP^1) \oplus H^2(\mathcal{O}(-1)) \cong \bZ^2$, so $\Gamma_2$ is indeed monotone.
\end{proof}

The composite of the  two correspondences
\[
\Gamma_1 \circ \Gamma_2 \subset \{pt\} \times X
\]
is exactly the Lagrangian torus $T$. This also partially explains the particular choice of $Spin$-structure on $T$; it should be equipped with a $Spin$-structure which in the $\Gamma_2$-direction is inherited from the ambient correspondence $S^3$, hence on that circle factor we must pick the \emph{bounding} structure.  

\begin{Lemma}
The Lagrangian torus $T$ of Lemma \ref{Lem:ToricFibre} has a non-trivial idempotent.
\end{Lemma}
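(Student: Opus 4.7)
The strategy is direct: read off non-trivial cohomological idempotents from the algebra structure of $HF(T,T)$ computed in Lemma~\ref{Lem:ToricFibre}, then lift them to idempotents up to homotopy using the general lifting principle.

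First, I would recall that $HF(T,T) \cong Cl_2$, where $Cl_2 = \bC\langle e_1, e_2\rangle/(e_i^2 = c_i,\, e_1e_2 + e_2e_1 = 0)$ for some nonzero constants $c_i \in \bC$ arising as the Hessian of the superpotential $W(z_1, z_2) = z_1 + z_2 + e^{c}z_1 z_2$ at the critical point. The $\bZ_2$-grading places the generators $e_1, e_2$ in odd degree (they span the subspace coming from $H^1(T^2)$), so the degree-zero part is spanned by $1$ and $\omega := e_1 e_2$. A short computation gives $\omega^2 = e_1 e_2 e_1 e_2 = -e_1^2 e_2^2 = -c_1 c_2 \in \bC^{\times}$, so $Cl_2^{ev} \cong \bC[\omega]/(\omega^2 + c_1 c_2)$.

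Second, since $\bC$ is algebraically closed and $c_1 c_2 \neq 0$, the polynomial $\omega^2 + c_1 c_2$ splits into distinct linear factors and $Cl_2^{ev}$ becomes isomorphic to $\bC \oplus \bC$ as a commutative $\bC$-algebra. Choosing a square root $\alpha$ of $-c_1 c_2$, the elements
\[
p_{\pm} \;=\; \tfrac{1}{2}\!\left(1 \pm \alpha^{-1}\omega\right) \;\in\; Cl_2^{ev}
\]
are primitive orthogonal idempotents of degree zero with $p_+ + p_- = 1$, $p_+ p_- = 0$, and neither equal to $0$ nor to the unit. This gives two non-trivial cohomological idempotents in $\mathrm{Hom}_{H(\scrF(X))}(T,T)$.

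Third, I would invoke the lifting principle (Lemma 4.2 of \cite{FCPLT}, recalled in Section~\ref{Section:AlgebraicInterlude}): any even-degree cohomological idempotent $p \in \mathrm{Hom}_{H(\scrA)}(Y,Y)$ admits a lift to an idempotent up to homotopy $\wp$ with $[\wp^1] = p$. Applying this to $p_+$ (say) in the $A_\infty$-category $\scrF^{\#}(X)$, the abstract image of $\wp$ is an object of $\Pi \scrF^{\#}(X)$ which is a non-trivial summand of $T$ by the universal property of split-closure recorded in Equation~\eqref{Eq:AbstractIdempotentModules}. Since $p_+$ is neither $0$ nor $1$, this summand is neither zero nor quasi-isomorphic to $T$ itself, establishing the existence of the non-trivial idempotent.

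The only subtle point in this argument is verifying that $c_1 c_2 \neq 0$, i.e.\ that the superpotential has a \emph{non-degenerate} critical point at $(-e^{-c}, -e^{-c})$; this was already observed in the proof of Lemma~\ref{Lem:ToricFibre} and ensures $HF(T,T)$ is semisimple as a $\bZ_2$-graded algebra. Everything else is purely algebraic, and in fact one sees that $T$ already splits into two summands at the level of $H(\Pi\scrF^{\#}(X))$, consistent with the expectation from Corollary~\ref{Cor:QuadricSorted} that the relevant summand of $\scrF^{\#}(X)$ should be built from a pair of points.
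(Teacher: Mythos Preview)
Your proof is correct and follows essentially the same approach as the paper: both arguments simply exhibit explicit non-trivial idempotents in the even part of $Cl_2 \cong HF(T,T)$. One small imprecision worth noting is that in the natural basis coming from the circle factors of $T^2$, the Hessian of $W$ at the critical point is off-diagonal (so $e_i^2 = 0$ and $e_1e_2 + e_2e_1 \neq 0$, which is the presentation the paper uses, with $xy$ and $yx$ as the idempotents); your diagonal presentation with $e_i^2 = c_i \neq 0$ is obtained after a linear change of basis, so ``$c_i$ arising as the Hessian'' should be read as ``after diagonalising the Hessian''---but this does not affect the argument, since non-degeneracy of the critical point is exactly what makes $Cl_2^{ev} \cong \bC \oplus \bC$ in either presentation.
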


\begin{proof}
 $Cl_2$ is a matrix algebra, hence splits into two  idempotents each of which has endomorphism algebra of rank 1.  More precisely, if $Cl_2$ is generated by $x$ and $y$ of degree $1$ subject to $xy+yx=1$ with $x^2=0=y^2$, then $xy=e^+$ and $yx=e^-$ are idempotents; the morphisms between the corresponding summands of $T$ are $HF(e^+, e^-) \cong \bK$.  
\end{proof}

\begin{Remark}
The proof shows there are two ``indistinguishable" idempotents in $HF(T,T)$, and one could use either in the construction of the functor below.  In fact, 
\[
e^+ \cong e^-[1] \quad \textrm{and} \quad e^- \cong e^+[1]
\] (which is compatible with the fact that the double shift is isomorphic to the identity in the $\bZ_2$-graded Fukaya category). The same binary choice appears on the mirror side, when one considers Kn\"orrer periodicity for Orlov's derived categories of singularities.  In principle, this choice fibrewise in a family of Clifford-like 2-tori could lead to an obstruction to the existence of a global idempotent, but this obstruction will vanish when we later blow up the base-locus of a pencil of hyperplanes.
\end{Remark}

\begin{Remark} \label{Rem:CorrNonZero}
Non-triviality of $HF(T,T)$ implies that the Lagrangian correspondences $\Gamma_i$ are non-trivial elements of their Fukaya categories.  Note that the individual correspondences $\Gamma_1 \cong S^1$ and $\Gamma_2\cong S^3$ cannot split for reasons of degree: in both cases only the identity lives in $HF^{ev}$. \end{Remark}

The composite correspondence $\Gamma_1 \circ \Gamma_2$ has an idempotent summand which defines a functor
\begin{equation} \label{Eqn:Embed}
\Phi(e^+): \scrF(\{pt\}) \rightarrow Tw^{\pi}\scrF(X)
\end{equation}
which takes the unique Lagrangian submanifold of the base locus $\{pt\}$ to an object on the RHS with the same Floer cohomology; i.e. the summand of the functor is fully faithful.  To obtain a slightly less trivial example, one can consider blowing up a collection of points in $\bC^2$ by equal amounts.  Taking a common size for the blow-ups implies that the resulting space contains Lagrangian 2-spheres, which arise from the symplectic cut of trivial Lagrangian cylinders in $\bC^2$.

\begin{Example} \label{Ex:SpinModel}
Let $p=(0,0) \in \C^2$ and consider the arc $\gamma \subset \bR^+ \subset \bC = \{w=0\} \subset \bC^2$, with co-ordinates $(z,w)$ on $\bC^2$.  The blow-up at $p$ is effected symplectically by choosing a ball with boundary 
\[
S^3_{\varepsilon} \ = \ \{|z|^2+|w|^2 = \varepsilon^2\}
\]
and quotienting out Hopf circles.  Suppose now we consider the circles $\{|w|=\varepsilon\}$ lying over the arc $\gamma \subset \{w=0\}$.  The union of these circles is a Lagrangian tube in $\bC^2$ which meets $S^3_{\varepsilon}$ only over the end-point of $\gamma$ at the origin, where the intersection is precisely the circle $\{z=0, |w|=\varepsilon\}$ which is collapsed in forming the blow-up, cf. Figure \ref{Fig:SpinTube}.  
\begin{center}
\begin{figure}[ht]
\includegraphics[scale=0.4]{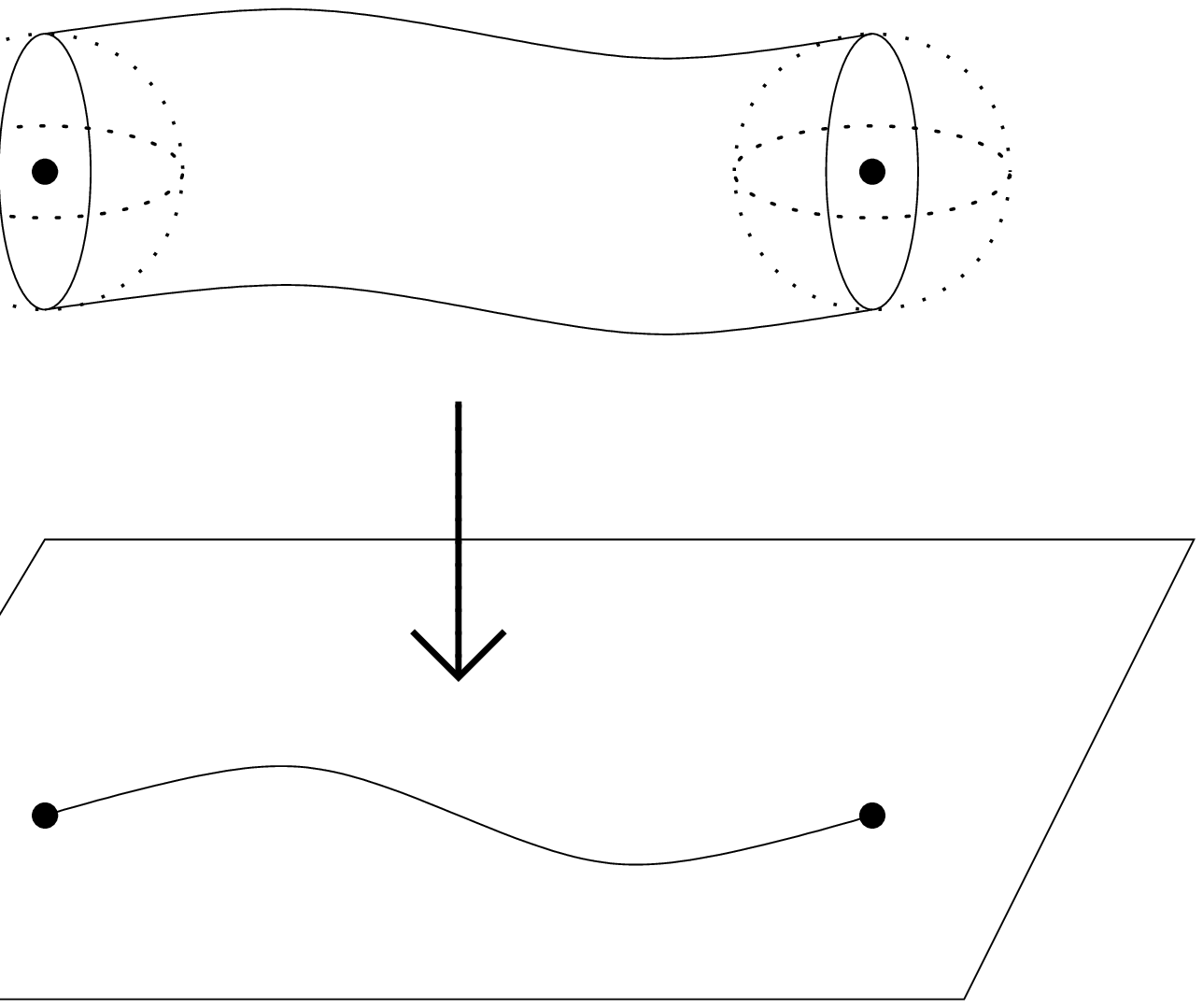} 
\caption{The symplectic cut of a Lagrangian tube in $Bl_{p\amalg q}(\bC^2)$ (the dotted balls are collapsed in performing the blow-up) \label{Fig:SpinTube}}
\end{figure}
\end{center}
The Lagrangian tube therefore defines a smooth Lagrangian disk in $Bl_p(\bC^2)$.  If we now blow up $\bC^2$ at $p=(0,0)$ and $q=(2,0)$, with equal weights $c=\pi\varepsilon^2$ at the two-points, there is a Lagrangian sphere $L \subset Bl_{p\amalg q}\bC^2$ in the homology class $\pm(E_1-E_2)$. This arises from a pair of such disks, and is naturally a matching sphere associated to a path  $\gamma \subset \bC=\{w=0\}$ between the points $p=(0,0)$ and $q=(2,0)$.
\end{Example}

\begin{Lemma} \label{Lem:Clean}
The torus $T \subset Bl_0(\bC^2)$ meets the Lagrangian disk $\Delta$ which is the symplectic cut of $\bR_{\geq 0} \times \{|w| = \varepsilon\}$ cleanly in a circle.  The Floer cohomology $HF(T^2,\Delta) \cong H^*(S^1)$.
\end{Lemma}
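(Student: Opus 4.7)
The plan is in three stages. In the first, I would express both Lagrangians in a common coordinate system via Equation~(\ref{Eq:blowup}): outside the exceptional divisor the blow-up is identified with $\bC^2 \setminus B(\chi)$, where $\chi$ is the radius of the collapsed ball. Using the alternative description of $T$ given just before Lemma~\ref{Lem:CorrsAreLagrangian}, $T$ corresponds to a product Clifford torus $\{|z|=|w|=\rho\}$ for a specific $\rho$ determined by the monotonicity condition, while $\Delta$ (by its construction in Example~\ref{Ex:SpinModel}) is the image of the cylinder $\{z \in \bR_{\geq 0},\ |w|=\varepsilon\}$, whose boundary circle $\{z=0,\ |w|=\varepsilon\}$ is collapsed to the point $[0:1] \in E$ (forcing $\varepsilon = \chi$).

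Second, after a Hamiltonian isotopy of $T$ (which preserves its Floer cohomology) arranging $\rho = \varepsilon$, the set-theoretic intersection is the single circle
\[
T \cap \Delta \ = \ \{(\varepsilon, w) \in \bC^2 \,:\, |w|=\varepsilon\},
\]
which lies entirely outside the exceptional divisor. Cleanness follows from a tangent-space count: at each point of this circle, $T_pT$ is spanned by the two angular directions $\partial_{\theta_z}, \partial_{\theta_w}$ and $T_p\Delta$ is spanned by $\partial_z$ (the real-$z$ direction) and $\partial_{\theta_w}$, so $T_pT \cap T_p\Delta = \Span(\partial_{\theta_w}) = T_p(T\cap\Delta)$ as required.

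Third, I would identify $HF(T,\Delta)$ with $H^*(S^1)$ via Pozniak's Morse-Bott spectral sequence for cleanly intersecting Lagrangians \cite{Pozniak}. The $E_2$-page is $H^*(S^1) \cong \bK \oplus \bK$ (the local system is trivial: there is no Maslov holonomy around the circle, and both Lagrangians carry the Spin-structures specified in their constructions), with the two generators in opposite $\bZ_2$-parities. Identifying $E_2$ with $E_\infty$ amounts to showing that after Morse-perturbing the clean circle to two transverse intersection points, no rigid holomorphic strips contribute to the Floer differential between them.

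The hard part is precisely this vanishing. I would address it inside the local toric model: any contributing strip has well-defined intersection numbers with the exceptional divisor and the proper transforms of the coordinate axes $\{z_1 z_2 = 0\}$, and a direct index computation of the type used in the proof of Lemma~\ref{Lem:ToricFibre} forces rigid configurations to lie in a class of negative virtual dimension. As a cross-check, the residual $S^1$-symmetry rotating $w$ is preserved by $T$, $\Delta$ and a symmetric perturbation, so any rigid strip is $S^1$-equivariant; the $S^1$-quotient Floer problem on the two-real-dimensional reduction has no rigid solutions. Either route yields $HF(T,\Delta) \cong H^*(S^1)$.
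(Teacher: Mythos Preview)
Your treatment of the clean intersection is essentially the paper's, modulo one slip: moving the Clifford torus radius $\rho$ is a Lagrangian but \emph{not} a Hamiltonian isotopy (it changes the areas of the Maslov~2 disks), so you cannot adjust $T$ this way. In fact no adjustment is needed: for the monotone form the toric fibre $T$ already sits so that it meets $\Delta$ cleanly in the circle $\{z=\varepsilon\in\bR_{>0},\ |w|=\varepsilon\}$, as the paper observes directly from the moment picture.

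Where you diverge from the paper is in the third stage, and here your argument is incomplete. The paper does not attempt to rule out rigid strips directly. Instead it observes that since the clean intersection is a single circle, the Morse--Bott spectral sequence forces $HF(T,\Delta)\in\{0,\,H^*(S^1)\}$, so it suffices to prove $HF(T,\Delta)\neq 0$. This is done by reinterpreting the geometry as a Lefschetz fibration: projection $Bl_0(\bC^2)\to\bC$ to the $z$-coordinate has a single critical point, $\Delta$ is its unique Lefschetz thimble, and $T$ is the monotone torus swept out by the vanishing cycle over a circle in the base. Proposition~\ref{Prop:generate} (thimbles generate all compact unobstructed Lagrangians) then forces $HF(T,\Delta)\neq 0$, since $HF(T,T)\neq 0$.

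Your two proposed routes to vanishing of the differential are both sketchy. The index computation ``of the type used in Lemma~\ref{Lem:ToricFibre}'' concerns Maslov~2 disks on a single torus, not strips between two Lagrangians; you would need to enumerate the relative homotopy classes of strips and compute their Maslov indices, which you have not done. The $S^1$-equivariance argument has a more basic problem: the clean intersection circle is a free $S^1$-orbit, so no $S^1$-equivariant perturbation can resolve it to finitely many transverse points. One could instead attempt a Morse--Bott argument in which the $S^1$ acts on moduli of non-constant strips, but this requires equivariant transversality, which is delicate and not addressed. The paper's indirect route via generation sidesteps all of this.
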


\begin{proof}
The disk $\Delta$ fibres over the horizontal arc emanating from the vertex on the $y$-axis in the moment image of Figure \ref{Fig:MomentPolytope}.  The disk and the torus meet cleanly along the circle where $z=\varepsilon \in \bR_+$. We claim the Morse-Bott-Floer differential vanishes: it is enough to see that $HF(T^2, \Delta) \neq 0$. To prove this, we introduce 
 a Lefschetz fibration viewpoint on these constructions. The trivial fibration $\bC^2 \rightarrow \bC$ which is projection to the $z$-co-ordinate induces a Lefschetz fibration on $Bl_0(\bC^2)$ with a unique critical fibre, which contains the exceptional sphere. The vanishing cycle is homotopically trivial, and the monodromy is Hamiltonian isotopic to the identity.  There is a unique radius of circle centred at the origin in the base, determined by the size $c$ of the blow-up parameter, for which the Lagrangian torus which is swept out by the vanishing cycle in each fibre (defined relative to parallel transport along a radial arc)  is actually monotone, and this is our torus $T$.  The disk $\Delta$ of Lemma \ref{Lem:Clean} is the unique Lefschetz thimble, and the non-vanishing of $HF(T,\Delta)$ is an instance of the general fact that  a basis of vanishing thimbles generates a category containing all weakly unobstructed compact Lagrangians.  This is a theorem of \cite{FCPLT} in the case of exact Lefschetz fibrations; Proposition \ref{Prop:generate} explains why the argument applies in the current setting. \end{proof}

 From Lemma \ref{Lem:Clean} one sees that $HF(e^{\pm}, \Delta) \cong \bK$.  In the Fukaya category $\scrV$ of the Lefschetz fibration, the individual summands $e^{\pm}$ are isomorphic  up to shifts to the disk $\Delta$.   We record one elementary point from this reformulation.  Suppose one has an $A_2$-chain of Lagrangian $S^0$'s, i.e. two zero-spheres $V_0$ and $V_1$ sharing a point, say $V_0 = \{(-1,0), (0,0)\}$ and $V_1 = \{(0,0), (1,0)\} \subset \bC^2$.  We fix real arcs $\Delta_i \subset \bC\times \{0\}$ bounding the $V_i$ and meeting at the origin, with respect to which we obtain spun Lagrangian 2-spheres $\hat{V}_i \subset Bl_{3 \ \textrm{points}} (\bC^2)$, and we consider the Floer product
\begin{equation} \label{Eqn:ProductOnSpunSpheres}
HF(\hat{V}_0, \hat{V}_1) \otimes HF(T, \hat{V}_0) \longrightarrow HF(T, \hat{V}_1).
\end{equation}
The relevant Lagrangians fibre, in the Lefschetz fibration described above -- locally restricting to a subspace $Bl_0(\bC^2)$ of the 3-point blow-up --  over a circle centred at the origin and the positive and negative real half-lines; the $\hat{V}_i$ are here viewed as compactifications of Lagrangian disks fibring over these half-lines.  In this local picture, the triangle product must be non-vanishing, just because the Lagrangian disks are actually isomorphic in the Fukaya category of the Lefschetz fibration, both being the unique Lefschetz thimble.  On the other hand, by the maximum principle all the holomorphic triangles are local, hence this non-vanishing actually applies to Equation \ref{Eqn:ProductOnSpunSpheres}.

\subsection{Idempotents and blow-ups\label{Section:Idempotent}}

Let $(X,\omega)$ be a projective variety, $\{H_t\}_{t\in\bP^1}$ a Lefschetz pencil of hypersurface sections with smooth base locus $B = H_0\cap H_1 \subset X$.  For simplicity, we suppose $\pi_1(B) = 0$.  We also fix an additional ``reference" hyperplane section $H_{ref}$ linearly equivalent to the $H_i$ but not in the pencil defining $B$, so there is an associated net of hyperplanes $\langle H_0, H_1, H_{ref}\rangle$. We denote by $b: Y\rightarrow X$ the blow-up $Y=Bl_B(X)$ along the base locus, and by $E\subset Y$ the exceptional divisor; note that $E \cong B\times \bP^1$ canonically, since the normal bundle $\nu_{B/X} \cong \mathcal{L} \otimes \bC^2$ for the line bundle $\mathcal{L} = \nu_{H_i/X}$. We assume:
\begin{equation}
\begin{aligned} \label{Equation:Hypothesis}
\bullet \ &  X, B \ \textrm{and}  \ Y \ \textrm{are all monotone (Fano)};  \ c_1(X) = (d+2)H \ \textrm{with} \ d\geq 2; \\
\bullet \ & c_1(\mathcal{L}|_B) \ \textrm{is proportional to} \  c_1(B);  \ B \ \textrm{has even minimal Chern number} \  \geq 2; \\
\bullet \ & B  \ \textrm{contains a Lagrangian sphere}. 
\end{aligned} \end{equation} 
The first assumption determines the size $\lambda$ of the blow-up parameter, i.e. the area of the ruling curve $\bP^1 \subset E$.  We actually strengthen the first assumption, and demand that
\begin{equation} \label{Equation:Hypothesis2}
\bullet \  c_1(Y) - \textrm{PD}[E] = b^*c_1(X) - 2\textrm{PD}[E] \ \textrm{is ample, so contains K\"ahler forms}; \quad 
\end{equation}
note that this implies there are K\"ahler forms on $Y$ which give the ruling curves $\bP^1 \subset E$ \emph{twice} the area they have for the monotone form. Recall our  notation  $(\bullet,\omega)^-$ for  the symplectic manifold $(\bullet,-\omega)$.   As in \eqref{Eqn:correspondences}, we consider the two correspondences
\[
\Gamma_R = B\times S^1_{eq} \ \subset \ B^- \times (B \times \bP^1)
\]
where $S^1_{eq} \subset \bP^1$ denotes the equator and 
\[
\Gamma_E = U_{\nu_E} \ \subset \ E^-\times Y
\]
where $\nu_E \rightarrow E$ is the normal bundle to $E$ in $Y$ and $U_{\nu_E}$ denotes a circle sub-bundle.  The correspondences are equipped with bounding $Spin$ structures.  As in Lemma \ref{Lem:CorrsAreLagrangian}, by choosing the radius of this subbundle and the symplectic forms appropriately:

\begin{Lemma} \label{Lem:Monotone}
Under hypothesis (\ref{Equation:Hypothesis2}), the correspondences $\Gamma_E$ and $\Gamma_R$ can be chosen to be embedded monotone Lagrangian submanifolds, with embedded monotone composition  $\Gamma = \Gamma_R \circ \Gamma_E \subset B^- \times Y$.
\end{Lemma}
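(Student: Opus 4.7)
The plan is to globalize Lemma \ref{Lem:CorrsAreLagrangian} by running its construction in families over $B$. First I would fix a K\"ahler form $\Omega$ on $Y$ in the ample class $c_1(Y) - \textrm{PD}[E]$ granted by hypothesis \eqref{Equation:Hypothesis2}. From the blow-up formula $c_1(Y) = b^*c_1(X) - \textrm{PD}[E]$ and the intersection numbers $R\cdot b^*H = 0$, $R\cdot E = -1$ for a ruling $R \subset E$, the form $\Omega$ gives $R$ twice its area in the monotone K\"ahler class, which is precisely the doubling relation required by Lemma \ref{Lem:CorrsAreLagrangian}. I would then take $\Gamma_E$ to be the unit circle subbundle of $\nu_E$, with radius adjusted fiber-by-fiber using the local identification \eqref{Eq:blowup}, so that the symplectic reduction of the coisotropic $U_{\nu_E} \subset (Y, \Omega)$ recovers $(E, \Omega|_E)$. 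This makes $\Gamma_E \subset E^- \times Y$ a Lagrangian graph by construction. For monotonicity of $\Gamma_E$, relative disk classes are generated, up to sphere classes from $E^- \times Y$, by fiberwise Maslov $2$ disks from the toric $\mathcal{O}(-1)$ model (compare Lemma \ref{Lem:ToricFibre} and Remark \ref{Remark:LocalSlag}); these share a common area/Maslov ratio dictated by the chosen radius, and the required proportionality on spherical classes follows, using $\pi_1(B) = 0$ and adjunction on $Y$, from the monotonicity of $X$, $B$, and $Y$ listed in \eqref{Equation:Hypothesis}.

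For $\Gamma_R$, the canonical splitting $E \cong B \times \bP^1$ writes the coisotropic $B \times S^1_{eq} \subset E$ as a product, with characteristic foliation the standard $S^1$-action on $\bP^1$ fixing the equator and reduced space $B$. Its graph in $B^- \times E$ is $\Delta_B \times S^1_{eq}$, a product of the diagonal of $B$ (monotone since $B$ is) with the equator of $\bP^1$ (monotone as a great circle), and the monotonicity constants are arranged to agree by the choice of $\Omega$ in the first step, which forces $\Omega|_E$ to decompose with the correct ratio between $\omega_B$ and $\omega_{\bP^1}$. The geometric composition $\Gamma = \Gamma_R \circ \Gamma_E$ then consists of those pairs $(b,y) \in B^- \times Y$ for which $y$ lies on the circle fiber of $U_{\nu_E}$ over a point $(b, s) \in \{b\} \times S^1_{eq} \subset E$; because $\Gamma_E \to E$ is a principal circle bundle and $B \times S^1_{eq} \subset E$ is a smoothly embedded hypersurface, the fiber product is smooth and the projection to $B^- \times Y$ is injective, so $\Gamma$ is embedded. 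Fiberwise over $b\in B$, the slice of $\Gamma$ inside a normal $\bC^2$-slice is precisely the Clifford-type Lagrangian torus $T$ from Lemma \ref{Lem:ToricFibre} sitting in the fiber blow-up $Bl_{\{0\}}(\bC^2)$; combining the Clifford-torus monotonicity done fiberwise with the sphere-class monotonicity inherited from $\Gamma_E$ yields monotonicity of $\Gamma$.

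The main obstacle I expect is the simultaneous compatibility of these symplectic choices: a single form $\Omega$ on $Y$ together with a single radius for $U_{\nu_E}$ must at once make $\Gamma_E$ Lagrangian with matching reductions on $E$, make $\Delta_B \times S^1_{eq}$ a monotone Lagrangian with constants agreeing with those induced by $\Omega|_E$, and leave the composite $\Gamma$ both embedded and monotone. This is precisely the reason for the strengthened hypothesis \eqref{Equation:Hypothesis2}: the extra ample class $c_1(Y) - \textrm{PD}[E]$ gives enough room in the K\"ahler cone of $Y$ (and hence, by restriction, in that of $E = B \times \bP^1$) to realize all three matchings simultaneously via fiberwise application of the local model of Lemma \ref{Lem:CorrsAreLagrangian}.
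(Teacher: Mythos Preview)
There is a genuine gap in the choice of symplectic form on $Y$. You propose to work with a K\"ahler form $\Omega$ in the class $c_1(Y)-\textrm{PD}[E]$, but this class is \emph{not} monotone, so $E^-\times Y$ with that form is not a monotone symplectic manifold and $\Gamma_E$ cannot be a monotone Lagrangian correspondence in the sense needed for the quilt theory and for $\scrF(Y)$. The paper instead keeps the \emph{monotone} form $\omega_{mon}$ on $Y$ throughout; hypothesis \eqref{Equation:Hypothesis2} is not used to replace that form but to perform an inflation. Concretely: starting from a K\"ahler form in the ample class $c_1(Y)-\textrm{PD}[E]$ (where the ruling has area $>2$), one blows down to get a symplectic form on $X$ containing a radius $\sqrt{2}+\delta$ disk bundle around $B$, then blows up again by amount $1$. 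The result is the monotone form on $Y$, but now $E$ sits inside a standard symplectic disk bundle with fibres of area $1+\delta$, which is large enough to contain the coisotropic $U_{\nu_E}$ at the radius where fibre disks have area $1$.

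A related confusion: you write that the reduction of $U_{\nu_E}$ ``recovers $(E,\Omega|_E)$'', but the reduced form is never the restricted form; it is shifted by a multiple of $c_1(\nu_E)$. The actual content of the paper's argument is computing this shift: with $\nu_E = \pi_B^*\mathcal{L}\otimes\pi_{\bP^1}^*\mathcal{O}(-1)$, the reduced class is $[\omega_{mon}]|_E - (c_1(\mathcal{L}),-1)$, and using $[\omega_{mon}] = (c_1(X)-c_1(\mathcal{L}),1)$ together with adjunction $c_1(B)=c_1(X)|_B-2c_1(\mathcal{L})$ one sees that this reduced form on $B\times\bP^1$ is monotone. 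It is \emph{this} monotone form on $E$ (not the restriction of either $\omega_{mon}$ or your $\Omega$) that one uses as the target of $\Gamma_R$ and the source of $\Gamma_E$, so that the two correspondences compose coherently. Your phrase ``radius adjusted fiber-by-fiber'' is also problematic: a varying radius would not in general give a coisotropic submanifold; one takes a fixed radius in a Hermitian metric with product curvature.
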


\begin{proof}
Let $H \subset Z$ be a codimension 2 symplectic submanifold of a symplectic manifold $(Z,\omega_Z)$, with the symplectic form $\omega_H = (\omega_Z)|_H$.  Equip the normal bundle $\nu \rightarrow H$ with a Hermitian metric. The symplectic form on a neighbourhood of the zero-section in the total space of  $\nu$  is 
 \begin{equation} \label{Eqn:SymplecticDiskBundle}
\Omega_{\nu}  = \pi^*\omega_H + \frac{1}{2} d(r^2 \alpha)
\end{equation}
where $r$ is the radial co-ordinate and $\alpha$ is a connection form for the bundle satisfying $d\alpha = -\pi^*\tau$, with $\tau$ a curvature 2-form on $H$ with  $[\tau] = [c_1(\nu)]$ as obtained from Chern-Weil theory.  Note that the form $\Omega_{\nu}$ is a standard area form on the fibres of the disk bundle.
 The symplectic form induced on the reduced space $H_{red}$ of the co-isotropic submanifold which is the boundary of the radius $r$ disk bundle in $\nu$ differs from $\omega_H$ by subtracting $r^2 \tau / 2$.  In particular, when it is possible to take  $r=\sqrt{2}$, the reduced form lies in the cohomology class $[\omega_H - c_1(\nu)]$, compare to Lemma \ref{Lem:CorrsAreLagrangian}. 

The symplectic normal bundle to $B \times \bP^1 \cong E \subset Y$ is the tensor product $\pi_B^* \mathcal{L} \otimes \pi_{\bP^1}^* \mathcal{O}(-1)$, with first Chern class  $(c_1(\mathcal{L}),-1) \in H^2(B) \oplus H^2(\bP^1)$.  We fix a tensor product Hermitian metric and connexion, taking $\tau$ to be a product 2-form.   The ruling $\bP^1 \subset E$ has area $1$ for a monotone K\"ahler form $\omega_{mon}$ on $Y$ which is cohomologous to the first Chern class.  By hypothesis, we have K\"ahler forms $\omega$  on $Y$ for which the area of $\bP^1$ is $> 2$.  After normalising the K\"ahler form near $E$, one can blow down to obtain a symplectic form on $X$ which contains a radius $\sqrt{2}+\delta$ disk bundle around the base locus $B$; compare to \cite[Lemma 7.11]{McD-S} or \cite[Theorem 2.3]{Gompf}. If we now blow up again by amount $1$, the exceptional divisor $E\subset (Y, \omega_{mon})$ naturally lies inside a standard symplectic disk bundle whose fibres have area $1+\delta$.  This is sufficiently large to contain an embedded circle bundle $U_{\nu_E}$ in which the circles bound fibre disks of area $1$.  (Blowing down and up in this way effects a symplectic inflation.  Indeed, when Gompf puts symplectic structures on Lefschetz pencils in general dimension, he precisely patches in a \emph{large} symplectic disk bundle over the base locus \cite[p.279, and his embedding $\varphi$ onto a disk bundle of radius $R$]{Gompf}.)

From \eqref{Eqn:SymplecticDiskBundle}, the reduced space of the co-isotropic $U_{\nu_E}$  has symplectic form $(\omega_Y)|_E - \tau$ in cohomology class
\begin{equation} \label{Eqn:ReducedClass}
[\omega_Y]|_E - [(c_1(\mathcal{L}), -1)].
\end{equation}
We can view $Y\subset X\times \bP^1$ as a divisor in class $(c_1(\mathcal{L}),1)$. It follows that the monotone form $\omega_Y$ lies in cohomology class $(c_1(X) - c_1(\mathcal{L}), 1) \in image \left( H^2(X) \oplus H^2(\bP^1) \rightarrow H^2(Y) \right)$ (the latter map being an isomorphism by the Lefschetz theorem in our examples).  Combining this with \eqref{Eqn:ReducedClass} and noting that $c_1(B) = c_1(X)|_B -2c_1(\mathcal{L})$, by adjunction, shows that the reduced symplectic form on $B\times \bP^1$ coming from the co-isotropic $U_{\nu_E}$ is monotone.     

We have arranged that the coisotropic defining the Lagrangian correspondence $\Gamma_E$ lives inside the blow-up $Bl_B(\nu_{B/X}) = Tot_{\mathcal{O}(-1)}(E)$ of a large disk normal bundle to $B$ in $X$. By construction, all Maslov index 2 disks in this open subset have the same area, compare to Lemma \ref{Lem:ToricFibre}.  Since $H_1(B) = 0$, these local disks together with rational curves in the total space span $H_2(E\times Y, \Gamma_E)$, hence are sufficient to detect monotonicity. It follows that $\Gamma_E$ is monotone.  The corresponding statement for $\Gamma_R$ is trivial, taking the monotone forms on $B$ and $B \times \bP^1$; and monotonicity is then inherited by the Lagrangian composition (note this  is well-defined since the symplectic forms on $B\times \bP^1$ arising as the output of $\Gamma_R$ and the input of $\Gamma_E$ do co-incide).
\end{proof}

\begin{Remark} \label{Rem:MonotoneFormFromCut}
We emphasise that there are two different symplectic forms on $B\times \bP^1$ in play;  first, the non-monotone  form arising by restricting a  monotone form on $Y$ to $E\subset Y$; second, the monotone form arising from co-isotropic reduction of the boundary of a suitable disk bundle $\nu_E \subset Y$.  We only use the Fukaya category of the monotone form; as a mild abuse of notation, we will continue to write $E$ for the space $B\times \bP^1$ even though it is not equipped with the symplectic form it inherits from the embedding in the ambient space $Y$.
\end{Remark}

\begin{Remark} \label{Rem:HypothesisHolds}
For the case of interest in this paper, namely $(2,2)$-intersections in $\bP^{2g+1}$ with $g\geq 2$, the ampleness hypothesis (\ref{Equation:Hypothesis2}) holds by Lemma \ref{Lem:AmpleBlowUp}.  When $g=1$, this conditions \emph{fails}; the class $c_1(\bP^3)-2\textrm{PD}[E]$ lies on the boundary of the K\"ahler cone of the blow-up of $\bP^3$ along a complete intersection elliptic curve.  In that case, there is no monotone correspondence, which fits with the fact that the elliptic curve has no balanced Fukaya category and  $\scrF(\Sigma_1)$ is naturally defined over $\Lambda_{\bC}$.
\end{Remark}

\begin{Remark} \label{Rem:CorrespondenceViaCut}
There is another viewpoint on the composite correspondence $\Gamma = \Gamma_R \circ \Gamma_E$ which is often useful.  Again starting from a K\"ahler form on $Y$ which is standard in a large disk bundle over $E$,  we perform a symplectic cut \cite{Lerman} along the boundary of a sub-disk bundle.  Algebro-geometrically, symplectic cutting is effected by deformation to the normal cone \cite[Chapter 5]{Fulton}.   We obtain a space with two components meeting along a divisor. One component is a $\bF_1$-bundle over $B$, given by projective completion of the normal bundle to $E$, and the other $Y'$ is holomorphically isomorphic to $Y$, but equipped with a different K\"ahler form (of smaller volume).  The hypothesis \eqref{Equation:Hypothesis2} implies that one can perform the cut so as to obtain a K\"ahler form which is monotone on the total space of the $\bF_1$-bundle. The associated monotone $\bP^2$-bundle over $B$ is the completion $\bP(\mathcal{L} \oplus \mathcal{L} \oplus \mathcal{O})$ of its normal bundle. Being the projectivisation of a direct sum of line bundles, the structure group  reduces to a maximal torus $T^2 \leq PGL_3(\bC)$, which implies that there is a (cohomologous) linear K\"ahler form on the $\bP^2$-bundle for which parallel transport preserves the fibrewise Lagrangian torus which is the monotone torus in $\bP^2$.  Globally, this gives rise to a Lagrangian submanifold
\begin{equation} \label{Eqn:Cut}
\Gamma_{cut} = T^2 \tilde{\times} B \ \subset \ \bP(\mathcal{L}^{\otimes 2}\oplus\mathcal{O}) \times B
\end{equation}
which is a monotone 2-torus bundle over the diagonal $\Delta_B$.  If one now blows back up  and moreover smooths the normal crossing locus of the symplectic cut, in each case deforming the symplectic form in a region disjoint from $\Gamma_{cut}$,   one obtains a Lagrangian correspondence $\Gamma_{cut} \subset B\times Y$, which is exactly $\Gamma$.
\end{Remark}

As remarked above, $\Gamma= \ T^2\widetilde{\times} B$ is a 2-torus bundle, which in general is differentiably non-trivial.  However, given any simply-connected Lagrangian $L\subset B$, the restriction of the correspondence to $L$ \emph{is} trivial (because the symplectic line bundle $\mathcal{L} \rightarrow L$ is flat and hence trivial), and the geometric composition of $L$ with the correspondence is a Lagrangian submanifold diffeomorphic to a product $L\times T^2 \subset Y$.

The quilted Floer theory of Mau, Wehrheim and Woodward, Theorem \ref{Thm:QuiltsGiveFunctors} of Section \ref{Section:Quilts},  associates to a monotone Lagrangian submanifold $\Gamma \subset M^-\times N$ a $\bZ_2$-graded $A_{\infty}$-functor $\Phi_{\Gamma}: \scrF(M) \rightarrow \scrF^{\#}(N)$, where $\scrF^{\#}(\bullet)$ is the $A_{\infty}$-category of generalised Lagrangian correspondences.  Theorem \ref{Thm:QuiltsCompose} implies that the functors $\Phi_{\Gamma_R}\circ\Phi_{\Gamma_E}$ and $\Phi_{\Gamma_R\circ \Gamma_E}$ are quasi-isomorphic. 

\begin{Lemma} \label{Lem:ExtendedFukNotNeeded}
The functor $\Phi_{\Gamma_R\circ\Gamma_E}$ is quasi-isomorphic to a functor with image in the fully faithfully embedded $A_{\infty}$-subcategory $\scrF(Y) \subset \scrF^{\#}(Y)$.
\end{Lemma}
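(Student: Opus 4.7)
The plan is to realise, for every object $L \in \scrF(B)$, the generalised correspondence $(L,\Gamma) \in \scrF^{\#}(Y)$ as a genuine Lagrangian $\Gamma \circ L \subset Y$, and then to assemble these identifications into a quasi-isomorphism of $A_\infty$-functors. Concretely, I would view the generator $L \in \scrF(B)$ as a Lagrangian correspondence $\{\mathrm{pt}\}\times L \subset \{\mathrm{pt}\}^- \times B$, and then apply Theorem \ref{Thm:QuiltsCompose} to the pair of correspondences $\{\mathrm{pt}\}\times L$ and $\Gamma$. Provided the geometric composition $\Gamma \circ L \subset Y$ is smooth, embedded and transversely cut out, this yields a quasi-isomorphism $(L,\Gamma) \simeq \Gamma \circ L$ in $\scrF^{\#}(Y)$.

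First I would verify the hypothesis on the geometric composition. The description from Remark \ref{Rem:CorrespondenceViaCut} realises $\Gamma$ as the total space of a $T^2$-bundle over $\Delta_B$, coming from the fibrewise monotone torus in the projective bundle $\bP(\mathcal{L}\oplus\mathcal{L}\oplus\mathcal{O})\to B$. Restricting to $L\subset B$, the bundle $\mathcal{L}|_L$ is a flat (hence trivialisable on simply-connected pieces) symplectic line bundle, so $\Gamma\circ L$ is a smooth $T^2$-bundle over $L$, embedded transversely in $Y$. For the simply-connected Lagrangian spheres that are the generators of interest, this bundle is globally a product $L\times T^2$. Monotonicity of $\Gamma\circ L$ in $Y$ follows from monotonicity of $\Gamma\subset B^-\times Y$ together with monotonicity of $L\subset B$ by the same index comparison used in the proof of Lemma \ref{Lem:Monotone}: the Maslov-area proportionality for $\Gamma\circ L$ is inherited from that for $\Gamma$, since the relevant disk classes in $(Y, \Gamma\circ L)$ either come from disks in $(B,L)$ or from disks in the local $\bP^2$-bundle model, which is toric.

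Next I would upgrade these object-level quasi-isomorphisms to a quasi-isomorphism of $A_\infty$-functors. Writing $\tilde\Phi \colon \scrF(B) \to \scrF(Y)$ for the functor with $\tilde\Phi(L) = \Gamma\circ L$ and higher products defined by counting quilted disks in which the seam labelled by $\Gamma$ has been collapsed, the Mau-Wehrheim-Woodward biquilted-disk construction (the mechanism underlying Theorem \ref{Thm:QuiltsCompose}) yields a natural transformation $\Phi_{\Gamma}\to \tilde\Phi$ of non-unital $A_\infty$-functors into $\scrF^{\#}(Y)$. The previous paragraph shows this is a quasi-isomorphism on each object, hence a quasi-isomorphism of functors. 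Composing with the fully faithful inclusion $\scrF(Y)\hookrightarrow \scrF^{\#}(Y)$ then gives the claimed conclusion.

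The main obstacle is verifying the geometric hypotheses (smoothness, embeddedness, transverse cut-out, and monotonicity) of $\Gamma\circ L$, since $\Gamma$ itself is generally a non-trivial $T^2$-bundle and does \emph{not} split as a product. The key reductions come from $\pi_1(B)=0$ together with the simply-connectedness of the Lagrangians we ultimately need (spheres arising as vanishing cycles), and from the fact that the model neighbourhood around $E\subset Y$ obtained by symplectic cutting is a linear $\bP^2$-bundle in which the fibrewise monotone torus varies smoothly with the base point. Once these are in hand, Theorem \ref{Thm:QuiltsCompose} supplies the rest, and the lemma reduces to a straightforward bookkeeping of the composition isomorphism.
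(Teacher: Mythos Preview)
Your proposal is correct and follows essentially the same approach as the paper: both arguments observe that for each $L \in \scrF(B)$ the geometric composition $\Gamma \circ L \subset Y$ is a smooth embedded monotone Lagrangian (a $T^2$-bundle over $L$, trivial when $L$ is simply-connected), and then invoke Theorem~\ref{Thm:QuiltsCompose} applied to the chain $\{pt\} \stackrel{L}{\rightarrow} B \stackrel{\Gamma}{\rightarrow} Y$ to identify the generalised brane $(L,\Gamma)$ with the honest Lagrangian $\hat L = \Gamma \circ L$ in $\scrF^{\#}(Y)$. The paper's sketch is slightly more concrete at the last step: rather than appealing abstractly to the biquilted-disk natural transformation, it writes down the quilted Floer chain group $CF(\hat L, (L,\Gamma))$ explicitly as $CF(\hat L \times \Gamma^-, \Delta_Y \times L)$ in $Y \times Y^- \times B$, observes this is a clean intersection projecting diffeomorphically to $\hat L$, and takes a chain-level unit in $C^*(\hat L)$ as the quasi-isomorphism; it then checks by the same clean-intersection argument that morphism spaces match up.
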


\begin{proof}[Sketch]
 Write $\Gamma=\Gamma_R\circ\Gamma_E \subset B^- \times Y$.  
For every object $L\in \scrF(B)$ the image of $L$ under $\Gamma$ is a smooth Lagrangian submanifold $\hat{L}$ in $Y$. Quilt theory, via Theorem \ref{Thm:QuiltsCompose}, again provides a quasi-isomorphism of functors $\scrF(pt) \rightarrow \scrF^{\#}(Y)$ with images $\hat{L}$ and $\Phi_{\Gamma}(L)$; the quasi-isomorphism of functors implies that the target objects
\[
\hat{L} \ \stackrel{\sim}{\longrightarrow} \ \{pt \stackrel{L}{\longrightarrow} B \stackrel{{\Gamma}}{\longrightarrow} Y\} \, = \, \Phi_{\Gamma}(L)
\]
 are quasi-isomorphic objects of $\scrF^{\#}(Y)$, and in particular $HF(\hat{L}, \Phi_{\Gamma}(L)) \cong HF(\hat{L}, \hat{L})$.  More explicitly, the Floer chain group
\[
CF(\hat{L}, \{pt \stackrel{L}{\rightarrow}B \stackrel{\Gamma}{\rightarrow} Y\}) \ = \ CF(\hat{L} \times \Gamma^-, \Delta \times L)
\] 
where the RHS is computed in $Y \times Y^- \times B$.  These Lagrangian submanifolds meet cleanly, and projection to the first factor identifies the clean intersection locus with $\hat{L}$.  A chain-level representative $e \in CF(\hat{L} \times \Gamma^-, \Delta \times L) = C^*(\hat{L})$ for the identity $1 \in HF(\hat{L},\hat{L})$  provides the desired quasi-isomorphism.  Similarly, 
\[
CF(pt  \stackrel{L_0}{\rightarrow}B \stackrel{\Gamma}{\rightarrow} Y,  \ pt \stackrel{L_1}{\rightarrow}B \stackrel{\Gamma}{\rightarrow} Y) 
\]
is given by chains on the intersection locus $(\Gamma \times \Gamma^-) \cap (L_0^- \times \Delta_Y \times L_1) \subset B^- \times Y \times Y^- \times B$, and this clean intersection is exactly $\widehat{L_0 \cap L_1} = \hat{L}_0 \cap \hat{L}_1$.  Projection to the central two factors $Y\times Y^-$ again identifies the Floer chain group with $CF(\hat{L}_0, \hat{L}_1)$.  The subcategories of $\scrF^{\#}(Y)$ generated by the $\hat{L}$-images and the corresponding generalised Lagrangian branes are quasi-equivalent, and the former lies inside $\scrF(Y)$.
\end{proof}

We will henceforth replace $\Phi_{\Gamma}$ by such a quasi-isomorphic functor to $\scrF(Y)$, without however changing our notation.
We would like this composite correspondence $\Gamma$ to behave like a sheaf of Clifford algebras over $B$.  One can filter the Floer complex for $T^2 \tilde{\times} B$ by projecting generators down to critical points of a Morse function on $B$ and filtering by Morse degree.  This yields a spectral sequence of rings
 \[
E_2^{*,*} =  H^*(B; HF(T^2,T^2))\cong H^*(B)\otimes Cl_2\  \Rightarrow \ HF(T^2\tilde{\times} B, T^2\tilde{\times} B)
 \]
In the simplest case, the spectral sequence degenerates,  the non-trivial idempotents in the Clifford algebra survive to $E_{\infty}$, and they split the total correspondence,  as in Remark \ref{Rem:CorrNonZero}.  This was trivially the situation in the example considered in Lemma \ref{Lem:ToricFibre}, where $B=\{pt\}$.  In general, the picture is less clear: even additively the Floer cohomology of the composite correspondence is more complicated, coming from the non-triviality of the normal bundle $\mathcal{L}$ of the original hyperplanes $H_i \subset X$.  However, this complication becomes irrelevant on the appropriate summand of the Fukaya category of the base locus. 

\begin{Lemma} \label{Lem:FirstCorrSplits}
Suppose $L \in \scrF(B;0)$, and let $\Gamma_L$ denote its image under the correspondence $\Gamma = \Gamma_R \circ \Gamma_E$. Either $HF(\Gamma_L, \Gamma_L) = 0$ for all $L \in \scrF(B;0)$, or 
  there is an isomorphism $HF(\Gamma_L, \Gamma_L) \cong HF(L,L) \otimes Cl_2$ of $QH^*(B;0)$-modules.  
\end{Lemma}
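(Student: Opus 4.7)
The plan is to compute $HF(\Gamma_L, \Gamma_L)$ via a Morse--Bott spectral sequence that exploits the $T^2$-bundle structure of $\Gamma_L$ over $L$, and then to extract the dichotomy from the Clifford-algebra structure on the fibre Floer cohomology.

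First, I would work in the $\bP^2$-bundle model from Remark \ref{Rem:CorrespondenceViaCut}: the composite correspondence $\Gamma$ is realised as $T^2 \widetilde{\times} B \subset B^-\times \bP(\mathcal{L}^{\otimes 2}\oplus\mathcal{O})$, a fibrewise-monotone Clifford $T^2$-bundle over the diagonal. Since $L$ is simply connected, the symplectic line bundle $\mathcal{L}|_L$ is trivial, so the pullback torus bundle is symplectically trivial and $\Gamma_L \cong L\times T^2$ through a symplectomorphism of a neighbourhood in $Y$. Choose a Morse function $f$ on $L$ and extend it trivially in the fibre directions to obtain a Hamiltonian perturbing $\Gamma_L$ to $\Gamma_L'$. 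The resulting Floer complex $CF(\Gamma_L,\Gamma_L')$ admits a natural filtration by the value of $f$ at the foot of each generator.

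The associated spectral sequence has $E_1$-page computed by the fibre Floer cohomologies. In each fibre $\bP^2$, the relevant torus is a monotone Clifford-type torus whose Floer cohomology was computed in Lemma~\ref{Lem:ToricFibre} to be $Cl_2$ (provided the superpotential's critical value is attained; otherwise it is zero). Thus either $E_1 \equiv 0$ --- in which case $HF(\Gamma_L,\Gamma_L)=0$ uniformly in $L$ --- or $E_1\cong CM^*(L)\otimes Cl_2$. In the latter case, the $E_1$-differential counts thin cylinders whose projection to $B$ is a Morse gradient flow on $L$, possibly with insertions of Maslov index $2$ disks bounded by $L$. The hypothesis $L\in\scrF(B;0)$, i.e.\ $\frak m_0(L)=0$, guarantees that the insertions contribute trivially, so the induced differential is the Floer differential on $CF(L,L)$ tensored with $\mathrm{id}_{Cl_2}$. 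Hence $E_2\cong HF(L,L)\otimes Cl_2$ as modules over $QH^*(B;0)$, with the module structure acting only on the first factor through the open--closed map of Section \ref{SubSec:Floer}.

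The main obstacle is showing that higher differentials $d_r$ ($r\ge 2$) vanish. I would exploit two structures in tandem: the pair-of-pants product gives $HF(\Gamma_L,\Gamma_L)$ a natural $Cl_2$-bimodule structure coming from the inclusion of a single fibre torus, and this bimodule structure is compatible with the filtration. Since $Cl_2$ is a matrix algebra over $\bK$, every $d_r$ is $Cl_2$-bilinear and hence determined by its value on the unit. On the other hand, $d_r$ is also $QH^*(B;0)$-linear and the open--closed map $QH^*(B;0)\to HH^*$ lifts through the spectral sequence; so any non-trivial $d_r(1)$ would already represent a universal obstruction class, independent of $L$, which would force the spectral sequence to collapse to zero on all of $\scrF(B;0)$ simultaneously. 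Combining this with the $Cl_2$-bilinearity yields the stated dichotomy: either $d_r\equiv 0$ for all $r\ge 2$ and we obtain the isomorphism $HF(\Gamma_L,\Gamma_L)\cong HF(L,L)\otimes Cl_2$ of $QH^*(B;0)$-modules, or the entire spectral sequence collapses to zero uniformly in $L$.
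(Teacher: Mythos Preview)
Your approach differs substantially from the paper's. The paper does not set up a spectral sequence at all; it applies Perutz's quantum Gysin sequence (Theorem~\ref{Thm:Gysin}) to the circle-fibred correspondence $\Gamma_E$. One first has $HF(L\times S^1_{eq},\, L\times S^1_{eq}) \cong HF(L,L)\otimes HF(S^1_{eq},S^1_{eq})$ by K\"unneth in $B\times\bP^1$, and then $HF(\Gamma_L,\Gamma_L)$ is the cone on quantum product by the corrected Euler class $e+\sigma\cdot 1$, where $e=h\otimes 1 + 1\otimes(-t)$ with $h=c_1(\mathcal L|_B)$ and $\sigma\in\bK$ a \emph{single scalar} counting Maslov~$2$ disks through a global angular chain. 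For $L\in\scrF(B;0)$ one has $h\mapsto 0$ in $HF(L,L)$ by Proposition~\ref{Lem:quantumcap}, so the map collapses to $x\otimes y\mapsto x\otimes(\sigma-t)y$. This is identically zero if $\sigma=1$ and invertible if $\sigma\neq 1$, giving the dichotomy immediately; crucially, $\sigma$ does not depend on $L$.

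Your argument has two genuine gaps. First, the $E_1$ computation conflates the local $\bP^2$-bundle model of Remark~\ref{Rem:CorrespondenceViaCut} with the ambient space $Y$ in which $\Gamma_L$ actually lives. The fibre Floer cohomology you need is $HF(T^2,T^2)$ \emph{computed in $Y$}, not in $\bP^2$; holomorphic disks on $T^2\subset Y$ need not stay in a $\bP^2$-fibre, and establishing that they do (for Maslov index~$2$) is precisely the content of the later Lemmas~\ref{Lem:DisksLocalise}--\ref{Lem:NonZero}. The paper's Gysin route avoids this because the correction term $\sigma$ is left as an unknown scalar and the dichotomy follows regardless of its value. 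Second, your treatment of $d_{r\ge 2}$ is not a proof. The claim that $d_r$ is $Cl_2$-bilinear presupposes that the fibre $Cl_2$-generators are permanent cycles, which is exactly what is at stake; in a base--fibre filtration nothing prevents $d_r$ from mapping filtration~$0$ into filtration~$r$. And ``any non-trivial $d_r(1)$ would represent a universal obstruction class forcing collapse to zero'' is an assertion, not an argument: the unit is always a permanent cycle, so $d_r(1)=0$ tautologically, and you have not said what the putative obstruction class is, where it lives, or why it annihilates $E_\infty$. The Gysin sequence replaces this whole discussion by a single explicit two-term complex.
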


\begin{proof}
We work with the monotone symplectic forms provided by Lemma \ref{Lem:Monotone}. 
The Floer cohomology of $L\times S^1_{eq} \subset B^-\times \bP^1$ can be computed using the K\"unneth theorem, since all holomorphic curves split in the obvious product description, as does their deformation theory. One therefore gets
\[
HF(L\times S^1_{eq}, L\times S^1_{eq}) \ \cong \ HF(L,L) \otimes HF(S^1_{eq}, S^1_{eq}). 
\]
By the K\"unneth theorem in quantum cohomology, $QH^*(E) \cong QH^*(B)\otimes QH^*(\bP^1)$.
We next apply the quantum Gysin sequence, in the form of Theorem \ref{Thm:Gysin}, or rather its Lagrangian analogue.  This says that $HF(\Gamma_L,\Gamma_L)$ is the cone on quantum cup-product on $HF(L\times S^1_{eq}, L\times S^1_{eq})$ by the corrected Euler class $e+\sigma\cdot 1$, where $\sigma \in \bK$ counts Maslov index 2 disks through a global angular chain for $U_{\nu_E}$.  In our case, writing $t\in QH^2(\bP^1)$ for the fundamental class,
\[
e =  h\otimes 1 + 1\otimes (-t)
\]
where $h = c_1(\mathcal{L}|_B)\in H^2(B)$ is the restriction of $c_1(\nu_{H/X})$.  Recall that our initial assumptions imply this is proportional to $c_1(B)$. We should therefore compute quantum product by
\[
h \otimes 1 + (1 \otimes (\sigma\cdot 1 - t)): HF(L,L) \otimes HF(S^1_{eq}, S^1_{eq}) \rightarrow HF(L,L) \otimes HF(S^1_{eq}, S^1_{eq}).
\]  
Now \emph{ for $L$ in the nilpotent summand} $\scrF(B;0)$, $h \mapsto 0 \in HF(L,L)$ by Lemma \ref{Lem:quantumcap}, and the above therefore simplifies to the map
\[
x \otimes y \mapsto x \otimes (\sigma 1 -t)y
\]
which has cone $HF(L,L) \otimes Cone(\sigma 1 -t )$.     If $\sigma \neq 1$ then the map is $\id \otimes$ (invertible) and hence $HF(\Gamma_L,\Gamma_L) = 0$, so $\Gamma_L$ is quasi-isomorphic to zero for every $L$.  If $\sigma = 1$ this map vanishes and then $HF(\Gamma_L, \Gamma_L) \cong HF(L,L) \otimes Cl_2$.  Since the quantum Gysin sequence is an exact sequence of $QH^*$-modules, the statement on the module structure follows.
\end{proof}

The force of Lemma \ref{Lem:FirstCorrSplits} is that to obtain a conclusion for an arbitrary $L\in \scrF(B;0)$, it suffices to find a single such $L$ for which $HF(\Gamma_L, \Gamma_L) \neq 0$.  
 We now aim to prove this non-vanishing when $L$ is a Lagrangian sphere as provided by hypothesis (\ref{Equation:Hypothesis}).   Let $\mathcal{U} \subset \mathcal{L}^{\oplus 2} \rightarrow B$ denote an open neighbourhood of $B\subset X$, symplectomorphic to a sub-disk bundle of the normal bundle.  Write $\mathcal{L}_{taut}$ for a symplectic disk bundle neighbourhood of $E \subset Y$ inside its normal bundle,  which is just the tautological bundle over $E = \bP(\mathcal{L}^{\oplus 2})$. Since $\pi_2(B \times Y, \Gamma) = \pi_2 (B \times Bl_B(\mathcal{U}), \Gamma)$,  all possible homotopy classes of holomorphic disk with boundary on $\Gamma$ are visible in $B\times Bl_B(\mathcal{U})$.

\begin{Lemma} \label{Lem:DisksLocalise}
If for some Lagrangian sphere $L\subset B$ all holomorphic disks of Maslov index 2 with boundary on $\Gamma_L$ lie inside a disk bundle $\mathcal{L}_{taut} \rightarrow E$, then $\Gamma_L \not \simeq 0$.
\end{Lemma}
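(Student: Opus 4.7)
\medskip

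\noindent\textit{Proof plan.}
The strategy is to compute $HF(\Gamma_L,\Gamma_L)$ inside a product local model via a K\"unneth argument and verify that it is non-zero, thereby obstructing $\Gamma_L \simeq 0$.

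First, I trivialise the local geometry near $L$. Since $L$ is a Lagrangian sphere of dimension $\geq 2$ in $B$ (the assumption $c_1(B)$ even and $\geq 2$ plus the dimension count in (\ref{Equation:Hypothesis}) ensures this), it is simply connected, so the symplectic line bundle $\mathcal{L}|_L$ is trivial. Hence $E|_L \cong L \times \bP^1$ and the restriction of the tautological disk bundle is $\mathcal{L}_{taut}|_{E|_L} \cong L \times \mathcal{O}(-1)_\delta$ for some $\delta>0$. Using the symplectic-cut description of $\Gamma$ from Remark~\ref{Rem:CorrespondenceViaCut} and monotonicity of the relevant normal disk bundle, the composite correspondence $\Gamma_L$ sits inside this product neighbourhood as $L \times T$, where $T \subset \mathcal{O}(-1)$ is precisely the Clifford-type monotone Lagrangian torus of Lemma~\ref{Lem:ToricFibre}. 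By that Lemma, $HF(T,T) \cong Cl_2 \neq 0$.

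Second, I choose an $\omega$-tame almost complex structure $J$ on $Y$ which has product form $J_B \oplus J_{\mathcal{O}(-1)}$ on the region $L \times \mathcal{O}(-1)_{\delta/2}$, and take product Hamiltonian perturbations of $\Gamma_L = L \times T$. The hypothesis localises every Maslov index 2 disk with boundary on $\Gamma_L$ to the disk bundle $\mathcal{L}_{taut}$, i.e.~inside $E|_B$. Projecting to $B$ by blowing down, these disks become bounded-area configurations with boundary on $L$; by a suitable choice of $J_B$ (e.g.~one for which the projection is pseudo-holomorphic) together with monotonicity, their projections to $B$ can be assumed to lie in an arbitrarily small neighbourhood of $L$. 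The same monotonicity reasoning, using the minimal Maslov number $\geq 2$ to rule out sphere or disk bubbling of non-positive Maslov index, extends to the Floer strips relevant for $\mu^1$ and $\mathfrak{m}_0$, so all such curves may be assumed to live in the product neighbourhood $L \times \mathcal{O}(-1)_{\delta/2}$.

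Third, inside this product region the product form of $J$ forces every $J$-holomorphic strip with boundary on $L \times T$ to split as a pair $(u_B,u_{\mathcal{O}(-1)})$ of strips with boundary on $L$ and on $T$ respectively. A standard K\"unneth argument then yields
\[
HF(\Gamma_L,\Gamma_L) \;\cong\; HF(L,L) \otimes HF(T,T) \;\cong\; H^*(S^{\dim L};\bK) \otimes Cl_2 \;\neq\; 0,
\]
since both factors are non-zero. In particular $\Gamma_L$ represents a non-zero object of $\scrF(Y)$, as claimed.

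The principal technical obstacle is the second step: the hypothesis concerns only Maslov index 2 disks, so one must upgrade it to control every moduli space entering $\mu^1$ (and, more subtly, the one-parameter families used in verifying $\mu^1 \circ \mu^1 = \mathfrak{m}_0 - \mathfrak{m}_0 = 0$). The key ingredients are monotonicity (precluding bubbling of Maslov $\leq 0$), a product-like choice of $J$ and Hamiltonian data, and a maximum-principle argument for the blow-down projection to $B$ to ensure no Floer trajectory leaks out of the product neighbourhood $L \times \mathcal{O}(-1)_{\delta/2}$.
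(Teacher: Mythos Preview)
Your plan differs substantively from the paper's, and the gap you flag in your final paragraph is real and not closed by the ingredients you list. The paper never attempts a direct K\"unneth computation. Instead it feeds the hypothesis into the quantum Gysin dichotomy of Lemma~\ref{Lem:FirstCorrSplits}: for $L\in\scrF(B;0)$ one has $HF(\Gamma_L,\Gamma_L)$ equal either to $0$ (for all such $L$) or to $HF(L,L)\otimes Cl_2$, the outcome being decided by the single scalar $\sigma$ that counts Maslov index~$2$ disks through a global angular chain for $U_{\nu_E}$. Since $\sigma$ is, by its very definition, a Maslov~$2$ disk count, the hypothesis of the Lemma is precisely what is needed to evaluate it. The paper then notes that any Maslov~$2$ disk on $\Gamma_L$ projects (via $p$ of Equation~\ref{Equation:Project}) to a constant in $B\times B$, since a non-constant projection would contribute Maslov $\geq 4$ from the sphere $L$; hence every such disk lies in a fibre $Bl_0(\bC^2)$ over a point of $L$. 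There one is in the model of Lemma~\ref{Lem:ToricFibre}, where $HF(T,T)\neq 0$ already forces $\sigma=1$, and Lemma~\ref{Lem:FirstCorrSplits} finishes the argument.

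The weakness of your route is exactly the upgrade from Maslov~$2$ disks to the Floer strips entering $\mu^1$. You work in the Hamiltonian-perturbation model, so the relevant curves are index~$1$ strips between $\Gamma_L$ and a small pushoff; the hypothesis says nothing about these directly. Your proposed ``maximum-principle argument for the blow-down projection to $B$'' cannot apply as stated: the projection $\mathcal{L}_{taut}\to E\to B$ is only defined on the disk bundle, not on all of $Y$, and there is no barrier preventing a strip from leaving $\mathcal{L}_{taut}$. Monotonicity rules out bad bubbling but does not confine honest strips. A genuine repair would be to pass to the pearl model, where the self-Floer differential is explicitly built from Morse flowlines on $\Gamma_L$ together with a single Maslov~$2$ disk; once one checks (as the paper does) that Maslov~$2$ disks lie in $Bl_0(\bC^2)$-fibres, a product Morse function makes the pearly complex visibly tensor and K\"unneth follows. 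But that is a different argument from the one you wrote, and it essentially re-derives by hand the piece of the Gysin sequence that the paper invokes.
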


\begin{proof}
We continue to write $\Gamma = \Gamma_R\circ \Gamma_E$.
  There is a canonical projection
\begin{equation} \label{Equation:Project}
p: B \times Bl_B(\mathcal{U}) \ \longrightarrow \ B\times B
\end{equation}
under which $\Gamma \mapsto \Delta_B$. Any Lagrangian sphere $L\subset B$ necessarily lies in $\scrF(B;0)$ since $B$ has minimal Chern number $\geq 2$. Disks with non-constant image in $B \times B$ under the map of Equation \ref{Equation:Project} have Maslov number at least 4. Therefore the only Maslov index 2 disks with boundary on $\Gamma_E(L\times S^1_{eq})$ lie in homotopy classes which project trivially to $B\times B$, and thus lie in $Bl_0(\bC^2)$-fibres of the projection $p$, with boundary on $T^2$-fibres of $\Gamma \cong T^2\tilde{\times} B$. All such disks are therefore visible in the model of Lemma \ref{Lem:ToricFibre}, and in that case we know $\sigma = 1$ since we know \emph{a priori} that the Floer cohomology of the torus $T$ was non-vanishing.   (In the model one can check explicitly that only one of the three holomorphic disks through the generic point of $T$ has boundary passing through a global angular chain for the unit circle fibration of $\mathcal{O}(-1) \rightarrow \bP^1$.) 
\end{proof}

Since $Y = Bl_B(X)$ is the blow-up of a base locus of a pencil of hyperplanes $\{H_t\}$, it has a natural fibration structure $\pi: Y\rightarrow \bP^1$, with fibre homologous to $\pi^*H - E$.  
 We choose a holomorphic volume form $\Omega$ on $Y$ which has poles along the reducible divisor
\begin{equation} \label{Equation:HolVolForm}
D_{\Omega} \ = \ E + [\pi^{-1}(0)] + [\pi^{-1}(\infty)] + d H_{ref}
\end{equation}
where we recall $c_1(X) = (d+2)H$. For suitable choices of fibres $H_0 = \pi^{-1}(0)$ and $H_{\infty} = \pi^{-1}(\infty)$ we have $L\times T^2 \subset Y\backslash D_{\Omega}$.  Remark \ref{Remark:LocalSlag} implies that $L\times T^2$ can be graded with respect to $\Omega$. 

\begin{Lemma} \label{Lem:MaslovViaIntersection}
Suppose $\dim_{\bC}(B) > 1$.  Then Maslov index 2 disks with boundary on $L \times T^2$ meet exactly one of $E$, $H_0$ and $H_{\infty}$ once, and are disjoint from $H_{ref}$.  In particular, no such Maslov index 2 disk can project onto $\bP^1$ under $\pi$.
\end{Lemma}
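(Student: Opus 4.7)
The plan is to apply the index formula \eqref{Eqn:IndexFormula} for the meromorphic volume form $\Omega$, whose pole divisor is $D_\Omega = E + [\pi^{-1}(0)] + [\pi^{-1}(\infty)] + dH_{ref}$. The first thing to verify is that $D_\Omega$ represents the anticanonical class of $Y$: since $Y \to X$ blows up the codimension-two smooth base locus $B$, we have $c_1(Y) = b^*c_1(X) - \textrm{PD}[E] = (d+2)H - E$, while each fibre of $\pi$ has class $b^*H - E$ and $H_{ref}$ has class $b^*H$. Adding gives $E + 2(b^*H - E) + d\, b^*H = (d+2)H - E = c_1(Y)$, so $\Omega$ indeed has simple poles along $E, \pi^{-1}(0), \pi^{-1}(\infty)$ and a pole of order $d$ along $H_{ref}$. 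To secure the grading of $L \times T^2$ with respect to $\Omega$, I would appeal to Remark~\ref{Remark:LocalSlag}: near each point of $L$, the Lagrangian $L \times T^2$ is locally modelled on the product of $L$ with the special Lagrangian Clifford torus in the blown-up $\bC^2$-chart, whose local pole divisor is exactly the restriction of $E \cup H_0 \cup H_\infty$. The hypothesis $\dim_\bC B > 1$ forces $L$ to be a Lagrangian sphere of real dimension $\geq 4$, hence simply connected, so the Maslov class of $L \times T^2$ reduces to that of the $T^2$-factor; this vanishes by the local special Lagrangian condition and provides the desired global grading.

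With the grading in hand, the index formula yields
\[
E \cdot u + H_0 \cdot u + H_\infty \cdot u + d\, H_{ref} \cdot u = 1
\]
for any Maslov index 2 disk $u$ with boundary on $L \times T^2$. I would pick the tame almost complex structure $J$ so that each of the four irreducible components of $D_\Omega$ is $J$-holomorphic, which is achievable within the class of tame $J$ used to define the moduli spaces since Maslov 2 disks on a monotone Lagrangian are generically somewhere injective and transversality can be arranged by perturbing $J$ away from the divisors. Positivity of intersections then forces each summand to be a non-negative integer. Since $d \geq 2$, this immediately gives $H_{ref} \cdot u = 0$, and the remaining constraint $E \cdot u + H_0 \cdot u + H_\infty \cdot u = 1$ with non-negative summands leaves only the three possibilities claimed: the disk meets exactly one of $E, H_0, H_\infty$ once and is disjoint from the other two.

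The final assertion follows by contradiction. If $\pi \circ u \colon D \to \bP^1$ were surjective, then both $0$ and $\infty$ would lie in its image, and positivity of intersections (applied to $u$ and the $J$-holomorphic fibres $\pi^{-1}(0)$ and $\pi^{-1}(\infty)$) would force $H_0 \cdot u \geq 1$ and $H_\infty \cdot u \geq 1$, so $H_0 \cdot u + H_\infty \cdot u \geq 2$, contradicting the bound just obtained. The main technical obstacle is verifying that $J$ can be chosen simultaneously to make all four components of $D_\Omega$ pseudoholomorphic while remaining compatible with the perturbation data used in defining the Fukaya-theoretic moduli spaces for $L \times T^2$; the other point that needs care is the local-to-global compatibility of the phases needed to assemble a global $\Omega$-grading on $L \times T^2$ from the Clifford-torus models.
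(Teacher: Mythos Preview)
Your proposal is correct and follows essentially the same approach as the paper: both identify the Maslov index with twice the intersection number against $D_\Omega$, then use positivity of intersections together with $d\geq 2$ to force $H_{ref}\cdot u=0$ and exactly one hit among $E,H_0,H_\infty$. The paper's treatment of the grading step is slightly slicker: rather than patching local special-Lagrangian phases, it observes that $\pi_1(L)=0$ makes $H_2(B)\to H_2(B,L)$ surjective, so the Maslov index of any disk in $B$ with boundary on $L$ is already computed by $2c_1(B)$ (i.e.\ by intersection with $dH_{ref}\cap B$), which immediately globalises the identification of Maslov index with $D_\Omega$-intersection and sidesteps your stated local-to-global concern.
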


\begin{proof}
Since $dim_{\bC}(B)>1$, the Lagrangian sphere $L$ is simply connected, which implies that $H_2(B) \rightarrow H_2(B,L)$ is surjective.  One can therefore assume that the Maslov index for holomorphic disks in $B$ with boundary on $L$ is given by intersection number with $dH_{ref} \cap B$.  The choice of holomorphic volume form then implies that Maslov index of disks with boundary on $L\times T^2$ is given by their intersection number with $D_{\Omega}$.  The result follows.
\end{proof}

\begin{Example} (Auroux)
The hypothesis on $\dim(B)$ plays a definite role. Take a pencil of linear hyperplanes in $\bP^3$ with base locus $\bP^1$ and blow that up.  This may be done torically, and we obtain a toric variety whose moment map is a truncated tetrahedron.  The Lagrangian $L\times T^2$ is just the monotone torus $T^3$ which is the central fibre of that moment map, and it bounds one family of Maslov index 2 disks for each facet of the moment polytope, in particular it bounds 5 families, \emph{not all of which} miss the reference hyperplane $H_{ref}$ if this is taken to be one face of the tetrahedron. 

This is because the original Lagrangian $L\subset B$ is an equatorial $S^1 \subset \bP^1$, in particular it bounds Maslov index 2 disks. The natural map $H_2(B) \rightarrow H_2(B,L)$ is not surjective; although the first Chern class ``contains" $H_{ref}$ with multiplicity $d=2$, the actual toric divisor which naturally represents it does not contain $H_{ref}$ with multiplicity 2, but rather $H_{ref}$ and another homologous component.
\end{Example}

Away from $B \cap H_{ref}$, the original net of hyperplanes on $X$ defines a map to $\bP^2$.  Blowing up along $B$, one obtains a map $\phi$ from $Y \backslash (E\cap H_{ref})$ to the first Hirzebruch surface $\bF_1$.  Then
\begin{enumerate}
\item the projection map $\pi$ is the composite $Y \backslash (E\cap H_{ref}) \stackrel{\phi}{\longrightarrow} \bF_1 \rightarrow \bP^1$, with the second map the natural projection;  
\item the proper transforms of $H_0$ and $H_{\infty}$ lie over the fibres of $\bF_1$ over $0, \infty \in \bP^1$;
\item the divisors $E$ and $H_{ref}$ (minus their common intersection) live over the $-1$ respectively $+1$ section of $\bF_1$.
\end{enumerate}
This is pictured schematically in Figure \ref{Fig:Hirz}, via the usual moment map image for $\bF_1$.  The fibre over the centre of gravity $\bullet$  is a monotone Lagrangian torus $\bT \subset \bF_1$. \begin{center}
\begin{figure}[ht]
\includegraphics[scale=0.6]{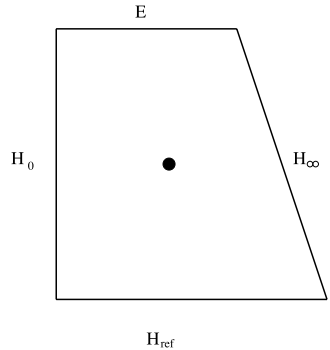} 
\caption{The moment polytope for $\mathbb{F}_1$ and associated divisors\label{Fig:Hirz}}
\end{figure}
\end{center}

\begin{Lemma} \label{Lem:NonZero}
Assuming that $\dim_{\bC}(B)>1$ and that the hypotheses (\ref{Equation:Hypothesis}, \ref{Equation:Hypothesis2}) hold, then $\Gamma_L \neq 0$ for a Lagrangian sphere $L\subset B$.
\end{Lemma}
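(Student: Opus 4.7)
The plan is to apply Lemma \ref{Lem:DisksLocalise}: it suffices to show every Maslov index $2$ holomorphic disk $u\colon(D,\partial D)\to(Y,\Gamma_L)$ has image contained in the disk bundle neighbourhood $\mathcal{L}_{taut}$ of $E$. By Lemma \ref{Lem:MaslovViaIntersection}, any such $u$ intersects exactly one of $E$, $H_0$, $H_\infty$ transversely once, misses $H_{ref}$, and has $\pi\circ u$ non-surjective onto $\bP^1$. In particular $u$ misses the indeterminacy locus $E\cap H_{ref}$ of $\phi$, so $v:=\phi\circ u\colon D\to\bF_1$ is a well-defined holomorphic disk with boundary on the monotone torus fibre $\bT\subset\bF_1$, obeying $v\cdot\sigma^+=u\cdot H_{ref}=0$ and $v\cdot\sigma^-+v\cdot F_0+v\cdot F_\infty = 1$.

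The toric classification gives four fundamental relative classes of Maslov $2$ disks for $\bT$ in $H_2(\bF_1,\bT)$, one for each facet of the trapezoidal moment polytope of $\bF_1$, and the constraint $v\cdot\sigma^+=0$ rules out the class $D_{\sigma^+}$. Hence $v$ lies in one of $D_{\sigma^-}, D_{F_0}, D_{F_\infty}$, each representable by a disk contained in the affine chart $\bF_1\setminus\sigma^+\cong Bl_0(\bC^2)$ near the relevant facet; these are precisely the three families of local Maslov $2$ disks through the monotone Clifford-type torus of Lemma \ref{Lem:ToricFibre}. Using the local product structure $\mathcal{L}_{taut}\cong B\times Bl_0(\bC^2)$ of Remark \ref{Rem:CorrespondenceViaCut}, in which $\phi$ factors through the second projection, write $u=(u_B,\tilde v)$; adding Maslov indices along this splitting yields $\mu_B(u_B)=\mu_Y(u)-\mu_{Bl_0(\bC^2)}(\tilde v)=0$. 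Since $L\subset B$ is simply-connected (as $\dim_\bC B>1$) and $B$ has minimal Chern number $\geq 2$ by hypothesis \eqref{Equation:Hypothesis}, the minimal Maslov index of $L$ in $B$ is $\geq 4$. Consequently $u_B$ must be constant at some $pt\in L$, and $u=(pt,\tilde v)$ lies entirely within $\mathcal{L}_{taut}$.

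The hard part is the apparent circularity of the splitting argument, which a priori presupposes that $u$ already lies inside $\mathcal{L}_{taut}$ in order to apply the product decomposition. I would close this gap by a Gromov-compactness and dimension count: the constructed family $\{(pt,\tilde v) : pt\in L,\,\tilde v\text{ a local Maslov }2\text{ disk in }Bl_0(\bC^2)\}$ already realises a smooth moduli space of the expected dimension, while any hypothetical additional disk escaping $\mathcal{L}_{taut}$ and projecting under $\phi$ to one of the three fundamental $\bF_1$-classes could only bubble in Gromov limits onto disks on $L\subset B$ of Maslov index $\geq 4$ or onto spheres of positive Chern number, both incompatible with the total Maslov $2$ hypothesis. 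The hypothesis of Lemma \ref{Lem:DisksLocalise} is therefore satisfied and $\Gamma_L\not\simeq 0$.
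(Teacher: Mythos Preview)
Your approach has the right geometric intuition---projecting to $\bF_1$ via $\phi$ and classifying Maslov $2$ disks there---and in fact the paper sketches exactly this viewpoint in the Remark immediately following its proof. But the circularity you flag is real, and your proposed Gromov-compactness fix does not work as written: bubbling is a phenomenon for \emph{sequences} of curves, not for a single Maslov $2$ disk, so there is no mechanism by which ``a hypothetical additional disk escaping $\mathcal{L}_{taut}$'' would bubble at all. Knowing only the relative homotopy class of $v=\phi\circ u$ in $H_2(\bF_1,\bT)$ does not control where $u$ goes in the fibre direction of $\phi$, and without already knowing $u$ stays in $\mathcal{L}_{taut}$ there is no product splitting $u=(u_B,\tilde v)$ to invoke. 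There is also an earlier unjustified step: you assert that $\partial v$ lands on the monotone torus $\bT\subset\bF_1$, but the construction of $\Gamma_L$ does not make $\phi(\Gamma_L)=\bT$ automatic; arranging this already requires the degeneration (see the paper's Remark).

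The paper resolves both issues with a single move: a \emph{deformation to the normal cone}. One blows up $Y\times\bC$ along $E\times\{0\}$, obtaining a family $p:W\to\bC$ whose central fibre has two components $C_0\cup C_1$, with $C_1$ the $\bF_1$-bundle over $B$ carrying the monotone Lagrangian $K_L=L\times T^2$. A Lagrangian cobordism $\mathbf{L}$ over $\bR_{\geq 0}$ interpolates between $\Gamma_L\subset Y$ in the generic fibre and $K_L\subset C_1$. Now Gromov compactness is applied correctly: given Maslov $2$ disks $u_k$ in fibres over $t_k\to 0$, the limit $u_\infty$ must contain a disk component in $C_1$ (Maslov $\geq 2$) and any sphere component in $C_0$ has positive Chern number (since $C_0$ is Fano), so in fact $u_\infty$ is entirely in $C_1\setminus(C_0\cap C_1)$. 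Hence for $k\gg 0$ the disks $u_k$ lie in the disk-bundle neighbourhood of $E$, and Lemma~\ref{Lem:DisksLocalise} applies. The degeneration is the missing ingredient that makes your localization argument rigorous.
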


\begin{proof}
The proof uses a degeneration argument modelled on deformation to the normal cone; compare to \cite[Section 5]{Seidel:Flux}.  We symplectically blow up $Y\times \bC$ along $E\times \{0\}$; the resulting space $W$ has an obvious projection $p$ to $\bC$, which we take to be holomorphic.  The zero-fibre has two components, one of which $C_0$ is diffeomorphic to $E$ but with a K\"ahler form of smaller volume (shifted in cohomology class by a real multiple of the divisor $E$), and the other of which $C_1$ is the projective completion of the normal bundle to $E$, hence is the total space of an $\bF_1$-bundle over $B$.  We can arrange that:
\begin{enumerate}
\item The total space $W$ of the blow-up carries a K\"ahler form $\Omega$ which on the component $C_1$ co-incides with the form obtained by symplectic reduction of the co-isotropic $U_{\nu_E} \subset Y$ appearing in Lemma \ref{Lem:Monotone}. In particular, $C_1$ contains the monotone Lagrangian submanifold $K_L= L\times T^2$ which is the Clifford torus bundle over $L\subset B$, as in Remark \ref{Rem:CorrespondenceViaCut}. 
\item The total space $(W, \Omega)$ of the blow-up contains a Lagrangian submanifold $\textbf{L}$ diffeomorphic to $L \times T^2 \times [0,\infty)$, fibring over the positive real half-line in $\bC$, with $p^{-1}(0) \cap \textbf{L} = K_L$.
\item The generic fibre of $\pi$ is a K\"ahler manifold symplectomorphic to $Y$ with its monotone form, and the Lagrangian submanifold $\textbf{L} \cap \pi^{-1}(t)$ is Hamiltonian isotopic to $\Gamma_E(L)$.
\end{enumerate}
The proof relies on a picking a suitable symplectic connexion on the regular part of the fibration near the component $C_1$ and parallel transporting $K_L$ along the real axis, compare to an identical construction in the case $B=\{pt\}$ given in \cite[Section 4]{SeidelSmith:Ramanujam}. The last statement in (3) follows from the reformulation of $\Gamma_E$ given in Remark \ref{Rem:CorrespondenceViaCut}.

We now consider a sequence $(u_k: D^2 \rightarrow W)$ of holomorphic disks in $W$ with boundary on the total Lagrangian $\textbf{L}$.  Assume moreover that for each $k$, the disk $u_k$ has image inside the fibre $p^{-1}(t_k)$ for some sequence $t_k \rightarrow 0$ of positive real numbers.  Gromov compactness in the total space $(W,\textbf{L})$ implies that there is some limit stable disk $u_{\infty}$, but since the projections $p\circ u_t$ are constant maps, that limit disk must lie entirely in the zero fibre of $p$.  The general description of a Gromov-Floer limit of disks as a tree of holomorphic disks and sphere bubbles, see for instance \cite{Frauenfelder},  implies that $u_{\infty}$ has at least one disk component inside $C_1$, with boundary on $K_L$, and that any component of $u_{\infty}$ meeting $C_0 \backslash (C_0 \cap C_1)$ must be a rational curve, since $\textbf{L} \cap C_0 = \emptyset$.

We now suppose that all the disks $u_k$ have Maslov index 2. Any non-constant disk in $C_1$ with boundary on $K_L$ has Maslov index $\geq 2$, by monotonicity.  Moreover, any non-trivial holomorphic sphere in $C_0$ has strictly positive Chern number, since that space is a Fano variety. It follows that the limit disk $u_{\infty}$ has no component contained in $C_0$, and indeed no component  which intersects the divisor $C_0 \cap C_1$ (such a component would be glued to a non-constant sphere in the component $C_0$ or it would not be locally smoothable to the nearby fibre; compare to  the degeneration formula for Gromov-Witten invariants \cite{IP}). Therefore, for sufficiently large $k$, the holomorphic disk $u_k$ must live inside the open neighbourhood of $E\subset \pi^{-1}(t_k)$ bound by $U_{\nu_E}$. 
The result now follows from Lemma \ref{Lem:DisksLocalise}.
\end{proof}

\begin{Remark}
Another viewpoint on the previous proof is also informative. The fibration over $\bF_1$ has a (generally singular) discriminant curve, whose amoeba projects to some complicated subset of the moment map image.  However, this image is disjoint from the $-1$-section, because we originally chose $H_{ref}$ transverse to $B$ (i.e. a net of hypersurfaces with smooth base locus).  A holomorphic rescaling of $\bF_1$ (the lift of the holomorphic action on $\bP^2$ taking $[x:y:z] \mapsto [x:y:cz]$ where we obtain $\bF_1$ from blowing up $[0:0:1]$) will move the discriminant into a small neigbourhood of $H_{ref}$.  On the other hand, the symplectic structure on $Y$ was constructed, via inflation, to contain a large standard symplectic disk bundle, containing the monotone Lagrangian $L\times T^2$, cf. the proof of  Lemma \ref{Lem:Monotone}.  
In consequence, we may assume that the monotone $L\times T^2$ projects into the upper half of Figure \ref{Fig:Hirz}, whilst the discriminant lies in the lower half.  

The effect of deformation to the normal cone is to make the K\"ahler form standard in a neighbourhood of $E$ containing $L\times T^2$.   Suppose we start with a net of sections obtained by perturbation from a net on the normal crossing degeneration which are degree 1 in the $\bP^2$-fibres of $\bP(\mathcal{L}^{\otimes 2}\oplus\mathcal{O})$ (we have essentially linearised, replacing $s_0$ and $s_1$ by $Ds_0, Ds_1$ which span the normal bundle of $B \subset X$).  As in Remark \ref{Rem:CorrespondenceViaCut}, one can then suppose -- after smoothing and small compactly supported perturbation of the symplectic form near the normal crossing divisor -- that $L\times T^2$ fibres over $\bT$.  There is now a  Lagrangian isotopy of $L\times T^2$,  moving the image point in the moment polytope towards the facet $E$ at the top of Figure \ref{Fig:Hirz}. Since this is not a Hamiltonian isotopy, and the Lagrangian does not stay monotone, there may \emph{a priori} be wall-crossing. However,  Maslov index zero disks  project holomorphically to $\bF_1$, where Maslov index is given by intersection number with the toric boundary.  The projection of any such wall-crossing disk must therefore be constant, so it in fact lay inside a fibre of the projection $\phi$.  Equivalently, the disk would lie inside a copy of $B$.  The Lagrangian isotopy viewed inside $B$ \emph{is} monotone, since $\pi_1(L)=0$, so in fact there is no wall-crossing.  We move $L\times T^2$ into a small neighbourhood of $E$ without changing the families of Maslov 2 disks which it bounds.  These can now be understood by projection to $\bF_1$; each disk hits one of $E, H_0, H_{\infty}$, and there are 3 well-known families of such disks, which were those visible in Lemma \ref{Lem:DisksLocalise}. \end{Remark}

Lemma \ref{Lem:FirstCorrSplits} now says  that $HF(\Gamma_L, \Gamma_L) \cong HF(L,L) \otimes Cl_2$ as a module over $QH^*(B;0)$.  Although the ring structure on the second tensor factor is not \emph{a priori} determined by the quantum Gysin sequence, for degree reasons the Floer product amongst the codimension 1 cycles given by the circle factors $L \times \{pt\} \times S^1$ and $L \times S^1 \times \{pt\}$ is completely determined by disks of Maslov index 2.  The constraints on the  Maslov 2 disks obtained in the proof of Lemma \ref{Lem:NonZero} are sufficient to imply that 
\[
HF(\Gamma_L, \Gamma_L) \ \cong \ HF(L,L) \otimes Cl_2
\] as a ring. This is true for all Lagrangian spheres $L\in \scrF(B;0)$, hence for any such $L$ the image $\Gamma_L$ has a non-trivial idempotent.  We can analyse the Floer cohomology $HF(\Gamma, \Gamma)$ of the entire correspondence $\Gamma \subset B\times Y$ via an analogous Gysin sequence, using the fact that $\Gamma$ is the image of a Lagrangian submanifold $\Delta_B \times S^1_{eq} \subset B\times E$ under a correspondence $B\times E \rightsquigarrow B\times Y$ (the appropriate symplectic forms come from Remark \ref{Rem:MonotoneFormFromCut}).  Taking the $0$-generalised eigenspace for the action of $c_1(B)$, we again arrive at the mapping cone of a quantum product
\begin{equation} \label{Eqn:QproductCorr}
\ast (e+ \sigma \cdot 1): QH^*(B;0) \otimes HF(S^1_{eq}, S^1_{eq}) \longrightarrow QH^*(B;0) \otimes HF(S^1_{eq}, S^1_{eq})
\end{equation}
with $e = h\otimes 1 + 1 \otimes (-t)$ as before.  Now $h$ acts nilpotently, rather than trivially, on $QH^*(B;0)$.  Lemma \ref{Lem:FirstCorrSplits}, and non-triviality of $\Gamma_L$ for some $L$, shows that $\sigma = 1$ in \eqref{Eqn:QproductCorr}, which means that the second term $1 \otimes (\sigma \cdot 1 - t)$ acts trivially on $HF(S^1_{eq}, S^1_{eq})$, so $HF(\Gamma, \Gamma; 0_B)$ inherits a Clifford algebra factor. The idempotents for the individual $\Gamma_L$ derived above have as common source an idempotent for $\Gamma$ arising from splitting this Clifford algebra. We deduce that the functor $\Phi_{\Gamma}$ associated to $\Gamma$ by quilt theory has non-trivial idempotents, if one views its domain as the nilpotent summand $\scrF(B;0)$. 
 We now appeal to Lemma \ref{Lem:SplitIdempotentFunctors} to see that there is a functor $\Phi_{\Gamma}^+: Tw\,\scrF(B;0) \rightarrow Tw^{\pi}\,\scrF(Y)$ associated to either choice of idempotent in the Clifford algebra factor (strictly, associated to a choice of idempotent up to homotopy for the chosen cohomological idempotent).

\begin{Lemma} \label{Lem:FullyFaithful}
The functor $\Phi_{\Gamma}^+$ is fully faithful.
\end{Lemma}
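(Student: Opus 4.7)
The plan is to show that for any $L_0, L_1 \in \scrF(B;0)$ the induced map on morphism spaces
\[
HF(L_0,L_1) \ \longrightarrow \ HF(\Phi_{\Gamma}^+(L_0), \Phi_{\Gamma}^+(L_1)) \ = \ e^+ \cdot HF(\Gamma_{L_0},\Gamma_{L_1}) \cdot e^+
\]
is an isomorphism, where $e^+ \in Cl_2$ denotes the chosen cohomological idempotent and $\Phi_{\Gamma}^+$ arises from an idempotent up to homotopy lifting it via Lemma \ref{Lem:SplitIdempotentFunctors}.  The computation proceeds in three steps.

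First, I would generalise the computation of $HF(\Gamma_L,\Gamma_L)$ in Lemma \ref{Lem:FirstCorrSplits} to pairs $L_0,L_1 \in \scrF(B;0)$.  By the K\"unneth theorem in Floer theory,
\[
HF(L_0\times S^1_{eq},L_1\times S^1_{eq}) \ \cong \ HF(L_0,L_1)\otimes HF(S^1_{eq},S^1_{eq}),
\]
and the quantum Gysin sequence of Theorem \ref{Thm:Gysin} presents $HF(\Gamma_{L_0},\Gamma_{L_1})$ as the mapping cone of quantum cup product with the corrected Euler class $h\otimes 1 + 1\otimes(\sigma\cdot 1-t)$.  For Lagrangians in the nilpotent summand, Lemma \ref{Lem:quantumcap} forces $h \mapsto 0$ in Floer cohomology, while Lemma \ref{Lem:NonZero} has already established $\sigma=1$.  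Both pieces of the Euler class therefore act trivially, giving a module isomorphism
\[
HF(\Gamma_{L_0},\Gamma_{L_1}) \ \cong \ HF(L_0,L_1)\otimes Cl_2.
\]

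Second, I would promote this to a $Cl_2$-bimodule isomorphism, compatible with the Floer triangle product
\[
HF(\Gamma_{L_1},\Gamma_{L_2}) \otimes HF(\Gamma_{L_0},\Gamma_{L_1}) \ \longrightarrow \ HF(\Gamma_{L_0},\Gamma_{L_2}).
\]
The bimodule structure on the left-hand tensor product comes from pre- and post-composition with $HF(\Gamma,\Gamma)$ via the MWW $A_{\infty}$-functor of Theorem \ref{Thm:QuiltsGiveFunctors}, and the needed compatibility reduces to a local statement about triangle products on $\Gamma \simeq T^2\tilde\times B$.  By the disk-localisation argument in Lemmata \ref{Lem:DisksLocalise} and \ref{Lem:NonZero}, Maslov index two disks with boundary on any $\Gamma_L$ (hence the low-energy holomorphic triangles contributing to these products) fibre over the $Bl_0(\bC^2)$-fibres of the projection $p$ of \eqref{Equation:Project} and are captured by the toric model of Section \ref{Section:BlowupC2}; higher-energy curves necessarily project to disks or polygons in $B$ of Maslov number $\geq 4$ and so contribute to the $HF(L_0,L_1)$ factor. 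This separation of variables lets me identify the product with the tensor of the Floer product on $B$ and the Clifford product on the $T^2$-fibre, so the second factor genuinely acts as $Cl_2$ on both sides.

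Third, once this bimodule identification is established, the conclusion is algebraic: cutting down by the idempotent $e^+$ on both sides gives
\[
e^+\cdot HF(\Gamma_{L_0},\Gamma_{L_1})\cdot e^+ \ \cong \ HF(L_0,L_1)\otimes\bigl(e^+Cl_2 e^+\bigr) \ \cong \ HF(L_0,L_1),
\]
since $Cl_2 \cong M_{1|1}(\bC)$ has rank-one corner algebras, and the map induced by $\Phi_{\Gamma}^+$ is the natural inclusion, hence an isomorphism.  The main obstacle is the second step: the Gysin sequence alone only provides a $QH^*(B;0)$-module identification, and promoting it to a $Cl_2$-bimodule identification compatible with the triangle product requires genuinely controlling the quilted holomorphic polygons defining $\Phi_{\Gamma}$.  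This is exactly where the geometric input of Lemma \ref{Lem:NonZero} — localising low-index disks to a standard neighbourhood of $E$ where the toric model of Lemma \ref{Lem:ToricFibre} applies — is essential, and where one must verify that the chain-level idempotent transported by the MWW functor reduces in that local model to the Clifford idempotent $e^+$.
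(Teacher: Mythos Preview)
Your approach is essentially the same as the paper's: both compute $HF(\Gamma_{L_0},\Gamma_{L_1}) \cong HF(L_0,L_1)\otimes Cl_2$ via the Gysin sequence, identify the idempotent action with that of the fixed Clifford idempotent in the $Cl_2$ tensor factor, and then invoke Equation~\ref{Eq:AbstractIdempotentModules} together with $e^+ Cl_2\, e^+ \cong \bK$. The paper is considerably more terse about your Step~2, simply asserting that ``the idempotents both come from a fixed idempotent $p \in HF(T^2,T^2)$'' by reference to the proof of Lemma~\ref{Lem:FirstCorrSplits} and the ring-structure discussion following Lemma~\ref{Lem:NonZero}; it also restricts from the outset to simply-connected $L_i$ (the only case needed later), which lets the $T^2$-bundle trivialise over each $L_i$ and makes the tensor splitting more transparent.
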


\begin{proof}
Let $L_i$, $i=1,2$, be objects of $\scrF(B;0)$. For simplicity we suppose they are simply-connected (this is the only case which shall be used in the sequel).  The functor $\Phi_{\Gamma}^+$ takes these to idempotent summands of $L_i\times T^2$, which we formally denote by $(L_i\times T^2)^+$.  According to Equation \ref{Eq:AbstractIdempotentModules}, we know
\[
HF((L_i\times T^2)^+, (L_j \times T^2)^+) \ \cong \ (+_i)HF(L_i\times T^2, L_j\times T^2)(+_j)
\]
where the RHS denotes the action on the cohomological morphism group of the relevant idempotents under the module structure of that group for $HF(L_i\times T^2, L_i\times T^2)$ on the left and $HF(L_j\times T^2, L_j\times T^2)$ on the right. From the proof of Lemma \ref{Lem:FirstCorrSplits}, the idempotents both come from a fixed idempotent $p \in HF(T^2,T^2)$, via an isomorphism
\[
HF((L_i\times T^2)^+, (L_j \times T^2)^+) \ \cong \ HF(L_i,L_j) \otimes p\cdot HF(T^2,T^2)\cdot p.
\] 
The second tensor factor is just a copy of the coefficient field $\bK$, completing the proof.
\end{proof}

\begin{Proposition} \label{Prop:BlowUpEmbeds}
Assuming  (\ref{Equation:Hypothesis}, \ref{Equation:Hypothesis2}) and that $\dim_{\bC}(B)>1$, there is a fully faithful embedding $D^{\pi}\scrF(B;0) \hookrightarrow D^{\pi}\scrF(Y).$
\end{Proposition}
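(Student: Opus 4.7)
The plan is to assemble the statement from the pieces already constructed. First, apply the Mau--Wehrheim--Woodward theorem (Theorem \ref{Thm:QuiltsGiveFunctors}) to the monotone Lagrangian correspondence $\Gamma = \Gamma_R \circ \Gamma_E \subset B^- \times Y$ provided by Lemma \ref{Lem:Monotone}. This gives a $\bZ_2$-graded $A_{\infty}$-functor $\Phi_\Gamma : \scrF^{\#}(B) \to \scrF^{\#}(Y)$; by Theorem \ref{Thm:QuiltsCompose} this is quasi-isomorphic to $\Phi_{\Gamma_R} \circ \Phi_{\Gamma_E}$, and by Lemma \ref{Lem:ExtendedFukNotNeeded} (and the hypothesis $\pi_1(B) = 0$, ensuring $\Gamma$-images of objects of $\scrF(B)$ are smooth embedded Lagrangians in $Y$) we may replace it by a quasi-isomorphic functor $\scrF(B) \to \scrF(Y)$.

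Next, restrict attention to the nilpotent summand $\scrF(B;0)$. For any $L \in \scrF(B;0)$, Lemma \ref{Lem:FirstCorrSplits} computes $HF(\Gamma_L,\Gamma_L)$ via the Lagrangian quantum Gysin sequence as the cone on quantum product by a corrected Euler class $e + \sigma \cdot 1$; the component $h \otimes 1$ of $e$ is killed by Lemma \ref{Lem:quantumcap} on $\scrF(B;0)$, so the outcome is dichotomous---either $\Gamma_L$ is quasi-isomorphic to zero for every such $L$ (when $\sigma \neq 1$), or $HF(\Gamma_L,\Gamma_L) \cong HF(L,L) \otimes Cl_2$ (when $\sigma = 1$). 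To rule out the first alternative, I would apply Lemma \ref{Lem:NonZero}: using the Lagrangian sphere $L \subset B$ provided by hypothesis \eqref{Equation:Hypothesis} together with the deformation-to-the-normal-cone argument (which constrains all Maslov $2$ disks to lie in the normal disk bundle $\mathcal{L}_{taut}$ via Lemma \ref{Lem:DisksLocalise} and the intersection-theoretic control of Lemma \ref{Lem:MaslovViaIntersection}), we have $\Gamma_L \not\simeq 0$, forcing $\sigma = 1$.

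An analogous quantum Gysin argument, applied to the correspondence $\Gamma$ itself as the image of $\Delta_B \times S^1_{eq} \subset B \times E$ and now taking only the $0$-generalised eigenspace for multiplication by $c_1(B)$, shows that $HF(\Gamma,\Gamma; 0_B)$ likewise contains a Clifford algebra factor $Cl_2$. A choice of non-trivial idempotent $e^+ \in Cl_2$ lifts (by \cite[Lemma 4.2]{FCPLT}) to an idempotent up to homotopy for $\Gamma$, and is compatible under the $QH^*$-module structure with the fibrewise Clifford idempotents on each $\Gamma_L$. Feeding this data into Lemma \ref{Lem:SplitIdempotentFunctors}, we obtain an induced functor
\[
\Phi_\Gamma^+ \colon Tw\, \scrF(B;0) \longrightarrow Tw^{\pi}\, \scrF(Y).
\]
Lemma \ref{Lem:FullyFaithful} then shows that $\Phi_\Gamma^+$ is cohomologically fully faithful on simply-connected objects, and the same argument applies more generally since the module structure used there only requires the common Clifford idempotent in $HF(T^2,T^2)$. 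Passing to split-closures and cohomology yields the desired fully faithful embedding $D^{\pi}\scrF(B;0) \hookrightarrow D^{\pi}\scrF(Y)$.

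The real content has already been discharged in Lemmata \ref{Lem:Monotone}--\ref{Lem:FullyFaithful}; the principal subtlety in the proof above is the passage from the fibrewise idempotents (one per object $L$) to a single idempotent on the total correspondence $\Gamma$, so that Lemma \ref{Lem:SplitIdempotentFunctors} applies uniformly. This is precisely why we need the Gysin-type computation for the total space $HF(\Gamma,\Gamma;0_B)$ rather than just pointwise information, and it is the reason the hypothesis that the idempotent splitting is consistent across the family (encoded in the $QH^*(B)$-module structure) must be checked. Once this step is in place the remainder of the argument is formal.
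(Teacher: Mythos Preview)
Your proposal is correct and follows exactly the paper's approach: the paper's own proof is the single sentence ``This is exactly the content of Lemmas \ref{Lem:ExtendedFukNotNeeded} and \ref{Lem:FullyFaithful},'' and your write-up simply makes explicit the logical assembly of Lemmata \ref{Lem:Monotone}--\ref{Lem:FullyFaithful} together with the global-idempotent discussion around Equation \eqref{Eqn:QproductCorr} that immediately precedes Lemma \ref{Lem:FullyFaithful}. You have correctly identified the one genuine subtlety, namely passing from the fibrewise Clifford idempotents on each $\Gamma_L$ to a single idempotent on the total correspondence $\Gamma$ so that Lemma \ref{Lem:SplitIdempotentFunctors} applies.
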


This is exactly the content of Lemmas  \ref{Lem:ExtendedFukNotNeeded} and \ref{Lem:FullyFaithful}.
Now take $B=Q_0\cap Q_1 \subset \bP^{2g+1}$ for some $g>1$, so the space $Y \rightarrow \bP^1$ is the relative quadric.  Lemma \ref{Lem:AmpleBlowUp} shows that the crucial hypothesis (\ref{Equation:Hypothesis2}) holds, and Proposition \ref{Prop:BlowUpEmbeds} yields one of the embeddings of  Theorem \ref{Thm:Embed}.

\subsection{Spinning\label{Section:Spinning}}
 There is a more geometric approach to the material of Section \ref{Section:Idempotent}, relevant for proving Addendum \ref{Thm:Action}.  Let $\nu \rightarrow H$ be a real rank 2 symplectic vector bundle over a symplectic manifold $(H,\omega_H)$, and equip $\nu$ with a Hermitian metric.  Suppose the first Chern class $c_1(\nu) = t [\omega_H] \in H^2(H)$ is positively proportional to the symplectic form.  The symplectic form on the total space of the disk bundle of $\nu$  of radius $< 1/\sqrt{t}$ is 
 \[
\Omega_{\nu}  = \pi^*\omega_H + \frac{1}{2} d(r^2 \alpha)
\]
where $r$ is the radial co-ordinate and $\alpha$ is a connection form for the bundle satisfying $d\alpha = -t \pi^*\omega_H$. Fibrewise, this is the standard area form on $\bR^2$. If $\Delta \subset H$ is a Lagrangian cycle and $\hat{\Delta}$ is the lift of $\Delta$ to $\nu$ given by taking the union over $\Delta$ of circles $\{r=constant\}$, then $\hat{\Delta}$ is also Lagrangian, since
\[
\Omega_{\nu}|_{\hat\Delta} = (\pi^*\omega_H + r.dr.d\alpha + \frac{1}{2} r^2 d\alpha)|_{\hat\Delta} = \pi^*(1 - t r^2 / 2)\omega_H
\]
is pulled back from the base.  Now suppose we have a pencil of hyperplane sections $\{H_t\}_{t\in\bP^1}$ of a projective variety $P$ with base locus $H_0\cap H_{\infty} = M \subset P$.  Strengthening hypothesis \eqref{Equation:Hypothesis}, we suppose the elements of the pencil are ample hypersurfaces Poincar\'e dual to a multiple of the symplectic form, which is the condition $c_1(\nu) = t [\omega_H]$ above. The blow-up $Bl_MP$ is obtained by removing a symplectic tubular neighbourhood of $M$, with boundary an $S^3$-bundle over $M$, and then projectivising the fibres of that unit normal bundle via the Hopf map $S^3 \rightarrow S^2$. Suppose we have a symplectic form $\Omega_{\lambda}$ on $Bl_M(P)$ for which the ruling curves have area $\lambda > 0$.

\begin{Lemma} \label{lem:spin}
Let $S^n\cong L\subset M$ be a Lagrangian sphere.  Fix a Lagrangian disk $\Delta^{n+1}\subset H$ with $\partial \Delta = L$ in a smooth element $H\subset P$ of the pencil, and with $\Delta \cap M = L$.  There is a Lagrangian sphere $S^{n+2}\cong\hat{L} \subset (Bl_M(P),\Omega_{\lambda})$. The Hamiltonian isotopy class of $\hat{L}$ is determined by the Lagrangian isotopy class of $\Delta$.
\end{Lemma}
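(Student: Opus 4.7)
The plan is to build $\hat L$ in two stages, spinning $\Delta$ in one normal direction and then capping off the resulting circle bundle inside the exceptional divisor.  First, apply the spinning recipe just established to the rank $2$ symplectic normal bundle $\nu = \nu_{H/P} \to H$ (real rank $2$, with $c_1(\nu)$ proportional to $\omega_H$ by the monotonicity hypothesis on $P$).  The Lagrangian disk $\Delta\subset H$ lifts to a Lagrangian $\hat\Delta\subset\nu$ which topologically is $\Delta\times S^1$, namely the union over $\Delta$ of fibre circles $\{r=r_0\}\subset\nu$, and which is Lagrangian because $\Omega_\nu|_{\hat\Delta}=\pi^*\bigl((1-tr_0^2/2)\omega_H|_\Delta\bigr)=0$ as $\omega_H|_\Delta=0$.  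Choosing $r_0$ small, $\hat\Delta$ sits in a tubular neighbourhood of $H$ inside $P$ and is disjoint from $M$ except along $\partial\hat\Delta=L\times S^1$, which lies in a tubular neighbourhood of $M$ transverse to $H$.  Since the interior of $\hat\Delta$ misses $M$, it embeds unchanged into $Bl_M(P)$, and its boundary now sits close to the section $\sigma\colon M\to E$ of the exceptional $\bP^1$-bundle picked out by the direction of $H$ in $\nu_{M/P}$.

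Second, cap $L\times S^1$ by a Lagrangian $D^2$-bundle lying in the exceptional divisor.  For each $x\in L$ the circle $\{x\}\times S^1\subset \partial\hat\Delta$ encircles $\sigma(x)\in E_x=\bP^1$ and bounds two discs in that fibre; take $D^2_x$ to be the one containing $\sigma(x)$.  Set $L\times D^2 = \bigcup_{x\in L} D^2_x$, an $(n+2)$-dimensional $D^2$-bundle over $L$.  Gluing,
\[
\hat L \ = \ \hat\Delta \,\cup_{L\times S^1}\, (L\times D^2),
\]
which is diffeomorphic to $S^{n+2}$ via the standard decomposition $S^{n+2}=\partial(D^{n+1}\times D^2)=(D^{n+1}\times S^1)\cup_{S^n\times S^1}(S^n\times D^2)$.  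The Lagrangian property of the cap is established by an analogous spinning identity applied to the tautological line bundle $\mathcal{O}_{\bP(\nu_{M/P})}(-1)\to E$, now regarded as a real rank $2$ symplectic bundle: the required proportionality $c_1\propto\omega$ on the base is exactly the hypothesis that $c_1(\mathcal{L}|_B)\propto c_1(B)$.  Equivalently, in the symplectic cut picture of Remark \ref{Rem:CorrespondenceViaCut}, the two pieces of $\hat L$ are the restrictions to $L\subset M$ of two naturally paired Lagrangian fillings of the monotone torus $T^2\tilde\times M$ inside the $\bP^2$-bundle $\bP(\mathcal{L}^{\oplus 2}\oplus\mathcal{O})$; this viewpoint makes both the Lagrangian condition and the smooth matching along $L\times S^1$ transparent.

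The main obstacle is the scaling compatibility of the two pieces.  The spinning radius $r_0$ in $\nu_{H/P}$ determines a specific circle inside each $\bP^1$-fibre of $E$; this circle must simultaneously arise from the monotone K\"ahler form $\Omega_\lambda$ on $Bl_M(P)$ as the boundary of the cap disc.  That these two conditions are mutually consistent uses the ampleness hypothesis \eqref{Equation:Hypothesis2} (which gives enough room to do the inflation in the proof of Lemma \ref{Lem:Monotone}) together with $c_1(\mathcal{L}|_B)\propto c_1(B)$; once the radii are matched, a local computation in coordinates near $E$ shows the gluing is $C^\infty$ and the total submanifold is Lagrangian.

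For the final assertion, a Lagrangian isotopy $\Delta_t$ of discs in $H$ with $\partial\Delta_t=L$ produces via the same recipe a Lagrangian isotopy $\hat L_t$ in $Bl_M(P)$.  Since $\hat L_t\cong S^{n+2}$ is simply-connected for $n\geq 0$, the Lagrangian isotopy is automatically Hamiltonian by the standard flux-vanishing argument, so the Hamiltonian isotopy class of $\hat L$ depends only on the Lagrangian isotopy class of $\Delta$ rel $L$.
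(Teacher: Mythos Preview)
Your capping step has a genuine gap. You propose to close off $\partial\hat\Delta=L\times S^1$ by a disc bundle $L\times D^2$ lying inside the exceptional divisor $E$, with each $D^2_x$ a disc in the $\bP^1$-fibre $E_x$. But $E\subset Bl_M(P)$ is a \emph{symplectic} submanifold, and $\Omega_\lambda$ restricts to a genuine area form on every $\bP^1$-fibre (indeed, the ruling curves have area $\lambda>0$). Each disc $D^2_x\subset E_x$ is therefore symplectic, not isotropic, and the union $\bigcup_{x\in L}D^2_x$ carries nonzero symplectic area in the fibre direction; it cannot be Lagrangian in $Bl_M(P)$. Your appeal to ``an analogous spinning identity applied to $\mathcal{O}_{\bP(\nu_{M/P})}(-1)\to E$'' does not help: the spinning identity manufactures a Lagrangian \emph{circle}-bundle in the total space of a line bundle out of a Lagrangian in the base, whereas here you are asserting that a \emph{disc}-bundle inside the base $E$ itself is Lagrangian. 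There is also a geometric mismatch: the boundary circle $\{x\}\times S^1\subset\hat\Delta$ lives in $Bl_M(P)$ at distance $r_0$ from (the proper transform of) $H$, not inside the $\bP^1$-fibre $E_x$; the two pieces do not share a common boundary as you describe.

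The paper's argument is structurally different and sidesteps this entirely. One takes the spinning radius equal to the blow-up parameter $\lambda$, so that over $L=\partial\Delta$ the tube $\hat\Delta\cong\Delta\times S^1$ meets the boundary $S^3$-bundle of the blow-up ball around $M$ exactly in Hopf circles (one per point of $L$). The symplectic blow-up, viewed as a symplectic cut, collapses each Hopf circle to a \emph{single point} of $E$. Thus $\hat\Delta$ descends to a closed submanifold of $Bl_M(P)$: a circle bundle over $\Delta^{n+1}$ with the boundary circles crushed to points, which is topologically $S^{n+2}$. Smoothness and the Lagrangian property near the collapsed locus are checked against the explicit local model of Example~\ref{Ex:SpinModel}. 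No separate ``cap'' is needed, the hypotheses \eqref{Equation:Hypothesis2} and $c_1(\mathcal{L}|_B)\propto c_1(B)$ are not invoked, and the only matching of radii is the single choice $r_0=\lambda$.
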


\begin{proof}
Locally near $H$, the symplectic manifold $P$ is symplectomorphic to the normal bundle $\nu\rightarrow H$. Fix a compatible metric on this normal bundle.  Form a Lagrangian tube in $P$ from the radius $\lambda$ circles over $\Delta$. After an isotopy of $\Delta$  if necessary, this Lagrangian tube will meet the boundary of the $\lambda$-disk bundle over $M$ in a subfibration which is a circle bundle over $L$ composed entirely of Hopf circles in $S^3$-fibres of the normal bundle. Blowing up (i.e. symplectic cutting) will collapse these circles to points. Differentiably, the lift of the tube over $\Delta$ is therefore a circle bundle over an $(n+1)$-disk with the circles collapsed  over the boundary; this is topologically a sphere, and near any collapsed circle it is locally modelled on Example \ref{Ex:SpinModel}, hence is globally smooth. It is easy to see that the choices, for fixed $\lambda$, lead to Lagrangian isotopic lifts to the total space, which are therefore Hamiltonian isotopic.  
\end{proof}

\begin{Definition}\label{Definition:SpunSphere}
In the situation of Lemma \ref{lem:spin}, we will say that $\hat{L}$ is obtained from $L \subset M$ by \emph{spinning} the thimble $\Delta$.
\end{Definition}

In general, for such an $S^n \cong L \subset M$, we now have two constructions of spherical objects in $Tw^{\pi} Bl_M(P)$, namely $\hat{L}$ and the image $\Phi_{\Gamma}^+(L)$.  At least in the special case in which $\scrF(M)$ is split-generated by a chain of Lagrangian spheres, we are able to relate these two objects. 
 To make sense of the next result, note that a symplectomorphism $\phi: M \rightarrow M$ which is the monodromy of a family of embeddings $\{M_t\}_{t\in S^1} \rightarrow P$ induces a symplectomorphism $\hat{\phi}: Bl_M(P) \rightarrow Bl_M(P)$, well-defined up to isotopy, by  taking parallel transport in a fibrewise blow-up. We again refer to $\hat{\phi}$ as the spin of $\phi$.

\begin{Lemma} \label{lem:SpinMany}
In the situation of Lemma \ref{lem:spin}, let $\{V_1,\ldots, V_n\}$ be Lagrangian spheres in $M$ which are vanishing cycles for the pencil $\{H_t\}$ on $P$. There are spinnings $\{\hat{V}_j\} \subset (Bl_M(P), \Omega_{\lambda})$ with the property that $\hat{V}_i \cap \hat{V}_j = V_i\cap V_j$ for all $i \neq j$.  Moreover,  for any choice of vanishing thimble $\Delta$ for $V_j$, the Dehn twist $\tau_{V_j}$ on $M$ spins to the Dehn twist $\tau_{\hat{V}_j}$ in $\hat{V}_j \subset Bl_M(P)$.
\end{Lemma}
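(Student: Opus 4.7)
My plan is as follows. The existence and intersection claim for the $\hat V_j$ will be reduced to Lemma \ref{lem:spin} plus a local geometric check, while the Dehn twist claim will follow from a degeneration argument.

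First, since each $V_j$ is a vanishing cycle, it bounds a Lagrangian thimble $\Delta_j$ in some smooth fibre $H_{t_j}$ of the pencil. I plan to choose the thimbles so that in $P$ they form an $A_k$-chain of Lagrangian disks mimicking the $A_k$-chain of spheres $\{V_j\}$: specifically, $\Delta_i \cap \Delta_j = V_i \cap V_j$ for $i \ne j$, cleanly (and a single point for consecutive indices). This can be arranged by taking pairwise distinct $t_j$, so that
\[
\Delta_i \cap \Delta_j \ \subseteq \ H_{t_i} \cap H_{t_j} \cap \Delta_i \cap \Delta_j \ \subseteq \ M \cap \Delta_i \cap \Delta_j \ = \ V_i \cap V_j,
\]
followed if necessary by a Hamiltonian perturbation of each $\Delta_j$ inside $H_{t_j}$, rel boundary, to promote containment to equality.

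Applying Lemma \ref{lem:spin} to each $\Delta_j$ produces $\hat V_j \subset Bl_M(P)$. To verify $\hat V_i \cap \hat V_j = V_i \cap V_j$, I would work in a Weinstein neighbourhood of each point $p \in V_i \cap V_j$: an ambient model $\bC^{n+1}\times\bC^2$ (with $M$ the first factor and the pencil directions in $\bC^2$), in which the local pieces of $\Delta_i, \Delta_j$ are standard Lagrangian half-disks in distinct hyperplane fibres, and the spinnings yield explicit Lagrangian submanifolds of the local blow-up $\bC^{n+1}\times Bl_0(\bC^2)$. A direct computation in these coordinates shows that the two local spun pieces meet in exactly one point sitting in the proper transform of the exceptional $\bP^1$ over $p$; away from a neighbourhood of $V_i \cap V_j$ the thimbles are disjoint and hence so are their tubes and spinnings, giving the required equality.

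The second claim --- that $\tau_{V_j}$ spins to $\tau_{\hat V_j}$ --- proceeds by matching Morse--Bott degenerations. The Dehn twist $\tau_{V_j}$ arises as the monodromy around the critical value of a family $\{M_s\}_{s \in D^2} \to P$ whose central member $M_0$ develops an ordinary double point with vanishing cycle $V_j$, via the Wall construction of Lemma \ref{Ex:genus2}. By definition $\hat\tau_{V_j}$ is the monodromy of the fibrewise blow-up family $\{Bl_{M_s}(P)\}_{s \in D^2}$. I would then show that this family extends to a smooth symplectic Lefschetz-type degeneration over $D^2$, with sole critical fibre at $s = 0$ and vanishing cycle exactly $\hat V_j$; granting this, the classical identification of Morse--Bott symplectic monodromy with a Dehn twist (as used in the proof of Proposition \ref{Prop:SeidelCallahan} and in \cite{Seidel:triangle}) gives $\hat\tau_{V_j} \simeq \tau_{\hat V_j}$ up to Hamiltonian isotopy.

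The main obstacle is precisely this local-model verification. Near the node of $M_0$, the family $\{M_s\}$ is modelled on $\{\sum_{i=0}^n z_i^2 = s\}$ embedded in $P$, with the pencil providing local defining functions for the centre. One must check that blowing up each $M_s$ in the ambient $P$ gives a smooth total space over $D^2$ whose central fibre has an isolated quadratic singularity at the expected point, and that the spun spheres $\hat V_{j,s}$ shrink onto this singular point as $s \to 0$. The ampleness hypothesis \eqref{Equation:Hypothesis2} provides the room in $P$ needed for a compatible symplectic disk bundle carrying the spun spheres, and Wall's theorem matches the normal forms of the discriminant of the pencil with that of the singularity in the base locus, so that the family blow-up recovers the standard Lefschetz local normal form.
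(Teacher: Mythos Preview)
Your strategy for the intersection claim differs from the paper's and, as written, has a gap. Placing the thimbles in pairwise distinct fibres $H_{t_j}$ does force $\Delta_i\cap\Delta_j\subset H_{t_i}\cap H_{t_j}=M$, hence $\subset V_i\cap V_j$, but the step ``the thimbles are disjoint and hence so are their tubes and spinnings'' is unjustified. The tube over $\Delta_i$ is built from radius-$\lambda$ circles in the direction normal to $H_{t_i}$, and these circles sweep transversally into neighbouring pencil fibres; disjointness of the base disks (each confined to its own hypersurface) does not by itself prevent the tubes from meeting. In your local model with $\Delta_i\subset\{u=0\}$ and $\Delta_j\subset\{v=0\}$, a tube intersection over a point $x\in M$ requires only that $x$ lie in the $M$-projections of $\Delta_i\cap\{|v|=\lambda\}$ and of $\Delta_j\cap\{|u|=\lambda\}$; once one leaves a collar of the boundaries these slices are not governed by $V_i\cap V_j$, and extra intersections are not ruled out without further control on how the interiors of the thimbles sit relative to $M$. (A minor related point: the one local intersection you do find sits at $(0,\lambda,\lambda)$, which is \emph{outside} the exceptional divisor, not in it.)

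The paper sidesteps all of this by placing \emph{all} the $\Delta_j$ in a single fixed smooth element $H$ of the pencil, choosing vanishing paths so that the thimbles meet only along their boundaries (this is where ``by the definition of a Lefschetz pencil'' enters). Every tube is then a circle bundle in the one normal line bundle $\nu_H\to H$, so two tubes intersect precisely over $\Delta_i\cap\Delta_j=V_i\cap V_j$, in Hopf circles that collapse under blow-up; the intersection claim is immediate. The Dehn-twist statement is handled equally directly: parallel transport in the family of base loci around a small loop encircling the vanishing path of $\Delta_j$ gives $\tau_{V_j}$ on $M$, and the induced monodromy on the fibrewise blow-up is the Dehn twist in the spun sphere $\hat V_j$. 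Your degeneration argument for this last part is in the same spirit and plausible, just more elaborate than needed.
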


\begin{proof}
We choose the bounding thimbles $\Delta_j$ with $\partial \Delta_j = V_j$ to lie in one fixed smooth element $H$ of the pencil of hypersurfaces on $P$, and to meet only at their boundaries (this is always possible by the definition of a Lefschetz pencil).  The associated Lagrangian tubes   meet only in the circle fibrations which are collapsed by the blowing-up process, which gives the co-incidence of intersections $\hat{V}_i \cap \hat{V}_j = V_i\cap V_j$.  The last statement follows by considering parallel transport around the loop which encircles the path in $\bC$ over which a chosen Lefschetz thimble fibres.
\end{proof}

We now return to the specific situation of Section \ref{Subsec:Pencil}.  Take $\gamma \subset \Sigma_g$ to be a simple closed curve associated to a  matching path of $\Sigma \rightarrow \bP^1$, so $\gamma$ is invariant under the hyperelliptic involution on $\Sigma_g$. From Wall's Lemma \ref{Ex:genus2} there is a vanishing cycle $V_{\gamma} \subset Q_0 \cap Q_1$ associated to the degeneration of $\Sigma$ which collapses $\gamma$.  Thus, $V_{\gamma}$ is a vanishing cycle for the family of $(2,2)$-intersections defined by a hyperelliptic Lefschetz fibration over the disk with fibre $\Sigma$ and vanishing cycle $\gamma$.  We know from Section \ref{Subsec:Pencil}  that $V_{\gamma}$ arises as a vanishing cycle for a Lefschetz pencil on the quadric $2g$-fold $Q\subset \bP^{2g+1}$.   $V_{\gamma}$ therefore bounds a Lagrangian disk $\Delta_{\gamma} \subset Q$, and by fixing a choice of such a Lagrangian disk one can spin the sphere $V_{\gamma}$ to obtain a Lagrangian sphere $\hat{V}_{\gamma} \subset Z$, as in Definition \ref{Definition:SpunSphere}.
 Write $\scrP \subset \scrF(Z)$ for the subcategory generated by spun spheres $\hat{V}_{\gamma}$, for hyperelliptic-invariant curves $\gamma \subset \Sigma$.  
 
  According to Section \ref{Section:Curve}, the Fukaya category of the curve $\Sigma$, and hence the subcategory $\scrP$, is split-generated by an $A_{2g+1}$-chain of Lagrangian spheres (when $g=2$ these are the spheres associated to the curves $\zeta_j$, $1\leq j\leq 5$, of Figure \ref{Fig:genus2pencil}).  Lemma \ref{Lem:HolonomySpheresMeetRight} shows that the associated $V_{\gamma}$-spheres also define an $A_{2g+1}$-chain in $Q_{2,2}$, which moreover split-generate the nilpotent summand of its Fukaya category.  Label the spheres of this chain by $V_{\gamma_1},\ldots, V_{\gamma_{2g+1}}$, and their spinnings by $\hat{V}_{\gamma_j}$.   Recall the functor $\Phi^+_{\Gamma}: D^{\pi}\scrF(Q_0 \cap Q_1;0) \rightarrow D^{\pi}\scrF(Z)$ of Proposition \ref{Prop:BlowUpEmbeds}  takes any simply-connected Lagrangian submanifold $L$ to an idempotent summand of $L\times T^2 \subset Z$.  

 \begin{Lemma} \label{Lem:TriangleCount}
 The cup-product map
\begin{equation} \label{Eqn:ThisWillDo}
HF(\hat{V}_{\gamma_{j}}, \hat{V}_{\gamma_{j+1}}) \otimes HF(V_{\gamma_j} \times T^2, \hat{V}_{\gamma_{j}}) \longrightarrow HF(V_{\gamma_j} \times T^2, \hat{V}_{\gamma_{j+1}})
\end{equation}
does not vanish.
 \end{Lemma}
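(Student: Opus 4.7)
The plan is to reduce the triangle product \eqref{Eqn:ThisWillDo} to the local configuration studied after Example \ref{Ex:SpinModel}, and to use a maximum-principle argument to localise the contributing holomorphic triangles. First invoke Lemma \ref{lem:SpinMany}, choosing bounding Lagrangian disks $\Delta_j, \Delta_{j+1}$ for $V_{\gamma_j}, V_{\gamma_{j+1}}$ inside a common smooth quadric $Q$ of the pencil and intersecting only at the common boundary point $p = V_{\gamma_j} \cap V_{\gamma_{j+1}}$. Since the $V_{\gamma_i}$ form an $A_{2g+1}$-chain of Lagrangian spheres in $Q_0 \cap Q_1$ by Lemma \ref{Ex:genus2}, this intersection is a single transverse point, and thus $\hat{V}_{\gamma_j} \cap \hat{V}_{\gamma_{j+1}} = \{p\}$ by the conclusion of Lemma \ref{lem:SpinMany}. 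The group $HF(\hat{V}_{\gamma_j}, \hat{V}_{\gamma_{j+1}})$ is therefore one-dimensional, generated by the class of $p$.

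Next, pass to a small Darboux neighbourhood $U$ of $p$ in $Z$. A suitable complex chart on $U$ identifies the symplectic geometry with the product of a linear slice and the local model $Bl_0(\bC^2)$ of Example \ref{Ex:SpinModel}, under which the restricted Lagrangians $(V_{\gamma_j} \times T^2, \hat{V}_{\gamma_j}, \hat{V}_{\gamma_{j+1}})|_U$ correspond to the product of a linear Lagrangian slice with the configuration $(T, \hat{V}_0, \hat{V}_1)$, where $T$ is the toric torus of Lemma \ref{Lem:ToricFibre} and $\hat{V}_0, \hat{V}_1$ are the spun 2-spheres built from transverse real arcs through the origin. The argument following Equation \eqref{Eqn:ProductOnSpunSpheres} then applies directly: the spun spheres are compactifications of Lagrangian disks fibring via the local projection $\bC^2 \to \bC$ over the positive and negative real half-lines, and both are Lefschetz thimbles for the unique critical point of this local fibration. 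They are therefore quasi-isomorphic in the Fukaya-Seidel category, the generator of $HF(\Delta_j, \Delta_{j+1})$ represents an isomorphism, and its Floer product with any non-zero class in $HF(T, \Delta_j) \cong H^*(S^1)$ (by Lemma \ref{Lem:Clean}) is a non-zero element of $HF(T, \Delta_{j+1})$.

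The main obstacle is propagating this local non-vanishing to the global Floer product. Choose an almost complex structure $J$ on $Z$ which on $U$ is a product of the standard complex structure on $Bl_0(\bC^2)$ with an arbitrary one on the transverse slice, so that the local projection $\bC^2 \to \bC$ is pseudoholomorphic on $U$. Any $J$-holomorphic triangle contributing to \eqref{Eqn:ThisWillDo} then projects under this local fibration to a holomorphic triangle in $\bC$ bounded by pieces of the real axis and the circle which is the image of $T$; by the maximum principle such a triangle is confined to the small disk enclosed by that circle, so the corresponding triangle in $Z$ lies entirely in $U$. This identifies the global product with the local one, completing the argument. The delicate step is arranging $J$ so as to be simultaneously of product type near $p$, compatible with global monotonicity, and such that the triangle moduli problem is transversely cut out; this requires standard perturbation arguments away from the local critical fibre, exploiting the fact that the rigid local triangles are already regular for the product structure.
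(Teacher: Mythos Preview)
Your localisation argument is circular. You propose to project a global $J$-holomorphic triangle in $Z$ under a fibration which is only defined on the Darboux neighbourhood $U$, and then invoke the maximum principle in the base $\bC$ to conclude that the triangle stays in $U$. But the projection is only available once you already know the triangle lies in $U$; a priori the boundary arcs, which run along the global Lagrangians $V_{\gamma_j}\times T^2$, $\hat V_{\gamma_j}$, $\hat V_{\gamma_{j+1}}$, can and generally will leave $U$. In particular the clean intersection $V_{\gamma_j}\times T^2 \cap \hat V_{\gamma_j} \cong V_{\gamma_j}\times S^1$ is spread over all of $V_{\gamma_j}$, not concentrated near $p$, so even the input corner need not lie in your chart. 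No choice of $J$ that is merely product-type on $U$ can prevent a triangle from escaping $U$, wandering through $Z$, and returning. The maximum principle you want needs a \emph{globally defined} holomorphic projection to an affine target, and you have not produced one.

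The paper supplies exactly this missing global control, and in two stages. First, a holomorphic volume form $\Omega'$ on $Z$ with polar divisor $[\pi^{-1}(\infty)] + g[Q_{ref}]$ is used to $\bZ$-grade the Lagrangians; the relevant Floer generators then all lie in degrees $0$ and $1$, so rigid triangles have Maslov index zero and by the index formula are disjoint from $Q_{ref}$ and from the fibre at infinity. Second, a deformation-to-the-normal-cone argument (as in the proof of Lemma~\ref{Lem:NonZero}) together with Gromov compactness confines the triangles to the $\bF_1$-bundle component over $B$, after which one has genuine affine projections both to $B\backslash(B\cap Q_{ref})$ and to the $Bl_0(\bC^2)$ fibres, and the maximum principle applies in each factor. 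Only then does the local computation after Equation~\eqref{Eqn:ProductOnSpunSpheres} become relevant. The step you flagged as ``delicate'' --- arranging transversality for a locally product $J$ --- is not the real difficulty; the real difficulty is the confinement, and that requires the grading and degeneration arguments you have omitted.
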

 
 \begin{proof}
  Let  $Q_{ref}$ denote a ``reference" quadric hypersurface, taking the role of the abstract $H_{ref}$ from Section \ref{Section:Idempotent} and extending the given pencil $\langle Q_0, Q_1\rangle$ of quadrics to a net. The Lefschetz thimbles $\Delta_{\gamma_i} \subset Q$ lie in the fixed hypersurface $Q$ from the original pencil, and we may assume they lie in the complement of $Q\cap Q_{ref}$.  Moreover, the base locus $B \subset Q$ meets $Q_{ref}$ in a hypersurface Poincar\'e dual to its normal bundle, hence the complement $B\backslash (B \cap Q_{ref}) \subset Q\backslash (Q\cap Q_{ref})$ has trivial normal bundle.  The Lagrangian submanifolds appearing in Equation \ref{Eqn:ThisWillDo} are, locally near the intersection circle of $V_{\gamma_j} \times T^2$ and $\hat{V}_{\gamma_{j+1}}$, products of two Lagrangian disks in $B\backslash (B \cap Q_{ref})$ meeting transversely once at a point $p_0$, with the Lagrangian submanifolds occuring in Equation \ref{Eqn:ProductOnSpunSpheres}.  The discussion there shows the map of Equation \ref{Eqn:ThisWillDo} does not vanish provided all holomorphic triangles remain in an open neighbourhood of  $\{p_0\}\times T^2 \subset (B\backslash (B \cap Q_{ref})) \times Bl_0(\bC^2)$ in which this product description holds valid.  

The argument is now a variant of that of Lemma \ref{Lem:NonZero}.  Fix a volume form $\Omega_{perp}$ on $Bl_0(\bC^2)$ with simple poles along the transform of a co-ordinate axis in $\bC^2$.  We fix a holomorphic volume form $\Omega'$ on $Z$ with the properties
\begin{itemize}
\item the polar divisor $D_{\Omega'} \ = \ [\pi^{-1}(\infty)] + g [Q_{ref}] \ \simeq \ c_1(Z)$; 
\item locally near  the affine open set $B\backslash (B \cap Q_{ref})$, $\Omega' = \Omega_{base} \wedge \Omega_{perp}$.
\end{itemize}
Using $\Omega_{base}$ we grade the Lagrangian spheres $V_{\gamma_j}$ in $B \backslash (B \cap Q_{ref})$ such that the intersection point at $p_0$ has degree $0$.  The non-trivial Floer gradings amongst the Lagrangians of Equation \ref{Eqn:ThisWillDo} come from fibre directions, and the model of Equation \ref{Eqn:ProductOnSpunSpheres}. Here all Floer gradings are concentrated in degrees $\{0,1\}$, since in that Lefschetz fibration over $\bC$ the Lagrangians were given by two isomorphic Lefschetz thimbles and a torus meeting each thimble cleanly in a circle.  It follows that all contributing holomorphic triangles have Maslov index $0$; by Lemma \ref{Lem:DeformationDegree} they are disjoint from $D_{\Omega'}$ (cf. the corresponding argument in Lemma \ref{Lem:MaslovViaIntersection}, noting that $\dim_{\bC}(B)>1$ in our case).  

Disjointness from $D_{\Omega'}$ means that all relevant holomorphic triangles live over the complement of the boundary divisors $Q_{ref}$ and $Q_{\infty}$, denoted $H_{ref}$ and $H_{\infty}$ in Figure \ref{Fig:Hirz}.  As in Lemma \ref{Lem:NonZero}, we perform a symplectic cut at the boundary of a tubular neighbourhood of $E$ large enough to contain the $\hat{V}_{\gamma}$ and $V_{\gamma} \times T^2$.  Via Gromov compactness, we consider the holomorphic triangles in the reducible zero-fibre of the total space of the deformation to the normal cone, and observe they cannot reach the component which is not the $\bF_1$-bundle for Maslov index reasons (all spheres in the other component have positive Chern number).   Indeed, disjointness from $Q_{ref}$ implies that all the Maslov zero holomorphic triangles are localised in the complement inside the $\bF_1$-bundle over $B$ of the subvariety defined by $Q_{\infty}$, hence  live inside a bundle with fibre $Bl_0(\bC^2)$.  Maximum principle under projection to the affine part $B\backslash (B\cap Q_{ref})$ and in the fibres $Bl_0(\bC^2)$  shows that all the relevant holomorphic triangles are those described in the discussion after Equation \ref{Eqn:ProductOnSpunSpheres}.
 \end{proof}

\begin{Lemma} \label{Lem:Step2}
$\Phi^+_{\Gamma}(V_{\gamma}) \simeq \hat{V}_{\gamma}$ are quasi-isomorphic objects of mod-$\scrP$.
\end{Lemma}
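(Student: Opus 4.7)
Plan. I will treat first the case where $\gamma = \gamma_j$ is one of the $A_{2g+1}$-chain curves, then extend to arbitrary hyperelliptic-invariant $\gamma$ by Dehn-twist equivariance. For the equivariance step, any hyperelliptic-invariant simple closed curve can be written $\gamma = w(\gamma_j)$ for some chain element $\gamma_j$ and some word $w$ in the Dehn twists $\tau_{\gamma_i}$, since these generate $\Gamma_g^{hyp}$. By Lemma \ref{lem:SpinMany}, the spun sphere $\hat V_{w(\gamma_j)}$ is Hamiltonian isotopic to $w^{\wedge}(\hat V_{\gamma_j})$, where $w^{\wedge}$ interprets $w$ as a word in $\tau_{\hat V_{\gamma_i}} \in \pi_0\Symp(Z)$. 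On the other hand, Proposition \ref{Prop:twists} together with \eqref{Eqn:TwistFunctor} applied to $\Phi^+_{\Gamma}$ yield $\Phi^+_{\Gamma}(V_{w(\gamma_j)}) \simeq w^{\mathrm{alg}}(\Phi^+_{\Gamma}(V_{\gamma_j}))$, where $w^{\mathrm{alg}}$ is the same word in the algebraic twists $\scrT_{\Phi^+_{\Gamma}(V_{\gamma_i})}$. Granted the chain case $\Phi^+_{\Gamma}(V_{\gamma_i}) \simeq \hat V_{\gamma_i}$ in mod-$\scrP$, the algebraic twists $\scrT_{\Phi^+_{\Gamma}(V_{\gamma_i})}$ and $\scrT_{\hat V_{\gamma_i}}$ agree as endofunctors of mod-$\scrP$ (since the algebraic twist of a spherical object, acting on right modules, depends only on the associated Yoneda module), and the general case follows.

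For the chain case, Lemma \ref{Lem:TriangleCount} furnishes a non-zero class $\beta_j \in HF(V_{\gamma_j}\times T^2, \hat V_{\gamma_j})$. Decomposing under the two Clifford idempotents $e^{\pm} \in HF(T^2, T^2)$, at least one of $e^{\pm}\cdot \beta_j$ is non-zero; relabelling idempotents if necessary, I take $\alpha_j := e^+ \cdot \beta_j \in HF(\Phi^+_{\Gamma}(V_{\gamma_j}), \hat V_{\gamma_j})$, which is non-zero. Post-composition with $\alpha_j$ defines a morphism $\phi_j$ of right $\scrP$-modules from the restricted Yoneda module of $\Phi^+_{\Gamma}(V_{\gamma_j})$ to that of $\hat V_{\gamma_j}$. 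To verify $\phi_j$ is a quasi-isomorphism it suffices to evaluate on the split-generators $\{\hat V_{\gamma_k}\}$ of $\scrP$.

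The evaluation splits into cases. For $|k-j|>1$, Lemma \ref{lem:SpinMany} gives $\hat V_{\gamma_k} \cap \hat V_{\gamma_j} = \emptyset$, so both Floer cohomologies vanish once one verifies $HF(\hat V_{\gamma_k}, V_{\gamma_j}\times T^2) = 0$; the latter follows from the same geometric localisation of Maslov-index-$2$ disks exploited in the proofs of Lemmas \ref{Lem:NonZero} and \ref{Lem:TriangleCount}, which confines holomorphic strips to a neighbourhood of $V_{\gamma_k} \cap V_{\gamma_j} = \emptyset$. For $|k-j|\leq 1$, both sides have matching rank (rank $2$, resp.\ rank $1$, in the $\bZ_2$-graded setting over $\bC$) by Lemma \ref{lem:SpinMany} combined with \eqref{Eq:AbstractIdempotentModules} and the fullness of $\Phi^+_{\Gamma}$ (Lemma \ref{Lem:FullyFaithful}); and Lemma \ref{Lem:TriangleCount} asserts precisely that the triangle product through $\alpha_j$ is non-zero, forcing $\phi_j$ to be an isomorphism on these generators.

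The main obstacle is controlling the idempotent summand: \emph{a priori} the non-vanishing class $\beta_j$ might lie entirely in the ``wrong" idempotent component, or consistent choices of $e^+$ might fail along the chain. Lemma \ref{Lem:TriangleCount} is stated as a non-vanishing triangle product rather than as a single non-vanishing morphism precisely to circumvent this: it guarantees that both $\alpha_j$ and the corresponding chain morphism $\hat V_{\gamma_j} \to \hat V_{\gamma_{j+1}}$ survive projection to a common idempotent, enabling a consistent choice of $e^+$ along the entire chain.
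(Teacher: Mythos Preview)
Your overall strategy---produce a morphism $\alpha_j \in HF(\Phi^+_\Gamma(V_{\gamma_j}),\hat V_{\gamma_j})$ and test it against the generators $\hat V_{\gamma_k}$---matches the paper's. But your handling of the idempotent is where the argument breaks down, and the paper's proof supplies exactly the geometric ingredient you are missing.

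You extract a non-zero $\beta_j \in HF(V_{\gamma_j}\times T^2,\hat V_{\gamma_j})$ from Lemma~\ref{Lem:TriangleCount} and then propose to set $\alpha_j = e^+\cdot\beta_j$, ``relabelling idempotents if necessary''. But the idempotent defining $\Phi^+_\Gamma$ is a single global choice; you cannot relabel it separately for each $j$. Your final paragraph acknowledges the problem and asserts that the triangle product ``enables a consistent choice of $e^+$ along the entire chain'', but no mechanism is given: the non-vanishing of \eqref{Eqn:ThisWillDo} does not by itself tell you in which idempotent summand the non-zero class lives. The paper avoids this entirely by first computing $HF(V_{\gamma_j}\times T^2,\hat V_{\gamma_j})$ geometrically: a fibred version of Lemma~\ref{Lem:Clean} shows the two Lagrangians meet cleanly in $V_{\gamma_j}\times S^1$, and the two idempotents pick out the two $\bZ_2$-graded pieces of its cohomology, so that $CF(\Phi^+_\Gamma(V_{\gamma_j}),\hat V_{\gamma_j})\simeq CF(V_{\gamma_j},V_{\gamma_j})$. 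One then takes $q$ to be a chain representative for the \emph{unit}, which is manifestly non-zero and of even degree. The subsequent reduction to Lemma~\ref{Lem:TriangleCount} uses the further observation that the two idempotent summands of $V_{\gamma_j}\times T^2$ are quasi-isomorphic up to shift, so non-vanishing of the full product \eqref{Eqn:ThisWillDo} implies non-vanishing after projecting to either summand. This shift-isomorphism of idempotents is the point your argument needs and does not supply.

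Two smaller remarks. For $|k-j|>1$ you invoke localisation of holomorphic strips, but nothing so delicate is needed: Lemma~\ref{lem:SpinMany} gives $\hat V_{\gamma_k}\cap\hat V_{\gamma_j}=V_{\gamma_k}\cap V_{\gamma_j}=\emptyset$, and the spinning construction likewise shows $\hat V_{\gamma_k}$ is disjoint from $V_{\gamma_j}\times T^2$, so the chain groups themselves vanish. Finally, your extension to arbitrary hyperelliptic-invariant $\gamma$ via Dehn-twist equivariance is sound in outline, but the paper defers this to Corollary~\ref{Cor:MappingClassCompatible-1}; the lemma itself is proved only for the chain curves $\gamma_j$, which suffices since these split-generate $\scrP$.
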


\begin{proof}
It suffices to show that, for $\gamma=\gamma_j$,  there is an element 
\[
q \in CF^{ev}(\Phi_{\Gamma}^+(V_{\gamma}), \hat{V}_{\gamma})
\]
for which
\begin{equation} \label{Eqn:WantIso}
CF(\hat{V}_{\gamma}, \hat{V}_{\gamma_i}) \xrightarrow{\mu^2(\cdot,q)} CF(\Phi_{\Gamma}^+(V_{\gamma}), \hat{V}_{\gamma_i})
\end{equation}
is an isomorphism for each $1\leq i \leq 2g+1$.  A fibred version of Lemma \ref{Lem:Clean} shows that $\hat{V}_{\gamma}$ and $V_{\gamma} \times T^2$ have non-trivial intersection, meeting cleanly along a copy of $V_{\gamma} \times S^1$, which moreover can be perturbed by a Morse function to give intersection locus two copies of $V_{\gamma}$, of differing $\bZ_2$-grading.  The choice of idempotent in $HF(V_{\gamma} \times T^2, V_{\gamma} \times T^2)$ picks out one of these two graded pieces, and there is  a quasi-isomorphism
\[
 CF(\Phi_{\Gamma}^+(V_{\gamma}), \hat{V}_{\gamma}) \ \simeq \ CF(V_{\gamma}, V_{\gamma}).
\]
Take $q$ to be  a chain-level representative for the cohomological unit of $HF(V_{\gamma}, V_{\gamma})$.  The chain groups appearing in Equation \ref{Eqn:WantIso} vanish, so the fact that multiplication by $q$ is an isomorphism is trivial, unless $|i-j| < 2$. Hence there are really two cases to consider: $i=j$ and $|i-j| = 1$, corresponding to taking the same curve twice, respectively adjacent curves, in the $A_{2g+1}$-chain. The two arguments are similar: we give that for $|i-j| = 1$.  In this case, the two Floer complexes in Equation \ref{Eqn:WantIso} have rank one cohomology, so the map is either an isomorphism or vanishes.   To exclude the latter case, since the idempotent summands in $V_{\gamma}\times T^2$ are quasi-isomorphic up to shift, it is enough to show that the map
\begin{equation} 
HF(\hat{V}_{\gamma_{j}}, \hat{V}_{\gamma_{j+1}}) \otimes HF(V_{\gamma_j} \times T^2, \hat{V}_{\gamma_{j}}) \ \xrightarrow{\mu^2(\cdot,q)} HF(V_{\gamma_j} \times T^2, \hat{V}_{\gamma_{j+1}})
\end{equation}
does not vanish. This is the conclusion of Lemma \ref{Lem:TriangleCount}.
\end{proof}

\begin{Corollary} \label{Cor:MappingClassCompatible-1}
The triangulated embedding $\Phi^+: D^{\pi}\scrF(Q_0 \cap Q_1) \hookrightarrow D^{\pi}\scrF(Z)$ is compatible with the natural weak actions of the hyperelliptic pointed mapping class group, in the sense that  for any element $f \in \Gamma_{g,1}^{hyp}$ there is some quasi-equivalence of functors
\[
\Phi^+ \circ f|_{D^{\pi}\scrF(Q_0 \cap Q_1)} \ \simeq \ f|_{D^{\pi} \scrF(Z)} \circ \Phi^+.
\]
\end{Corollary}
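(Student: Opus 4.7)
The plan is to verify the claimed compatibility on a generating set of mapping classes and then extend multiplicatively. The hyperelliptic mapping class group $\Gamma_{g,1}^{hyp}$ is generated by Dehn twists $t_\gamma$ along simple closed curves $\gamma \subset \Sigma_g$ which are invariant under the hyperelliptic involution; it therefore suffices to exhibit, for each such $\gamma$, a quasi-equivalence
\[
\Phi^+ \circ f_{t_\gamma}|_{D^{\pi}\scrF(Q_0\cap Q_1;0)} \ \simeq \ f_{t_\gamma}|_{D^{\pi}\scrF(Z)} \circ \Phi^+
\]
of functors, where $f_{t_\gamma}$ denotes the autoequivalence induced by parallel transport over the loop in configuration space that realises the Dehn twist.

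First, I would identify both sides geometrically as Dehn twists in Lagrangian spheres. On the base locus side, parallel transport around a loop collapsing $\gamma$ implements the Dehn twist $\tau_{V_\gamma}$ in the Wall sphere $V_\gamma \subset Q_0\cap Q_1$ of Lemma \ref{Ex:genus2}, which by Proposition \ref{Prop:twists} is quasi-isomorphic to the algebraic twist functor $\scrT_{V_\gamma}$. On the relative quadric side, $Z\to \bP^1$ is obtained by blowing up $\bP^{2g+1}$ along $Q_0\cap Q_1$, so parallel transport in the family of $(2,2)$-intersections lifts naturally to a parallel transport on $Z$ which spins the base transport, in the sense of Definition \ref{Definition:SpunSphere} and Lemma \ref{lem:SpinMany}. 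That lemma identifies the lifted action with the Dehn twist $\tau_{\hat V_\gamma}$ along the spun Lagrangian sphere $\hat V_\gamma \subset Z$, which is in turn quasi-isomorphic to the algebraic twist $\scrT_{\hat V_\gamma}$ by another application of Proposition \ref{Prop:twists}.

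The key geometric input is Lemma \ref{Lem:Step2}, which identifies $\Phi^+(V_\gamma) \simeq \hat V_\gamma$ as objects of $mod$-$\scrP$. Granted this, the functorial naturality statement \eqref{Eqn:TwistFunctor} of Seidel, applied to $\scrG = \Phi^+$ and $Y = V_\gamma$, gives
\[
\Phi^+ \circ \scrT_{V_\gamma} \ \simeq \ \scrT_{\Phi^+(V_\gamma)} \circ \Phi^+ \ \simeq \ \scrT_{\hat V_\gamma} \circ \Phi^+,
\]
which is exactly the desired compatibility on the generator $t_\gamma$. Composing such quasi-isomorphisms along an expression for an arbitrary $f \in \Gamma_{g,1}^{hyp}$ as a product of Dehn twists in hyperelliptic-invariant simple closed curves yields the result. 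The only subtlety is that $\Gamma_{g,1}^{hyp}$ acts only weakly, so one must take care that the quasi-isomorphisms between composites of functors can be chosen coherently enough to propagate through the word, but since we work throughout at the level of isomorphism classes in the homotopy category $nu$-$fun$, this is routine.

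The main obstacle to this argument is already absorbed into Lemma \ref{Lem:Step2}, whose proof required the careful holomorphic-triangle count of Lemma \ref{Lem:TriangleCount} showing that the abstract idempotent summand $\Phi^+(V_\gamma)$ coincides with the geometrically constructed spun sphere $\hat V_\gamma$; everything else here is formal. One minor point to verify is that, since Lemma \ref{Lem:Step2} is stated only at the level of objects in $mod$-$\scrP$, the quasi-isomorphism $\Phi^+(V_\gamma) \simeq \hat V_\gamma$ holds in the split-closed derived category, using that $\scrP$ split-generates $D^\pi \scrF(Z)$ via the family of spun $A_{2g+1}$-chains coming from the pentagram decompositions of $\Sigma_g$ discussed in Section \ref{Section:Curve}; this is sufficient to apply \eqref{Eqn:TwistFunctor} at the level of $D^\pi$-autoequivalences.
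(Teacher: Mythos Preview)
Your argument is essentially identical to the paper's: reduce to Dehn twists in hyperelliptic-invariant curves, identify the induced actions on $Q_0\cap Q_1$ and $Z$ as twists in $V_\gamma$ and $\hat V_\gamma$ respectively (via Lemma~\ref{Ex:genus2} and Lemma~\ref{lem:SpinMany}), and then combine Equation~\eqref{Eqn:TwistFunctor} with the identification $\Phi^+(V_\gamma)\simeq \hat V_\gamma$ of Lemma~\ref{Lem:Step2}. One small correction: in your final paragraph you assert that $\scrP$ split-generates $D^\pi\scrF(Z)$, which is not established (and not needed); the paper simply invokes Lemma~\ref{Lem:Step2} directly, and the quasi-isomorphism there suffices since both objects lie in the relevant subcategory.
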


\begin{proof}
It is sufficient to prove compatibility with the Dehn twist defined by an essential simple closed curve $\gamma \subset \Sigma$ invariant under the hyperelliptic involution, since such twists generate the hyperelliptic mapping class group.   We actually restrict to twists in curves projecting to arcs in $\bC \subset \bP^1$, hence the group $\Gamma_{g,1}^{hyp}$, so that the relevant hyperelliptic curves live in families over configuration spaces of points in $\bC$ and the description of the associated pencil of quadrics in the co-ordinate terms of Equation \ref{Eqn:PencilCoord} is always valid.  The Dehn twist in $\gamma \subset \Sigma$  acts by the Dehn twist in $V_{\gamma} \subset Q_0 \cap Q_1$ by Section \ref{Subsec:Pencil} and Lemma \ref{Ex:genus2}, and by the Dehn twist in $\hat{V}_{\gamma} \subset Z$ by Lemma \ref{lem:SpinMany}.  The relation between algebraic and geometric twists, Proposition \ref{Prop:twists} and more specifically Equation \ref{Eqn:TwistFunctor}, implies
\[
\scrT_{\Phi^+(V_{\gamma})} \circ \Phi^+ \ = \ \Phi^+ \circ \scrT_{V_{\gamma}}.
\]
Using Lemma \ref{Lem:Step2} to identify $\Phi^+(V_{\gamma}) \simeq \hat{V}_{\gamma}$, we see $\Phi^+$ entwines the action of a Dehn twist. This implies the Lemma (note we do not claim any compatibility between the quasi-isomorphisms for different mapping classes).
\end{proof}

%%%%%%%%%%%%%%%%%%%%%%%%%%%%%%%%%%%%%%%%%%%%
\section{From the curve to the relative quadric\label{Sec:CurveToQuadric}}

\begin{Notation} For a space $M$ equipped with a map $p:M \rightarrow \bP^1$, write $M_{\neq t}$ for the complement of the fibre $p^{-1}(t) \subset M$; similarly $M_{\neq T}$ denotes the complement of the union of the fibres for $t\in T$.
\end{Notation}

\subsection{Matching Spheres\label{Section:Match}}

Let $Z$ continue to denote the blow-up of $\bP^{2g+1}$ along $Q_0\cap Q_1$. 
A matching cycle is by definition a path $\chi: [-1,1] \rightarrow \bP^1$ in the base of a Lefschetz fibration which ends at distinct critical values (and is otherwise  disjoint from the set of critical values) and for which the associated vanishing cycles are Hamiltonian isotopic in the fibre over $\chi(0)$.  Any path between critical points for $Z \rightarrow \bP^1$ is a matching path, by Lemma \ref{Lem:uniquesphere}.

\begin{Lemma} \label{Lem:5sphere}
Any matching path $\chi:[-1,1] \rightarrow \bP^1$ for $Z\rightarrow \bP^1$ defines a Lagrangian sphere $L_{\chi} \subset Z$, unique up to Hamiltonian isotopy.  
\end{Lemma}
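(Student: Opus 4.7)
The plan is to construct $L_{\chi}$ by the standard matching-cycle recipe, and to reduce uniqueness to the (nontrivial) uniqueness of the vanishing cycle in the fiber, which is exactly Lemma~\ref{Lem:uniquesphere}. First I would fix an almost complex structure $J$ on $Z$ making $w:Z\rightarrow\bP^1$ pseudo-holomorphic, and a symplectic connection on the smooth locus of $w$ (i.e.\ the $\omega$-orthogonal complements to the fibers). Parallel transport along $\chi|_{[-1,0]}$ starting from the critical point in $w^{-1}(\chi(-1))$ sweeps out a Lefschetz thimble $D_{-}\subset Z$: a Lagrangian half-disk whose boundary $V_{-}\subset w^{-1}(\chi(0))$ is the vanishing cycle at $\chi(-1)$. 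Similarly one builds $D_{+}$ with boundary $V_{+}$.

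By Lemma~\ref{Lem:uniquesphere} the vanishing cycles $V_{\pm}$ are both Hamiltonian isotopic to the unique Lagrangian sphere in the quadric fiber $w^{-1}(\chi(0))$; fix a Hamiltonian isotopy $\{\phi_t\}$ with $\phi_0=\mathrm{id}$ and $\phi_1(V_{-})=V_{+}$. The standard trick (cf.~\cite{Seidel:triangle}, \cite{FCPLT}) is to replace $D_{-}$ by its image under a compactly supported Hamiltonian isotopy in $Z$ realising $\phi_t$ fiberwise in a small neighborhood of $w^{-1}(\chi(0))$. Near $\chi(0)$ the two thimbles then have identical boundary $V_{+}$; concatenating them yields a topological sphere $L_\chi\subset Z$ which is Lagrangian, and smooth away from $V_{+}$. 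Smoothness across $V_{+}$ is checked in local Darboux coordinates where the concatenation becomes the standard double of a half-disk, as is done for matching spheres in Lefschetz fibrations in general; this is where I would be most careful, but the argument is purely local and identical to the classical case.

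For uniqueness up to Hamiltonian isotopy, note that the construction depends on three choices: (i) the symplectic connection (equivalently, $J$ on $Z$); (ii) the Hamiltonian isotopy class of each vanishing cycle representative $V_{\pm}$; and (iii) the Hamiltonian isotopy $\phi_t$ identifying $V_{-}$ and $V_{+}$. Any two symplectic connections differ by a family of Hamiltonian vector fields on the fibers, so the associated thimbles are related by an ambient Hamiltonian isotopy; this disposes of (i). Lemma~\ref{Lem:uniquesphere} is precisely the statement needed for (ii), giving a canonical Hamiltonian isotopy class for the fiber vanishing cycle. For (iii), the relevant space is the space of Hamiltonian isotopies taking $V_{-}$ to $V_{+}$ through embedded Lagrangian spheres in the quadric fiber; any two such isotopies differ by a loop based at $V_{+}$ in this space, and the ambiguity is absorbed by a further Hamiltonian isotopy in $Z$ supported near $w^{-1}(\chi(0))$.

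The main obstacle is conceptual rather than technical: ensuring that the matching condition supplied by Lemma~\ref{Lem:uniquesphere} is strong enough. What we genuinely need is that the vanishing cycles from the two endpoints, after being parallel transported to the common midpoint, become Hamiltonian isotopic (not merely symplectomorphic) in a controlled way. Lemma~\ref{Lem:uniquesphere} delivers exactly this because the Lagrangian sphere there is unique not just up to symplectomorphism but as an isotopy class produced by an algebraic one-parameter degeneration, hence by a Moser argument up to Hamiltonian isotopy. Once that point is granted, the rest is standard matching-cycle technology and I would simply cite \cite[Section~16g]{FCPLT} for the details of smoothness and uniqueness.
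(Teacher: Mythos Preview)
Your approach is the standard matching-cycle construction, and the paper explicitly discusses it in the Remark immediately following the proof, but the paper takes a different and cleaner route. The paper defines $L_\chi$ as the vanishing cycle of an \emph{algebraic} nodal degeneration of $Z$ in which the two critical values over $\chi(\pm 1)$ coalesce along $\chi$; such a degeneration exists because $Z$ sits as a divisor in $\bP^{2g+1}\times\bP^1$ and can be moved in a Lefschetz pencil there. Since the monotone K\"ahler form is restricted from the ambient projective space, the vanishing cycle is Lagrangian for the given form and unique up to Hamiltonian isotopy by the usual Moser argument for algebraic families.

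The reason this matters is precisely the point the paper flags: your matching-cycle recipe, as written, glosses over a genuine technical issue. Realising the fibrewise Hamiltonian isotopy $\phi_t$ by a compactly supported ambient Hamiltonian isotopy of $Z$ is not automatic; the standard implementation (as in \cite{AMP}, which the paper cites) instead deforms the symplectic connection, and this \emph{a priori} only produces a Lagrangian sphere for a cohomologically perturbed symplectic form on $Z$. In the exact or open setting this is harmless, but on the closed monotone $Z$ it is exactly what you need to control. Your uniqueness argument in (iii) is also incomplete: two Hamiltonian isotopies $V_-\to V_+$ differ by a loop in the orbit of $V_+$, and ``absorbing'' this into an ambient Hamiltonian isotopy requires knowing something about $\pi_1$ of that orbit, which you do not supply. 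The paper's definition via algebraic degeneration sidesteps both issues at once, which is why it is preferred; your construction is relegated to an alternative description valid on the open piece $Z_{\neq t}$.
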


\begin{proof}
The monotone symplectic form on $Z$ is restricted from a global K\"ahler form on $\bP^{2g+1} \times \bP^1$. We define the sphere $L_{\chi}$ to be the vanishing cycle of the nodal algebraic degeneration of $Z$ in which two critical values coalesce along $\chi$ (such a degeneration is easily obtained by moving $Z$ in a  Lefschetz pencil on $\bP^{2g+1} \times \bP^1$). 
\end{proof}

\begin{Remark}
 If one restricts to $Z_{\neq t}$ for some $t\not \in \chi$, there is another description of this sphere.
By hypothesis the vanishing cycles associated to $\chi|_{[-1,0]}$ and $\chi|_{[0,1]}$ are Hamiltonian isotopic in the fibre $Q_0$, by a Hamiltonian isotopy $H_s$.  One can deform the symplectic connection of the fibration $Z_{\neq t} \rightarrow \bC$ so that the parallel transport along $\chi|_{[-\delta,\delta]}$ incorporates this isotopy, and exactly matches up the boundaries of the two vanishing thimbles, cf. Lemma 8.4 of \cite{AMP}.   It is standard that the two constructions give rise to the same Lagrangian sphere up to isotopy.  Note that the matching cycle construction, requiring a deformation of symplectic connection, \emph{a priori} only yields a Lagrangian sphere for a cohomologically perturbed symplectic form on the total space of $Z$, rather than of $Z_{\neq t}$, which is why the definition as a vanishing cycle is preferable.
\end{Remark}

A matching sphere $L_{\chi}$ can be oriented by choosing an orientation of the matching path $\chi$ and of the vanishing cycle in the fibre.  (Since Dehn twists act on even-dimensional spheres by reversing orientation, the latter choice will not generally be preserved by monodromy.) 
The spheres $L_{\chi}$ are homologically essential, which implies that they are non-zero objects of $\scrF(Z)$, with Floer homology additively given by $H^*(S^{2g+1})$.  

\begin{Lemma} \label{Lem:Unchanged}
For a matching path $\gamma \subset \bC$ defining  $L_{\gamma}\subset Z$, $HF(L_{\gamma}, L_{\gamma}) \cong H^*(S^{2g+1})$ as a ring.
\end{Lemma}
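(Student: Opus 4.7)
My plan is to decompose the assertion into additive content plus an analysis of the single potentially non-trivial product $\mu^2(x,x)$ where $x$ generates the top degree. First I would fix an absolute grading on $L_\gamma$: since $c_1(Z) = (2g+2)H - E$ is represented by the effective divisor $D_\Omega = \pi^{-1}(\infty) + 2g H_{ref}$ for $H_{ref}$ a generic hyperplane section of $Z$, one can choose a holomorphic volume form $\Omega$ on $Z\setminus D_\Omega$. Taking $\gamma\subset\bC$ and $H_{ref}$ generic, we may assume $L_\gamma\cap D_\Omega=\emptyset$. Since $L_\gamma\cong S^{2g+1}$ is simply connected for $g\geq 1$, it is graded with respect to $\Omega$, and a Morse–Bott (Pozniak) argument for a small Hamiltonian Morse perturbation shows that additively $HF(L_\gamma,L_\gamma)$ has a basis $\{1,x\}$ in degrees $0$ and $2g+1$.

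The main task is to show $\mu^2(x,x)=0$. The $\bZ_2$-grading a priori allows $\mu^2(x,x)=c\cdot 1$. Applying Equation~\eqref{Eqn:IndexFormula} with the polar divisor $D_\Omega$ representing $c_1(Z)$ (so $r=1$), any rigid contributing disc $u$ must satisfy
\[
i^\eta(\text{output}) = 4g+2 - 2\, D_\Omega\!\cdot\! u,
\]
which forces either the output to be in degree $0$ and $D_\Omega\!\cdot\! u=2g+1$, or the output to be in degree $2g+1$ and $D_\Omega\!\cdot\! u=(2g+1)/2$; the latter is non-integral, so only disks computing the coefficient of $1$ can occur.

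To rule out a non-zero count, my plan is to apply finite determinacy of $A_\infty$-structures on an exterior algebra in one generator (Example \ref{Ex:AinftyExterior}): the $A_\infty$-structure on $HF(L_\gamma,L_\gamma)$ is gauge-equivalent to one of the form $W=x^k$ for a unique $k\geq 2$, and $\mu^2(x,x)$ is non-zero precisely when $k=2$. Example \ref{Ex:AinftyExterior} further shows that in this classification the Hochschild cohomology of the associated $A_\infty$-algebra has rank $k-1$. It therefore suffices to produce two linearly independent classes in $HH^*(HF(L_\gamma,L_\gamma))$. I would obtain these from the composition of the open-closed string map with restriction,
\[
QH^*(Z) \longrightarrow HH^*(\scrF(Z)) \longrightarrow HH^*(HF(L_\gamma,L_\gamma)),
\]
showing that the unit and a suitable fibre-related class in $QH^*(Z)$ (for instance the class Poincar\'e dual to $F=\pi^{-1}(pt)$, which intersects $L_\gamma$ positively and hence maps to a non-trivial Hochschild class) have independent images. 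Since $k\geq 3$ forces $\mu^2(x,x)=0$, this concludes the argument.

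The main obstacle I anticipate is the lower bound on Hochschild cohomology via the open-closed map: one must verify that two distinct quantum classes really do survive to independent Hochschild classes on the small $A_\infty$-subalgebra generated by $L_\gamma$. As an alternative fallback I would argue geometrically, choosing a fibred almost complex structure so that Maslov-zero polygons are disjoint from $D_\Omega$; the disks enumerating $\mu^2(x,x)$ must then cross $D_\Omega$ essentially, and the antipodal involution of $L_\gamma$ (exchanging the two endpoints of $\gamma$ and restricting to the canonical involution on each fibrewise vanishing cycle) should pair such disks with opposite signs once coherent orientations are tracked. Either approach eliminates the Clifford-type deformation and identifies the ring structure with $H^*(S^{2g+1})$.
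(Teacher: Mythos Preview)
Your Hochschild lower-bound step is circular. To deduce $\rk HH^*\!\bigl(HF(L_\gamma,L_\gamma)\bigr)\geq 2$ via the open--closed map, you need two classes in $QH^*(Z)$ whose images are independent \emph{in Hochschild cohomology} of the one-object subcategory. The length-zero piece of that map is just the unital ring homomorphism $QH^*(Z)\to HF(L_\gamma,L_\gamma)$, and independence of images in $HF$ says nothing about independence in $HH^*$: if the structure were Clifford ($k=2$), then $HH^*$ has rank $1$ by Example~\ref{Ex:AinftyExterior} and any two classes collapse. So you would need to know that $HF(L_\gamma,L_\gamma)\to HH^*$ is injective, which already presupposes the ring structure you are trying to determine. (Incidentally, your candidate class $PD[F]$ lies in $H^2$, hence has even $\bZ_2$-degree and maps to a multiple of $1_L$, not to the odd generator.) Your fallback is also not convincing as stated: the antipodal map on $S^{2g+1}$ is orientation-preserving, so there is no evident sign to force pairwise cancellation of discs.

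The paper's argument is shorter and bypasses Hochschild cohomology. Choose a second matching path $\gamma^\dagger$ meeting $\gamma$ transversely once, so $[L_\gamma]\cdot[L_{\gamma^\dagger}]=\pm 1$. By Lemma~\ref{Lem:Composition} the composite $QH^*(Z)\to HF(L_\gamma,L_\gamma)\to QH^*(Z)$ is quantum product with $[L_\gamma]$; since $[L_\gamma]\ast[L_{\gamma^\dagger}]\neq 0$ (the top-degree classical term is unaffected by quantum corrections), the first map is surjective, sending $1_Z$ and $[L_{\gamma^\dagger}]$ to the two generators. Now $[L_{\gamma^\dagger}]$ has odd degree, so $[L_{\gamma^\dagger}]\ast[L_{\gamma^\dagger}]=0$ in $QH^*(Z)$ by graded commutativity in characteristic zero, and since the map is a ring homomorphism the odd generator of $HF(L_\gamma,L_\gamma)$ also squares to zero. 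The key idea you are missing is this use of the dual matching sphere to produce an odd-degree \emph{preimage} in $QH^*(Z)$, where graded commutativity is available for free.
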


\begin{proof}
From Lemma \ref{Lem:Composition}, there are natural maps
\begin{equation} \label{Eqn:ClosedOpenClosed}
QH^*(Z) \rightarrow HF^*(L_{\gamma}, L_{\gamma}) \rightarrow QH^*(Z)
\end{equation}
whose composite is given by quantum product
\[
QH^*(Z) \longrightarrow QH^*(Z), \quad \alpha \mapsto [L_{\gamma}] \ast \alpha
\]
with the fundamental class $[L_{\gamma}] \in QH^*(Z)$.   Let $L_{\gamma}^{\dagger}$ denote the matching cycle associated to a path meeting $\gamma$ transversely once, so $[L_{\gamma}^{\dagger}]$ and $[L_{\gamma}]$ represent Poincar\'e dual cycles in $Z$.  Equation \ref{Eqn:ClosedOpenClosed} takes $[L_{\gamma}^{\dagger}] \mapsto [L_{\gamma}] \ast [L_{\gamma}^{\dagger}].$
The image is non-zero, since the classical cup-product of the classes is $\pm 1$ and deformation to the quantum product does not change the coefficient in the maximal cohomological degree.  It follows that 
\[
QH^*(Z) \rightarrow HF^*(L_{\gamma}, L_{\gamma})
\]
is surjective, with $1_Z$ and $[L_{\gamma}^{\dagger}]$ mapping to generators (the image of the first is non-trivial by unitality; the images of the two classes are distinct since they have different  mod 2 degrees). Since $[L_{\gamma}^{\dagger}]\ast [L_{\gamma}^{\dagger}] = 0 \in QH^{ev}(Z)$ by graded commutativity, the Floer product in $HF^*(L_{\gamma}, L_{\gamma})$ is undeformed.
\end{proof}

Seidel \cite{Seidel:graded} proved there are no Lagrangian spheres in $\bP^{2g+1}$, so these matching spheres necessarily intersect the exceptional divisor $E \subset Z$.  Any such intersection point lies on a ruling curve $R\subset E$, which can then be viewed as a Maslov index 2 disk with (collapsed) boundary on $L_{\gamma}$. 

\begin{Lemma} \label{Lem:NotNilpotent}
For any matching path $\gamma$, the sphere $L_{\gamma} \in \scrF(Z;1)$.
\end{Lemma}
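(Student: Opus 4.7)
The approach combines Lemma~\ref{Lem:quantumcap} with the ring identification of Lemma~\ref{Lem:Unchanged}. By the former, $\frak{m}_0(L_\gamma)\cdot 1_{L_\gamma}$ is the image of $c_1(Z)$ under the unital ring map $QH^*(Z) \to HF^*(L_\gamma, L_\gamma)$, and by the latter the target is $H^*(S^{2g+1})$ as a ring, so every even-degree element is automatically a scalar multiple of the unit. The problem therefore reduces to computing this scalar and showing it equals $1$.

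First I would classify the Maslov $2$ disk classes with boundary on $L_\gamma$. Because $L_\gamma \cong S^{2g+1}$ is simply connected for $g \geq 1$, the boundary map $\pi_2(Z, L_\gamma) \to \pi_1(L_\gamma)$ vanishes and $\pi_2(Z) \to \pi_2(Z, L_\gamma)$ is an isomorphism; so Maslov $2$ disk classes correspond to Chern $1$ sphere classes. Writing elements of $H_2(Z) = \bZ\langle L\rangle \oplus \bZ\langle R\rangle$ and using $c_1(Z) = (2g+2)H - E$, a short calculation together with the fact that $\deg B = 4$ rules out all effective Chern $1$ classes other than the ruling class $R$ of the exceptional $\bP^1$-bundle $E = \bP(\nu_{B/\bP^{2g+1}}) \to B$, at least for $g \geq 2$.

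Next I would observe, as in the paragraph preceding the lemma, that $L_\gamma \cap E \neq \emptyset$: the blow-down restricts to a symplectomorphism $Z \setminus E \cong \bP^{2g+1} \setminus B$, so a disjoint $L_\gamma$ would give a Lagrangian sphere in $\bP^{2g+1}$, contradicting Seidel's theorem. Through each $p \in L_\gamma \cap E$ runs a unique ruling, and this ruling attached to a constant disk at $p$ is a Maslov $2$ nodal configuration contributing to $\frak{m}_0(L_\gamma)$.

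The hard part will be carrying out the virtual count of these configurations and obtaining the value $1$. The difficulty is that $L_\gamma \cap E$ has real codimension $2$ in $L_\gamma$, so a purely set-theoretic evaluation map does not sweep out the fundamental class of $L_\gamma$ and the naive count vanishes. My plan is to exploit the realization of $L_\gamma$ as the vanishing cycle of a nodal algebraic degeneration $\{Z_s\}$ supplied by Lemma~\ref{Lem:5sphere}, and to run a degeneration argument in the spirit of Lemma~\ref{Lem:NonZero}: in a one-parameter family in which $L_{\gamma_s}$ collapses onto the node of $Z_0$ as $s \to 0$, Gromov compactness applied to Chern $1$ spheres localizes the rigid contributions to curves through the nodal point, where a local transversality computation picks out exactly one ruling. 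This localization should produce the value $\frak{m}_0(L_\gamma) = 1$, with the essential technical point being to replace the positive-dimensional naive moduli of ruling-through-$(L_\gamma\cap E)$ configurations by the transverse contribution at the node in the degenerate limit.
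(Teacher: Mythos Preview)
Your reduction via Lemmas~\ref{Lem:quantumcap} and~\ref{Lem:Unchanged} to a single scalar, and your identification of the ruling class $R$ as the only effective Chern~$1$ class, are both correct. But the paper computes that scalar by a different route, and the gap you identify in your own approach is real and not resolved by your proposed fix.

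The paper does not count Maslov~$2$ disks. Instead it exploits the surjectivity established in the proof of Lemma~\ref{Lem:Unchanged}: since the Poincar\'e-dual matching sphere class $[L_\gamma^\dagger]$ maps to the odd generator of $HF^*(L_\gamma,L_\gamma)$, the scalar $\frak{m}_0(L_\gamma)$ equals the coefficient of $[L_\gamma]$ in the \emph{closed} quantum product $c_1(Z)\ast[L_\gamma^\dagger]\in QH^{odd}(Z)$. This is a three-point genus-zero Gromov--Witten invariant, counting rulings through $L_\gamma$, $L_\gamma^\dagger$, and $E$. Crucially, one may now choose cycle representatives freely: via the spinning construction of Section~\ref{Section:Spinning}, $L_\gamma$ and $L_\gamma^\dagger$ are taken to come from $(2g-1)$-spheres inside $E$ meeting transversely at a single point, and the unique ruling through that intersection point is the only contributor.

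Your direct disk count faces exactly the obstacle you name: the relevant holomorphic configurations are degenerate (constant disk plus ruling bubble), supported over the codimension-$2$ locus $L_\gamma\cap E$, so the evaluation map to $L_\gamma$ has deficient rank and a virtual contribution must be extracted. Your nodal-degeneration proposal is too vague to do this: collapsing $L_\gamma$ to a point in a nodal $Z_0$ does not single out one ruling, and no mechanism is given for why the limit should produce the value~$1$ rather than~$0$. The paper's conversion to a closed invariant sidesteps this obstruction-bundle computation entirely, by trading the single Lagrangian boundary condition for two independently movable cycles $L_\gamma$ and $L_\gamma^\dagger$.
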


\begin{proof}
Let $L=L_{\gamma}$, and fix a Poincar\'e dual sphere $L^{\dagger}$ as before.   By Lemma \ref{Lem:Unchanged}, it suffices to compute the coefficient of the dual cycle $[L]$ in the quantum cup-product $c_1(Z) \ast [L^{\dagger}] \in QH^{odd}(Z)$.  A dimension count shows that only Chern class 2 spheres contribute, and the only such sphere with a holomorphic representative is the ruling curve $R$, so we are interested in the algebraic number of $R$-curves passing through $L$, $L^{\dagger}$ and $E$.  The spinning construction of $L$ and $L^{\dagger}$ given in Section \ref{Section:Spinning} shows we can take them to come from $(2g-1)$-spheres inside $E$ meeting transversely once. The $R$-curve through that intersection point is then the unique curve which contributes to the product. 
\end{proof}

\begin{Lemma} \label{Lem:Step1}
For a matching path $\gamma \subset \bC$, the spun sphere $\hat{V}_{\gamma}$ and the matching sphere $L_{\gamma}$ are quasi-isomorphic objects of $\scrF(Z)$.
\end{Lemma}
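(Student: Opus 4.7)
The plan is to exhibit both $L_\gamma$ and $\hat V_\gamma$ as vanishing cycles of a common nodal algebraic degeneration of $Z$, obtained by coalescing the endpoints $\lambda_i,\lambda_j$ of the matching path $\gamma$; uniqueness of vanishing cycles will then force them to be Hamiltonian isotopic in $Z$, hence quasi-isomorphic in $\scrF(Z)$.

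First I would introduce a 1-parameter family $\{Z_s\}_{s\in[0,\epsilon)}$ of relative quadrics obtained by moving $\lambda_i$ and $\lambda_j$ along $\gamma$ towards a common midpoint while holding the remaining branch points fixed, so that $Z_s\cong Z$ as symplectic manifolds for $s>0$ via parallel transport in the family. At $s=0$ the base locus $B_0$ acquires an $A_1$-singularity at a point $p\in\bP^{2g+1}$ by Lemma~\ref{Ex:genus2}, and a direct computation writing $Z=\{sF_1-tF_0=0\}\subset\bP^{2g+1}\times\bP^1$ and examining where both the equation and its partial derivatives vanish shows that $Z_0$ itself acquires a single ordinary double point, located at the unique point $q\in E_0$ above $p$ determined by the proportionality factor between $\nabla F_0(p)$ and $\nabla F_1(p)$. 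By Lemma~\ref{Lem:5sphere}, the matching sphere $L_\gamma$ is by definition the Lagrangian sphere in $Z=Z_s$ ($s>0$) obtained as the vanishing cycle of this nodal degeneration.

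Next I would verify that $\hat V_\gamma$ represents the same vanishing cycle, by running the spinning construction of Lemma~\ref{lem:spin} in family. For each $s>0$ the sphere $V_\gamma^s\subset B_s$ bounds a Lagrangian disk $\Delta_s$ in some fixed smooth quadric $Q\subset \bP^{2g+1}$ (chosen independent of $s$ and not involved in the coalescence), and $\hat V_\gamma^s$ is obtained by spinning the unit circle subbundle of $\nu_{Q/\bP^{2g+1}}$ over $\Delta_s$ and passing to the blow-up. As $s\to 0$ the sphere $V_\gamma^s$ collapses to the node $p$, $\Delta_s$ can be chosen to collapse to $p$ as well, and the spun tube collapses to a single circle at $p$ in the normal direction to $Q$, which after blow-up becomes a point on the exceptional $\bP^1$ over $p$ in $E_0$. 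By choosing $Q$ so that $\nu_{Q/\bP^{2g+1}}|_p$ points along the collinearity direction of $\nabla F_0(p),\nabla F_1(p)$, one arranges that $\hat V_\gamma^s$ collapses to the node $q$ of $Z_0$.

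Finally I would invoke the standard uniqueness of vanishing cycles for an ordinary double point: any Lagrangian sphere in the smooth fibre of a nodal 1-parameter family of K\"ahler manifolds which collapses to the node as the parameter tends to zero is Hamiltonian isotopic to the standard vanishing cycle, by a Moser-type argument applied to the unique local symplectic model at a node (compare the proof of Lemma~\ref{Lem:uniquesphere}). Applied to $L_\gamma^s$ and $\hat V_\gamma^s$ this yields $L_\gamma\simeq \hat V_\gamma$ as objects of $\scrF(Z)$. The main obstacle I anticipate is the third paragraph: carefully running the spinning construction in a family degenerating at $s=0$, and matching the limit of $\hat V_\gamma^s$ with the prescribed node $q$ of $Z_0$. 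If the naive choice of auxiliary quadric $Q$ does not place the limit point at $q$, one can remedy this by a local Hamiltonian isotopy supported near $q$ that moves any given limit point on the exceptional $\bP^1$-fibre over $p$ to $q$, using that this $\bP^1$-fibre acts by symplectomorphisms on a neighbourhood of $q$ in the degenerate fibre $Z_0$.
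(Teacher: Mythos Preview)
Your overall strategy---exhibit both $L_\gamma$ and $\hat V_\gamma$ as vanishing cycles of the same nodal degeneration of $Z$ and invoke uniqueness---is exactly the paper's. The identification of $L_\gamma$ as the vanishing cycle is Lemma~\ref{Lem:5sphere}, as you say. The difference lies in how you handle $\hat V_\gamma$, and this is where there is a genuine gap.

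Your direct ``collapse'' argument for $\hat V_\gamma$ is problematic. The spinning construction of Lemma~\ref{lem:spin} takes place in $Z_s = Bl_{B_s}(\bP^{2g+1})$, and as $s\to 0$ the blow-up centre $B_s$ itself becomes singular. Running the spinning construction in family through $s=0$ therefore requires making sense of this degenerating blow-up and tracking the spun tube through it; you have not done this, and it is not clear that the naive picture (tube collapses to a point on the exceptional fibre over $p$) is correct or even well-posed. Your proposed remedy, that ``this $\bP^1$-fibre acts by symplectomorphisms on a neighbourhood of $q$'', does not make sense as stated: a $\bP^1$ is not a group.

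The paper sidesteps this entirely by arguing via monodromy rather than collapse. Over $\bC^*$ one has the smooth family of $(2,2)$-intersections $\{(Q_{2,2})_t\}$ with monodromy $\tau_{V_\gamma}$; blowing up $\bC^*\times\bP^{2g+1}$ fibrewise along this family gives a smooth family of relative quadrics, which one then checks extends algebraically over $0\in\bC$ to a Lefschetz fibration. The monodromy of the resulting $Z$-family around $0$ is the spin of $\tau_{V_\gamma}$, which by Lemma~\ref{lem:SpinMany} is $\tau_{\hat V_\gamma}$. Since a Lefschetz degeneration has monodromy equal to the Dehn twist in its vanishing cycle, this identifies $\hat V_\gamma$ as the vanishing cycle without ever analysing the singular fibre directly. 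You should replace your third paragraph with this monodromy argument.
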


\begin{proof}
Take a loop $Y \rightarrow S^1$ of genus g curves $\{\Sigma_g^t\}_{t\in S^1}$ with monodromy the Dehn twist $t_{\gamma}$.  $Y$ is obtained as the double cover of $\bP^1 \times S^1$ over a multi-section comprising $2g$ constant sections and a bi-section in which the branch points undergo a half-twist.  There is an obvious inclusion $Y \hookrightarrow \mathcal{Y} \rightarrow \bC$ to a Lefschetz fibration in which the curve defined by $\gamma$ has collapsed in the central fibre.  $\mathcal{Y}$ defines  algebraic Lefschetz fibrations with generic fibre $Z$ in two ways. First, one can construct a family of relative quadrics acquiring a node, with vanishing cycle $L_{\gamma}$, as in Lemma \ref{Lem:5sphere}.  Second, one can consider the Lefschetz fibration of $(2,2)$-intersections $\{Q_{2,2}\}_{t\in \bC}$ with vanishing cycle $V_{\gamma}$,  define a  family of relative quadrics by blowing up $\bC^*\times \bP^{2g+1}$ along $\bC^*\times (Q_{2,2})_t$, and noting that the total space again extends to an algebraic Lefschetz fibration over $\bC$.   The induced monodromy around the circle is then a Dehn twist in $\hat{V}_{\gamma}$, by Lemma \ref{lem:SpinMany}. The constructions exhibit both $L_{\gamma}$ and $\hat{V}_{\gamma}$ as vanishing cycles for the algebraic degeneration of $Z$ with monodromy induced by $t_{\gamma}$, which implies they are Hamiltonian isotopic.
\end{proof}

\begin{Corollary}\label{Cor:QHRelativeQuadric}
The subcategory  $\scrP\subset \scrF(Z;1)$ generated by the matching spheres $L_{\gamma}$ for arbitrary paths $\gamma$ has Hochschild cohomology $HH^*(\scrP,\scrP)\cong H^*(\Sigma_g)$.
\end{Corollary}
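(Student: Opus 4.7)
The plan is to identify $Tw^\pi \scrP$, the split-closed triangulated envelope of $\scrP$, with the essential image of the fully faithful functor $\Phi^+: D^\pi \scrF(Q_0 \cap Q_1; 0) \hookrightarrow D^\pi \scrF(Z)$ constructed in Proposition \ref{Prop:BlowUpEmbeds}. The conclusion then follows immediately, since Hochschild cohomology is invariant both under split-closure \cite[Theorem 4.12]{BM} and under quasi-equivalence of $A_\infty$-categories, and since Corollary \ref{Cor:QHIntersectQuadrics} computes $HH^*(\scrF(Q_0 \cap Q_1; 0)) \cong H^*(\Sigma_g)$.

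To set up the identification, first I would use the canonical correspondence between matching paths $\gamma \subset \bC$ for the Lefschetz fibration $Z \to \bP^1$ and hyperelliptic-invariant simple closed curves on the associated hyperelliptic curve $\Sigma_g$ to attach to each generator $L_\gamma$ of $\scrP$ the Wall vanishing cycle $V_\gamma \subset Q_0 \cap Q_1$ of Lemma \ref{Ex:genus2}. Then Lemmas \ref{Lem:Step1} and \ref{Lem:Step2} chain together to yield
\[
L_\gamma \ \simeq \ \hat{V}_\gamma \ \simeq \ \Phi^+(V_\gamma),
\]
with the first quasi-isomorphism in $\scrF(Z)$ and the second in $\mathrm{mod}$-$\scrP$. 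In particular, via Yoneda the generators of $\scrP$ are identified with the $\Phi^+(V_\gamma)$, so the full subcategory of $\mathrm{mod}$-$\scrP$ they generate is quasi-equivalent to the full subcategory of $\scrF(Z)$ generated by the $\Phi^+(V_\gamma)$; by full faithfulness of $\Phi^+$ this is in turn quasi-equivalent to the full subcategory of $\scrF(Q_0 \cap Q_1; 0)$ generated by the $V_\gamma$. Passing to split-closures and invoking Lemma \ref{Lem:SplitGenQuadricIntersection}, which asserts that the $\{V_\gamma\}$ split-generate $D^\pi \scrF(Q_0 \cap Q_1; 0)$, produces the desired quasi-equivalence $Tw^\pi \scrP \simeq D^\pi \scrF(Q_0 \cap Q_1; 0)$.

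The main point requiring care, rather than a genuine obstacle, is that Lemma \ref{Lem:Step2} establishes its quasi-isomorphism only inside $\mathrm{mod}$-$\scrP$, and not in $\scrF(Z)$ itself. This is exactly the right setting for the present argument: $HH^*(\scrP)$ is computed from the Yoneda images of the generators of $\scrP$, so it is insensitive to the fact that $\Phi^+$ shifts eigenvalues, sending $\scrF(Q_0 \cap Q_1; 0)$ into $\scrF(Z; 1)$ by Lemma \ref{Lem:NotNilpotent}, and more generally to any extrinsic data about the ambient Fukaya category of $Z$. Hence the whole corollary reduces to assembly of previously established pieces.
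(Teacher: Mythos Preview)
Your proposal is correct and follows essentially the same route as the paper's own proof: both chain together Lemma \ref{Lem:Step1} and Lemma \ref{Lem:Step2} to identify the generators $L_\gamma$ of $\scrP$ with the images $\Phi^+(V_\gamma)$, invoke full faithfulness of $\Phi^+$ (Lemma \ref{Lem:FullyFaithful}) and split-generation by the $V_\gamma$ (Lemma \ref{Lem:SplitGenQuadricIntersection}) to obtain $Tw^\pi\scrP \simeq D^\pi\scrF(Q_0\cap Q_1;0)$, and then read off Hochschild cohomology from Corollary \ref{Cor:QHIntersectQuadrics}. Your explicit attention to the mod-$\scrP$ setting of Lemma \ref{Lem:Step2} is a helpful clarification but does not change the underlying argument.
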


\begin{proof}
Lemma \ref{Lem:Step1} shows that $\scrP$ is the same as the subcategory defined by spun spheres $\hat{V}_{\gamma}$.  According to Lemma \ref{Lem:Step2} this is in turn the subcategory which is the image of the functor $\Phi^+: Tw^{\pi}\scrF(Q_0 \cap Q_1) \rightarrow Tw^{\pi}\scrF(Z)$.  On the other hand, this summand of the functor $\Phi$ is fully faithful, by Lemma \ref{Lem:FullyFaithful}; and finally, the Hochschild cohomology of $\scrF(Q_0 \cap Q_1)$ (and hence its split-closure) was calculated in Corollary \ref{Cor:QHIntersectQuadrics}.
\end{proof}

\begin{Remark}
The relative quadric $Z \subset \bP^{2g+1} \times \bP^1$ is a divisor of bidegree $(2,1)$, from which perspective it is straightforward to construct  a Lefschetz pencil with fibre $Z$ and with vanishing cycles matching spheres in the sense of Lemma \ref{Lem:5sphere}.  However, the cycle class $\mathcal{C}(w)$ for the corresponding fibration $w: Bl_{Z \cap Z'}(\bP^{2g+1} \times \bP^1) \rightarrow \bP^1$ does not act nilpotently on $HF(L_{\chi}, L_{\chi})$, which means we can't compute the Hochschild cohomology of $\scrP$ by direct appeal to Corollary \ref{Cor:QuiltsQH}.  This accounts for the somewhat roundabout proof of Corollary \ref{Cor:QHRelativeQuadric}, and its appeal to Lemma \ref{Lem:Step2}.  \end{Remark}

We take the matching paths of the pentagram, Figure \ref{Figure:Pentagram}, and write $\scrA$ for the subcategory of $\scrF(Z)$ generated by the associated matching spheres.

\subsection{Thimbles\label{Section:Thimbles}}

For $\pi: W \rightarrow \bC$ a Lefschetz fibration, with exact or monotone fibres, we denote by $\scrV(\pi)$ the \emph{Fukaya category of the Lefschetz fibration} in the sense of \cite[Section 18]{FCPLT}; we refer to \emph{op. \!cit.} for foundational material on such categories.     $\scrV(\pi)$ has objects  either closed Lagrangian submanifolds or Lefschetz thimbles, equipped with the usual brane and perturbation data;  we recall that one can achieve transversality for curves without components contained in fibres within the class of almost complex structures making $\pi$ pseudo-holomorphic.  In general $W$ will not be convex at infinity.   However, when we consider compact Lagrangian submanifolds, they project to a compact set in $\bC$, and (using almost complex structures compatible with the fibration structure) the maximum principle in the base ensures spaces of holomorphic curves are compact. When we consider $A_{\infty}$-operations between Lefschetz thimbles, these thimbles will always be perturbed so all the intersection points lie within a compact set in the base, which is sufficient for well-definition of the Fukaya category.  As a particular instance of this, the Floer cohomology of a Lefschetz thimble with itself is defined by small Hamiltonian perturbation at infinity.  

\begin{Proposition}[Seidel] \label{Prop:generate}
Any compact Lagrangian submanifold in $\scrV(\pi)$ is generated by a distinguished basis of Lefschetz thimbles.
\end{Proposition}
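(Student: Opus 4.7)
The strategy is to adapt Seidel's argument from the exact case \cite{FCPLT} to the monotone setting, noting that the fibration structure $\pi: W \to \bC$ provides all the geometric control needed, while monotonicity replaces exactness as the mechanism ensuring well-definedness of Floer theory. Fix a regular value $* \in \bC$ away from the critical values and a distinguished basis of vanishing paths emanating from $*$, giving Lefschetz thimbles $\Delta_1, \ldots, \Delta_n$. Given a compact Lagrangian $L \subset W$, the aim is to exhibit $L$ as an iterated cone amongst the $\Delta_i$ in $\scrV(\pi)$.

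The first step is to construct, for any such $L$, the universal evaluation morphism
\[
\bigoplus_i \Delta_i \otimes HF(\Delta_i, L) \ \longrightarrow \ L
\]
in $Tw\,\scrV(\pi)$, and show that its cone $C_L$ is a compact Lagrangian object with the property $HF(\Delta_i, C_L) = 0$ for every thimble $\Delta_i$. This follows from the basic yoga of algebraic twists (cf.\ Section \ref{SubSec:Fukaya}) applied iteratively. Next, I would argue that any compact Lagrangian $K \subset W$ with $HF(\Delta_i, K) = 0$ for all $i$ must be quasi-isomorphic to the zero object. This is the geometric heart: one constructs a Hamiltonian isotopy (generated by a function behaving like $\mathrm{Re}\,\pi$ at infinity) which pushes $K$ far off to the right along the real axis in the base, out of the region containing $\pi(\Delta_i)$. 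The maximum principle applied to $\pi$ shows that holomorphic strips between the shifted $K$ and any $\Delta_i$ still live over a compact region, so continuation maps are well-defined; once $K$ has been pushed past all critical values, the sum $\bigoplus_i HF(\Delta_i, K)$ computes (up to identification) the Floer cohomology $HF(L, L)$ via a spectral sequence or iterated exact triangle, the terms of which measure the wall-crossings occurring as $\pi(K)$ sweeps across each critical value. Vanishing of every $HF(\Delta_i, K)$ then forces the self-Floer cohomology of $K$ to vanish as well, hence $K \simeq 0$ in $H\scrV(\pi)$.

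The main obstacle is the wall-crossing analysis: one must verify that in the monotone setting, Maslov index $2$ disk bubbling on $K$ or on the thimbles does not create anomalous contributions, so that the iterated cone decomposition produced by pushing $K$ past successive critical values really does compute $HF(\Delta_i, K)$ in terms of the expected fibrewise intersections with vanishing cycles. This is handled as in Remark \ref{Rem:OurSpacesAreMonotone} and Section \ref{SubSec:Fukaya}: monotone Lagrangians of minimal Maslov number $\geq 2$ admit well-defined Floer theory over $\bC$, disk bubbles contribute only multiples of the unit and cancel in boundaries of one-dimensional moduli, and all relevant almost complex structures can be chosen making $\pi$ holomorphic so that curves without fibre components are confined by the maximum principle. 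Assembling these pieces gives $C_L \simeq 0$, hence $L$ lies in the triangulated subcategory generated by $\Delta_1, \ldots, \Delta_n$, as required.
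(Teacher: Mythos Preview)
Your approach diverges from the paper's, and the second step has a genuine gap. The paper does not push $K$ through critical values; it uses Seidel's \emph{double-covering trick}. One forms the double cover $\widehat{W}$ of $W$ branched along a regular fibre $\pi^{-1}(t)$ near infinity; the thimbles $\Delta_i$ lift to closed Lagrangian spheres $S_i \subset \widehat{W}$, and a compact $L$ lifts to two disjoint copies $L^{\pm}$ exchanged by the covering involution $\iota$. Since $\iota$ is the product $\prod_j \tau_{S_{i_j}}$ of Dehn twists in the $S_i$, one has $\prod_j \tau_{S_{i_j}}(L^+) \cap L^+ = L^- \cap L^+ = \emptyset$. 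This disjointness forces a vanishing arrow in the concatenated exact triangle for the iterated twists (via the monotone version of Proposition~\ref{Prop:twists}), exhibiting $L^+$ as split-generated by the $S_i$; interpreted $\bZ_2$-equivariantly downstairs this is generation of $L$ by the $\Delta_j$.

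Your step 2 does not work as written. First, the cone $C_L$ is a twisted complex built from the non-compact $\Delta_i$ together with $L$; it is not represented by a compact Lagrangian submanifold, so the geometric pushing argument cannot be applied to it. Second, even for a genuine compact $K$, a Hamiltonian isotopy of $K$ (through compact Lagrangians) preserves $HF(\Delta_i,K)$ by continuation, so there is no wall-crossing as $\pi(K)$ sweeps past critical values, and the asserted spectral sequence ``$\bigoplus_i HF(\Delta_i,K) \Rightarrow HF(L,L)$'' has no clear meaning here. Without an independent reason why $C_L \simeq 0$, your scheme is circular: deducing $C_L \simeq 0$ from $HF(\Delta_i, C_L)=0$ alone already presupposes that the $\Delta_i$ generate. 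The missing idea is exactly what the double cover supplies---a geometric mechanism (the covering involution equals a product of sphere twists, and separates the two lifts of $L$) that forces the relevant arrow to vanish.
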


\begin{proof}[Sketch]
In the exact case, this is Proposition 18.17 of \cite{FCPLT}.  The argument relies on a beautiful double-covering trick.  Namely, one considers the double cover $\widehat{W}$ of $W$ over a smooth fibre $\pi^{-1}(t)$, viewed as near infinity.  Lefschetz thimbles $\Delta_i$ with boundary lying in $\pi^{-1}(t)$ lift to closed Lagrangian spheres $S_i$ in $\widehat{W}$, and one actually \emph{defines} $\scrV(\pi)$ as a $\bZ_2$-equivariant version of the usual Fukaya category $\scrF(\widehat{W})$; this requires the underlying coefficient field $\bK$ not to be of characteristic 2.  Any \emph{compact} $L\subset W$ lifts to a pair of disjoint compact Lagrangians $L^{\pm} \subset \widehat{W}$, which are exchanged by the covering involution $\iota$. This involution can be expressed as a product of Dehn twists $\iota = \prod_j \tau_{S_{i_j}}$ in the spheres $S_i$ associated to a basis of thimbles $\{\Delta_i\}$.  We now use  the monotone version of Proposition \ref{Prop:twists}.  (Recall that Wehrheim and Woodward have extended Seidel's exact triangle for a Dehn twist to the monotone case, Theorem \ref{Thm:WWtriangle}, see also Oh's  \cite{Oh:triangle}.  The exact triangle, together with general aspects of quilt theory, implies that the geometric Dehn twist and algebraic twist are quasi-isomorphic functors \cite{WW:triangle}.)  The disjointness 
\[
\prod_j \tau_{S_{i_j}}(L^+) \cap L^+ \ = \ L^- \cap L^+ \ = \ \emptyset
\]
therefore yields a vanishing arrow in an exact triangle in $\scrF(\widehat{W})$ which implies split-generation of the summand $L^+$ by the spheres $S_j$.  This is then interpreted downstairs in $W$, i.e. in $\scrV(\pi)$, as generation by the thimbles $\Delta_j$. 
\end{proof}

  Let  $C_{\neq\infty}$ and $Z_{\neq\infty}$ be the Lefschetz fibrations over $\bC$ with fibre a pair of points, respectively a $2g$-dimensional quadric, given by removing the $\infty$-fibres from the closed manifolds $C$ and $Z$.  We can alternatively remove the 0-fibres to yield $C_{\neq 0}$ and $Z_{\neq 0}$, which are also Lefschetz fibrations over $\bC = \bP^1\backslash \{\infty\}$.

\begin{Lemma}\label{Lem:thimblesagree}
The categories $Tw\,\scrV(C_{\neq\infty})$ and $Tw\,\scrV(Z_{\neq\infty})$ are quasi-isomorphic as $\bZ_2$-graded $A_{\infty}$-categories.  Similarly, $Tw\,\scrV(C_{\neq 0}) \simeq Tw\,\scrV(Z_{\neq 0})$.
\end{Lemma}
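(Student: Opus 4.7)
The strategy is to apply Proposition~\ref{Prop:generate} to both sides in order to reduce the statement to a quasi-isomorphism of directed $A_\infty$-subcategories of vanishing cycles in the respective fibres, and then to identify these subcategories by appealing to Corollary~\ref{Cor:QuadricSorted}.

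First, the two Lefschetz fibrations $C\to\bP^1$ and $Z\to\bP^1$ share the same $2g+2$ critical values in $\bC$, by the construction of the associated pencil of quadrics, cf.~Example~\ref{Ex:LayGround} and equation~(\ref{Eqn:PencilCoord}). The plan is to fix a common distinguished basis of vanishing paths from a reference point near infinity to each critical value, thereby producing Lefschetz thimbles $\Delta_i^C\subset C_{\neq\infty}$ and $\Delta_i^Z\subset Z_{\neq\infty}$ covering the same arcs in the base. By Proposition~\ref{Prop:generate}, the categories $Tw\,\scrV(C_{\neq\infty})$ and $Tw\,\scrV(Z_{\neq\infty})$ are (split-)generated by these bases of thimbles, so the lemma will follow from a quasi-isomorphism between the endomorphism $A_\infty$-algebras $\bigoplus_{i,j}hom(\Delta_i^C,\Delta_j^C)$ and $\bigoplus_{i,j}hom(\Delta_i^Z,\Delta_j^Z)$ as $\bZ_2$-graded $A_\infty$-algebras.

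Second, the endomorphism algebra of a distinguished basis of thimbles is determined by the directed $A_\infty$-subcategory of vanishing cycles in the fibre, together with the combinatorial data of the vanishing paths; see Section 18 of \cite{FCPLT}. In $C_{\neq\infty}$, each vanishing cycle is the entire zero-dimensional fibre $S^0$, which in $Tw\,\scrF(S^0)$ is the twisted complex $p_1\oplus p_2$ with endomorphism algebra $\bC\oplus\bC$ and only trivial higher products (a zero-dimensional fibre admits no nonconstant holomorphic polygons). In $Z_{\neq\infty}$, each vanishing cycle is the Lagrangian sphere $L\subset Q$ of Lemma~\ref{Lem:uniquesphere}, which lies in the nilpotent summand $\scrF(Q;0)$; Lemma~\ref{Lem:Ainfinitytrivial} gives $HF(L,L)\cong\bC[t]/\langle t^2-1\rangle\cong\bC\oplus\bC$ semisimple with formal $A_\infty$-structure. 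Corollary~\ref{Cor:QuadricSorted} then supplies a quasi-equivalence $D^\pi\scrF(Q;0)\simeq D^\pi\scrF(S^0)$ sending $L$ to $S^0$. Since both directed algebras consist of $2g+2$ formal copies of a single semisimple object, with morphisms between distinct copies intrinsically determined by the ambient Fukaya category of the fibre together with the shared combinatorial data of vanishing paths, this equivalence lifts to a quasi-isomorphism of the two directed subcategories, and therefore of their $Tw$-closures. The $0$-fibre case is handled identically, using a reference point near $0$ instead of near $\infty$.

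The main obstacle is twofold. First, the higher $A_\infty$-products on the thimbles a priori count holomorphic polygons in the \emph{total space} of the Lefschetz fibration, not in the fibre; one must check these reduce to fibrewise polygon counts with boundary on the vanishing cycles. This follows from the standard argument that fibration-compatible almost complex structures plus the maximum principle in the base confine such polygons to neighbourhoods of the vanishing paths, so the counts are computed by the fibrewise directed $A_\infty$-structure. Second, Corollary~\ref{Cor:QuadricSorted} is formulated after idempotent completion, whereas the lemma concerns $Tw$. This is not a genuine obstruction: on the $\scrF(S^0)$ side the vanishing cycle $S^0=p_1\oplus p_2$ is already a twisted complex with $End\cong\bC\oplus\bC$, and on the $\scrF(Q;0)$ side the formality and rigidity of the $A_\infty$-structure on the semisimple algebra $HF(L,L)$ (cf. Lemma~\ref{Lem:Ainfinitytrivial} and Remark~\ref{Rem:HH}) pin down all higher products, so no further split-closure is needed to match the two directed algebras.
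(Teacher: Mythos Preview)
Your proposal is correct and follows essentially the same approach as the paper: both reduce to identifying the directed $A_\infty$-subcategory on a distinguished basis of vanishing cycles in the fibre, observe that all vanishing cycles coincide with the unique Lagrangian sphere of Lemma~\ref{Lem:uniquesphere} (respectively with $S^0$), and invoke the equivalence of the fibrewise Fukaya categories from Lemma~\ref{Lem:Ainfinitytrivial}/Corollary~\ref{Cor:QuadricSorted}. Your discussion of the two ``obstacles'' is more cautious than necessary---in Seidel's framework the directed category on thimbles is by construction computed in the fibre, and since both $HF(L,L)$ and $HF(S^0,S^0)$ are formal semisimple of rank two, the directed algebras match without any appeal to idempotent completion---but the logic is sound.
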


\begin{proof}
The  category generated by a distinguished basis of vanishing thimbles is just the directed category amongst the associated vanishing cycles $\{\partial \Delta_i\}$.  This category is computed entirely inside the fibre of the Lefschetz fibration. For $Z_{\neq\infty}$ the vanishing cycles are all the particular Lagrangian sphere constructed in Lemma \ref{Lem:uniquesphere} (whilst the fibre $S^0$ of $C_{\neq\infty}\rightarrow \bC$ obviously contains a unique Lagrangian sphere).  Thus, the relevant summands of the Fukaya categories of the fibres are quasi-isomorphic by an equivalence which identifies the ordered collection of vanishing cycles;  the result follows. The argument for the categories given by removing the 0-fibres is identical, since Proposition \ref{Prop:generate} does not require that the Lefschetz fibration have trivial monodromy at infinity.
\end{proof}

Let's call a \emph{basic path} a linear path in $\bC$ between two $(2g+1)$-st roots of unity adjacent in argument; one is dotted in Figure \ref{Figure:Pentagram}.  Thus, the pentagram spheres are matching spheres for paths which are given by completing a pair of consecutive basic paths to a triangle. Similarly, if a \emph{radial path} is one from the origin to a root of unity, the pentagram spheres are alternatively obtained from pairs of radial paths. We record this for later:

\begin{Lemma} \label{Lem:BasictoPent}
Each pentagram matching path is obtained from some basic, respectively radial, matching path by applying a half twist  along another basic, respectively radial, matching path.
\end{Lemma}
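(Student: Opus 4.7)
The plan is to verify the claim by direct inspection of the half-twist operation on matching paths, since this is a purely two-dimensional combinatorial/topological statement about arcs in $\bC$ with prescribed endpoints at the branch locus $\{0,\xi,\xi^2,\ldots,\xi^{2g+1}\}$.

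I would first recall the following. A half-twist along a matching path $\beta \subset \bC$ with endpoints $p_1,p_2$ is realised by a compactly supported diffeomorphism $T_\beta$ of $\bC$ that is the identity outside a small tubular neighbourhood of $\beta$, that rotates $\beta$ by angle $\pi$ about its midpoint, and that interchanges $p_1$ and $p_2$. Its action on any other matching path $\alpha$ fixes those endpoints of $\alpha$ lying outside this neighbourhood and drags any shared endpoint to the opposite end of $\beta$ along a canonical half-loop. Once this is set up, the statement reduces to identifying the isotopy class of $T_\beta(\alpha)$ for two specific choices of $\alpha,\beta$.

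For the basic case, I would take $\alpha$ to be the linear basic path from $\xi^j$ to $\xi^{j+1}$ and $\beta$ the linear basic path from $\xi^{j+1}$ to $\xi^{j+2}$. These share the single endpoint $\xi^{j+1}$; applying $T_\beta$ leaves the endpoint $\xi^j$ fixed and drags $\xi^{j+1}$ to $\xi^{j+2}$ along a half-arc parallel to $\beta$. The resulting path has endpoints $\xi^j$ and $\xi^{j+2}$ and stays inside an arbitrarily small neighbourhood of $\alpha \cup \beta$ in $\bC$, which is a topological disk meeting the branch set $\{0,\xi^k\}$ only in $\{\xi^j,\xi^{j+1},\xi^{j+2}\}$. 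Since the pentagram chord from $\xi^j$ to $\xi^{j+2}$ can be isotoped into this same disk rel endpoints while avoiding $\xi^{j+1}$, the two matching paths are isotopic in $\bC\setminus(\{0\}\cup\{\xi^k\}\setminus\{\xi^j,\xi^{j+2}\})$, hence equivalent as matching paths.

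For the radial case, I would take $\alpha$ to be the radial path $\rho_{j+2}$ from $0$ to $\xi^{j+2}$ and $\beta$ to be the radial path $\rho_j$ from $0$ to $\xi^{j}$, which share the endpoint $0$. The half-twist $T_\beta$ fixes $\xi^{j+2}$ and carries $0$ to $\xi^j$ along a half-arc parallel to $\rho_j$, yielding a matching path from $\xi^j$ to $\xi^{j+2}$ contained in a neighbourhood of $\rho_j \cup \rho_{j+2}$; this is again a disk whose only branch points other than $\xi^j$ and $\xi^{j+2}$ are $0$, and the pentagram chord is isotopic to it rel endpoints inside this disk.

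The argument is essentially picture-drawing, so there is no serious obstacle; the only potentially delicate point is keeping track of on which side of $\beta$ the half-twist pushes the free endpoint, but both the basic and radial configurations are convex enough that the two possible outputs of $T_\beta$ differ only by isotopy across the complement of $\beta$'s midpoint, and either choice lands in the isotopy class of the pentagram chord between the appropriate roots of unity. This completes the verification.
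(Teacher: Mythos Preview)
The paper states this lemma without proof, treating it as self-evident from Figure~\ref{Figure:Pentagram}. Your explicit verification via the action of half-twists on arcs supplies the picture the paper omits and is correct in outline.

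One correction to your final paragraph: the claim that \emph{either} direction of half-twist lands in the pentagram class is not right. In the basic case, $T_\beta(\alpha)$ and $T_\beta^{-1}(\alpha)$ push the concatenation off the shared branch point $\xi^{j+1}$ to opposite sides, and since $\xi^{j+1}$ is itself a branch point these are genuinely different isotopy classes of matching paths (they differ by a full twist about $\xi^{j+1}$, not merely by an isotopy across the interior of $\beta$). Only the direction that pushes toward the interior of the triangle $\xi^j\xi^{j+1}\xi^{j+2}$, equivalently toward $0$, is isotopic to the pentagram chord. The same caveat applies in the radial case, with $0$ playing the role of the avoided branch point. Since the lemma only asserts that \emph{some} half-twist works, this does not affect the conclusion; simply fix the correct direction rather than asserting both succeed.
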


\begin{Lemma} \label{Lem:pentagramsagree}
The equivalence of Lemma \ref{Lem:thimblesagree} yields a quasi-isomorphism between the subcategories $Tw \,\scrA(C_{\neq\infty})$ and $Tw\,\scrA(Z_{\neq\infty})$ generated by the finite set of Lagrangian matching spheres associated to the arcs of the generalised pentagram.
\end{Lemma}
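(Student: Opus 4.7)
The plan is to exploit the fact that in the Fukaya category of a Lefschetz fibration, a matching sphere admits a canonical description as a twisted complex of two Lefschetz thimbles. Given a pentagram matching path $\chi$ between critical values $z_0, z_1 \in \bC$, choose a distinguished basis of vanishing paths for $\pi$ (where $\pi$ is either $C_{\neq\infty} \to \bC$ or $Z_{\neq\infty} \to \bC$) whose first two members concatenate to form $\chi$. Such a choice is possible because, by Lemma \ref{Lem:BasictoPent}, each pentagram path is built by completing two radial (or basic) paths. The matching-path hypothesis means that the vanishing cycles over the midpoint of $\chi$ coincide; the associated thimbles $\Delta_0, \Delta_1$ therefore meet in a single point of intersection number $\pm 1$ in a fibre near the midpoint, determining up to scalar a canonical morphism $\eta \in HF^0(\Delta_0, \Delta_1)$.

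The content of the cone description is that the matching sphere $L_\chi$ is quasi-isomorphic in $Tw\,\scrV(\pi)$ to $\{\Delta_0 \xrightarrow{\eta} \Delta_1\}$. Geometrically this reflects the fact that $L_\chi$ arises by Lagrange surgery on the immersed Lagrangian sphere obtained by concatenating $\Delta_0$ and $\Delta_1$ along their common vanishing cycle in the middle fibre; this is a standard feature of Picard--Lefschetz theory (cf.\ \cite[Sections 16e, 18h]{FCPLT}), and the monotone analogue proceeds along the same lines because the relevant holomorphic polygons are localised near the matching vanishing cycle in the fibre. In the trivial-fibre case $C_{\neq\infty}$ the statement is transparent: $L_\chi$ is literally the union of two arcs in $C$ meeting at their common boundary in the $S^0$-fibre, and the surgery is the obvious smoothing at the branch points. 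In the case $Z_{\neq\infty}$ the two thimbles intersect cleanly along the Lagrangian sphere of Lemma \ref{Lem:uniquesphere}, and surgery along that clean intersection produces $L_\chi$.

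Once this cone presentation is in hand, the equivalence $\Phi\colon Tw\,\scrV(C_{\neq\infty}) \xrightarrow{\simeq} Tw\,\scrV(Z_{\neq\infty})$ supplied by Lemma \ref{Lem:thimblesagree} identifies the two distinguished bases of thimbles together with the morphism spaces between them, since that equivalence is built precisely from the identification of directed fibre subcategories. The morphism $\eta$ is characterised up to scalar by its unique fibre-level representative, so $\Phi(\eta)$ again defines the corresponding matching-path morphism on the $Z_{\neq\infty}$ side. Consequently $\Phi$ sends the cone representative of each pentagram matching sphere in $C_{\neq\infty}$ to that of the corresponding matching sphere in $Z_{\neq\infty}$, and restricting $\Phi$ to the full subcategories split-generated by these cones yields the desired quasi-isomorphism $Tw\,\scrA(C_{\neq\infty}) \simeq Tw\,\scrA(Z_{\neq\infty})$.

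The main technical obstacle is the justification of the cone presentation in a setting (monotone, $\bZ_2$-graded) where one cannot directly quote the exact-case results of \cite{FCPLT}. The cleanest route is to work in the pearl-model of the Fukaya category as in Remark \ref{Rem:Pearls}, where the surgery description of a matching sphere gives an explicit chain-level quasi-isomorphism with the mapping cone of $\eta$; alternatively one can invoke the double-cover trick underlying Proposition \ref{Prop:generate} to reduce to the classical exact case, observing that the surgery happens in a neighbourhood of the matching cycle where monotonicity plays no role. Either approach suffices for the formal consequence extracted here.
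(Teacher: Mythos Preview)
Your approach is essentially the paper's: represent matching spheres as cones on pairs of Lefschetz thimbles and then transport these cone presentations through the equivalence of Lemma \ref{Lem:thimblesagree}. The execution differs, however, and the paper's version is tighter in two respects.

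First, the paper does not attempt to present the \emph{pentagram} spheres directly as cones. Instead it applies the cone description to the \emph{basic} matching paths (adjacent roots of unity), citing \cite[Proposition 18.21 \& Remark 20.5]{FCPLT}; the fibres here are semisimple (Lemma \ref{Lem:Ainfinitytrivial} for $Q$, trivially for $S^0$), and Remark 20.5 is precisely the semisimple special case, so no separate monotone justification is needed. Having matched the basic spheres, the paper observes that the associated Dehn-twist functors are likewise entwined by the equivalence, and then invokes Lemma \ref{Lem:BasictoPent} in its actual form---each pentagram path is a \emph{half-twist} of a basic path along another basic path---to conclude that the pentagram spheres are matched as well.

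Second, your appeal to Lemma \ref{Lem:BasictoPent} to produce a distinguished basis whose first two vanishing paths ``concatenate to form $\chi$'' misreads that lemma: it asserts a half-twist relation, not a concatenation. One can certainly arrange a distinguished basis adapted to a given pentagram path and invoke the general cone description, but this requires a separate argument (and a fresh basis choice) for each pentagram arc, whereas the paper's two-step route (basic spheres as cones, then twist) handles all pentagram spheres uniformly and sidesteps the need to reprove the cone presentation in the monotone setting.
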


\begin{proof}
The matching sphere for a basic path  is the cone on a distinguished degree zero morphism between the thimbles associated to the end-points, by \cite[Proposition 18.21 \& Remark 20.5]{FCPLT} (note that our fibres are semi-simple, so it is the special case covered by Remark 20.5 which is relevant).  These cones amongst thimbles involve identical data in the two spaces.  The same argument shows that the twist functors associated to the spheres for basic matching paths are entwined by the equivalence of Lemma \ref{Lem:thimblesagree}.  The result now follows from Lemma \ref{Lem:BasictoPent}. Note that the quasi-equivalence acts cohomologically by the obvious isomorphism of the underlying vector spaces of cohomology groups. \end{proof}

Lemma \ref{Lem:pentagramsagree} and the description of $\scrF(\Sigma_g)$ given in Section \ref{Section:Curve} imply that the Floer cohomology algebra $A=H(\scrA)$ defined by the pentagram spheres in $Z_{\neq\infty}$ is the $\bZ_2$-graded semi-direct product $\Lambda(V)\rtimes \bZ_{2g+1}$, for a 3-dimensional vector space $V$.   To pin down higher products explicitly, it is computionally most efficient to avoid Hamiltonian perturbations, and to pass to a pearly model for the Fukaya category as in 
Remark \ref{Rem:Pearls}. This involves fixing metrics and Morse functions on the individual pentagram Lagrangians. We take these to be equivariant under the obvious finite rotation symmetry group. 

\begin{Lemma} \label{Lem:even}
One can define $\scrA(Z)$ with respect to almost complex structures with the following properties:
(i)  the map $\pi: Z \rightarrow \bP^1$ is pseudoholomorphic; (ii) the structure is integrable near the fibres lying over $0$ and $\infty$;  (ii) the rotation group $\bZ_{2g+1}$ pulled back from $\bP^1$ acts holomorphically.
\end{Lemma}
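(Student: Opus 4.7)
The plan is to construct $J$ by perturbing an integrable, equivariant reference structure, with the perturbation supported away from the fibres over $0$ and $\infty$. After rotating co-ordinates on $\bP^{2g+1}$ so the pencil spanned by $Q_0, Q_1$ is invariant under the diagonal lift of the $\bZ_{2g+1}$-action on $\bP^1$, the blow-up $Z$ inherits a $\bZ_{2g+1}$-equivariant integrable K\"ahler structure $J_{int}$ for which $\pi$ is holomorphic. Let $\mathcal{J}$ denote the space of $\omega$-tame, $\bZ_{2g+1}$-equivariant almost complex structures on $Z$ which make $\pi$ pseudo-holomorphic and which agree with $J_{int}$ on fixed open neighbourhoods $U_0, U_{\infty}$ of the fibres $\pi^{-1}(0)$ and $\pi^{-1}(\infty)$. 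This space is non-empty and contractible.

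Next I would fix Floer data (Hamiltonian perturbations, metrics and Morse functions) for the pentagram matching spheres $L_{\chi_j}$. Since the matching paths all lie in a compact subset of $\bC \setminus \{0\}$ bounded away from $\{0, \infty\}$, after shrinking $U_0, U_{\infty}$ one can arrange that the $L_{\chi_j}$, together with all their small Hamiltonian perturbations used in the construction of $\scrA(Z)$, are disjoint from $U_0 \cup U_{\infty}$. Only countably many moduli spaces of holomorphic polygons enter the definition of $\scrA(Z)$, each a priori compactified thanks to monotonicity. A generic $J \in \mathcal{J}$ achieves transversality for all of these by the usual Floer-Hofer-Salamon argument: any non-constant polygon has its boundary in $Z \setminus (U_0 \cup U_{\infty})$, so the underlying simple curve is somewhere injective in the complement, where $J$ is free to vary within $\mathcal{J}$.

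The main obstacle is that the constraint of integrability on $U_0 \cup U_\infty$ combined with $\bZ_{2g+1}$-equivariance could in principle obstruct regularity for sphere bubbles contained in $\pi^{-1}(0)$ or $\pi^{-1}(\infty)$, or for bubbles or polygons fixed by a non-trivial subgroup of $\bZ_{2g+1}$. For a holomorphic sphere contained in a fibre over $0$ or $\infty$, both of which are smooth quadrics, automatic regularity for $J_{int}$ applies once one checks that the normal bundle splits as a sum of line bundles of Chern number $\geq -1$; this follows from the fact that $\pi^{-1}(0)$ and $\pi^{-1}(\infty)$ are smooth divisors of trivial normal bundle in $Z$ and the target quadrics are Fano. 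For curves with non-trivial $\bZ_{2g+1}$-isotropy, one notes that $\bZ_{2g+1}$ acts freely on $\pi^{-1}(\bC^* \setminus \{(2g+1)\text{th roots of }1\})$, so any such stabilised curve must project into the critical locus together with $\{0, \infty\}$, bringing us back to the fibre-contained case. Finally, $\bZ_{2g+1}$-equivariant transversality is achieved by a standard averaging argument, which is unobstructed since we work over a field of characteristic zero.
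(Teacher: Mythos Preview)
Your overall architecture is reasonable, but the handling of sphere bubbles and of the $\bZ_{2g+1}$-fixed locus contains genuine errors, and you miss the simple observation the paper actually uses.

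First, the fibre $\pi^{-1}(0)$ is \emph{not} a smooth quadric: in the pentagram set-up the hyperelliptic curve is branched over $\{0,\xi,\ldots,\xi^{2g+1}\}$, so $0$ is a critical value of $\pi$ and $\pi^{-1}(0)$ is a nodal quadric.  Your automatic-regularity claim for spheres inside that fibre therefore fails as stated.  More to the point, the whole discussion of fibre-contained sphere bubbles is unnecessary.  The paper instead observes that, by monotonicity of $Z$, the zero- and one-dimensional moduli spaces of pearls defining the $A_\infty$-structure contain no sphere bubble components at all.  Hence only genuine polygon components need to be made regular, and those have boundary on the pentagram Lagrangians, which live over a compact subset of $\bC^*$ disjoint from $U_0\cup U_\infty$; perturbing $J$ there suffices.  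This is the content of part (ii) in the paper's sketch.

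Second, your description of the free locus of the $\bZ_{2g+1}$-action is off: the rotation $z\mapsto \xi z$ on $\bP^1$ fixes exactly $0$ and $\infty$ and \emph{permutes} the $(2g{+}1)$-st roots of unity, so the action on $Z$ is free over all of $\bC^*$, not just over $\bC^*$ minus the roots of unity.  Consequently the fixed locus of the rotation group lies in $\pi^{-1}(0)\cup\pi^{-1}(\infty)$.  Since the Lagrangians are disjoint from these fibres and (as above) no sphere bubbles occur, no pearl component can lie inside the fixed locus.  That is exactly the hypothesis under which equivariant transversality can be achieved by domain perturbations, as in \cite[Remark~9.1]{Seidel:HMSgenus2}; the paper invokes this directly rather than an unspecified ``averaging argument''.
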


\begin{proof}[Sketch]
The first statement is standard. The fibres over $0$ and $\infty$ are disjoint from the Lagrangians, and holomorphic pearls contributing to the $A_{\infty}$-structure contain no sphere bubbles by monotonicity, which implies the second statement.  Since no holomorphic curve components of any pearl lie inside the fixed locus of the rotation group, one can achieve equivariant transversality by perturbations on the domain, which gives the last statement, see \cite[Remark 9.1]{Seidel:HMSgenus2}. \end{proof}

\begin{Remark} \label{Rem:EquivtTransverse}
The usual obstruction to equivariant transversality  is the existence of holomorphic curves entirely contained in the fixed point locus of the finite group action, cf. \cite[Lemma 5.12]{KhovanovSeidel}, which cannot exist if the finite group action is free on the Lagrangian submanifolds involved in the relevant category $\scrA$.  More generally, working in characteristic zero one can always take the $A_{\infty}$-structure on $\scrA$  to be strictly invariant under a given action of a finite group $\Gamma$.  In the approach pioneered in \cite{FO3},  the $A_{\infty}$-operations are constructed via virtual perturbations and Kuranishi chains, and equivariance with respect to arbitrary finite group actions is built in from the start. An alternative argument, due to Paul Seidel (private communication),  introduces an artificially enlarged category $\scrA^+$ which contains many copies of each object, one for each group element.  The $\Gamma$-action is now free on objects, hence can be made strict on $\scrA^+$; on the other hand, starting from the equivalence in characteristic zero   $H^*(CC^*(\scrA^+,\scrA^+)^\Gamma) = HH^*(\scrA^+, \scrA^+)^\Gamma$, a deformation theory argument  shows that there must be some $\Gamma$-invariant structure on $\scrA$ itself.
\end{Remark}

%%%%%%%%%%%%%%

\subsection{Exterior Algebra\label{Section:Exterior}}

We continue to study the algebra $A = H(\scrA)$ defined by the $(2g+1)$ pentagram spheres of Figure \ref{Figure:Pentagram}.  We orient the spheres equivariantly for the cyclic symmetry group which permutes them.  Any two pentagram spheres are either disjoint, meet transversely in a point, or meet cleanly in a copy of $S^{2g}$. The two generators of the Floer complex in the last case have degrees of the same parity, so there is no differential. Therefore as $\bZ_2$-graded groups, $HF(L_i, L_j)$ can be computed equivalently in $Z_{\neq\infty}$ or in $Z$ (in either case one just recovers the cohomology of the clean intersection).   It follows that $H(\scrA) = \oplus_{i,j} HF(L_i, L_j)$ is isomorphic, as a $\bZ_2$-graded complex vector space, to the space $\Lambda(V) \rtimes \bZ_{2g+1}$  obtained in Lemma \ref{Lem:pentagramsagree}, and hence to the graded vector space describing $\scrF(\Sigma_g)$ from Section \ref{Section:Curve}.   Our strategy now has three components:
\begin{enumerate}
\item Prove the exterior algebra structure on $H(\scrA)$ survives compactification from $Z_{\neq\infty}$ to $Z$;
\item Prove that the ``first order term" (cubic term) of the $A_{\infty}$-structure is non-trivial;
\item Infer that the $A_{\infty}$-structure on the exterior algebra is determined up to quasi-isomorphism by knowledge of this first order term and of the Hochschild cohomology.
\end{enumerate}
Whilst the final step is pure algebra, and bypasses having to make explicit computations with holomorphic curves, the first two require some control on holomorphic polygons.  Recall from Lemma \ref{Lem:Unchanged} that for each of the individual pentagram spheres $L$, the Floer cohomology ring $HF(L,L) \cong H^*(S^{2g+1})$ is undeformed in $Z$. 

\begin{Lemma} \label{Lem:TwoSpheresOK}
For a pair of pentagram spheres $L, L'$, the Floer products
\begin{equation}
\begin{aligned}
HF(L', L') \otimes HF(L,L') & \longrightarrow HF(L,L') \\
HF(L', L) \otimes HF(L,L') & \longrightarrow HF(L,L)
\end{aligned}
\end{equation}
are the same in $Z$ as in $Z_{\neq\infty}$.
\end{Lemma}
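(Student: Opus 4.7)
The plan is to use a degree argument to rule out new holomorphic triangles in $Z$ that are not already present in $Z_{\neq\infty}$. First I would choose an almost complex structure $J$ on $Z$ as in Lemma~\ref{Lem:even}, so that $\pi: Z \to \bP^1$ is pseudo-holomorphic. By positivity of intersections, any $J$-holomorphic triangle $u$ with boundary on pentagram spheres satisfies $u \cdot \pi^{-1}(\infty) \geq 0$, with equality precisely when $u$ lies in $Z_{\neq\infty}$. The discrepancy between $\mu^2$ computed in $Z$ and in $Z_{\neq\infty}$ is therefore accounted for by triangles with $u \cdot \pi^{-1}(\infty) \geq 1$.

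To rule out these extra contributions, I would invoke the quasi-equivalence of Lemma~\ref{Lem:pentagramsagree}, which identifies the pentagram subcategory of $\scrF(Z_{\neq\infty})$ with the $\bZ$-graded pentagram subcategory of $\scrF(\Sigma_g)$, whose morphism algebra is $\Lambda(V) \rtimes \bZ_{2g+1}$ with $V$ three-dimensional in degree $1$ (so that all generators have $\bZ$-degree at most $3$). The $A_\infty$-structure on $\scrF(\Sigma_g)$ decomposes as $\mu^d = \sum_i \mu^d_i$, where $\mu^d_i$ has pure $\bZ$-degree $6 - 3d + (4g-4)i$ by Equation~\eqref{Eqn:AinftyCurve} and Lemma~\ref{Lem:DeformationDegree}. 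In particular $\mu^2_i$ has degree $(4g-4)i$, which for $g \geq 2$ and $i \geq 1$ is at least $4$, strictly greater than the maximum generator degree. Hence $\mu^2_i = 0$ for all $i \geq 1$ in the pentagram subcategory of $\scrF(\Sigma_g)$, and correspondingly in $\scrF(Z_{\neq\infty})$.

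Finally, a triangle in $Z$ hitting $\pi^{-1}(\infty)$ with multiplicity $m \geq 1$ corresponds under the identification above to a contribution to $\mu^2_m$ in $\scrF(\Sigma_g)$, since the wrapping index $i$ in the decomposition precisely records the intersection multiplicity with the divisor at infinity, and the latter is matched with $\pi^{-1}(\infty)$ under the correspondence between $Z_{\neq\infty}$ and $C_{\neq\infty}$. By the vanishing of $\mu^2_m$ for $m \geq 1$, no such extra triangles contribute, and the two listed products $\mu^2$ in $Z$ and in $Z_{\neq\infty}$ coincide.

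The delicate point is this last step: matching the intersection multiplicity with $\pi^{-1}(\infty)$ in $Z$ with the wrapping index $i$ on the curve side. This requires that the equivalence of Lemma~\ref{Lem:pentagramsagree} be implemented by a correspondence respecting the fibration structures, in particular identifying $\pi^{-1}(\infty) \subset Z$ with the preimage of $\infty \in \bP^1$ in the hyperelliptic cover $C \to \bP^1$, so that the two wrapping decompositions are compatible at the level of polygon counts rather than only of morphism spaces. One must also verify that sphere bubbles off $\pi^{-1}(\infty)$ contribute no additional rigid triangles, which is ensured by the integrability of $J$ near $\pi^{-1}(\infty)$ from Lemma~\ref{Lem:even} together with monotonicity and the minimal Maslov number bound on the pentagram spheres.
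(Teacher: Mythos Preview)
Your argument has a genuine gap at exactly the point you flag as delicate, and the gap is circular. Lemma~\ref{Lem:pentagramsagree} is an equivalence of the \emph{open} categories $\scrA(Z_{\neq\infty})$ and $\scrA(C_{\neq\infty})$, obtained abstractly via the directed category of vanishing thimbles. It gives no correspondence between holomorphic triangles in $Z$ and holomorphic triangles in $\Sigma_g$; in particular, there is no reason a triangle in $Z$ meeting $\pi^{-1}(\infty)$ with multiplicity $m$ should ``correspond to a contribution to $\mu^2_m$ in $\scrF(\Sigma_g)$''. Establishing that the compactifications yield equivalent $A_\infty$-structures is precisely the goal of this entire section, so invoking such a correspondence to prove an intermediate step is circular. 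Your grading argument would need to be run \emph{intrinsically on $Z$} using the volume form $\eta_Z$ of Section~\ref{Sec:Cubic}, but then you must also control intersections with $H_{ref}$, and you would need to know the absolute $\bZ$-gradings of all the generators---information the paper only partially extracts, and only later.

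The paper's proof is far more elementary and sidesteps all of this. It uses only $\bZ_2$-gradings and Poincar\'e duality: for the first product, $HF(L',L')$ splits as identity plus an odd-degree class, while $HF(L,L')$ sits in a single mod~2 degree, so only multiplication by $1_{L'}$ can be nonzero and that is fixed by unitality; for the second product, $HF(L,L')$ and $HF(L',L)$ lie in opposite parities, so the output lands in $HF^{odd}(L,L)$, which is one-dimensional and determined by the nondegeneracy of the Poincar\'e pairing. No comparison with $\Sigma_g$ is needed.
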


\begin{proof}
In the first case, the input element of $HF(L', L')$ is either the identity, in which case the product is obviously determined, or has odd mod 2 degree.  However, $HF(L,L')$ is concentrated in a single mod 2 degree. Therefore, only $\mu^2(1_{L'}, \cdot)$ can be non-trivial.  The second case is analogous.  By Poincar\'e duality in Floer cohomology, the groups $HF(L,L')$ and $HF(L',L)$ are concentrated in distinct mod 2 degrees.  Therefore, the product can only be non-trivial into the odd degree component of $HF(L,L)$, which is spanned by the fundamental class.  This component is determined by the non-degeneracy of Poincar\'e duality, via its trace, hence will be  independent of the compactification to $Z$. \end{proof}

To compute the algebra structure on $H(\scrA)$ (taken inside $Z$) completely, we must therefore understand Floer products amongst 3 different Lagrangians.  Since 3 distinct pentagram spheres are pairwise transverse or cleanly intersecting, holomorphic pearls reduce to Morse-Bott Floer trajectories, in which we count holomorphic triangles which may have non-trivial cycle constraints (or attached flowlines) at a corner mapping to a clean intersection.  Lemma \ref{Lem:even} implies that such a Floer disk projects to a holomorphic triangle in $\bP^1$ with boundary on the pentagram (but in principle holomorphic triangles in $Z$ might project \emph{onto} $\bP^1$). There are only two distinct possible boundary patterns for the boundary of the triangle in $\bP^1$, modulo the rotation action by $\bZ_{2g+1}$ (in either picture any of the three vertices might be the output of the Floer product). These are depicted in Figure \ref{Fig:Triangle} in the case $g=2$.
\begin{center}
\begin{figure}[ht]
\includegraphics[scale=0.5]{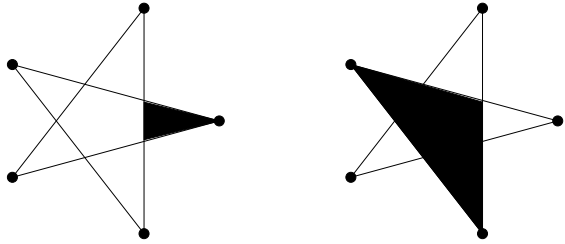} 
\caption{Possible holomorphic triangles\label{Fig:Triangle}}
\end{figure}
\end{center}
In $Z_{\neq\infty}$, the generators of the exterior algebra are given by ($\bZ_{2g+1}$ orbits of) the idempotent summands of the Floer cohomology of the clean intersection spheres (lying over interval vertices) and the isolated external vertices.  Lemma \ref{Lem:pentagramsagree} implies that we can choose $\bZ_2$-gradings which reproduce those of $\scrF(\Sigma_g)$.  From \cite{Seidel:HMSgenus2} and \cite{Efimov}, or by direct computation, these are as follows:
\begin{enumerate}
\item $HF^*(L_\gamma, L_{e^{-2i\pi/(2g+1)}\gamma}) \cong H^*(S^{2g})[1]$ is concentrated in odd degree;
\item $HF^*(L_{\gamma}, L_{e^{-4i\pi/(2g+1}\gamma}) \cong \bC$ is concentrated in even degree.
\end{enumerate}

To clarify, for the small holomorphic triangle on the left of Figure \ref{Fig:Triangle} with the anticlockwise boundary orientation that it inherits as a holomorphic disk in $\bC$, the clean intersections at the internal vertices will be concentrated in odd degree when viewed as inputs, and will have output the even degree external vertex. (Thus, the output is even degree read anticlockwise, meaning viewed as a morphism going from the upper edge to the lower.) In $Z_{\neq\infty}$ (and in the affine hyperelliptic curve) there \emph{are} rigid holomorphic triangles projecting to these small triangles, and their cyclically rotated versions, which define the exterior algebra structure. The existence of these triangles determines the mod 2 degrees of the Floer generators of the algebra $H(\scrA)$.

\begin{Lemma} \label{Lem:NotOne}
No rigid holomorphic triangle in $Z$  has boundary which projects to the second triangle of Figure \ref{Fig:Triangle}.
\end{Lemma}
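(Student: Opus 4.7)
The plan is to use the pseudoholomorphic structure of $\pi\colon Z\to\bP^1$ (Lemma~\ref{Lem:even}) to reduce to a holomorphic disk in $\bP^1$ and then rule out the triangles of the second type by the index formula. First I identify the two pictures of Figure~\ref{Fig:Triangle} as depicting, modulo the $\bZ_{2g+1}$-rotation, the two regions into which three pentagram arcs cut the sphere $\bP^1$: the first is the small compact region contained in $\bC$, and the second is its complement, which contains $\infty$. For any holomorphic triangle $u\colon D^2\to Z$ with boundary on three pentagram matching spheres, the composition $\pi\circ u$ is a holomorphic disk in $\bP^1$ whose boundary lies on the three corresponding arcs; hence its image is contained in one of the two regions. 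In the second case the image covers $\infty$, forcing $u$ to intersect the fibre $\pi^{-1}(\infty)\subset Z$. By positivity of intersections of pseudoholomorphic curves with the pseudoholomorphic divisor $\pi^{-1}(\infty)$, one has $\pi^{-1}(\infty)\cdot u \geq 1$.

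To quantify this I will grade the pentagram spheres using a meromorphic volume form $\Omega$ on $Z$ whose polar divisor is $D_\Omega = \pi^{-1}(\infty) + 2g\,H_{\mathrm{ref}}$, where $H_{\mathrm{ref}}$ is a generic reference hyperplane section chosen so that all matching spheres are disjoint from $D_\Omega$. The cohomology class $[D_\Omega]$ equals $c_1(Z) = 2gH+F$, so $\Omega$ is meromorphic of the required class. Equation~\eqref{Eqn:IndexFormula} applied to a $3$-punctured disk reads
\[
\mathrm{vdim}(u) \ = \ i^{\Omega}(x_0)-i^{\Omega}(x_1)-i^{\Omega}(x_2)+2\bigl(\pi^{-1}(\infty)\cdot u\bigr)+4g\bigl(H_{\mathrm{ref}}\cdot u\bigr).
\]
Rigid triangles of the first type exist in $Z_{\neq\infty}$ by the paragraph preceding the lemma, and for a generic $H_{\mathrm{ref}}$ one can arrange their images also to avoid $H_{\mathrm{ref}}$; vanishing of $\mathrm{vdim}$ then pins down the alternating sum of corner indices as zero. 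Crucially, the three corners of the second configuration are \emph{the same} three Floer generators: the two configurations are the two bounded regions in $\bP^1$ cut out by a single triple of pentagram arcs, meeting at the same three intersection points. Hence the identity $i^{\Omega}(x_0)-i^{\Omega}(x_1)-i^{\Omega}(x_2)=0$ holds for the second case as well.

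Consequently, any hypothetical rigid triangle $u'$ of the second type would satisfy
\[
\mathrm{vdim}(u') \ \geq \ 0 + 2\cdot 1 + 0 \ = \ 2 \ > \ 0,
\]
contradicting rigidity. The main technical point to check is that the matching spheres can all be graded with respect to $\Omega$ and that generic small-triangle representatives can be arranged disjoint from $D_\Omega$: the first holds because the pentagram spheres are simply connected and lie over paths in $\bC$ disjoint from $\infty$, and the second follows from a generic choice of $H_{\mathrm{ref}}$ avoiding both the Lagrangians and the finitely many compact small-triangle representatives. No new moduli can appear from sphere or disk bubbling by the monotonicity bounds of Section~\ref{SubSec:Floer}.
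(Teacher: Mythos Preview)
Your proof contains a genuine error in identifying what the ``second triangle'' is. You assert that the two pictures of Figure~\ref{Fig:Triangle} are the two complementary regions in $\bP^1$ cut out by a \emph{single} triple of pentagram arcs, and hence share the same three corner generators. This is not the case. The paper explains just before the lemma that the two pictures depict the two combinatorially distinct \emph{boundary patterns} which can arise from a triple of pairwise-intersecting pentagram spheres, modulo $\bZ_{2g+1}$. In the first configuration the triple consists of three consecutive arcs (e.g.\ $L_0,L_1,L_2$), with two internal (clean $S^{2g}$) vertices and one external (transverse point) vertex. In the second it is a non-consecutive triple (e.g.\ $L_0,L_1,L_3$), with two external vertices and one internal vertex. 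These are genuinely different triples of Lagrangians with different corner generators, so your transfer of the relation $i^{\Omega}(x_0)-i^{\Omega}(x_1)-i^{\Omega}(x_2)=0$ from the first to the second configuration is unjustified. (Relatedly, the second triangle need not contain $\infty$ at all, so the key inequality $\pi^{-1}(\infty)\cdot u\geq 1$ is also unsupported.)

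The paper's own argument is far more elementary: it is a one-line $\bZ_2$-degree count. For the second configuration with both external vertices taken as inputs, each input lies in odd degree (the external generator is even in the forward direction, hence odd in the reverse direction by Poincar\'e duality), while the internal vertex as output also lies in odd degree; since $\mu^2$ has degree $0\pmod 2$, no rigid triangle can exist. The remaining cyclic orderings are handled identically. Your $\bZ$-graded\,/\,intersection-number approach could in principle be made to work, but only after computing the absolute indices of the second configuration's corners from scratch---essentially the content of Section~\ref{Sec:Cubic}---which is considerably more work than the $\bZ_2$-parity obstruction.
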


\begin{proof}
The mod 2 gradings of the vertices preclude existence of any rigid triangle, recalling that $\mu^2$ has degree zero. For instance, on the right of Figure \ref{Fig:Triangle}, if the external vertices are the inputs, they each have odd degree, but the internal vertex as an output also has odd degree.
\end{proof}

Return to the small triangle of Figure \ref{Fig:Triangle}.  Label the Lagrangians on that triangle: $L''$ is the vertical side, $L$ the lower side and $L'$ the upper side.  Let $p \in HF^{odd}(L,L')$ be the Floer generator at the external vertex, viewed as an input.

\begin{Lemma} \label{Lem:NoTriangle}
The product $\mu^2_{\scrF(Z)}(p,\cdot): HF^{odd}(L'',L) \rightarrow HF^{ev}(L'', L')$ is an isomorphism.
\end{Lemma}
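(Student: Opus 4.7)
The plan is to reduce the computation of $\mu^2(p,\cdot)$ in $\scrF(Z)$ to the corresponding product in $\scrF(Z_{\neq\infty})$, where by Lemma \ref{Lem:pentagramsagree} it is realised inside the $\bZ_2$-graded semidirect product $\Lambda(V) \rtimes \bZ_{2g+1}$. In this algebraic model, the generator $p$ corresponds to a basis vector of $V$, and left multiplication by $p$ is an isomorphism between the appropriate cyclic summands of $\Lambda^{\bullet}(V)$ by non-degeneracy of the wedge product on an exterior algebra on a three-dimensional vector space. The entire issue is therefore that compactification from $Z_{\neq\infty}$ to $Z$ might either add extra rigid triangles or cancel the ones already present.

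The first step is to use Lemma \ref{Lem:even} and work with an almost complex structure $J$ on $Z$ under which $\pi\colon Z\rightarrow \bP^1$ is pseudoholomorphic. Any rigid $J$-holomorphic triangle contributing to $\mu^2(p,\cdot)$ projects to a holomorphic triangle in $\bP^1$ with boundary on the pentagram arcs, and by Lemma \ref{Lem:NotOne} only the small-triangle boundary pattern of Figure \ref{Fig:Triangle} can support a rigid count. The obvious candidate is the biholomorphic lift of this small triangle to a (unique) rigid triangle in $Z_{\neq\infty}$, which realises the exterior algebra product. What must be excluded are additional contributions from triangles in $Z$ whose projection is a branched cover or wraps around $\infty \in \bP^1$.

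To rule out such contributions I would introduce a meromorphic volume form $\Omega$ on $Z$ in the style of \eqref{Equation:HolVolForm}, with polar divisor containing $\pi^{-1}(\infty)$, and promote the $\bZ_2$-grading on $\scrA(Z)$ to a $\bZ$-grading on the complement of the polar divisor. After grading the pentagram spheres and the class $p$ consistently, the index formula \eqref{Eqn:IndexFormula} relates the $\bZ$-Maslov index of a rigid triangle to its intersection numbers with the components of the polar divisor. In the $\bZ$-graded refinement, $\mu^2(p,\cdot)$ has degree zero, which pins down the intersection number of any contributing triangle with $\pi^{-1}(\infty)$ to be zero. Contributions from triangles wrapping $\infty$ correspond to higher $q$-powers with non-trivial degree shift, in the sense of Section \ref{Sec:Indices}, and therefore land in a different $\bZ$-graded summand of $HF^{ev}(L'',L')$; they do not affect the $q^0$ piece, which agrees with the computation in $Z_{\neq\infty}$ by Lemma \ref{Lem:pentagramsagree}.

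The main obstacle is the bookkeeping in this Maslov-index argument. The pentagram spheres intersect the exceptional divisor $E$ (by the argument used in the proof of Lemma \ref{Lem:NotNilpotent}) and generically intersect any hyperplane component $H_{ref}$ that must appear in the polar divisor for class reasons. Consequently the graded Maslov index formula has to be interpreted relative to appropriate global angular chains along these divisors, along the lines of Perutz's quantum Gysin formalism invoked in Theorem \ref{Thm:Gysin}. Provided $H_{ref}$ is chosen in generic position away from the small triangle region in $\bP^1$ and the matching-sphere gradings are calibrated against the local contributions on $E$, the balance of intersection numbers works out and every rigid triangle of the correct $\bZ$-degree is confirmed to live inside $Z_{\neq\infty}$. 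Once this localisation is in place, the isomorphism claimed in the Lemma is tautological: multiplication by a generator of $V$ in $\Lambda(V) \rtimes \bZ_{2g+1}$ identifies the two relevant $\bZ_2$-graded summands.
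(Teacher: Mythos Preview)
Your approach via $\bZ$-gradings and intersection numbers is quite different from the paper's. The paper proves the Lemma by a direct application of the Dehn twist exact triangle: the Lagrange surgery $L\# L' = \tau_L(L')$ is Hamiltonian isotopic, as a matching sphere, to one lying over a basic path disjoint from $L''$, so $HF(L'', L\#L') = 0$; the long exact sequence then forces $\mu^2(p,\cdot)$ to be an isomorphism. This argument works directly in $\scrF(Z)$ and never needs to locate individual holomorphic triangles, compare with $Z_{\neq\infty}$, or invoke any grading refinement. It is considerably cleaner than what you propose, and in fact the paper later \emph{uses} this Lemma as input to the grading calculations in Section~\ref{Sec:Cubic}, rather than the other way around.

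Your strategy could in principle be salvaged, but as written it has real gaps. First, you reference the volume form of Equation~\eqref{Equation:HolVolForm}, whose polar divisor contains $E$; since the pentagram spheres do meet $E$, this obstructs $\bZ$-grading and is the source of your bookkeeping worries. The correct choice (introduced later, in Section~\ref{Sec:Cubic}) has poles along $gH_{ref} + Z_\infty$ only, and the Lagrangians can be taken disjoint from both components; once this is done, the appeal to global angular chains and Perutz's formalism is unnecessary. Second, and more seriously, even with the right volume form the intersection number of a rigid triangle with the polar divisor is \emph{not} forced to vanish: the target $HF^{ev}(L'',L')$ has two generators whose $\bZ$-degrees differ by $2g$, so triangles with total intersection contributing exactly $2g$ to the index can land in the other generator. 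What would actually make the argument work is that such extra contributions only go one way (by positivity of intersection), so the matrix of $\mu^2(p,\cdot)$ in the $\bZ$-graded bases is triangular with the $W$-computation on the diagonal; you did not articulate this step, and without it the claim that ``higher $q$-powers do not affect the $q^0$ piece'' does not imply invertibility of the full map.
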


\begin{proof}
Consider the Lagrange surgery $L \# L'$ of $L$ and $L'$ at $p$, as depicted by the curved path in Figure \ref{Fig:Surgery}. This surgery is equivalently  given by taking the Dehn twist of $L'$ about $L$.  The surgery is Hamiltonian isotopic, via matching spheres, to a sphere $L\# L'$ which is disjoint from $L''$, namely that defined by the basic path between the leftmost two vertices of Figure \ref{Fig:Triangle}. 
\begin{center}
\begin{figure}[ht]
\includegraphics[scale=0.5]{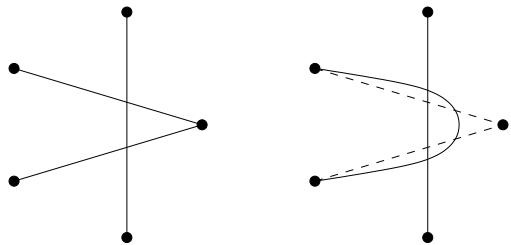} 
\caption{Lagrange surgery at a vertex\label{Fig:Surgery}}
\end{figure}
\end{center}
The exact triangle for monotone Dehn twists implies there is an exact triangle in $\scrF(Z)$
\begin{equation}
\xymatrix{
L \ar[rr] & & L' \ar[dl] \\ & L\#L' \ar[ul]^{[1]} &
}
\end{equation}
This gives a long exact sequence of Floer cohomology groups in $Z$
\[
\cdots \rightarrow HF(L'',L) \rightarrow HF(L'', L') \rightarrow HF(L'', L\#L') \rightarrow \cdots 
\]
Since the terms $HF(L'', L\#L') = 0$, we deduce $\mu^2(p,\cdot): HF(L'',L) \stackrel{\sim}{\longrightarrow} HF(L'', L')$ is an isomorphism.
 \end{proof}

Non-vanishing of $\mu^2(p,\cdot)$ \emph{a posteriori} implies that the algebraic number of triangles in $Z$ with the boundary conditions on the left of Figure \ref{Fig:Triangle} is non-zero. The same argument proves that right multiplication by $p$ also defines an isomorphism
\begin{equation} \label{Eqn:NoTriangleTwo}
HF^{odd}(L', L'') \xrightarrow{\mu^2_{\scrF(Z)}(\cdot,p)} HF^{ev}(L, L'').
\end{equation}
  Poincar\'e duality in Floer cohomology is graded commutative \cite[Section 12e]{FCPLT}. Together with associativity of Floer cup-product,  Lemma \ref{Lem:TwoSpheresOK} and Lemma \ref{Lem:NoTriangle}  together imply that $H(\scrA)$ is still given by an exterior algebra after compactification from $Z_{\neq\infty}$ to $Z$, compare to \cite[Section 10]{Seidel:HMSgenus2}. Explicitly,  in the notation of that paper, the generators of $\Lambda^1$ arise from $p=\xi_3$ and the two idempotents $\xi_1,\xi_2$ of $HF^{odd}(L'',L)$, or rather their $\bZ_{2g+1}$-orbits. The generators of $\Lambda^2$ are the Poincar\'e dual cycles $\bar{\xi}_3, \bar{\xi}_1, \bar{\xi}_2$, and the unit respectively fundamental class  of the pentagram spheres give the generators  $1 \in \Lambda^0$ and $q \in \Lambda^3$. Lemma \ref{Lem:NoTriangle} and its cousin \eqref{Eqn:NoTriangleTwo}  implies that $\xi_2 \cdot \xi_3 = \bar{\xi}_1$ and $\xi_3 \cdot \xi_1 = \bar{\xi}_2$, for suitably chosen signs of the generators, whilst Lemma \ref{Lem:TwoSpheresOK} implies that $\xi_i \cdot \bar{\xi}_i = q = -\bar{\xi}_i \cdot \xi_i$.  These relations define an exterior algebra.
 
%%%%%%%%%

\begin{Corollary} \label{Cor:PentsEqual}
If $g\geq 2$, the Floer cohomology algebra generated by the pentagram spheres in $Z$ is $\Lambda(\bC^3) \rtimes \bZ_{2g+1}$ (graded mod 2).
\end{Corollary}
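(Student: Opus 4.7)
The plan is simply to assemble the preceding lemmas; the hard work has already been done, and the corollary is the bookkeeping statement that records the outcome.

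First, as a $\bZ_2$-graded $\bC$-vector space the identification
\[
H(\scrA) \ \cong \ \Lambda(\bC^3) \rtimes \bZ_{2g+1}
\]
is already in hand from Lemma \ref{Lem:pentagramsagree}: compactifying from $Z_{\neq\infty}$ to $Z$ does not change any of the Floer chain complexes $CF(L_i,L_j)$ between distinct pentagram Lagrangians (they are pairwise either disjoint, transverse in a point, or cleanly intersecting in an $S^{2g}$, and the parities of the two cohomology generators of the clean intersection agree), and the self-Floer complex of each $L_i$ has rank $2$ in fixed parities. The task is therefore purely to identify the ring structure.

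Next, I would read off the diagonal and rank-two relations. Lemma \ref{Lem:Unchanged} supplies $HF(L_i,L_i) \cong H^*(S^{2g+1})$ as a ring, giving the relations $\xi \cdot \bar\xi = q$ and $\bar\xi\cdot \xi = -q$ on each pentagram sphere after a choice of signs for the generators $\xi_i,\bar\xi_i$ and the top class $q$. Lemma \ref{Lem:TwoSpheresOK} then handles every two-object product: because $HF(L_i,L_j)$ and $HF(L_j,L_i)$ sit in complementary mod $2$ degrees, the only non-tautological two-object products land in the top-degree component of a self-Floer group and are determined by the trace pairing, hence agree with the corresponding $Z_{\neq\infty}$ product.

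The crux is the three-object products, and this is where I expect the main obstacle to lie. Here Lemma \ref{Lem:even} restricts rigid holomorphic triangles to those whose boundary projects under $\pi$ to one of the two triangular regions in Figure~\ref{Fig:Triangle}. The right-hand region is ruled out on parity grounds by Lemma \ref{Lem:NotOne}, so only the small ``exterior-algebra'' triangles can contribute. Lemma \ref{Lem:NoTriangle} and its companion \eqref{Eqn:NoTriangleTwo} (both via the Dehn twist/Lagrange surgery exact triangle, using that the surgered matching sphere is disjoint from $L''$) show that the relevant triangle products $\mu^2(p,\cdot)$ and $\mu^2(\cdot,p)$ are isomorphisms, and in particular non-vanishing. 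Combined with graded commutativity of Poincar\'e duality and associativity, this pins down $\xi_2 \cdot \xi_3 = \pm\bar\xi_1$ and $\xi_3 \cdot \xi_1 = \pm\bar\xi_2$ (and their cyclic rotates), completing the exterior algebra relations.

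Finally, the semi-direct product by $\bZ_{2g+1}$ is automatic: we chose $J$ and the auxiliary perturbation data to be invariant under the rotation symmetry of the pentagram by Lemma \ref{Lem:even} (cf.\ Remark \ref{Rem:EquivtTransverse}), so the cyclic group acts on the category by strict $A_\infty$-autoequivalences permuting the objects. I would conclude by matching the generators and signs to those of the exterior algebra description of $H(\scrF(\Sigma_g))$ in Section \ref{Section:Curve}, so that the isomorphism of algebras has the same combinatorial form as in \cite{Seidel:HMSgenus2}, preparing the ground for the finite-determinacy argument in Section \ref{Section:FiniteDeterminacy}.
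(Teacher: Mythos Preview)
Your proposal is correct and follows essentially the same route as the paper: the corollary is simply the summary of the discussion immediately preceding it, assembling the vector-space identification (Lemma \ref{Lem:pentagramsagree}), the self-Floer and two-object products (Lemmas \ref{Lem:Unchanged} and \ref{Lem:TwoSpheresOK}), the triangle analysis (Lemmas \ref{Lem:even}, \ref{Lem:NotOne}, \ref{Lem:NoTriangle} and \eqref{Eqn:NoTriangleTwo}), and the $\bZ_{2g+1}$-equivariance. One small attribution slip: the relations $\xi_i \cdot \bar\xi_i = q = -\bar\xi_i \cdot \xi_i$ are products between \emph{different} pentagram spheres landing in a self-Floer group, so they come from Lemma \ref{Lem:TwoSpheresOK} (second line) and Poincar\'e duality, not from the ring structure on $HF(L_i,L_i)$ itself; Lemma \ref{Lem:Unchanged} only tells you that the top class of $HF(L_i,L_i)$ squares to zero.
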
 

Both the curve $C$ and the relative quadric $Z$ define $A_{\infty}$-structures on the exterior algebra which can be viewed as deformations of the particular quasi-isomorphism type of Lemma \ref{Lem:pentagramsagree}, with the deformation parameter counting intersections of holomorphic curves with the fibre over $\infty \in \bP^1$.  Since the relevant deformation space is large, we appeal to more algebraic methods to pin down the $A_{\infty}$-structure.

\begin{Remark} \label{Rem:TwoVblesNeeded}
By comparing to \cite{Seidel:HMSgenus2} and \cite{Efimov}, one sees that two of the generators of the exterior algebra arise from the two idempotent summands of $HF(V,V) \cong H^*(S^0)$, where $V \subset Q$ is the Lagrangian vanishing cycle in a  quadric fibre of $Z$. These play an essentially symmetric role.  Indeed, returning to Remark \ref{Rem:BranchingInfinity}, there is a holomorphic involution $\tau$ of $Z$ which reverses the sign of the first projective homogeneous co-ordinate $x_0$, and acts fibrewise over $\bP^1$, in particular fixes the nodes in the singular fibres. $\tau$ is one lift of the hyperelliptic involution on $\Sigma$ to a symplectomorphism of $Z$.  The  monotone K\"ahler form, which is pulled back from $\bP^{2g+1} \times \bP^1$, is $\tau$-invariant, which means that the associated parallel transport maps along matching paths are $\tau$-invariant, so $\tau$ preserves the Lagrangian pentagram spheres.  In particular, $\tau$ preserves the vanishing cycle $V = L'' \cap L$ in the fibre $Q$ defined by the corresponding matching paths.  Being a lift of the hyperelliptic involution, $\tau$ reverses orientation on $V$ (and so on $L, L''$),  by compatibility of the symplectic group representations on the Riemann surface and the associated $(2,2)$-intersection and hence relative quadric, cf. \cite{Reid}. 

Working with equivariant almost complex structures as in Lemma \ref{Lem:even}, and appealing to Remark \ref{Rem:EquivtTransverse}, we can assume that \emph{the $A_{\infty}$-structure on $Z$ is equivariant with respect to $\tau$.}  Note that if $e$ and $f$ denote generators for $HF(Q,Q) \cong HF(L'', L)$ above, then the idempotent summands are $(e\pm f)/2$, and the fundamental class of $Q$ is the \emph{difference} between the idempotents.  Since the involution $\tau$ acts by an orientation-reversing automorphism of the Lagrangian vanishing cycle it sends $f\mapsto -f$ and exchanges the two idempotents.   Therefore $\tau$-equivariance amounts to saying that $\scrA(Z)$ is equivariant under changing $f\mapsto -f$.   
\end{Remark}

\subsection{The cubic term\label{Sec:Cubic}}

 Although the category $\scrA(Z)$ generated by the pentagram spheres is only $\bZ_2$-graded, as in Section \ref{Sec:Indices} one can impose a $\bZ$-grading by considering the filtrations on Floer complexes coming from intersections of holomorphic polygons with the polar divisor $\Delta$ of a meromorphic 1-form.  We will choose the divisor $\Delta$ disjoint from the pentagram Lagrangians, at which point one can achieve transversality with perturbations of the almost complex structure and Floer equation supported away from $\Delta$.  The reason this is useful is that the term $\Delta \cdot [\im(u)]$ in Equation \ref{Eqn:IndexFormula} is necessarily \emph{positive}, by positivity of intersections of holomorphic curves with holomorphic divisors. If one knows something about the absolute indices $\iota^{\eta}(x_j)$ of intersection points, this can be a non-trivial constraint on which polygons exist.  As an illustration, we will extract slightly more from the argument of Lemma \ref{Lem:NoTriangle}, to give at least partial information on the cubic term $\mu^3_{\scrA(Z)}$ of the $A_{\infty}$-structure.
 
Choose a fibrewise hyperplane section of $Z$ -- the pullback of a general hyperplane $H\subset \bP^{2g+1}$, which now plays the role of $H_{ref}$ from Section \ref{Section:Idempotent}  -- whose complement is an affine quadric bundle at least over a large open set in $\bC$ containing the finite set of pentagram matching paths.  (Globally, any choice of hyperplane $H\subset \bP^{2g+1}$ will fail to be transverse to a finite number of the quadrics in the pencil, and its complement will not globally be a $T^*S^{2g}$-bundle over $\bP^1$; but one can put the singular fibres near infinity, and they will make no difference to the computations undertaken below.)  Up to symplectomorphism, one can identify an open set in this $T^*S^{2g}$-bundle with an open set in a suitable family of Milnor fibres 
\[
W \ = \ \left\{(x_0,\ldots,x_{2g},y) \, \big | \, h(x_0) + \sum_{i>0} x_i^2 =y^2 \right\} \ \subset \ \bC^{2g+2}
\]
as studied by Khovanov and Seidel in \cite{KhovanovSeidel}, taking $h(z) = z(z^{2g+1}-1)$ to obtain a family  of affine quadrics with nodal singular fibres at the origin and the $(2g+1)$-st roots of unity. Since this open subset of $Z_{\neq\infty}$ is simply-connected, as are the Lagrangians, we know \emph{a priori} that gradings, which come from trivialisations of $K^{\otimes 2}$ and associated phase functions, are unique.

\begin{Lemma} \label{Lem:Bigrade}
The $\bZ$-gradings on the pentagram algebra inside $W$ are $\bZ_{2g+1}$-equivariant.
\end{Lemma}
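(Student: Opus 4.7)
The plan is to exhibit a geometric lift of the cyclic rotation action on $\bC$ to a symplectomorphism of $W$ which respects the grading datum up to an overall shift. Let $\zeta$ be a primitive $(2g+1)$-st root of unity, and fix a square root $\lambda$ with $\lambda^2=\zeta$. Since $h(\zeta z)=\zeta h(z)$, the formula
\[
g \cdot (x_0,x_1,\ldots,x_{2g},y) \ = \ (\zeta x_0, \lambda x_1,\ldots, \lambda x_{2g},\lambda y)
\]
defines an automorphism of $W$ lifting the rotation $z\mapsto \zeta z$ on the base $\bC$; taking $g^2$ yields an honest action of $\bZ_{2g+1}$ (recall $2g+1$ is odd) which permutes the nodes of the quadric fibration cyclically, and hence permutes the pentagram matching paths and their associated Lagrangian spheres.

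First I would check that, for the holomorphic volume form $\eta$ on $W$ obtained as the Poincar\'e residue of $dx_0\wedge\cdots\wedge dx_{2g}\wedge dy$ along the defining equation, one has $g^{*}\eta \,=\, \mathrm{const}\cdot \eta$. This is immediate: both $\Omega$ and the defining function $F=h(x_0)+\sum_{i>0}x_i^2-y^2$ transform by scalars under $g$, and so does the quotient $\eta$. Consequently, the induced map on the squared canonical bundle $K_W^{\otimes 2}$ (trivialised by $\eta^{\otimes 2}$) is multiplication by a constant, so $g$ acts on the space of phase functions on Lagrangian branes by a \emph{global constant} additive shift.

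Finally, to see this shift vanishes on the pentagram Lagrangians: each pentagram sphere is simply connected and there is a unique $\bZ$-lift of its phase function once one degree is specified, and all the spheres are Hamiltonian isotopic via paths in the $\bZ_{2g+1}$-orbit. The shift, viewed as an assignment to the generator of $\bZ_{2g+1}$, defines a homomorphism $\bZ_{2g+1}\to\bZ$, and must therefore be zero. Hence the $\bZ$-grading on generators of $\oplus_{i,j}CF(L_i,L_j)$ is preserved by the cyclic action. The only slightly delicate point in the argument is verifying that the constant shift induced by $g^*\eta=c\,\eta$ is integral; this is forced by the finite order of the action combined with the existence of any $\bZ$-grading at all, but one should check that the choice of $\lambda$ (the square root of $\zeta$) does not obstruct this, by noting that it is the square $g^2$ that enters.
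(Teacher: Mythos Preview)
Your approach---lifting the rotation to an automorphism of $W$ and checking that the holomorphic volume form transforms by a scalar---is a natural alternative to the paper's direct computation. The paper instead writes down the squared phase along a matching path explicitly, using the Khovanov--Seidel formula $t \mapsto h(\gamma(t))^{2g-1}\gamma'(t)^2$, and observes that under $\gamma \mapsto \zeta\gamma$ this picks up a factor $\zeta^{2g-1}\cdot\zeta^2 = \zeta^{2g+1} = 1$; since absolute Floer gradings are determined by local geometry together with the phase lift, equivariance follows at once. Your argument is more structural and avoids quoting that formula.

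There is, however, a genuine gap in your final paragraph. The claim that the pentagram spheres are ``Hamiltonian isotopic via paths in the $\bZ_{2g+1}$-orbit'' is not justified (and not needed). More importantly, the ``homomorphism $\bZ_{2g+1}\to\bZ$'' argument misidentifies the obstruction. If $g^*\eta^{\otimes 2} = c^2\,\eta^{\otimes 2}$ with $c^2$ a $(2g+1)$-st root of unity, you can transport a grading from $L_0$ around the cycle via $g$, but after $2g+1$ steps you return to $L_0$ with grading shifted by an integer $k$ (namely $(2g+1)\phi'$ for your chosen lift $\phi'$ of $\arg(c^2)/2\pi$). Changing the lift alters $k$ only by multiples of $2g+1$, so the obstruction to equivariant gradings is $k \bmod (2g+1) \in \bZ/(2g+1)$, which does \emph{not} automatically vanish.

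What rescues your argument is the very computation you gestured at but did not carry out: from your formula for $g$ one has $g^*\eta = \lambda^{2g+1}\eta$, hence
\[
g^*\eta^{\otimes 2} \ = \ \lambda^{2(2g+1)}\,\eta^{\otimes 2} \ = \ (\lambda^2)^{2g+1}\,\eta^{\otimes 2} \ = \ \zeta^{2g+1}\,\eta^{\otimes 2} \ = \ \eta^{\otimes 2}.
\]
So $c^2=1$ exactly, the phase shift is zero, and one may take $\phi'=0$; equivariant gradings then exist by your transport construction with no further obstruction. Once you insert this line in place of the $\Hom(\bZ_{2g+1},\bZ)$ step, your proof is complete and gives a clean conceptual alternative to the paper's hands-on calculation.
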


\begin{proof}
For any $M=g^{-1}(0) \subset \bC^{n+1}$, there is a holomorphic $n$-form $\eta$ on $M$  given by 
\begin{equation} \label{Eqn:VolForm}
\textrm{det} _{\bC} (\overline{dg_x} \wedge \ldots ) |_{TM}
\end{equation}
for which the phase of an oriented Lagrangian $n$-plane with orthonormal basis $\langle e_1,\ldots, e_n\rangle$ is given by $\det_{\bC} (\overline{dg_x} \wedge e_1 \ldots \wedge e_n)^2$.  Starting from here, \cite[Equation 6.5]{KhovanovSeidel} shows that for a Lagrangian $L_{\gamma} \subset W$ associated to a matching path $\gamma(t)$, the phase function is
\begin{equation} \label{Eqn:Phase}
t\ \mapsto \ h(\gamma(t))^{2g-1}\gamma'(t)^2
\end{equation}
where $h(z) = z(z^{2g+1}-1)$ and the phase is $O(2g+1)$-invariant so depends only on the base co-ordinate.  \eqref{Eqn:Phase} extends continuously over the two points of the sphere $L_{\gamma} \subset W \subset Z_{\neq\infty}$ lying over the endpoints of $\gamma$, and  is invariant under rotation by $exp(2i\pi/(2g+1))$, so the phase functions for the different pentagram spheres $L_j$ are identical under natural parametrisations. Absolute gradings in Floer theory are determined by local geometry near the intersection point and the associated phase function; here, both are invariant under rotation.
\end{proof}

\begin{Remark}[Notation] Recall in Figure \ref{Fig:Surgery} the three Lagrangians are $L''$ (vertical), $L$ (lower) and $L'$ (upper).  We will write $e$, $f$ and $p$ for three Floer generators (or their $\bZ_{2g+1}$-orbits) arising as follows:  $p$ is the external vertex of the pentagram, graded in odd degree as the generator of $HF(L,L')$ (as in the proof of Lemma \ref{Lem:NoTriangle}); and $e, f$ are the generators of $H^*(S^{2g})[1] = HF(L'',L)$ (which are also, by equivariance, the odd degree generators of $HF(L', L'')$, cf. \emph{op. cit.}).  We will write $\bar{e}, \bar{f}$ for the Poincar\'e dual generators, and $\iota^{\eta}(\cdot) = |\cdot|$ for the absolute index of any generator. We declare $|f| = |e|+2g$, so $e$ arises from the cohomological identity of the $S^{2g}$-clean intersection and $f$ from the fundamental class.
\end{Remark}

We now take a holomorphic volume form $\eta_Z$ on $Z$ which has poles of order $g$ along $H_{ref}$ and simple poles along the fibre $Z_{\infty}$, recalling that $c_1(Z) = (2g+2)H-E = 2gH + [\textrm{Fibre}]$.  Up to homotopy, the induced volume form on the subset $W \subset Z_{\neq\infty}$ agrees with that from \eqref{Eqn:VolForm}, so the corresponding gradings are $\bZ_{2g+1}$-invariant.  Note that although the $\bZ_2$-graded $A_{\infty}$-structure on $\scrA(Z)$  can  be taken to be invariant under the involution of Remark \ref{Rem:TwoVblesNeeded}, the $\bZ$-gradings need not be, since $\Delta$ is not invariant.   Poincar\'e duality in Floer cohomology implies that if $x \in CF(L,L')$ is a transverse intersection, then 
\[
\iota^{\eta}(x) = (2g+1) - \iota^{\eta}(\bar{x})
\]
where $\bar{x} \in CF(L',L)$ is the morphism viewed in the opposite direction.

 The proof of Lemma \ref{Lem:NoTriangle} relied only on the fact that the Lagrange surgery $L\# L'$ was Hamiltonian isotopic to a Lagrangian sphere disjoint from $L''$.  This is true inside the affine open subset $W$, so the holomorphic triangles whose existence was implied by Lemma \ref{Lem:NoTriangle} actually exist in $W$, in particular have trivial intersection number with the divisor $\Delta$. This provides a constraint on the absolute gradings of the Floer generators, via Equation \ref{Eqn:IndexFormula}:
 \[
|e| + |p| \ \textrm{and} \  |f| + |p| \, \in \{ |\bar{e}|, |\bar{f}|\} \, = \, \{2g+1-|e|, 1-|e|\}
\]
where the two equations come from the fact that $\mu^2(p,\cdot)$ was an isomorphism in Lemma \ref{Lem:NoTriangle}, so of rank 2.  On the other hand, we also know
\[
|f| = |e|+2g
\]
The two previous equations imply $2|e|+|p| =1.$  Now we consider cubic terms:
\begin{equation} \label{Eqn:CubicDegrees}
\mu^3(e,e,p); \quad \mu^3(e,f,p); \quad \mu^3(f,f,p). 
\end{equation}
We are interested in the coefficient of the identity $1_L$ in these cubic terms, which are pieces of the  product
\[
\mu^3: CF^*(L'', L) \otimes CF^*(L', L'') \otimes CF^*(L,L') \longrightarrow CF(L,L)[-1]
\]
Consider first working in $Z_{\neq\infty}$.  One can import some information about the higher $A_{\infty}$-structure on $\scrA(Z_{\neq\infty})$, using Lemma \ref{Lem:pentagramsagree} and our knowledge of $\scrA(C_{\neq\infty})$ arising from the results of Seidel and Efimov described in Section \ref{Section:Curve}.  Recall from \eqref{Eqn:Maurer-Cartan-FirstTerms} that the cubic term of an $A_{\infty}$-structure extending a given product defines a Hochschild cocycle;  exponentiated infinitesimal gauge transformations
\[
\alpha \mapsto \alpha - \partial \gamma + [\gamma,\alpha] + \frac{1}{2} [\gamma, [\gamma,\alpha]-\partial\gamma] + \cdots
\]
by elements $\gamma \in CC^0$ with vanishing linear term $\gamma = (\gamma^i)_{i\geq 2}$, so preserving the product, preserve the cohomology class $\Phi^1(\alpha^3) \in HH^*$ of $\alpha^3$, which is accordingly well-defined, cf. \cite[Eqn. (5.6)]{Seidel:HMSgenus2}. In this vein,  we record one implication of Seidel and Efimov's work.

\begin{Lemma} \label{Lem:CubicOpen}
In $Z_{\neq\infty}$, the product $\mu^3(e+f, e-f, p)$ contains $1_L$ with non-zero coefficient, whilst $\mu^3(e+f,e+f,p)$ is trivial.
\end{Lemma}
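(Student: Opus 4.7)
The plan is to transport the cubic $A_{\infty}$-term from the Fukaya category of the open curve $\scrA(C_{\neq\infty})$ over to $\scrA(Z_{\neq\infty})$ via the quasi-equivalence established in Lemma \ref{Lem:pentagramsagree}, and then read off the two claims from the explicit description of the structure on the curve side.

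First I would pin down the identification of generators across the equivalence. The transverse-vertex morphism $p$ in $\scrA(Z_{\neq\infty})$ must correspond to one of the three basis vectors of $V$, say $x_3$, while the two generators $e, f$ of the clean $S^{2g}$-intersection $HF(L'', L)$ match up, through their idempotent combinations $(e\pm f)/2$, with the remaining vectors $x_1$ and $x_2$. The matching is forced by the involution $\tau$ of Remark \ref{Rem:TwoVblesNeeded}: on the quadric side $\tau$ sends $f \mapsto -f$ and exchanges the two idempotents $(e\pm f)/2$, while on the curve side it exchanges the two $\mathrm{Spin}$ structures carried by the vanishing cycle, hence the generators $x_1 \leftrightarrow x_2$.

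Next I would invoke the Seidel--Efimov characterisation recorded in Section \ref{Section:Curve}: for a canonical representative of the $A_{\infty}$-structure on $\scrA(C_{\neq\infty})$, the leading cubic term satisfies $\mu^3_0(x_1, x_2, x_3) = -1_L$, while $\mu^3_0(x_a, x_b, x_c) = 0$ whenever two indices coincide. The vanishing-on-repeated-arguments comes from the formality picture (Theorem \ref{Thm:KontsevichFormality}): $\mu^3_0$ is induced by the cubic superpotential monomial $-x_1 x_2 x_3$, whose mixed triple derivatives are supported only on triples with three distinct indices. Multilinearity then reduces the two assertions of the lemma to
\[
\mu^3_0(e+f, e-f, p) \;=\; \lambda \cdot (-1_L), \qquad \mu^3_0(e+f, e+f, p) \;=\; 0,
\]
for a nonzero constant $\lambda$ depending only on the scalars in the identification.

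Finally I would verify that no further contribution alters the coefficient of $1_L$ from the higher pieces $\mu^3_i$ with $i \geq 1$ (counting polygons that meet the $0$-fibre with positive multiplicity). On the doubly-punctured piece $\scrA(Z_{\neq 0, \infty})$ there is a $\bZ$-grading refinement coming from a holomorphic volume form with simple pole along the $0$-fibre, in which the pieces $\mu^3_i$ sit in distinct absolute degrees $-3 + (4g-4)i$; hence only $\mu^3_0$ contributes to the component landing on $\langle 1_L \rangle$, the remaining pieces being absorbable by gauge without affecting the cohomological coefficient of $1_L$. On the curve this is consistent with the fact that the quintic part of the superpotential responsible for $\mu^{2g+1}_1$ does not contribute to $\mu^3$ at all. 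The main obstacle is the careful bookkeeping in the identification of generators: one must track how the $\bZ_{2g+1}$-symmetry, the $\tau$-involution, and the two $\mathrm{Spin}$-structure choices interact so that the curve formula $\mu^3_0(x_1, x_2, x_3) = -1_L$ translates faithfully into the corresponding statement in $\scrA(Z_{\neq\infty})$; once this dictionary is fixed, both assertions follow immediately from the polyvector-field description of $\mu^3_0$.
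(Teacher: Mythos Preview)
Your strategy in parts (1) and (2) is exactly the paper's: transport via the quasi-equivalence of Lemma~\ref{Lem:pentagramsagree} and then read off the cubic term from the Seidel--Efimov description \eqref{Eqn:AinftyCurve}, where the nonzero $v_1v_2v_3$ coefficient gives the first claim and the absence of $v_i^2v_3$ terms gives the second. The paper's own proof is a single sentence citing \eqref{Eqn:AinftyCurve}.

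Your part (3), however, contains a genuine error. You assert that $\scrA(Z_{\neq 0,\infty})$ carries a $\bZ$-grading from a holomorphic volume form with a simple pole along the $0$-fibre. This is false, and the paper says so explicitly in Section~\ref{Section:Curve}: ``even working over $\bC^*\subset \bP^1$, the Fukaya category of the relative quadric $Z_{\neq 0,\infty}$ \ldots\ is not $\bZ$-graded.'' The point is that $c_1(Z) = 2gH + [\mathrm{Fibre}]$; removing the $0$- and $\infty$-fibres only kills the fibre class, leaving the $2gH$ contribution, so no volume form with poles only along fibres exists. The $\bZ$-grading and the decomposition $\mu^3 = \sum_i \mu^3_i$ by intersection with $0$ live on the \emph{curve} side, and the equivalence of Lemma~\ref{Lem:pentagramsagree} is only $\bZ_2$-graded, so this decomposition does not transport.

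The correct justification for why the statement is well-posed is the one the paper gives in the paragraph \emph{preceding} the Lemma: the cubic term $\mu^3$ of any $A_\infty$-structure extending a fixed product is a Hochschild cocycle, and gauge transformations fixing $\mu^2$ preserve its Hochschild cohomology class $\Phi^1(\alpha^3)$. Under the HKR map this class is a cubic polynomial in $\bC[[V^\vee]]$ (plus a $\Lambda^2$-piece), and the coefficients of $v_1v_2v_3$ and $v_i^2v_3$ in that polynomial are the gauge-invariant quantities being asserted. Your part (3) is therefore unnecessary once you invoke this, and the grading argument should simply be dropped.
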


This follows from Equation \ref{Eqn:AinftyCurve}; in that notation, the first statement of the Lemma reflects the non-trivial cubic term $v_1v_2v_3$ whilst the second reflects the  vanishing of the terms $v_i^2v_3$, $i=1,2$.   We now combine Lemma \ref{Lem:CubicOpen} with the $\bZ$-grading considerations discussed before \eqref{Eqn:CubicDegrees}.   Lemma \ref{Lem:DeformationDegree} implies that intersections with $H_{ref}$ respectively $Z_{\infty}$ contribute $4g$ respectively $2$ to the Maslov index of the corresponding holomorphic polygon.  Indeed, for a rigid (virtual dimension zero) polygon contributing to $\mu^k$, with inputs $x_j$ and output the identity element $1_L \in HF^0(L,L)$, Lemma \ref{Lem:DeformationDegree} implies
\begin{equation} \label{Eqn:Virdim0Degree}
\sum_j |x_j| + 2-k \, = \, \Delta \cdot [\im(u)] = 4g\, \im(u)\cdot H_{ref} + 2\, \im(u) \cdot Z_{\infty}.
\end{equation}
We now put together the following pieces of information:
\begin{itemize}
\item By Remark \ref{Rem:TwoVblesNeeded}, the $A_{\infty}$ structures on $Z_{\neq\infty}$ and $Z$ can be taken invariant under an involution which preserves $p$ and $e$ but sends $f\mapsto -f$, so $\mu^3(e,f,p) = 0$ in both cases;
\item Lemma \ref{Lem:CubicOpen} then implies $\mu^3_{\neq\infty}(e,e,p) \neq \mu^3_{\neq\infty}(f,f,p)$ when computed in $Z_{\neq\infty}$.  By Equation \ref{Eqn:Virdim0Degree}, and the discussion before \eqref{Eqn:CubicDegrees}, the holomorphic polygons which contribute to  these expressions are necessarily disjoint from $\Delta$, for $\mu^3_{\neq\infty}(e,e,p)$, respectively meet $\Delta$ exactly once, for $\mu^3_{\neq\infty}(f,f,p)$.
\end{itemize}

Now consider the term $\mu^3_{\scrA(Z)}(e,e,p)$.  Equation \ref{Eqn:Virdim0Degree} and the relation $2|e|+|p|=1$ implies that no holomorphic polygon contributing to this product can meet either $H_{ref}$ or $Z_{\infty}$, in particular its value does not change under compactification from $Z_{\neq\infty}$ to $Z$.  There are in principle new holomorphic polygons which contribute to $\mu^3_{\scrA(Z)}(f,f,p)$, given by adding a Chern number zero sphere in homology class $2gR-L$ to a disk living in $Z_{\neq\infty}$.  (Since our total space is monotone, such a Chern number zero sphere cannot itself be represented by a holomorphic curve, so these additional polygons do not arise from broken configurations by gluing on  sphere bubble components.)  However, we can deduce:

\begin{Corollary} \label{Cor:CubicClosed}
The coefficient of $1_L$ in exactly one of  
\[
\mu^3_{\scrA(Z)}(e+f, e-f, p) \quad \textrm{and} \quad \mu^3_{\scrA(Z)}(e+f,e+f,p)
\]
 is non-zero.
\end{Corollary}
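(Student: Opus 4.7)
By the $\tau$-equivariance of Remark~\ref{Rem:TwoVblesNeeded}, the cross-terms $\mu^3_{\scrA(Z)}(e,f,p)$ and $\mu^3_{\scrA(Z)}(f,e,p)$ vanish, so writing $A := \mu^3_{\scrA(Z)}(e,e,p)|_{1_L}$ and $B := \mu^3_{\scrA(Z)}(f,f,p)|_{1_L}$, bilinearity reduces the two expressions of the Corollary to $A-B$ and $A+B$ respectively; the Corollary is therefore equivalent to the conjunction $A\neq 0$ and $B=\pm A$. The first half is immediate from the preceding analysis: the virtual-dimension relation \eqref{Eqn:Virdim0Degree} combined with $2|e|+|p|=1$ forces every polygon contributing $1_L$ to $\mu^3(e,e,p)$ to be disjoint from both $H_{ref}$ and $Z_\infty$, hence contained in $Z\setminus(H_{ref}\cup Z_\infty)$, where by Lemma~\ref{Lem:pentagramsagree} the restricted $A_\infty$-structure is identified with Seidel's structure on $C_{\neq\infty}$. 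Lemma~\ref{Lem:CubicOpen} together with $\tau$-equivariance then delivers $A\neq 0$.

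\textbf{Reduction to a numerical dichotomy.} Applying the same dimension formula to $\mu^3(f,f,p)$ with $2|f|+|p|=1+4g$ restricts contributing polygons to intersection bidegree $(m_{H_{ref}},m_{Z_\infty})\in\{(1,0),(0,2g)\}$. The $(1,0)$-class is contained in $Z_{\neq\infty}$ and contributes $\mu^3_{\neq\infty}(f,f,p)|_{1_L}=-A$ by the $Z_{\neq\infty}$ half of Lemma~\ref{Lem:CubicOpen} combined with $\tau$-equivariance, while the $(0,2g)$-class furnishes a count $\delta$ genuinely new to $Z$. Hence $B=-A+\delta$, $A-B=2A-\delta$ and $A+B=\delta$, and the Corollary becomes the assertion that $\delta\in\{0,2A\}$. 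Since $A\neq 0$, the two extremes correspond respectively to the first or second expression vanishing, accounting for the \emph{exactly one} in the statement.

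\textbf{The dichotomy via formality.} This is the principal obstacle, since the $(0,2g)$-polygons cannot be counted directly. I would dispatch it using the Kontsevich machinery of Section~\ref{Section:Formality}. The category $\scrA(Z)$ is a $\bZ_2$-graded $A_\infty$-deformation of the exterior algebra $\Lambda V$ on $V=\bC^3$, so by Theorem~\ref{Thm:KontsevichFormality} it is gauge-equivalent to the structure attached to a formal function $W\in\bC[[V]]^{\geq 3}$; Lemma~\ref{Lem:DeformedHH} together with the rank computation $\mathrm{rk}\,HH^*(\scrA(Z))=2g+2$ from Corollary~\ref{Cor:QHRelativeQuadric} forces the Jacobian algebra $\bC[[V]]/(\partial W)$ to have this same rank. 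Introducing dual coordinates $(v_1,v_2,v_3)$ paired with $(e+f,e-f,p)$, the $\tau$-invariance $v_1\leftrightarrow v_2$ and the leading-order behavior imported from $Z_{\neq\infty}$ constrain the $\tau$-symmetric cubic part of $W$ to the pencil $\alpha\,v_1v_2v_3+\beta\,v_3(v_1^2+v_2^2)$, with coefficients proportional to $A-\delta/2$ and $\delta/2$ respectively. The key step — and the heart of the difficulty — is to show, by a singularity-theoretic deformation argument in the spirit of Seidel's \cite{Seidel:HMSgenus2} finite-determinacy analysis, that no intermediate member of this pencil admits a $\tau$-invariant higher-order completion to an isolated-critical-point $W$ with Jacobian rank $2g+2$ compatible with the partial data visible in $Z_{\neq\infty}$. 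Only the two endpoints of the pencil survive, giving $\delta\in\{0,2A\}$ and completing the proof.
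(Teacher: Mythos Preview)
Your Steps 1 and 2 correctly reproduce the paper's set-up. Step 3 is where the approaches diverge, and where your argument fails.

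The paper's proof is far simpler than your Step 3: it is nothing more than the dichotomy already implicit in your Step 2. Since $A$ is unchanged under compactification (the grading argument you give), either $B$ is also unchanged --- and both conclusions of Lemma~\ref{Lem:CubicOpen} carry over verbatim --- or $B$ changes, whence $A + B \neq 0$ because $A$ stayed put. That is the entire proof; no formality theorem, no Hochschild rank computation, no singularity theory is invoked here.

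You correctly observe that this literal argument only yields \emph{at least one} non-zero (in the second branch, $A - B = 2A - \delta$ is unconstrained). But your proposed fix via the Jacobian-rank constraint cannot work. The generic member $q(v_1,v_2)\,v_3$ of the cubic pencil $\alpha\,v_1v_2v_3 + \beta(v_1^2+v_2^2)v_3$ has non-degenerate quadratic part $q$, hence is $GL_2$-equivalent to $v_1v_2v_3$; since $G = \bZ_{2g+1}$ acts by the scalar $\xi$ on $\langle v_1,v_2\rangle$, this linear change is $G$-equivariant, and the resulting cubic therefore admits exactly the same isolated-singularity, $\tau$-invariant higher-order completions with the same Jacobian rank as the endpoint. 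So your ``key step'' --- that no intermediate pencil member admits such a completion --- is false, not merely unproved.

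This same linear equivalence is precisely how the paper proceeds after Lemma~\ref{Lem:Gotyou}: the two cubic forms are explicitly identified under a $G$-equivariant change of coordinates before the finite-determinacy argument of Lemma~\ref{Lem:FinitelyDetermined}. That also makes clear that only the weaker ``at least one non-zero'' conclusion is actually required downstream, which is what the paper's short dichotomy delivers.
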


\begin{proof}
This is immediate from the preceding discussion. Either the coefficient of $1_L$ in $\mu^3(f,f,p)$ does not change on compactification, in which case we inherit the non-vanishing of Lemma \ref{Lem:CubicOpen}, or it does change, but since the analogous coefficient didn't change in $\mu^3(e,e,p)$, that would imply the second outcome. 
\end{proof}

\begin{Remark} It is possible to compute the absolute grading of $\scrA(Z)$ using the theory of ``bigraded curves" developed in \cite{KhovanovSeidel}, given a computer to plot phase functions along matching paths (and a blackboard to extract the gradings from those plots).  Knowledge of $\scrA(Z_{\neq\infty})$ gives many further \emph{a priori} constraints on the bigrading; for instance, \eqref{Eqn:AinftyCurve} implies $\mu^{2g+1}_{\neq\infty}(p,\ldots,p) \neq 0$, which together with Lemma \ref{Lem:DeformationDegree} shows that $|p| = 4gk-1$ for some $k\geq 1$, etc.
\end{Remark}

\subsection{Finite determinacy\label{Section:FiniteDeterminacy}}

We return to the situation of Section \ref{Section:Formality}, and an $A_{\infty}$-structure on an exterior algebra $A=\Lambda^*(V)$.  Suppose now $V \cong \bC^3$ is 3-dimensional, so $A$ has graded summands
\[
A^0 = \bC \oplus \Lambda^2(V); \quad A^1 = V \oplus \Lambda^3(V).
\]  
According to \cite[Theorem 3.3]{Seidel:HMSgenus2}, the $L_{\infty}$-quasi-isomorphism $\Phi$ of Theorem \ref{Thm:KontsevichFormality} has linear term $\Phi^1$ which is given by restricting elements of $Hom(\Lambda(V)^{\otimes i}, \Lambda(V))$ to the subspace of symmetric elements of $Hom(V^{\otimes i}, \Lambda(V))$ (this is the Hochschild-Kostant-Rosenberg map).  Since $V$ is in odd degree, $\alpha^i(v_1,\ldots, v_i) \in \Lambda^0 \oplus \Lambda^2$ for all $i$ and any $v_j\in V$.
In other words, taking the parities of the $\alpha^i$ into consideration, a Maurer-Cartan element comprises a pair
\[
(W, \eta) \in \bC[[V^{\vee}]] \oplus \bC[[V^{\vee}]]\otimes \Lambda^2(V)
\]
of a formal function and a formal 2-form, satisfying
\[
[W, W] = 0; \quad [\eta, \eta]=0; \qquad [W, \eta]=0.
\]  
Since the given $A_{\infty}$-structure does not deform the algebra structure on $\Lambda(V)$, we may suppose that $W$ and $\eta$ have no coefficients of polynomial degree $<3$.
Note that the equation $[W, W]=0$ always holds, since the Schouten bracket on polyvector fields involves contraction of the differential form, hence vanishes identically on $0$-forms.  In particular, any formal function $W$ together with the trivial 2-form $(W,0)$ defines a $\bZ_2$-graded $A_{\infty}$-structure on $\Lambda(\bC^3)$.  Gauge transformations are now by pairs
\[
(g^1,g^3) \in \left(\bC[[V^{\vee}]]\otimes \Lambda^1(V)\right) \oplus \left(\bC[[V^{\vee}]]\otimes \Lambda^3(V)\right).
\]
For vector fields vanishing at the origin the first term acts on both components $(W, \eta)$, by pullback by the diffeomorphism obtained from exponentiating the vector field.  Explicitly, a vector field $g^1$ acts on a formal function through the adjoint action
\begin{equation} \label{Eqn:ExpAdjoint}
exp(g^1)\cdot W \ = \ W + \sum_{n \geq 0} \frac{ad(g^1)^{n}}{(n+1)!} \, [g^1,W]
\end{equation}
 The second term acts by interior contraction
\begin{equation} \label{Eqn:gauge}
(0,g^3) \cdot (W, \eta) \ = \ (W, \eta + \iota_{dW}(g^3)).
\end{equation}
In this language, we can re-express the outcome of the previous section.  Pick co-ordinates $\xi_1, \xi_2, \xi_3$ on $V$ and dual co-ordinates $(v_1,v_2,v_3)$ on $V^{\vee}\cong \bC^3$.  Write $O(k)$ for power series all of whose terms have degree $\geq k$.   The action of $G=\bZ_{2g+1}$ on $V$ comes geometrically from the rotation action on $\bP^1$ permuting Lagrangian pentagram spheres, so the generator of $G$ acts on the hyperelliptic curve 
\[
y^2 = z(z^{2g+1}-1) \quad  \textrm{via} \quad (y,z) \mapsto (\xi^{g+1}y, \xi z), \ \textrm{with} \ \xi=e^{\frac{2i\pi}{2g+1}}.
\]
Via Lemma \ref{Lem:pentagramsagree}, there is a basis with respect to which $G$ acts on $V$  by diagonal matrices with entries which are non-trivial roots of unity; explicitly, there is a basis for which $G$ acts via the diagonal matrix with entries $(\xi,\xi,\xi^{2g-1})$.  

\begin{Lemma} \label{Lem:Gotyou}
The formal function $W$ defining the $A_{\infty}$-structure on $\scrA(Z)$ has the form
\[
W = \lambda v_1 v_2 v_3 + O(4) \quad \textrm{or} \quad W = \lambda (v_1^2 +v_2^2)v_3 + O(4).
\]
for some non-zero $\lambda \in \bC^*$.
\end{Lemma}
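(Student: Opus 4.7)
The plan is to apply Kontsevich's formality theorem to reduce the $A_\infty$-structure on $\scrA(Z)$ to a Maurer--Cartan pair $(W, \eta) \in \bC[[V^\vee]] \oplus (\bC[[V^\vee]] \otimes \Lambda^2(V))$, then to constrain its cubic part $W^{(3)}$ using the finite symmetries present and the geometric input of Corollary \ref{Cor:CubicClosed}. Because the underlying product on $\Lambda^*(V)$ is undeformed (Corollary \ref{Cor:PentsEqual}), both $W$ and $\eta$ may be chosen to begin in polynomial degree $\geq 3$, and gauge transformations $(g^1, g^3)$ with identity linear part act trivially on the leading cubic part. Moreover, the Hochschild--Kostant--Rosenberg piece $\Phi^1$ of Theorem \ref{Thm:KontsevichFormality} is valued in symmetric elements of $\Hom(V^{\otimes k}, \Lambda(V))$, so the coefficient of $1_L \in \Lambda^0(V)$ in $\mu^3|_{V^{\otimes 3}}$ is (up to a non-zero universal scalar) the corresponding triple derivative of $W$ at the origin; the two-form $\eta$ is valued in $\Lambda^2(V)$ and does not contribute to this $\Lambda^0$-component.

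Next, I impose the two finite symmetries. By Lemma \ref{Lem:even} and Remark \ref{Rem:EquivtTransverse} the $A_\infty$-structure may be taken strictly equivariant for the rotation group $G = \bZ_{2g+1}$, which acts on the distinguished basis $v_1, v_2, v_3$ of $V^\vee$ with weights $(\xi, \xi, \xi^{2g-1})$, $\xi = e^{2\pi i/(2g+1)}$. By Remark \ref{Rem:TwoVblesNeeded} one may also take it strictly equivariant for the involution $\tau$ which, in the basis $e = v_1 + v_2, f = v_1 - v_2, p = v_3$, sends $f \mapsto -f$ and fixes $e, p$, i.e.\ exchanges $v_1 \leftrightarrow v_2$ while fixing $v_3$. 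Since $\Phi^1$ is natural under $GL(V)$, the pair $(W, \eta)$ is invariant under $G \times \langle \tau \rangle$.  Enumerating monomials of total $G$-weight $\xi^{2g+1} = 1$ in the $v_i$ one finds, for $g \geq 2$, that the only $G$-invariant cubics are $v_1 v_2 v_3,\ v_1^2 v_3,\ v_2^2 v_3$; imposing $\tau$-invariance then forces
\[
W^{(3)} \, = \, a\, v_1 v_2 v_3 \, + \, b\,(v_1^2 + v_2^2)\, v_3
\]
for some $a, b \in \bC$.

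Finally, I translate Corollary \ref{Cor:CubicClosed} into a constraint on $(a,b)$. Under the substitutions above one has $\mu^3(e+f, e-f, p) = 4\,\mu^3(v_1, v_2, v_3)$ and $\mu^3(e+f, e+f, p) = 4\,\mu^3(v_1, v_1, v_3)$, and by the previous paragraph the $1_L$-coefficients of these are proportional to $a$ and to $b$ respectively.  Corollary \ref{Cor:CubicClosed} asserts that exactly one of these coefficients vanishes, so exactly one of $a, b$ is zero; denoting the non-zero one by $\lambda$ (and absorbing all universal scalars into it) gives the two alternatives in the statement of the lemma.

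The main potential obstacle is confirming the precise identification between the cubic part of $W$ and the $\bC \cdot 1_L$-component of $\mu^3$ on $V^{\otimes 3}$, and verifying that no further $G \times \langle \tau \rangle$-equivariant gauge transformation can reduce one of the two exclusive cases to the other (beyond the trivial freedom $\lambda \mapsto \alpha \lambda$ from coordinate rescaling).  Both facts are consequences of the bidegree formula for $\Phi^1$ as the HKR-restriction, combined with the explicit gauge action described in \eqref{Eqn:ExpAdjoint}--\eqref{Eqn:gauge}, but they demand careful tracking of parity conventions on the shifted Hochschild complex.
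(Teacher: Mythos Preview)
Your proposal is correct and follows essentially the same approach as the paper: the paper's proof likewise identifies $v_1,v_2,v_3$ with $e+f,\,e-f,\,p$, uses $\bZ_{2g+1}$-invariance to restrict the cubic terms of $W$ to $v_1v_2v_3,\ v_1^2v_3,\ v_2^2v_3$, uses the hyperelliptic involution $v_1\leftrightarrow v_2$ to equate the last two coefficients, and then invokes Corollary~\ref{Cor:CubicClosed} to conclude exactly one of the two shapes occurs. Your write-up is simply more explicit than the paper's (you spell out the HKR identification of $W^{(3)}$ with the $1_L$-coefficient of $\mu^3|_{V^{\otimes 3}}$ and the weight enumeration of invariant cubics), but there is no substantive difference in method.
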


\begin{proof}
The generators $v_i$ of the exterior algebra correspond, up to scale, to the Floer generators $e+f, e-f, p$ of Section \ref{Sec:Cubic}.  Hyperelliptic invariance, exchanging $f$ and $-f$, corresponds to invariance under $v_1 \leftrightarrow v_2$.  Corollary \ref{Cor:CubicClosed} implies that either the term $v_1v_2v_3$ in $W$ has non-zero coefficient, or the term $v_1^2v_3$ (and hence by symmetry $v_2^2v_3$) has non-zero coefficient. Moreover, these two cases are mutually exclusive.  The two cases cover the only possible cubic terms in $W$, by $\bZ_{2g+1}$-invariance, which gives the result.
\end{proof}

We now  explore more systematically the constraints imposed by $\bZ_{2g+1}$-equivariance.  One can consider only $G$-equivariant $\bZ_2$-graded $A_{\infty}$-structures on $\Lambda(V)$ by imposing $G$-equivariance throughout the preceding discussion; in particular, the inverse $\Phi$ to Kontsevich's map $\Psi$ gives a quasi-isomorphism between equivariant Hochschild cochains and equivariant polyvector fields, and $G$-equivariant gauge transformations act on the set of such $G$-equivariant Maurer-Cartan solutions.

As in Section \ref{Section:Formality}, Lemma \ref{Lem:DeformedHH} and the Formality Theorem \ref{Thm:KontsevichFormality} imply that the Hochschild cohomology of a $\bZ_2$-graded $A_{\infty}$-structure on $\Lambda(V)$ defined by a pair $(W,\eta)$ is given by the cohomology of the $\bZ_2$-graded complex 
\begin{equation} \label{Eqn:HHPolyFields}
\bC[[V^{\vee}]] \oplus (\bC[[V^{\vee}]]\otimes \Lambda^2(V)) \xrightleftharpoons[{\ [\cdot, W+\eta] \oplus [\cdot,W]\ }]{{\ [\cdot, \eta] \oplus [\cdot,W+\eta]\ }} (\bC[[V^{\vee}]]\otimes \Lambda^1(V))  \oplus (\bC[[V^{\vee}]]\otimes \Lambda^3(V))
\end{equation}
noting that $\xi \mapsto [\xi, W] = \iota_{dW}(\xi)$ vanishes on $0$-forms, and similarly $\xi \mapsto [\xi,\eta]$ vanishes on $3$-forms, for degree reasons. The group $HH^*(\scrA, \scrA)^{\bZ_{2g+1}}$ is computed by the complex
\begin{equation} \label{Eqn:HHPolyFieldsInvt}
\left( \bC[[V^{\vee}]] \otimes \Lambda^{ev}(V)) \right)^{\bZ_{2g+1}} \  
\xrightleftharpoons[{\ [\cdot, W+\eta] \ }]{{\ [\cdot,W+\eta]\ }} \ \left( \bC[[V^{\vee}]]\otimes \Lambda^{odd}(V) \right)^{\bZ_{2g+1}}  
\end{equation}
Note that $1 \in \bC[[V^{\vee}]]$ always defines a non-trivial class in $HH^{ev}$, since $HH^*$ is a unital ring.  Therefore, Corollary \ref{Cor:QHRelativeQuadric} implies that in our case there is precisely one other non-trivial class.  The Maurer-Cartan equation implies that both $W$ and $\eta$ are themselves cocycles, so some some linear combination of them must be a coboundary.  The force of Lemma \ref{Lem:Gotyou} is that we can prove that $\eta$ is in fact already a coboundary in the Koszul complex associated to $W$.

The Koszul complex of a holomorphic function $W$ is acyclic in degrees greater than the dimension of the critical locus of $W$, see  \cite[Ch.~6, Prop.~2.21]{Dimca}. The underlying acyclicity result applies more generally to complexes of finitely generated free modules over a Noetherian local ring $R$, which is useful in the context of formal rather than holomorphic functions.  Indeed, \cite[Corollary 1]{BuchsbaumEisenbud} or \cite[Section 20.3]{Eisenbud} says that a complex of free $R$-modules
\[
0 \rightarrow F_n \xrightarrow{\phi_n} F_{n-1} \xrightarrow{\phi_{n-1}} \cdots \rightarrow F_2 \xrightarrow{\phi_2} F_1
\]
is exact if and only if
\begin{enumerate}
\item $\rk(F_k) = \rk(\phi_k) + \rk(\phi_{k+1})$ and 
\item the depth of the ideal $I(\phi_k) \geq k-1$
\end{enumerate}
 for $k=2,\ldots, n$. Here, the rank of $\phi_k$ is the size of the largest non-vanishing minor in a matrix representing $\phi_k$, and $I(\phi_k)$ is the ideal in $R$ generated by the determinants of all $r\times r$ minors, where $r=\rk(\phi_k)$.  $\C[[v_1,v_2,v_3]]$ is Noetherian \cite[Corollary 10.27]{AtiyahMacDonald} and Cohen-Macaulay.  In a Cohen-Macaulay ring, depth of an ideal (maximal length of a regular sequence) equals its codimension \cite[Theorem 18.2]{Eisenbud}, and the criterion above, applied  to the Koszul complex truncated in degree $> k$ of a holomorphic $W$ with critical set of dimension  $k$, recovers the familiar acyclicity result stated previously. Taking $W$ to be a formal function in 3 variables, we would like to know when the truncated complex
 \[
 0 \rightarrow \Lambda^3 \xrightarrow{[\cdot, dW]} \Lambda^2  \xrightarrow{[\cdot, dW]} \Lambda^1
 \]
 is acyclic, with $\Lambda^i$ the free module of rank ${3 \choose i}$ over $R=\bC[[v_1,v_2,v_3]]$. It is easy to check that whenever $W\neq 0$, the first condition on the ranks of the maps and modules is satisfied.  Since $R$ contains no zero-divisors, depth$\, I(\phi_2) \geq 1$ is trivial, so the key condition is 
 \[
 \textrm{depth} \, I(\phi_3) \geq 2.
 \]
 This certainly holds if one can order the partial derivatives $(\partial_i W, \partial_jW, \partial_kW)$ in such a way that some linear combination $u.\partial_i W+ v.\partial_jW$ is not a zero-divisor in $R/ \langle \partial_k W\rangle$. If $u\partial_iW+ v\partial_jW$ is such a zero-divisor, then
 \[
 \phi \cdot (u\partial_i W + v\partial_jW) = \psi \partial_k W
 \]
 for some formal functions $\phi, \psi$ with $\partial_k W \not| \, \phi$.   Since $R$ is moreover a unique factorisation domain, this can happen for all $u$, $v$  only if the partial derivatives share a common irreducible factor (which is not a unit, so has vanishing constant term). However, this is precluded by Lemma \ref{Lem:Gotyou}.  We deduce:
 
 \begin{Lemma}
$\scrA(Z)$ is defined by a Maurer-Cartan pair $(W,\eta)$ for which $\eta$ is a coboundary in the Koszul complex associated to $W$, so there is some formal 3-form $g^3$ for which $[g^3,dW] = \eta$.
 \end{Lemma}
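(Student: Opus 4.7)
The plan is to combine the Maurer--Cartan condition for $(W,\eta)$ with the acyclicity of the truncated Koszul complex established in the paragraph immediately above the Lemma.

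First I would expand the Maurer--Cartan equation $[W+\eta,W+\eta]=0$ in the Schouten algebra. The piece $[W,W]=0$ is automatic, since the Schouten bracket vanishes on functions, and the cross-term $[W,\eta]$ equals (up to a fixed sign convention) the interior contraction $\iota_{dW}\eta$; indeed this follows from the Schouten formula quoted in Section \ref{Section:Formality} applied to a bivector field and a function. The cross-term vanishing $[W,\eta]=0$ thus says precisely that $\eta$ is a cocycle at $\Lambda^2$ in the Koszul complex
\[
0 \longrightarrow \Lambda^3 \xrightarrow{\ \iota_{dW}\ } \Lambda^2 \xrightarrow{\ \iota_{dW}\ } \Lambda^1.
\]
If this complex is exact at $\Lambda^2$, then $\eta = \iota_{dW}(g^3) = \pm [g^3,dW]$ for some $g^3 \in \Lambda^3(V)\otimes\bC[[V^\vee]]$, which, after absorbing the sign into $g^3$, is the conclusion of the Lemma.

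Second, I would invoke the Buchsbaum--Eisenbud analysis carried out in the paragraph before the Lemma, which reduces the exactness of the truncated Koszul complex over $R=\bC[[v_1,v_2,v_3]]$ to verifying that the partials $\partial_1 W,\partial_2 W,\partial_3 W$ do not share a common irreducible factor in the UFD $R$. So it remains only to check this non-sharing condition in each of the two cases provided by Lemma \ref{Lem:Gotyou}. Any common irreducible factor with nonzero constant term would be a unit in the local ring, so its lowest-order homogeneous component must divide the leading homogeneous part of each $\partial_i W$ inside the polynomial ring $\bC[v_1,v_2,v_3]$.

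In Case 1, $W = \lambda v_1 v_2 v_3 + O(4)$ has partials with leading parts $\lambda v_2 v_3,\ \lambda v_1 v_3,\ \lambda v_1 v_2$, whose gcd is $1$. In Case 2, $W = \lambda(v_1^2+v_2^2)v_3 + O(4)$ has leading parts $2\lambda v_1 v_3,\ 2\lambda v_2 v_3,\ \lambda(v_1^2+v_2^2)$, again with gcd $1$: a candidate irreducible factor would have to be $v_1$, $v_2$, or $v_3$, but $v_1\nmid v_2 v_3$, $v_2\nmid v_1 v_3$, and $v_3\nmid v_1^2+v_2^2$. Neither case admits a common irreducible factor, so the complex is exact at $\Lambda^2$ and the Lemma follows. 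If one additionally wants $g^3$ to be $\bZ_{2g+1}$-equivariant (to stay within the equivariant setup of Section \ref{Section:Exterior}), one averages the solution over the group action; this preserves the equation $[g^3,dW]=\eta$ because $\eta$ and $dW$ are equivariant and we work in characteristic zero.

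The main obstacle is essentially already paid for: identifying the leading cubic term of $W$ (Lemma \ref{Lem:Gotyou}) consumed the geometric input of Sections \ref{Section:Exterior} and \ref{Sec:Cubic}, and the Buchsbaum--Eisenbud acyclicity argument was spelled out just above. What remains for the Lemma itself is purely formal: the translation of the Maurer--Cartan equation into a Koszul cocycle condition, and the elementary verification of the no-common-factor property in the two cases. The precise restriction on the cubic part of $W$ provided by Lemma \ref{Lem:Gotyou} is indispensable here; had the cubic part had a factor common to all three partials, the required gauge transformation by $g^3$ need not exist.
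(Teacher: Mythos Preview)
Your proposal is correct and follows essentially the same route as the paper. The paper states just before the Lemma that the Maurer--Cartan equation makes $\eta$ a cocycle, and that a common irreducible factor among the partials is ``precluded by Lemma \ref{Lem:Gotyou}''; you simply spell out both of these points explicitly (expanding the MC equation into its graded pieces, and checking the gcd of the leading quadratic parts of $\partial_i W$ in each of the two cases), which is a welcome clarification rather than a different argument.
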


We can therefore apply a gauge transformation as in Equation \ref{Eqn:gauge}  to kill $\eta$, so up to gauge equivalence, the $A_{\infty}$-structure on $Z$ is defined by a formal function $W$ satisfying Lemma \ref{Lem:Gotyou}, together with the trivial 2-form. Gauge transformations of $A_{\infty}$-structures induce the identity on cohomology. There are also $A_{\infty}$-equivalences which act non-trivially on cohomology; in particular,  $A_{\infty}$-structures on $\Lambda(V)\rtimes G$ are invariant under $G$-equivariant linear transformations in $GL(V)$.  Since the cubic polynomials 
\[
v_1v_2v_3 \quad \textrm{and} \quad (v_1^2+v_2^2)v_3
\]
are related by a linear transformation which commutes with $G$ (which acts by $\xi \cdot \id$ on the subspace $\langle v_1, v_2\rangle \subset V^{\vee}$), we can suppose up to quasi-isomorphism that 
\begin{equation} \label{Eqn:MoreInfo}
W = \lambda v_1v_2v_3 + O(4) \quad \textrm{and} \quad \eta \equiv 0.
\end{equation}
 We next recall some background on Hochschild cohomology of semi-direct products, see for instance \cite{Shepler-Witherspoon}.  The Hochschild cohomology of a semi-direct product of a finite-dimensional algebra with a finite group $\Gamma$ has an eigenspace-type decomposition over conjugacy classes of $\Gamma$.  In particular, $HH^*(\scrA\rtimes \bZ_n, \scrA\rtimes \bZ_n)$ splits into a piece corresponding to $id \in \bZ_n$ and $(n-1)$ other summands indexed by the other characters:
\begin{equation} \label{Eqn:Characters}
HH^*(\scrA\rtimes \bZ_n, \scrA\rtimes \bZ_n) = \bigoplus_{\gamma \in \bZ_n} Ext_{bimod-\scrA}^*(\scrA, Graph(\gamma))^{\bZ_n} 
\end{equation}  The summand for $\gamma = 1$ is canonically identified with the invariant part $HH^*(\scrA,\scrA)^{\bZ_n}$. In the special case of an $A_{\infty}$-structure on the exterior algebra, each of the other summands is computed by a cochain complex which actually defines a Koszul resolution of $\bC$, hence has one-dimensional cohomology \cite[Section 4b]{Seidel:HMSquartic}, whilst the invariant part of $HH^*(\scrA,\scrA)$ can be computed from the complex of Equation \ref{Eqn:HHPolyFields}.
As an illustration, we explicitly verify that the algebraic description of the Fukaya category $D^{\pi}\scrF(\Sigma_g) \simeq D(\scrA\rtimes \bZ_{2g+1})$  given in Section \ref{Section:Curve} is consistent with the prediction of Corollary \ref{Cor:QuiltsQH}. 

\begin{Lemma}  \label{Lem:ComputeonCurve}
$HH^*(D^{\pi}\scrF(\Sigma_g)) \simeq H^*(\Sigma_g)$ as $\bZ_{2g+1}$-representations.
\end{Lemma}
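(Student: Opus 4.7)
The plan is to combine the description of $D^{\pi}\scrF(\Sigma_g)$ from Section \ref{Section:Curve} with the general decomposition \eqref{Eqn:Characters} for Hochschild cohomology of semi-direct products. Recall that the pentagram spheres split-generate $\scrF(\Sigma_g)$ with Floer algebra $\Lambda^*(V)\rtimes \bZ_{2g+1}$, $V\cong \bC^3$, and that via the formality theorem the underlying $A_{\infty}$-structure corresponds to a Maurer-Cartan pair $(W,0)$ where $W$ is a $\bZ_{2g+1}$-invariant formal function whose low-order Taylor expansion is determined by \eqref{Eqn:AinftyCurve}:
\[
W \;=\; -\,v_1 v_2 v_3 \;+\; \tfrac{1}{2g+1}\bigl(v_1^{2g+1}+v_2^{2g+1}+v_3^{2g+1}\bigr) \;+\; O(2g+2),
\]
with the generator of $\bZ_{2g+1}$ acting on $V^{\vee}$ by weights $(-1,-1,2)$. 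Equation \eqref{Eqn:Characters} then expresses $HH^*(\scrA\rtimes\bZ_{2g+1})$ as the identity sector $HH^*(\scrA,\scrA)^{\bZ_{2g+1}}$ plus $2g$ twisted sectors indexed by the non-trivial characters of $\bZ_{2g+1}$, with each twisted sector carrying that character as a $\bZ_{2g+1}$-representation and the identity sector carrying copies of the trivial representation.

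For the twisted sectors, the discussion following \eqref{Eqn:Characters} shows that each is one-dimensional (computed by a Koszul resolution of $\bC$), so they contribute the regular representation minus the trivial one. To match this with $H^1(\Sigma_g)$ viewed as a $\bZ_{2g+1}$-representation, I would compute the action of the generator $(y,z)\mapsto (\xi^{g+1}y,\xi z)$ of $\bZ_{2g+1}$ on the basis $\{z^i\,dz/y\}_{i=0}^{g-1}$ of $H^0(K_{\Sigma_g})$, finding eigenvalues $\xi^{i-g}$ for $i=0,\ldots,g-1$. Together with the conjugate basis on $H^{0,1}$, this realises each non-trivial character of $\bZ_{2g+1}$ with multiplicity exactly one, matching the twisted sector total.

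For the identity sector, formality together with Lemma \ref{Lem:DeformedHH} identifies $HH^*(\scrA,\scrA)^{\bZ_{2g+1}}$ with the $\bZ_{2g+1}$-invariants of the cohomology of the polyvector-field complex \eqref{Eqn:HHPolyFieldsInvt} with $\eta=0$, which is the $\bZ_2$-graded Frobenius algebra $\mathrm{Jac}(W)=\bC[[v_1,v_2,v_3]]/\langle \partial_i W\rangle$. One must show that $\mathrm{Jac}(W)^{\bZ_{2g+1}}$ is two-dimensional, spanned by the identity class $1$ and the top-degree socle class $\omega$ of the Frobenius structure. Both are automatically $\bZ_{2g+1}$-invariant: $1$ trivially, and $\omega$ because the Hessian of the invariant function $W$ transforms by $\det(g)^2 = 1$ under the action. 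Matching these to $H^0(\Sigma_g)\oplus H^2(\Sigma_g)$ would complete the comparison.

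The hard part is showing $1$ and $\omega$ exhaust the invariants of $\mathrm{Jac}(W)$. The invariant monomials in $\bC[v_1,v_2,v_3]$ are those $v_1^av_2^bv_3^c$ with $-a-b+2c\equiv 0\pmod{2g+1}$, and one must show all such monomials of positive degree are proportional to $\omega$ modulo the Jacobian ideal. A direct route is to use the relations $v_jv_k\equiv v_i^{2g}$ to reduce every invariant monomial to a polynomial in $t = v_1v_2v_3$, and then to note that $\sum v_i\,\partial_i W$ already expresses higher powers of $t$ in terms of lower ones, collapsing the span to two dimensions. A more conceptual alternative is to recognise $(V^{\vee},W)$ as a Landau--Ginzburg mirror of the pair $(\Sigma_g,\bZ_{2g+1})$ and invoke the Chevalley--Weil type decomposition for the invariant Jacobian ring, but the elementary argument suffices for the rank count at hand.
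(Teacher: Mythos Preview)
Your overall strategy is the same as the paper's: decompose via \eqref{Eqn:Characters}, observe that the $2g$ twisted sectors are each one-dimensional and exhaust the non-trivial characters (your explicit check on the basis $z^i\,dz/y$ of $H^0(K_{\Sigma_g})$ is more than the paper gives, and is correct), and identify the identity sector with $J(W)^{\bZ_{2g+1}}$. Your observation that the socle is $G$-invariant because $\det(g)=\xi^{2g+1}=1$ is fine; in fact the Hessian class is proportional to $t=v_1v_2v_3$ in $J(W)$, so this is the same second generator the paper uses.

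The gap is in your ``direct route'' for showing $\dim J(W)^{\bZ_{2g+1}}=2$. The Euler-type element $\sum_i v_i\,\partial_i W = -3v_1v_2v_3 + \sum_i v_i^{2g+1}$ does lie in the Jacobian ideal, but after substituting $v_i^{2g}\equiv v_jv_k$ on the right it collapses to the tautology $3t\equiv 3t$; it gives no relation among powers of $t$. The paper's mechanism is different: one multiplies the three relations together to obtain
\[
t^{2g}=v_1^{2g}v_2^{2g}v_3^{2g}\;\equiv\;\frac{1}{(2g+1)^3}(v_2v_3)(v_1v_3)(v_1v_2)=\frac{1}{(2g+1)^3}\,t^2,
\]
so $t^2\bigl(1-(2g+1)^3 t^{2g-2}\bigr)=0$, and since the bracketed factor is a unit in $\bC[[v_1,v_2,v_3]]$ one gets $t^2=0$. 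The same ``substitute-until-a-$t^2$-factor-appears, then invoke a unit'' argument disposes of the remaining invariant monomials such as $v_1^2v_3$: for instance when $g=2$, three successive substitutions give $v_1^2v_3\equiv 125\,v_1^4v_2^2v_3^3 = 125\,t^2\cdot v_1^2v_3$, whence $v_1^2v_3=0$. Once you replace the Euler relation by this computation, your argument goes through and coincides with the paper's.
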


\begin{proof}
From the preceding discussion, we identify the invariant part $HH^*(\scrA,\scrA)^{\bZ_{2g+1}}$ with the invariant part of the \emph{Jacobian ring} 
\[
J(Q) = \bC[[x_1,x_2,x_3]] / \langle \partial_1 Q, \partial _2 Q, \partial_3 Q\rangle
\]
under the inherited $\bZ_{2g+1}$-action, where $Q = -x_1x_2x_3 + x_1^{2g+1}+x_2^{2g+1}+x_3^{2g+1}$ is the polynomial superpotential controlling the $A_{\infty}$-structure on $A = \Lambda(\bC^3)$.  In our case, the invariant part $J(Q)^{\bZ_{2g+1}}$ is generated by the polynomials $1$ and $x_1x_2x_3$, noting that modulo the ideal of relations $\langle \partial_i Q\rangle$ one has
\[
(x_1x_2x_3)^2 \sim (x_1x_2x_3)^{2g}
\]  
and that $1-(x_1x_2x_3)^{2g-2}$ is invertible since we work over formal power series, hence $x_1x_2x_3$ has order 2 in $J(Q)^{\bZ_{2g+1}}$.  The upshot is that the eigenspace decomposition of $HH^*$ under the $\bZ_{2g+1}$-action has a two-dimensional zero-eigenspace and $2g$ one-dimensional eigenspaces. This matches the cohomology (= quantum cohomology) of $\Sigma_g$, with $\bZ_{2g+1}$ acting on $H^1$ with minimal polynomial $1+\lambda + \lambda^2+\lambda^3+\cdots+\lambda^{2g}$ and acting trivially on $H^0 \oplus H^2$.
\end{proof}

The subcategory $\scrA(Z)$ is determined by a $\bZ_{2g+1}$-equivariant Maurer-Cartan pair $(W,0)$ as in \eqref{Eqn:MoreInfo}.  Corollary \ref{Cor:QHRelativeQuadric} and the discussion around Equation \ref{Eqn:Characters} imply that the Hochschild cohomology $HH^*(\scrA \rtimes \bZ_{2g+1},\scrA \rtimes \bZ_{2g+1})$ has $2g$ one-dimensional summands for the non-identity elements, living in odd degree, and two even-degree classes (corresponding to $H^0$ and $H^2$ of the genus $g$ curve).

\begin{Lemma} \label{Lem:HHnotsmall}
If the formal function $W = \lambda v_1v_2 v_3 + O(4)$, $\lambda \neq 0$, determines an $A_{\infty}$-structure $\scrA$ on $\Lambda(\bC^3)\rtimes \bZ_{2g+1}$ for which $HH^*(\scrA,\scrA)^{\bZ_{2g+1}}$ has rank 2, then the coefficients of the 3 monomials $v_i^{2g+1}$ in $W$ are all non-zero.
\end{Lemma}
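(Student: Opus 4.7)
The plan is to use Kontsevich's formality theorem \ref{Thm:KontsevichFormality} to recast the rank-$2$ hypothesis as a constraint on the Jacobian ring $J(W) = \bC[[v_1,v_2,v_3]]/\langle \partial_1 W,\partial_2 W,\partial_3 W\rangle$, then analyze the $\bZ_{2g+1}$-invariants using the Gorenstein socle together with the distinguished invariant monomial $v_3^{2g+1}$. Since $\eta\equiv 0$ by \eqref{Eqn:MoreInfo} and the complex \eqref{Eqn:HHPolyFieldsInvt} is the $\bZ_{2g+1}$-invariant Koszul complex on $(\partial_i W)_{i=1,2,3}$, with the passage to invariants exact in characteristic zero, the rank-$2$ hypothesis forces the critical locus of $W$ to be the point $\{0\}$: a positive-dimensional critical locus would be $\bZ_{2g+1}$-invariant, the $\partial_i W$ would fail to be $\fm$-primary, and $J(W)^{\bZ_{2g+1}}$ would be infinite-dimensional. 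Under isolated singularity the $\partial_i W$ form a regular sequence in the Cohen-Macaulay ring $\bC[[v_1,v_2,v_3]]$, the Koszul complex is acyclic outside degree zero, and one identifies $HH^{*}(\scrA,\scrA)^{\bZ_{2g+1}} = J(W)^{\bZ_{2g+1}}$ of rank exactly $2$.

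For an isolated hypersurface singularity $J(W)$ is a finite-dimensional Gorenstein local $\bC$-algebra, with $1$-dimensional socle generated by $\det \mathrm{Hess}(W)$. Since $W$ is $\bZ_{2g+1}$-invariant so is its Hessian, and
\[
\det \mathrm{Hess}(W) \; \equiv \; 2\lambda^{3} v_1 v_2 v_3 \pmod{\fm^{4}}.
\]
So $\{1,\ \det\mathrm{Hess}(W)\}$ are two linearly independent invariant classes, and the rank-$2$ assumption forces them to span $J(W)^{\bZ_{2g+1}}$: every $\bZ_{2g+1}$-invariant element of $J(W)$ lies in $\bC\cdot 1\oplus \bC\cdot\det\mathrm{Hess}(W)$.

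Suppose for contradiction that $c_3 = 0$ (the cases $c_1=0$ or $c_2=0$ will follow by an analogous argument, exchanging the roles of the co-ordinate axes but retaining the invariant weight bookkeeping $(1,1,-2)$). The test element is the invariant monomial $v_3^{2g+1}$. The relation $\partial_3 W = 0$ reads
\[
\lambda v_1 v_2 + (2g+1)c_3\, v_3^{2g} + R \; = \; 0 \qquad \text{in } J(W),
\]
where $R$ gathers the contributions of the higher-order $\bZ_{2g+1}$-invariant terms of $W$; multiplying by $v_3$, when $c_3\neq 0$ one obtains $v_3^{2g+1} \equiv -\,\lambda/((2g+1)c_3)\cdot v_1v_2v_3$ modulo higher-order corrections, i.e.\ a nonzero multiple of the socle class, consistent with rank $2$. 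When $c_3=0$ I would show that $v_3^{2g+1}$ is independent from $\{1,\det\mathrm{Hess}(W)\}$ in $J(W)^{\bZ_{2g+1}}$, producing a third invariant class and contradicting the previous paragraph. This splits into two sub-cases: (a) if $W$ contains no pure-$v_3$ term of degree $k(2g+1)$ for any $k\ge 1$ and no monomial of type $v_i v_3^{g+1}$, then $W$ and every $\partial_j W$ vanish identically along the $v_3$-axis, so this axis lies in the critical locus, contradicting isolated singularity; (b) otherwise some mixed invariant higher-order term is nonzero, and a careful leading-order analysis in the $\fm$-adic filtration forces $v_3^{2g+1}$ to represent a new class.

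The hard part will be sub-case (b): the higher-order $\bZ_{2g+1}$-invariant terms of $W$ (for instance $v_1 v_3^{g+1}$, $v_2 v_3^{g+1}$, or any $v_1^{a}v_2^{2g+1-a}$) could \emph{a priori} conspire to produce a relation $v_3^{2g+1} \equiv \alpha + \beta\, v_1v_2v_3$ in $J(W)$. I expect to rule this out by combining equivariant gauge transformations of the form \eqref{Eqn:gauge}--\eqref{Eqn:ExpAdjoint} with Mather-Yau finite determinacy of isolated hypersurface singularities, reducing to a truncation of $W$ in which the weighted $(1,1,-2)$-graded pieces separate the polynomial degrees $3$ of $v_1v_2v_3$ and $2g+1$ of $v_3^{2g+1}$ in the associated graded of $J(W)^{\bZ_{2g+1}}$, so that no such syzygy can exist when $c_3=0$.
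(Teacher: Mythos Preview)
Your proposal has a genuine gap: sub-case (b) is the heart of the matter and you acknowledge it is only a sketch. The plan to invoke Mather--Yau finite determinacy and a weighted grading to separate $v_3^{2g+1}$ from $\alpha + \beta v_1v_2v_3$ in $J(W)$ is not carried out, and it is not clear this separation survives the higher-order invariant terms (e.g.\ $v_i v_3^{g+1}$, $v_1^a v_2^{2g+1-a}$, and their analogues in every degree $\equiv 0 \pmod{2g+1}$ for the weighting $(1,1,-2)$) without substantially more work. There is also a minor slip in sub-case (a): to force the $v_3$-axis into the critical locus you must exclude \emph{all} invariant monomials $v_i v_3^{g+1+k(2g+1)}$ for $k\ge 0$, not only $k=0$.

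The paper's route is shorter and bypasses your Gorenstein socle machinery entirely. It observes directly that the degree-zero Koszul cohomology contains $J(W)$, so $J(W)^{\bZ_{2g+1}}$ already has rank at most $2$. The key elementary point is that a pure power $v_i^{2g+1}$ cannot lie in the ideal generated by the leading quadratic terms $v_2v_3,\,v_1v_3,\,v_1v_2$ of the $\partial_j W$ (every monomial in $f_1 v_2v_3 + f_2 v_1v_3 + f_3 v_1v_2$ involves at least two of the variables), so if $\mu_i=0$ the class $[v_i^{2g+1}]$ survives. The paper then uses an extra hypothesis you do not: the $v_1\leftrightarrow v_2$ involution from Remark~\ref{Rem:TwoVblesNeeded}, valid in the geometric application but not stated in the Lemma. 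This forces $\mu_1=\mu_2$, so at most $\mu_3$ can vanish. In that remaining case, rank $2$ forces $[v_3^{2g+1}]^2=0$, hence $W$ must contain $v_3^{4g+2}$; but then $[v_1v_2v_3]$ and $[v_3^{2g+1}]$ are argued to be distinct nonzero invariant classes, again contradicting rank $2$.

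Your socle identification $\det\mathrm{Hess}(W)\equiv 2\lambda^3 v_1v_2v_3 + O(4)$ is correct and elegant, and would give a cleaner characterisation of the ``second'' invariant class than the paper's somewhat ad hoc use of $[v_1v_2v_3]$; but it does not by itself produce the independence of $[v_3^{2g+1}]$ you need, so you still face the same obstacle. If you want a self-contained proof in the generality of the Lemma as stated (without the involution), you should close sub-case (b) concretely rather than by appeal to finite determinacy in the abstract.
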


\begin{proof}
 The Hochschild cohomology of the $A_{\infty}$-structure is the cohomology of the Koszul complex
 \begin{equation} \label{Eqn:Koszul}
0 \rightarrow \bC[[V^{\vee}]] \otimes \Lambda^3(V) \xrightarrow{\iota_{dW}}  \bC[[V^{\vee}]] \otimes \Lambda^2(V) \xrightarrow{\iota_{dW}} \bC[[V^{\vee}]]\otimes \Lambda^1(V) \xrightarrow{\iota_{dW}} \bC[[V^{\vee}]] \rightarrow 0.
\end{equation}
 This contains in degree zero a copy of the Jacobian ring $J(W) = \bC[[V^{\vee}]] / \langle \partial_iW \rangle $, which must therefore contain a unique non-identity element.  
 
 If the coefficient $\mu_i$ of the monomial $v_i^{2g+1}$ is zero, then $v_i^{2g+1}$ will survive into the Jacobian ring (since this is the lowest monomial term that can occur in $W$ by $\bZ_{2g+1}$-invariance). Therefore, at most one of the $\mu_{i>0}$ can be zero, in fact $\mu_3$ since the $A_{\infty}$-structure is invariant under permuting $v_1$ and $v_2$; and in this case $W$ must contain a monomial $v_3^{4g+2}$ (or the element $[v_3^{2g+1}] \in J(W)$ will not have square zero). However, if $W = \lambda v_1v_2v_3 + \mu(v_1^{2g+1} + v_2^{2g+1}) + \mu_3v_3^{4g+2} + p$ where all terms of $p$ of degree $< 2g+1$ involve at least two variables, then $[v_1v_2v_3]$ and $[v_3^{2g+1}]$ give distinct elements of $J(W)$, which again contradicts the invariant part of $HH^*$ having rank 2.
 \end{proof}

We recall Noether's classical result \cite{Noether} that the invariant subring $\bC[v_1, v_2, v_3]^G$ under a finite group $G$ is generated by monomials of degree at most the order of $G$.  Let $Q \in \bC[[V^{\vee}]]^G$ denote 
\[
Q = -v_1v_2v_3 + v_1^{2g+1} + v_2^{2g+1} + v_3^{2g+1}.
\]
The following is a variant of Seidel's \cite[Lemma 4.1]{Seidel:HMSgenus2}, whose proof we follow closely.  

\begin{Lemma} \label{Lem:FinitelyDetermined}
Pick constants $(\lambda,\mu_1,\mu_2,\mu_3) \in (\bC^*)^4$. Let $p(\cdot)$ be a polynomial of degree $4 \leq deg(p) \leq 2g+1$, with vanishing coefficients of the monomials $v_i^{2g+1}$.
The $A_{\infty}$-structure defined by any $\bZ_{2g+1}$-invariant Maurer-Cartan pair $(W,0)$ with
\[
W = \lambda v_1v_2v_3  + p(v_1, v_2, v_3) +  \mu_1 v_1^{2g+1} + \mu_2v_2^{2g+1} + \mu_3v_3^{2g+1} + O(2g+2) \] 
is $A_{\infty}$-isomorphic to the structure defined by $(Q,0)$.
\end{Lemma}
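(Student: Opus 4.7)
The plan is to construct a $G$-equivariant $A_\infty$-isomorphism from the structure defined by $(Q,0)$ to that defined by $(W,0)$ by combining a $G$-equivariant linear change of basis on $V^\vee$ (which acts by rescaling on cohomology) with a formal gauge transformation of the form $\exp(g^1)$, where $g^1 \in \bC[[V^\vee]] \otimes V$ is a $G$-equivariant vector field vanishing at the origin.  Such a gauge transformation acts on the Maurer--Cartan pair $(Q,0)$ by pullback through the formal diffeomorphism $\exp(g^1)$, governed by \eqref{Eqn:ExpAdjoint}, with linearization $W \mapsto W + [g^1, W]$, i.e.\ the Lie derivative along $g^1$.  The argument splits into two stages: a leading-order normalization, followed by an inductive finite-determinacy argument.

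For the normalization, observe that the only $G$-equivariant linear automorphisms of $V^\vee$ are the diagonal rescalings $v_i \mapsto c_i v_i$ (together with the swap $v_1 \leftrightarrow v_2$), since $G$ acts diagonally with two distinct characters.  Under such a rescaling, $\lambda \mapsto \lambda c_1 c_2 c_3$ and $\mu_i \mapsto \mu_i c_i^{2g+1}$.  I choose $c_i$ with $\mu_i c_i^{2g+1} = 1$, which determines each $c_i$ up to a $(2g+1)$-st root of unity; the residual freedom in the roots can be used to arrange $\lambda c_1 c_2 c_3 = -1$.  After this stage, $W = Q + p'(v_1, v_2, v_3) + O(2g+2)$ for some $G$-invariant polynomial $p'$ of degrees between $4$ and $2g+1$ with no $v_i^{2g+1}$ terms.

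The second stage proceeds by induction on the degree $d$ of the leading discrepancy $R_d$ of $W - Q$.  At each step $R_d$ is a $G$-invariant homogeneous polynomial of degree $d \geq 4$, and the goal is to find a $G$-equivariant polynomial vector field $g = \sum f_i \xi_i$ with $[g, Q]\big|_{\deg d} = \sum f_i \partial_i Q\big|_{\deg d} = R_d$; applying $\exp(-g)$ then cancels $R_d$ while introducing corrections only in degrees $> d$.  A direct calculation of the critical locus of $Q$ (from $\partial_i Q = 0$, giving $v_j v_k = (2g+1) v_i^{2g}$ cyclically and hence $(v_1 v_2 v_3)^2 = (2g+1)^3 (v_1 v_2 v_3)^{2g}$) shows that the origin is the unique critical point.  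Thus $J(Q) = \bC[[V^\vee]]/\langle \partial_i Q\rangle$ is a finite-dimensional local ring and its Jacobian ideal $\mathcal{J}$ contains $\fm^N$ for some $N$.  For $d \geq N$, the required $g$ then exists automatically (write any lift of $R_d$ into $\bC[V^\vee]^{\oplus 3}$ and average over $G$, which is harmless in characteristic zero).  For $4 \leq d \leq 2g+1$, the $G$-invariant monomials of degree $d$ are the $v_1^a v_2^b v_3^c$ with $a+b \equiv 2c \pmod{2g+1}$; for the ``mixed'' invariants with $c \geq 1$ one kills them via the cubic part $-v_j v_k$ of $\partial_i Q$ with an explicit $G$-equivariant $f_i$, while for the ``boundary'' invariants at $d = 2g+1$ of the form $v_1^a v_2^{2g+1-a}$ with $1 \leq a \leq 2g$, one uses $f_3 = v_1^{a-1} v_2^{2g-a}$, whose weight matches $\xi_3$, and which produces the desired leading term via $-f_3(-v_1 v_2) = v_1^a v_2^{2g+1-a}$, with the higher-order correction $(2g+1) f_3 v_3^{2g}$ living in degree $4g-1 > 2g+1$.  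The infinite composition of these transformations converges in the $\fm$-adic topology to a $G$-equivariant formal diffeomorphism $\phi$ with $\phi^* W = Q$.

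The main obstacle is verifying this surjectivity in the low-degree regime $4 \leq d \leq 2g+1$, where finite determinacy from $\fm^N \subset \mathcal{J}$ does not yet apply directly.  The hypothesis from Lemma \ref{Lem:HHnotsmall} that $J(Q)^G$ has rank exactly $2$ -- spanned by $1$ and the distinguished class $[v_1 v_2 v_3]$ -- is precisely the statement that every other $G$-invariant monomial lies in $\mathcal{J}^G$, which is what guarantees the inductive step; the one genuine obstruction class, living on the cubic, was pinned down in Stage $1$.  Enumerating the two types of invariants (``mixed'' vs.\ ``boundary'') at each degree and exhibiting the required $G$-equivariant vector fields is routine but has to be carried out degree by degree up through $N$ before the abstract finite determinacy takes over.
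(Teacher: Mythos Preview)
Your Stage 1 normalization does not work. A $G$-equivariant linear automorphism of $V^{\vee}$ that preserves the shape of the cubic term (i.e.\ introduces no $v_i^2v_3$ terms) is diagonal or anti-diagonal, and under $v_i \mapsto c_iv_i$ the tuple $(\lambda,\mu_1,\mu_2,\mu_3)$ goes to $(\lambda c_1c_2c_3,\, \mu_1c_1^{2g+1},\, \mu_2c_2^{2g+1},\, \mu_3c_3^{2g+1})$. The quantity $\lambda^{2g+1}/(\mu_1\mu_2\mu_3)$ is therefore invariant under all the moves you allow (diagonal rescaling, swap, and gauge transformations, the last of which fix the cubic and the $v_i^{2g+1}$ coefficients); for $Q$ this invariant is $-1$, but for a general $(\lambda,\mu_i)\in(\bC^*)^4$ it is arbitrary. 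You are one degree of freedom short. The paper supplies it via the canonical $\bC^*$-action on $A_\infty$-structures which rescales $\mu^j$ by $\epsilon^{j-2}$; this is an $A_\infty$-isomorphism but is neither a gauge transformation nor induced by $GL(V)$. Accordingly the paper reverses your order: it first gauges $W$ to its degree-$\leq(2g+1)$ truncation $W_\mu$, then rescales the $v_i$ to reach $-\epsilon\,v_1v_2v_3 + \epsilon^{2g-1}\sum v_i^{2g+1}$, and finally uses this $\bC^*$-action to set $\epsilon=1$.

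There is a second gap in your Stage 2 induction. Your claim that each step introduces corrections only in degrees $>d$ forces the vector field to have polynomial degree $\geq d-2$, so only the cubic piece $-v_jv_k$ of $\partial_iQ$ contributes at degree $d$; hence $R_d$ must lie in the monomial ideal $(v_2v_3,v_1v_3,v_1v_2)$. The pure powers $v_i^d$ are excluded from this ideal, and they are $G$-invariant whenever $(2g+1)\mid d$. At $d=2(2g+1),3(2g+1),\ldots$ such terms can occur in $R_d$, and killing $v_i^d$ via the $(2g+1)v_i^{2g}$ part of $\partial_iQ$ requires $f_i$ of degree $d-2g$, whose cubic contribution then lands at degree $d-2g+2<d$, undoing previously fixed degrees; the two effects are exactly circular. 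Merely knowing $\fm^N\subset\mathcal{J}$ does not help, since a lift of $R_d$ need not have $f_i\in O(d-2)$. The paper avoids this by establishing auxiliary relations such as $v_i^{2g+2}\in I\cdot O(2)+O(\text{higher})$ so that the reduction to $W_\mu+O(n)$ proceeds with controlled-order coefficients, and then invoking the Mather--Yau finite determinacy theorem as a black box rather than a naive degree-by-degree induction.
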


\begin{proof}
To be maximally explicit, we suppose first $g=2$, so $p(\cdot) = 0$, and explain the modifications for the general case at the end.   Let $W_{\mu}$ denote the degree $\leq 5=2g+1$ part of the given formal function $W$.   Letting $I$ denote the ideal generated by the partial derivatives $I = \langle \partial_1 W_{\mu}, \partial _2W_{\mu}, \partial_3W_{\mu}\rangle$, one checks that:
\begin{itemize}
\item $v_iv_j \in I + O(4)$ 
\item  $v_i^{6} \in I\cdot O(2) + O(8)$.
\end{itemize}
Explicitly, to check the second claim, one starts from $\partial_1 W_{\mu} \in I$ and writes
\[
5 \mu_1 v_1^4  \in I + \mu_0 v_2v_3 \ \Rightarrow \ v_1^6 \in I\cdot O(2) + \bC\cdot v_1^2v_2v_3
\]
and then substitutes 
\[
v_1^2v_2v_3 \ = \ (v_1v_2)\cdot (v_1v_3)  \, \in (I - \bC v_3^4)\cdot (I - \bC v_2^4).
\]
Given any $n$, direct manipulation now yields a co-ordinate transformation $exp(g^1)$ (which can be averaged to be $G$-equivariant) for which 
\[
exp(g^1)\cdot (W) = W_{\mu}+O(n).
\]
Explicitly, if $W = W_{\mu} + O(6)$ is $\bZ_5$-equivariant, it contains no monomials $v_i^t$ for $t=6,7,8$.  Then $W-W_{\mu}$ is a sum of terms $v_j.v_k.(\cdot)$ and terms of order $\geq 9$, hence $W-W_{\mu} \in I\cdot O(\geq 4) + O(\geq 9)$.  For a suitable change of variables
\[
v_j' = v_j + f_{4,j}(v); \qquad f_{4,j} \in O(4)
\]
one sees $W(v_1',v_2',v_3')$ is given by
\[
W(v_1,v_2,v_3) + \sum_j f_{4,j} \partial_j W + \cdots + O(9)
\]
which we write as:
\[
W + \sum_j f_{4,j} \partial_j W_{\mu} + \sum_j f_{4,j} (\partial_j W - \partial_j W_{\mu}) +O(9)
\]
Now using the bullet point above, one can choose the $f_{4,j}$ so that the first two terms give exactly the part of $W-W_{\mu}$ in $I\cdot O(4)$, and all the other Taylor co-efficients are $O(9)$, so we have improved $n$ from $6$ to $9$.  Iterating the procedure, one can increase $n$ arbitrarily.   Since $W_{\mu}$ has an isolated singularity at the origin, the general finite determinacy theorem from singularity theory  \cite{MatherYau} implies that, once $n$ is large enough, there is a formal change of variables $exp(h^1)$ for which
\[
exp(h^1)\cdot W \, = \, W_{\mu}.
\]
Recall that  $A_{\infty}$-structures on $\Lambda(V)\rtimes G$ are invariant under $G$-equivariant linear transformations in $GL(V)$. In our situation they are invariant under arbitrary invertible linear rescalings of the $v_i$. This brings any polynomial $W_{\mu}$ of the given form into the shape $tv_1v_2v_3 + v_1^5+v_2^5+v_3^5$. A further linear co-ordinate change will bring this into the form $-\epsilon v_1v_2v_3 + \epsilon^3(v_1^5+v_2^5+v_3^5)$, for some $\epsilon \in \bC^*$.  One now uses the existence of a canonical $\bC^*$-action (again not by gauge transformations, but nonetheless by $A_{\infty}$-isomorphisms) on the space of $A_{\infty}$-structures, where $\epsilon \in \bC^*$ acts by rescaling $\mu^j$ by $\epsilon^{j-2}$, to reduce to the desired polynomial $Q$.  This completes the argument when $g=2$.

The general case, when $g>2$, is similar. The polynomial $p$ actually lies in $O(d)$ for some $d>4$, for invariance reasons, and contains no monomial terms $v_i^j$ (the only invariant monomial terms of degree $<2g+2$ are $v_i^{2g+1}$, for which $p$  has vanishing coefficients by hypothesis).  Therefore, its lowest degree $d$ piece can be written (not necessarily uniquely) as
\[
p_d(v_1,v_2,v_3) = v_1v_2q_3 + v_2v_3q_1 + v_3v_1q_2
\]
for $G$-invariant homogeneous polynomials $q_i \in O(d-2)$ of degree $>2$.  There is an infinitesimal gauge transformation
\[
v_i \mapsto v_i' = v_i-q_i \ = \ v_i + [-q_i \xi_i, v_i]
\]
associated to the 1-form $-\sum q_i \xi_i$, and since $q_i \in O(2)$, this exponentiates to a gauge transformation which takes the given $W$ to a polynomial $W'$ of the same shape, but for which the polynomial $p$ has larger degree: 
\[
W' =  \lambda v_1'v_2'v_3'  + \tilde{p}(v_1',v_2', v_3') +  \mu_1 (v_1')^{2g+1} + \mu_2(v_2')^{2g+1} + \mu_3(v_3')^{2g+1} + O(2g+2) \] 
with  $deg(\tilde{p}) \geq d+1$, because all the higher order terms in the exponentiated adjoint action \eqref{Eqn:ExpAdjoint} have larger polynomial degree. Iterating (finitely often, so there are no convergence issues), one reduces to the case when $p \in O(2g+2)$, so without loss of generality we can take $p\equiv 0$ in the statement of the Lemma. Now $v_iv_j \in I + O(2g)$, and explicit formal changes of variables as in the $g=2$ case  iteratively kill the $O(2g+2)$-part of $W$, cf. \cite[Lemma 3.1]{Efimov}.  Finally, the coefficients $\mu_j$ are adjusted using $GL(V)$-invariance and the canonical $\bC^*$-action on $A_{\infty}$-structures.
\end{proof} 

Note that Lemma \ref{Lem:ComputeonCurve} verifies that in the only non-excluded case, the Hochschild cohomology does have the anticipated rank.   Combining our knowledge of quantum cohomology, Corollary \ref{Cor:QHRelativeQuadric}, with Lemma \ref{Lem:HHnotsmall}, one finds that the $A_{\infty}$-structure on $\scrF(Z)$ satisfies the conditions of Lemma \ref{Lem:FinitelyDetermined}.  Since this singles out a unique quasi-isomorphism class, one concludes that the generalised pentagram spheres in $Z$ define an equivalent structure to the corresponding curves in the genus $g$ surface.  Since on the genus $g$ curve these Lagrangians split-generate, this proves that
\[
D^{\pi}\scrF(\Sigma_g) \hookrightarrow D^{\pi}\scrF(Z).
\]
Lemma \ref{Lem:Step1} and Lemma \ref{Lem:Step2} together show that this embedding has the same image, up to quasi-isomorphism, as the embedding $D^{\pi}\scrF(Q_0 \cap Q_1) \hookrightarrow D^{\pi}\scrF(Z)$ of Lemma \ref{Lem:FullyFaithful}. 

\begin{Corollary} \label{Cor:Compatible}
The embedding of categories $D^{\pi}\scrF(\Sigma_g) \hookrightarrow D^{\pi}\scrF(Z)$ is compatible with the natural weak actions of the pointed hyperelliptic mapping class group.  \end{Corollary}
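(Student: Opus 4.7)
The plan is to argue in parallel with Corollary~\ref{Cor:MappingClassCompatible-1}. Since $\Gamma_{g,1}^{hyp}$ is generated by Dehn twists $t_\gamma$ in essential simple closed curves $\gamma \subset \Sigma_g$ invariant under the hyperelliptic involution and projecting to matching paths in $\bC\subset\bP^1$, it is enough to check compatibility for each such generator.

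First I would identify the two actions explicitly. On $\scrF(\Sigma_g)$, the Dehn twist $t_\gamma$ acts by the geometric Dehn twist in $\gamma$, which by Proposition~\ref{Prop:twists} is quasi-isomorphic to the algebraic twist functor $\scrT_\gamma$. On $\scrF(Z)$, parallel transport in the family of relative quadrics as the $(2g+2)$ branch points undergo the half-twist presenting $t_\gamma$ induces a symplectomorphism which is Hamiltonian isotopic to the Dehn twist $\tau_{L_\gamma}$ in the matching sphere $L_\gamma\subset Z$. Indeed, this family extends over the disk bounded by the loop to an algebraic degeneration in which $Z$ acquires a node with vanishing cycle $L_\gamma$ (Lemma~\ref{Lem:5sphere}), and the proof of Lemma~\ref{Lem:Step1} shows that the spun sphere $\hat V_\gamma$ and $L_\gamma$ arise as vanishing cycles of the same such degeneration. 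Proposition~\ref{Prop:twists} then identifies this action on $\scrF(Z)$ with the algebraic twist $\scrT_{L_\gamma}$.

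Next I would use that the embedding $\Phi\co D^\pi\scrF(\Sigma_g)\hookrightarrow D^\pi\scrF(Z)$ constructed in Section~\ref{Section:FiniteDeterminacy} has the same essential image (up to quasi-isomorphism) as $\Phi^+\co D^\pi\scrF(Q_0\cap Q_1;0)\hookrightarrow D^\pi\scrF(Z)$ of Proposition~\ref{Prop:BlowUpEmbeds}. More precisely, under the induced equivalence $D^\pi\scrF(\Sigma_g)\simeq D^\pi\scrF(Q_0\cap Q_1;0)$, the curve $\gamma$ corresponds to the vanishing cycle $V_\gamma$ (via Lemma~\ref{Ex:genus2}), whose image under $\Phi^+$ is quasi-isomorphic to $\hat V_\gamma$ by Lemma~\ref{Lem:Step2}, and $\hat V_\gamma\simeq L_\gamma$ by Lemma~\ref{Lem:Step1}. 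Chaining these quasi-isomorphisms yields $\Phi(\gamma)\simeq L_\gamma$ in $D^\pi\scrF(Z)$.

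Finally, Seidel's entwining relation (Equation~\ref{Eqn:TwistFunctor}) gives
\[
\Phi\circ\scrT_\gamma \ \simeq \ \scrT_{\Phi(\gamma)}\circ\Phi \ \simeq \ \scrT_{L_\gamma}\circ\Phi,
\]
which is the required weak compatibility. As in Corollary~\ref{Cor:MappingClassCompatible-1}, no coherence between the quasi-isomorphisms produced for different mapping classes is asserted. The main point requiring care is the identification $\Phi(\gamma)\simeq L_\gamma$: this is where the compatibility of the two \emph{different} routes into $D^\pi\scrF(Z)$ (matching spheres via finite determinacy on the one hand, and spinning via quilted correspondences on the other) has to be reconciled. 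This reconciliation is however exactly what Lemmas~\ref{Lem:Step1} and \ref{Lem:Step2} establish, together with the fact that both embeddings identify the distinguished pentagram generators consistently. The remainder of the argument is formal manipulation of twist functors, so no further geometric obstacle arises.
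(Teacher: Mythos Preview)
Your argument has the right shape but takes a more circuitous route than the paper's. The paper says to invoke Lemma~\ref{Lem:Step1} \emph{in place of} Lemma~\ref{Lem:Step2}; you invoke both, routing the identification $\Phi(\gamma)\simeq L_\gamma$ through $D^\pi\scrF(Q_0\cap Q_1;0)$. The direct line the paper has in mind is: the embedding of Section~\ref{Section:FiniteDeterminacy} sends matching curves $\gamma$ to matching spheres $L_\gamma$ by construction; the $\Gamma_{g,1}^{hyp}$-action on $Z$ is by the twist in $\hat V_\gamma$ (Lemma~\ref{lem:SpinMany}); and Lemma~\ref{Lem:Step1} identifies $\hat V_\gamma\simeq L_\gamma$, so Equation~\ref{Eqn:TwistFunctor} closes the loop. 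That is exactly the proof of Corollary~\ref{Cor:MappingClassCompatible-1} with Step~1 substituted at the one place Step~2 was invoked. Your detour via $Q_0\cap Q_1$ works too, but it is not what the paper does.

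One correction: your citation of Lemma~\ref{Ex:genus2} for ``$\gamma$ corresponds to $V_\gamma$ under the induced equivalence'' is misplaced. Wall's lemma is the geometric construction of the spheres $V_\gamma\subset Q_0\cap Q_1$; it says nothing about the abstract categorical equivalence $D^\pi\scrF(\Sigma_g)\simeq D^\pi\scrF(Q_0\cap Q_1;0)$, which is defined only \emph{a posteriori} via the common image in $Z$. What you actually need at that point is that both embeddings send the distinguished generators to quasi-isomorphic objects of $\scrF(Z)$, and that is the content of Lemmas~\ref{Lem:Step1} and~\ref{Lem:Step2} together with the construction of the finite-determinacy embedding---which you do cite in your final paragraph. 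So this is a mislabelled pointer rather than a fatal gap; drop the reference to Lemma~\ref{Ex:genus2} there.
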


Corollary \ref{Cor:Compatible} has precisely the same proof as Corollary \ref{Cor:MappingClassCompatible-1}, but invoking Lemma \ref{Lem:Step1} in place of Lemma \ref{Lem:Step2}.   Together, these results imply Addendum \ref{Thm:Action} from the Introduction.

\begin{Remark} \label{Remark:FiniteGroupEntwine}
The finite group $H^1(\Sigma_g;\bZ_2)\ni \xi$ has a natural weak action on $\scrF(\Sigma_g)$, tensoring by flat line bundles $\xi$, and on $\scrF(Q_0 \cap Q_1;0)$, by symplectic involutions $\iota_{\xi}$ of Remark \ref{Rem:BranchingInfinity}, and one can ask if the equivalence $\Upsilon: D^{\pi}\scrF(\Sigma_g) \simeq D^{\pi}\scrF(Q_0 \cap Q_1;0)$ we have constructed also entwines that action. We shall not prove this, but the relevant cohomological evidence is provided by Remark \ref{Remark:2objects-1} and Remark \ref{Rem:TwoHolChoices}.  Presumably, just as $\Upsilon(\gamma) \simeq V_{\bar\gamma}$, also $\Upsilon(\xi \rightarrow \gamma) \simeq \iota_{\xi} V_{\bar\gamma}$.  If true, one could compute the Hochschild cohomology of the autoequivalence $\otimes \,\xi \in Auteq(\scrF(\Sigma))$ in terms of the fixed point Floer cohomology $HF(\iota_{\xi})$ on $Q_0 \cap Q_1$.
\end{Remark}

%%%%%%%%%%%%%%%%%%%%%%%%%%%%%%%%%%%%%%%%%%%%%
%%%%%%%%%%%%%%%%%%%%%%%%%%%%%%%%%%%%%%%%%%%%%%
\bibliographystyle{amsplain}

\end{document}